\documentclass[12pt]{amsart}

\numberwithin{equation}{section}

\usepackage{amsthm}
\usepackage{amsmath}
\usepackage{amssymb}
\usepackage{verbatim}
\usepackage{enumerate}
\usepackage[lite, alphabetic, nobysame]{amsrefs}
\usepackage{graphics}
\usepackage{graphicx}
\usepackage{color}

\usepackage{hyperref}
\hypersetup{pdfstartview={XYZ null null 1.00}, pdfpagemode=UseNone, colorlinks,breaklinks, linkcolor=blue,urlcolor=blue, anchorcolor=blue,citecolor=blue}

\usepackage[capitalise]{cleveref}
\crefname{defn}{Definition}{Definitions}


\usepackage{xspace}

\usepackage{geometry}
\geometry{margin = 1in}


\newcommand{\N}{\mathbb{N}}
\newcommand{\Z}{\mathbb{Z}}
\newcommand{\Q}{\mathbb{Q}}
\newcommand{\R}{\mathbb{R}}


\newcommand{\ga}{\gamma}
\newcommand\al{\alpha}

\newcommand\de{\delta}
\newcommand{\e}{\epsilon}

\newcommand\si{\sigma}
\newcommand{\Ga}{\Gamma}
\newcommand{\De}{\Delta}


\newcommand{\0}{\mathbb{\emptyset}}
\newcommand{\w}{\infty}
\newcommand{\Fr}{\mathbb{F}}


\newcommand{\proj}{\text{\textnormal{proj}}}


\newcommand{\cl}{\overline}


\newcommand{\actson}{\curvearrowright}

\newcommand{\imp}{\Rightarrow}

\newcommand{\shortiff}{\Leftrightarrow}

\newcommand{\set}[1]{\{#1\}}
\newcommand{\gen}[1]{\langle #1 \rangle}
\newcommand{\rest}[1]{\! \downharpoonright_{#1}}




\newcommand{\Slawek}{S\l{}awek\xspace}
\newcommand{\Slawomir}{S\l{}awomir\xspace}

\crefname{subsection}{Subsection}{Subsections}


\newtheorem{defn}[equation]{Definition}
\newtheorem*{defn*}{Definition}

\crefname{prop}{Proposition}{Propositions}
\newtheorem{prop}[equation]{Proposition}
\newtheorem*{propo*}{Proposition}

\newtheorem{lemma}[equation]{Lemma}

\newtheorem{obs}[equation]{Observation}

\crefname{cor}{Corollary}{Corollaries}
\newtheorem{cor}[equation]{Corollary}
\newtheorem*{cor*}{Corollary}

\newtheorem{theorem}[equation]{Theorem}
\newtheorem*{theorem*}{Theorem}

\newtheorem{claim}{Claim}
\newtheorem*{claim*}{Claim}

\newtheorem*{fact*}{Fact}

\crefname{question}{Question}{Questions}
\newtheorem{question}[equation]{Question}
\newtheorem*{question*}{Question}

\newcommand{\fntsz}[1][10]{\fontsize{#1}{#1}\selectfont}

\newenvironment{remark}[1][]{\refstepcounter{equation}\par\medskip\noindent \textbf{Remark~\theequation}#1\textbf{.} \rmfamily}{\medskip}

\newenvironment{remark*}[1][Remark]{\begin{trivlist}
\item[\hskip \labelsep {\bfseries #1.}]}{\end{trivlist}}

\newenvironment{remarklike}[2][]{\refstepcounter{equation}\par\medskip\noindent \textbf{#2~\theequation}#1\textbf{.} \rmfamily}{\medskip}

\newenvironment{remarklike*}[2][]{\par\medskip\noindent \textbf{#2}#1\textbf{.} \rmfamily}{\medskip}

\newenvironment{namedthm*}[2][]{\par\medskip\noindent \textbf{#2}#1\textbf{.}\itshape}{\medskip}

\newcommand{\iformat}[1][(\alph{enumi})]{
	\renewcommand{\theenumi}{\normalfont #1}
	\renewcommand{\labelenumi}{\theenumi}
}


\newenvironment{example}[1][]{\refstepcounter{equation}\par\medskip\noindent \textbf{Example~\theequation}#1\textbf{.} \rmfamily}{}

\newenvironment{examplelist*}[1][a]{\refstepcounter{equation}\par\medskip\noindent \textbf{Examples~\theequation.} 
\vspace{.6em}
\begin{enumerate}[\bfseries (#1)]
\setlength{\itemsep}{.8em}
\rmfamily}{\end{enumerate}\smallskip}

\creflabelformat{enumi}{#2(#1)#3}
\newcommand{\examplelistref}[2]{Example \ref{#1}\labelcref{#2}}

\newenvironment{case*}[1]{\smallskip\par\noindent \textit{Case~#1}: \rmfamily}{\smallskip}

\newenvironment{prop*}[1][]{\par\medskip\noindent \textbf{Proposition #1.} \itshape}{}

\newenvironment{pfof}[1][Claim]{\noindent \textit{Proof of #1.} \rmfamily}{\hfill{$\dashv$} \smallskip}
\newenvironment{skofpf}{\noindent \textit{Sketch of Proof.} \rmfamily}{\hfill{\qed} \medskip}


\definecolor{gris}{RGB}{90,90,90}

\usepackage[all]{xy} 
\usepackage{MnSymbol} 

\geometry{margin = 1in}

\newtheorem{ergdecthm}[equation]{Ergodic Decomposition Theorem}
\newtheorem{KriegersThm}[equation]{Krieger's Finite Generator Theorem}

\newcommand{\QB}{\N^{<\N}}
\newcommand{\Br}{\mathcal{N}}
\newcommand{\Brr}{\Br_2}

\newcommand{\Uni}{\bigcup_{n \in \N}}
\newcommand{\Uone}{\bigcup_{n \geq 1}}

\newcommand{\F}{\mathcal{F}}
\newcommand{\I}{\mathcal{I}}
\newcommand{\J}{\mathcal{J}}
\newcommand{\Pa}{\mathcal{P}}
\newcommand{\Qa}{\mathcal{Q}}
\newcommand{\M}{\mathcal{M}}
\newcommand{\K}{\mathcal{K}}

\newcommand{\U}{\mathcal{U}}
\newcommand{\W}{\mathcal{W}}
\newcommand{\BA}{\mathcal{B}}

\newcommand{\Erg}{\mathcal{E}}
\newcommand{\ErgX}{\Erg_{\Z}(X)}

\newcommand{\Sm}{\mathfrak{S}}
\newcommand{\Afrak}{\mathfrak{A}}
\newcommand{\Bfrak}{\mathfrak{B}}
\newcommand{\Cfrak}{\mathfrak{C}}
\newcommand{\Id}{\mathfrak{I}}

\newcommand{\fin}{\text{fin}}
\newcommand{\lfin}{\text{lfin}}

\newcommand{\GN}{G^{\N}}
\newcommand{\sone}{\mathbf{\Sigma}_1^1}
\newcommand{\s}{\sigma(\sone)}
\newcommand{\EI}{{F_{\I}}}
\newcommand{\f}{{f_{\I}}}

\newcommand{\n}{\bar{n}}
\newcommand{\Ez}{\tilde{E}_0}

\newcommand{\MEAGER}{\text{\textnormal{MEAGER}}}
\newcommand{\NULL}{\text{\textnormal{NULL}}}

\pagestyle{headings}

\title[\tiny Finite generators for countable group actions]{Finite generators for countable group actions in the Borel and Baire category settings}
\author{Anush Tserunyan}
\address{Department of Mathematics, University of Illinois at Urbana-Champaign, IL, 61801, USA}
\email{anush@illinois.edu}
\thanks{This work is part of the author's Ph.D. thesis completed at University of California at Los Angeles in 2013.}
\date{}

\begin{document}
\maketitle

\begin{abstract}
For a continuous action of a countable discrete group $G$ on a Polish space $X$, a countable Borel partition $\Pa$ of $X$ is called a \textit{generator} if $G \Pa := \set{gP : g \in G, P \in \Pa}$ generates the Borel $\si$-algebra of $X$. For $G = \Z$, the Kolmogorov--Sinai theorem gives a measure-theoretic obstruction to the existence of finite generators: they do not exist in the presence of an invariant probability measure with infinite entropy. It was asked by Benjamin Weiss in the late 80s whether the nonexistence of any invariant probability measure guarantees the existence of a finite generator. We show that the answer is positive (in fact, there is a $32$-generator) for an arbitrary countable group $G$ and $\si$-compact $X$ (in particular, for locally compact $X$). We also show that any continuous aperiodic action of $G$ on an arbitrary Polish space admits a $4$-generator on a comeager set, thus giving a positive answer to a question of Alexander Kechris asked in the mid-90s.

Furthermore, assuming a positive answer to Weiss's question for arbitrary Polish spaces and $G=\Z$, we prove the following dichotomy: every aperiodic Borel action of $\Z$ on a Polish space $X$ admits either an invariant probability measure of infinite entropy or a finite generator. As an auxiliary lemma, we prove the following statement, which may be of independent interest: every aperiodic Borel action of a countable group $G$ on a Polish space $X$ admits a $G$-equivariant Borel map to the aperiodic part of the shift action $G \actson 2^G$.

We also obtain a number of other related results, among which is a criterion for the nonexistence of non-meager weakly wandering sets for continuous actions of $\Z$. A consequence of this is a negative answer to a question asked by Eigen--Hajian--Nadkarni, which was also independently answered by Benjamin Miller.
\end{abstract}

\tableofcontents

\section{Introduction}

\subsection{Definition and examples}
Throughout the paper let $G$ denote a countably infinite discrete group and let $1_G$ denote the identity of $G$. Let $X$ be a \emph{Borel $G$-space}, i.e. a standard Borel space (see \cite[12.5]{bible}) equipped with a Borel action of $G$. If $X$ is a Polish topological space and $G$ acts continuously, we call it a \emph{Polish $G$-space}. For $g \in G$ and $x \in X$, we denote the result of the action of $g$ on $x$ by $gx$ (instead of $g \cdot x$).

Consider the following game: Player I chooses a finite or countable Borel partition\footnote{We call a countable partition Borel if each piece of it is a Borel set.} $\I = \set{A_n}_{n<k}$ of $X$ ($k \le \w$), then Player II chooses $x \in X$ and Player I tries to guess $x$ by asking questions to Player II regarding which piece of the partition $x$ lands in when moved by a certain group element. More precisely, for every $g \in G$, Player I asks ``To which $A_n$ does $gx$ belong?'' and Player II gives an index $n_g < k$ as an answer. Whether or not Player I can uniquely determine $x$ from the sequence $(n_g)_{g \in G}$ of responses depends on how cleverly he chose the partition $\I$. We call $\I$ a \emph{generator} (or a \emph{generating partition}) if it guarantees that Player I will determine $x$ correctly no matter which $x$ Player II chooses. Here is the precise definition, which also explains the terminology.

\begin{defn}[Generator]
For a Borel $G$-space $X$, a countable Borel partition $\I = \set{A_n}_{n<k}$ of $X$ ($k \le \w$) is called a generator if $G \I := \set{gA_n : g \in G, n<k}$ generates the Borel $\sigma$-algebra of $X$. We also call $\I$ a $k$-generator, and, if $k < \w$, a finite generator.
\end{defn}

\begin{examplelist*}\label{example:generators}
	\item\label{item:shift} For $k \le \w$, let $X = k^G$ (with the understanding that $X = \N^G$ when $k = \w$) be equipped with the product topology and the shift action of $G$, i.e. $g x (h) := x (g^{-1}h)$, for $g,h \in G, x \in X$. For each $n < k$, put $V_n = \set{x \in X : x(1_G) = n}$. The partition $\I = \set{V_n}_{n<k}$ of $X$ is clearly a generator because $G \I$ is a sub-basis for the product topology on $k^G$.
	
	\item Let $X = S^1$ be the unit circle in the complex plain, $\al \in \R$ be irrational relative to $\pi$ (i.e. $\al / \pi \notin \Q$), and $T_\al : X \to X$ be the rotation by angle $\al$, i.e. $T_\al(z) = e^{i\al}z$, for $z \in X$. This induces a continuous action $\Z \actson X$ and we claim that the partition $\I = \set{C_u, C_l}$ of $X$ into upper and lower half circles is a generator. This is not hard to verify directly, using the fact that every orbit is dense, but it easier to check the equivalent condition to being a generator given in \labelcref{item:separating_points} of \cref{st:equivalences_to_generator} below.
	
	\item If a Borel $G$-space $X$ admits a Borel weakly wandering (see \cref{defn of weakly wandering set}) complete section\footnote{For an action $G \actson X$, a subset of $X$ is called a \emph{complete section} if it non-trivially intersects every orbit.} $W$, e.g. the translation action of $\Z$ on $\R$ with $W = [0,2]$, then it is not hard to prove that it admits a $3$-generator (see \cref{st:ww_imp_3-gen} for a short and self-contained proof).
	
	\item The translation action of $\Z$ on $\R$ actually admits a $2$-generator because the exponentiation function $2^x$ makes it Borel isomorphic to the Borel $\Z$-space $\R^{>0}$, where the action of $1_\Z$ is given by multiplication by $2$; the latter action in its turn is Borel isomorphic to an invariant subset of the shift action $\Z \actson 2^\Z$ via the binary representation, which, by \labelcref{item:shift} above, has a $2$-generator. The fact that the translation action $\Z \actson \R$ has a $2$-generator is true more generally (\cref{2-generator for free smooth}) for smooth (\cref{defn:smoothness_via_transversal}) free actions.
\end{examplelist*}

For a Borel $G$-space $X$ and a Borel partition $\I = \set{A_n}_{n < k}$ of $X$,  $k \le \w$, let $\f : X \rightarrow k^G$ be defined by $x \mapsto (n_g)_{g \in G}$, where $n_g$ is such that $gx \in A_{n_g}$. This is often called the \emph{symbolic representation} or \emph{coding map} for the process $(X, G, \I)$. Clearly $\f$ is a $G$-equivariant Borel map and, for every $x \in X$, $\f(x)$ is the sequence of responses of Player I in the above game.

\begin{prop}\label{st:equivalences_to_generator}
For a Borel $G$-space $X$ and a Borel partition $\I = \set{A_n}_{n<k}$, $k \le \w$, the following are equivalent:
\begin{enumerate}[(1)]
\item $\I$ is a generator.

\item\label{item:separating_points} $G \I$ separates points\footnote{A collection $\F$ of subsets of $X$ \emph{separates points} if for all distinct $x,y \in X$ there is $A \in \F$ such that $x \in A \nLeftrightarrow y \in A$}.

\item The coding map $\f : X \to k^G$ is one-to-one.
\end{enumerate}
\end{prop}
\begin{proof}
	The only part worth proving is (3)$\imp$(1), so suppose $\f$ is one-to-one. Then, by the Luzin--Souslin theorem (see \cite[15.1]{bible}), $\f$ is a $G$-equivariant Borel isomorphism between $X$ and $Y := \f(X)$, so it is enough to show that the partition $\f(\I) := \set{\f(A_n)}_{n<k}$ of $Y$ is a generator for the shift action of $G$ on $Y$. But, in the notation of \examplelistref{example:generators}{item:shift}, $\f(A_n) = V_n \cap Y$, for each $n<k$, so $\f(\I)$ is indeed a generator for $G \actson Y$.
\end{proof}

Conversely, given a $G$-equivariant Borel map $f : X \rightarrow k^G$ for some $k \le \w$, define a partition $\I_f = \set{A_n}_{n<k}$ by $A_n = f^{-1}(V_n)$, where $V_n$ is as in \examplelistref{example:generators}{item:shift}. Note that $f_{\I_f} = f$. This and \cref{st:equivalences_to_generator} imply the following.

\begin{cor}
For $k \le \w$, $X$ admits a $k$-generator if and only if there is a $G$-equivariant Borel embedding of $X$ into $k^G$.
\end{cor}

In all arguments to follow in this paper, we use these equivalent descriptions of a generator without comment.

\subsection{Countable generators}

In \cite{Weiss}, it was shown that every aperiodic (i.e. having no finite orbits) $\Z$-space admits a countable generator. This was later generalized to any countable group in \cite{JKL}.

\begin{theorem}[Weiss, Jackson--Kechris--Louveau]\label{JKL thm}
	Every aperiodic Borel $G$-space $X$ admits a countable generator. In particular, there is a Borel $G$-equivariant embedding of $X$ into $\N^G$.
\end{theorem}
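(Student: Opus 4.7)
By Observation~1.4, the statement reduces to constructing a Borel $G$-equivariant injection $f : X \to \N^G$; equivalently, to producing a Borel coloring $c : X \to \N$ such that the induced map $x \mapsto (c(gx))_{g \in G}$ is one-to-one.

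Two ingredients enter the construction. First, since $X$ is standard Borel, I would fix a Borel injection $\iota : X \to 2^\N$; the preimages $B_n := \iota^{-1}(\{\alpha : \alpha(n) = 1\})$ then form a countable Borel family separating points of $X$, and the naive map $x \mapsto (\iota(gx))_{g \in G}$ is already a Borel $G$-equivariant injection $X \hookrightarrow (2^\N)^G$. The real content of the theorem is therefore to compress the alphabet from $2^\N$ down to $\N$. Second, since the orbit equivalence relation $E_G^X$ is an aperiodic countable Borel equivalence relation, I would apply the Slaman--Steel marker lemma to obtain a decreasing sequence $M_0 \supseteq M_1 \supseteq \cdots$ of Borel complete sections of $E_G^X$ with $\bigcap_n M_n = \emptyset$. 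The Borel rank function $r(x) := \min\{n : x \notin M_n\}$ is then unbounded on every orbit, since each $M_n$ is a complete section.

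The plan is to combine rank with bits of $\iota$. Using a Borel pairing $\N \times \{0,1\} \to \N$, a first attempt is $c(y) := \langle r(y), \iota(y)(r(y))\rangle$, so that $(c(gx))_g$ records at each $g \in G$ both the rank of $gx$ and one indexed bit of $\iota(gx)$. Distinguishing $x$ from a distinct $y = g_0 x$ in the same orbit reduces to ensuring that the sequence $(c(gx))_g$ has trivial shift-stabilizer, while distinguishing $x$ from a point in a different orbit reduces to ensuring that different orbits produce different rank-and-bit sequences. The main obstacle lies in this last requirement: a priori, two distinct orbits could produce identical rank-and-bit sequences even though $\iota$ is injective on $X$, since at each $g$ we see only the single bit of $\iota(gx)$ indexed by $r(gx)$. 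Overcoming this forces a more delicate marker construction---roughly, one nests the markers so that on each orbit the rank function samples enough bits of $\iota$ to distinguish orbits, and so that the $c$-sequence along each orbit is aperiodic under every nontrivial $G$-shift. This refinement of the marker lemma, together with the aperiodicity of the action, is the technical heart of the argument.
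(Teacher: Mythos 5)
The paper cites this theorem to \cite{JKL} and does not supply a proof, so there is nothing in-paper to compare against. On its own terms, your proposal points at some of the right ingredients---a vanishing sequence of markers and a countable point-separating family, compressed into a single $\N$-valued coloring---but the actual construction is absent. The concrete coding $c(y) = \langle r(y), \iota(y)(r(y))\rangle$ does not work, and your concluding paragraph concedes this without repairing it.

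The obstruction is structural, not a matter of sampling more bits. To distinguish two orbits you would need to reconstruct $\iota(x)$ from the colored orbit $(c(gx))_g$, but at each $g$ this sequence exposes a bit of $\iota(gx)$, not of $\iota(x)$; no refinement or ``nesting'' of the markers lets those shifted samples determine $\iota(x)$. Likewise, nothing in the marker lemma rules out $r$ being $h$-periodic along some orbit for some $h \neq 1_G$, so the same-orbit separation (triviality of the shift-stabilizer of $(c(gx))_g$) is also unestablished. The vague appeal to ``a more delicate marker construction'' is exactly where a real proof must do all its work---typically by aligning which bit of which point gets read where, which needs extra machinery of the sort the paper develops in Section~\ref{section_separating smooth-many sets} (for instance the $F_A$-invariant marker partitions and the carefully routed maps $\g_n$ in the proof of Theorem~\ref{separating smooth-many invariant sets}) for its related $4$-generator constructions. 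As it stands this is a genuine gap, not a routine technicality.
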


This is sharp in the sense that we could not hope to obtain a finite generator solely from the aperiodicity assumption because of the measure-theoretic obstruction provided by the Kolmogorov--Sinai theorem (see \labelcref{Kolmogorov--Sinai}) as explained below. For example, the action of $\Z$ on $[0,1]^{\Z} \setminus A$ by shift, where $A$ is the set of periodic points, is aperiodic, but it does not admit a finite generator since it admits an invariant probability measure of infinite entropy, namely, the Lebesgue measure.

Thus, the question of the existence of countable generators is completely resolved and the current paper studies the existence of finite generators.

\subsection{Entropy and finite generators}

Generators arose in the study of entropy in ergodic theory. Let $(X, \mu, T)$ be a dynamical system, i.e. $(X, \mu)$ is a standard probability space and $T$ is a Borel measure preserving automorphism of $X$. We can interpret the above game as follows: 
\begin{itemize}
\item $X$ is the set of possible pictures of the world,
\item $\I$ is an experiment that Player I conducts,
\item the point $x \in X$ that Player II chooses is the true picture of the world,
\item $T$ is the unit of time.
\end{itemize}
Assume that $\I$ is finite (indeed, we want our experiment to have finitely many possible outcomes). Player I repeats the experiment every day (also going back in time) and Player II tells its outcome. The goal is to find the true picture of the world $x$ with probability $1$. This happens exactly when $\I$ is a generator $\mu$-a.e.

The \emph{static entropy} of the experiment $\I = \set{P_n}_{n < k}$ is defined by
\begin{equation}\label{eq:static_entropy}
	h_{\mu}(\I) = - \sum_{n<k} \mu(A_n) \log_2 \mu(A_n),
\end{equation}
and intuitively, it measures our probabilistic uncertainty about the outcome of the experiment. For example, if for some $n < k$, $A_n$ had measure $1$, then we would be probabilistically certain that the outcome is going to be in $A_n$ and $h_\mu(\I) = 0$. On the other hand, if all of $A_n$ had probability ${1 \over k}$, then our uncertainty would be the highest, namely, $h_\mu(\I) = \log_2 k$. Equivalently, according to Shannon's interpretation, $h_{\mu}(\I)$ measures how much information we gain from learning the outcome of the experiment.

We now define the \emph{time average} or \emph{dynamic} entropy of $\I$ by
\begin{equation}\label{eq:dynamic_entropy}
	h_{\mu}(\I, T) = \lim_{n \to \infty} {1 \over 2n+1} h_{\mu}(\bigvee_{i=-n}^n T^i \I),
\end{equation}
where $\bigvee$ denotes the join (the least common refinement) of the partitions. The sequence in the limit is decreasing and hence the limit always exists and is finite (see \cite{Glasner} or \cite{Rudolph}).

Finally the \emph{entropy of the dynamical system} $(X,\mu,T)$ is defined as the supremum over all (finite) experiments:
\begin{equation}\label{eq:sup_entropy}
	h_{\mu}(T) = \sup_{\I} h_{\mu}(\I, T),
\end{equation}
and it could be finite or infinite. Now it is plausible that if $\I$ is a finite generator (and hence Player I wins the above game), then $h_{\mu}(\I, T)$ should be all the information there is to obtain about $X$ and hence $\I$ achieves the supremum above. This is indeed the case as the following theorem (\cite[Theorem 14.33]{Glasner}) shows.

\begin{theorem}[Kolmogorov--Sinai, '58--59]\label{Kolmogorov--Sinai}
If $\I$ is a finite generator modulo $\mu$-$\NULL$, then $h_{\mu}(T) = h_{\mu}(\I, T)$. In particular, $h_{\mu}(T) \le \log_2(|\I|) < \infty$.
\end{theorem}

Here $\mu$-$\NULL$ denotes the $\sigma$-ideal of $\mu$-null sets and, by definition, a statement holds modulo a $\sigma$-ideal $\Id$ if it holds on $X \setminus Z$, for some $Z \in \Id$. We will also use this for $\Id = \MEAGER$ (the $\sigma$-ideal of meager sets in a Polish space).

In case of ergodic systems, i.e. dynamical systems where every measurable invariant set is either null or conull, the converse of Kolmogorov--Sinai theorem is true (see \cite{Krieger}):

\begin{theorem}[Krieger, '70]\label{Krieger's thm}
 Suppose $(X, \mu, T)$ is ergodic. If $h_{\mu}(T) < \log_2 k$, for some $k \ge 2$, then there is a $k$-generator modulo $\mu$-$\NULL$.
\end{theorem}

\subsection{Weiss's question and potential dichotomy theorems}

Now let $X$ be just a Borel $\Z$-space with no measure specified. By the Kolmogorov--Sinai theorem, if there exists an invariant Borel probability measure on $X$ with infinite entropy, then $X$ does not admit a finite generator. Is this the only obstruction to having a finite generator? More precisely:

\begin{question}\label{natural_question}
If a Borel $\Z$-space $X$ does not admit any invariant Borel probability measure of infinite entropy, does it admit a finite generator?
\end{question}

The following seemingly simpler question was first asked by Weiss in \cite{Weiss}:

\begin{question}[Weiss, '87]\label{Weiss's question_for_Z}
If a Borel $\Z$-space $X$ does not admit any invariant Borel probability measure, does it admit a finite generator?
\end{question}

It is shown below in \cref{section_dichotomy} that these two questions are actually equivalent, and thus, a positive answer to Weiss's question would imply the following dichotomy theorem:

\begin{namedthm*}{\cref*{dichotomyII}}
Suppose the answer to \cref{Weiss's question_for_Z} is positive and let $X$ be an aperiodic Borel $\Z$-space. Then exactly one of the following holds:
\begin{enumerate}[(1)]
\item there exists an invariant Borel probability measure with infinite entropy;

\item $X$ admits a finite generator.
\end{enumerate}
\end{namedthm*}

We remark that the nonexistence of an invariant ergodic probability measure of infinite entropy does not guarantee the existence of a finite generator. For example, let $X$ be a direct sum of uniquely ergodic actions $\Z^{\curvearrowright} \! X_n$ such that the entropy $h_n$ of each $X_n$ is finite but $h_n \to \w$. Then $X$ does not admit an invariant ergodic probability measure with infinite entropy since otherwise it would have to be supported on one of the $X_n$, contradicting the unique ergodicity. Neither does $X$ admit a finite generator since that would contradict the Kolmogorov--Sinai theorem applied to $X_n$ for large enough $n$.

However, assuming again that the answer to \cref{Weiss's question_for_Z} is positive, we prove the following dichotomy suggested by Kechris:
\begin{namedthm*}{\cref*{dichotomyI}}
Suppose the answer to \cref{Weiss's question_for_Z} is positive and let $X$ be an aperiodic Borel $\Z$-space. Then exactly one of the following holds:
\begin{enumerate}[(1)]
\item there exists an invariant ergodic Borel probability measure with infinite entropy,

\item there exists a partition $\set{Y_n}_{n \in \N}$ of $X$ into invariant Borel sets such that each $Y_n$ admits a finite generator.
\end{enumerate}
\end{namedthm*}

The proofs of these dichotomies (presented in \cref{section_dichotomy}) use the Ergodic Decomposition Theorem and a uniform version of Krieger's theorem in tandem with \cref{separating smooth-many invariant sets}, which provides a finite partition that separates the equivalence classes of an invariant smooth equivalence relation (\cref{defn:smooth_eq_rel}).

\subsection{Weiss's question for an arbitrary group and the main result of the paper}

Because \cref{natural_question,Weiss's question_for_Z} are equivalent, we may focus on answering the latter. Moreover, since the statement of \cref{Weiss's question_for_Z} does not use the notion of entropy, one may as well state it for an arbitrary countable group $G$ as it is done in \cite{JKL}:

\begin{question}[Weiss '87, Jackson--Kechris--Louveau '02]\label{Weiss's question}
	Let $G$ be a countable group and $X$ be a Borel $G$-space. If $X$ does not admit any invariant Borel probability measure, does it admit a finite generator?
\end{question}

In order to state our answer, we need the following:

\begin{defn}
	Let $X$ be a Borel $G$-space and denote its Borel $\sigma$-algebra by $\Bfrak(X)$. For a topological property $P$ (e.g. Polish, $\sigma$-compact, etc.), we say that $X$ admits a $P$ topological realization, if there exists a Hausdorff second countable topology on $X$ satisfying $P$ such that it makes the $G$-action continuous and its induced Borel $\sigma$-algebra is equal to $\Bfrak(X)$.
\end{defn}

We remark that every Borel $G$-space admits a Polish topological realization\footnote{This is actually true also for an uncountable Polish group $G$, but it is a highly non-trivial result of Becker and Kechris, see \cite[5.2]{BK}}, see, \cite[13.12.ii]{bible}. The main result of this paper is a positive answer to \cref{Weiss's question} in case $X$ has a $\sigma$-compact realization:

\begin{namedthm*}{\cref*{sigma-compact compressible => finite generator}}
Let $X$ be a Borel $G$-space that admits a $\sigma$-compact realization. If there is no $G$-invariant Borel probability measure on $X$, then $X$ admits a Borel $32$-generator.
\end{namedthm*}

For example, \cref{Weiss's question} has a positive answer when $G$ acts continuously on a locally compact or even just $\sigma$-compact Polish space.

\begin{remark}
	The number $32$ in the above theorem comes from the fact that the generator is constructed as the partition generated by $5$ Borel sets.
\end{remark}

\begin{remark}
	The fact that a concrete numerical bound of $32$ is obtained in the conclusion of the above theorem is somewhat surprising. However, Robin Tucker-Drob pointed out that if \cref{Weiss's question} had a positive answer, then automatically there would be a uniform finite bound on the number generators; indeed, otherwise, there is an unbounded sequence $(k_n)_{n \in \N}$ of natural numbers such that for each $n \in \N$, there is a Borel $G$-space $X_n$ that
	\begin{enumerate}[(i)]
		\item does not admit an invariant probability measure,
		\item admits a $k_n$-generator, 
		\item does not admit a $k$-generator for $k<k_n$.
	\end{enumerate}
	Then, letting $X$ be the disjoint union of the $X_n$, $n \in \N$, we see that $X$ still does not admit an invariant probability measure, but neither does it admit a finite generator, contradicting the assumption that the answer to \cref{Weiss's question} is positive.
\end{remark}

\begin{remarklike}{Addendum}
	In the original version of the current paper, it was also asked whether every Borel $G$-space admits a $\sigma$-compact realization. However, this was later answered negatively by Conley, Kechris and Miller in \cite{CKM}.
\end{remarklike}

Before explaining the idea of the proof of the above theorem, we present previously known results as well as other related results obtained in this paper.

\subsection{Finite generators in the measure-theoretic setting}

The following result gives a positive answer to a version of \cref{Weiss's question} in the measure-theoretic context.

\begin{theorem}[Krengel--Kuntz \cite{Krengel,Kuntz}]\label{Kuntz's thm}
	Let $(X,\mu)$ be a standard probability space equipped with a nonsingular\footnote{An action is \emph{nonsingular} if it preserves the $\mu$-null sets.} Borel action of $G$. If there is no invariant Borel probability measure absolutely continuous with respect to $\mu$, then $X$ admits a $2$-generator modulo $\mu$-$\NULL$.
\end{theorem}

The proof of this uses a version of the Hajian--Kakutani--It\^{o} theorem (see \labelcref{st:HKI_theorem} below), which states that the hypothesis of the Krengel--Kuntz theorem is equivalent to the existence of a weakly wandering Borel set (see \cref{defn of weakly wandering set}) of arbitrarily large $\mu$-measure\footnote{Here, by arbitrarily large we mean arbitrarily close to $1$.}. However, as explained below in \cref{subsection:traveling/ww_vs_fin-gen}, the analogues of the Hajian--Kakutani--It\^{o} theorem fail in the Borel and Baire category settings. Therefore, other means needed to be used to obtain the results of the current paper.

\subsection{Finite generators in the Baire category setting}

In the mid-'90s, Kechris asked whether an analogue of the Krengel--Kuntz theorem holds in the context of Baire category (see \cite[6.6.(B)]{JKL} for a more particular version of this question), more precisely:

\begin{question}[Kechris, mid-'90s]\label{Kechris's question}
Does every aperiodic Polish $G$-space admit a finite generator on an invariant comeager set?
\end{question}

The nonexistence of invariant measures is not mentioned in the hypothesis of the question because it is automatic in the context of Baire category due to the following:

\begin{theorem}[Kechris--Miller {\cite[13.1]{KM}}]\label{Kechris-Miller thm}
For any aperiodic Polish $G$-space, there is an invariant comeager set that does not admit any invariant probability measure.
\end{theorem}

Thus, a positive answer to \cref{Weiss's question} for \emph{all} Borel $G$-spaces would imply a positive answer to \cref{Kechris's question}, but the latter does not follow from \cref{sigma-compact compressible => finite generator}. However, using entirely different techniques, we still give an affirmative answer to \cref{Kechris's question}:

\begin{namedthm*}{\cref*{4-generator modulo a meager set}}
Any aperiodic Polish $G$-space admits a $4$-generator on an invariant comeager set.
\end{namedthm*}

The proof of this is entirely contained in \cref{section_finite gen modulo meager} and can be read independently from the rest of the paper. It uses the Kuratowski--Ulam method introduced in the proofs of Theorems 12.1 and 13.1 in \cite{KM}. This method was inspired by product forcing and its idea is as follows. Suppose we want to prove the existence of an object $A$ that satisfies a certain condition on a comeager set (in our case a finite partition). We give a parametrized construction of such objects $A_{\alpha}$, where the parameter $\alpha$ ranges over $2^{\N}$ or $\N^{\N}$ (or any other Polish space), and then try to show that for comeager many values of the parameter $\alpha$, the object $A_{\alpha}$ has the desired property $\Phi$ on a comeager set. In other words, we want to prove 
$$
\forall^* \alpha \forall^* x \Phi(\alpha, x),
$$ 
where $\forall^*$ means ``for comeager many''. Now the key point is that the Kuratowski--Ulam theorem allows us to switch the order of the quantifiers and prove 
$$
\forall^* x \forall^* \alpha \Phi(\alpha, x)
$$
instead. The latter is often an easier task since it allows us to work locally with a fixed $x \in X$.

\subsection{Connections with weakly wandering and traveling sets}\label{subsection:traveling/ww_vs_fin-gen}

Going back to the Borel setting and \cref{Weiss's question}, we now explore the hypothesis of the question, namely, the nonexistence of an invariant probability measure. In the measure-theoretic setting, the latter is tightly connected to weakly wandering sets (\cref{defn of weakly wandering set}):

\begin{theorem}[Hajian--Kakutani--It\^{o} \cite{HK,HI}]\label{st:HKI_theorem}
	Let $(X,\mu)$ be a standard probability space equipped with a nonsingular Borel action of $G$. There is no invariant Borel probability measure equivalent to $\mu$ if and only if there is a $\mu$-positive weakly wandering Borel set.
\end{theorem}

In the Borel setting, the following statement would be an analogue of \cref{st:HKI_theorem}: \emph{a Borel $G$-space does not admit any invariant Borel probability measure if and only if it admits a weakly wandering Borel complete section}. However, in \cite{EHN} the authors show that it does not hold. Nevertheless, their counterexample does not rule out local versions of the statement, leaving the following question unsettled \cite[question (ii) on page 9]{EHN}:

\begin{question}[Eigen--Hajian--Nadkarni, '93]\label{EHN's question}
	Let $X$ be a Borel $\Z$-space. If $X$ does not admit an invariant probability measure, is there a countably generated (by Borel sets) partition of $X$ into invariant sets, each of which admits a weakly wandering Borel complete section?
\end{question}

We define a related notion of \emph{locally weakly wandering} sets (\cref{defn of weakly wandering set}) --- a generalization of weakly wandering sets, and show that the presence of a locally weakly wandering Borel complete section still gives a 4-generator (\cref{finite generators for lww}). However, we give an example (\cref{failure of having lww mod meager}) of a Polish $G$-space that does not support an invariant probability measure, but neither does it admit a locally weakly wandering Borel complete section nor a partition as in \cref{EHN's question}, thus providing a negative answer to the latter question. This result is a consequence of a criterion for the nonexistence of non-meager weakly wandering sets (\cref{syndetic open sets => no ww set}), which also implies that the analogues of the Hajian--Kakutani--It\^{o} theorem fail not only in the Borel, but also in the Baire category setting. Ben Miller pointed out that he had also obtained a negative answer to \cref{EHN's question} in \cite[Example 3.13]{Miller_thesis}.

We further generalize locally weakly wandering sets to \emph{traveling sets} (\cref{defn of traveling sets}) and observe as an immediate consequence of Nadkarni's theorem (see \cref{subsection:Nadkarni's theorem,Nadkarni's thm}) that a Borel $G$-space does not admit an invariant probability measure if and only if it admits a Borel traveling complete section (\cref{equivalences to compressibility}). Thus, \cref{Weiss's question} amounts to whether the existence of a Borel traveling complete section implies the existence of a finite generator. Regarding this formulation of the question, the strongest result obtained in the current paper is as follows: we define an intermediate (between traveling and locally weakly wandering) notion of \emph{locally finitely traveling} sets (\cref{defn of fin traveling sets}) and prove that the presence of a Borel locally finitely traveling complete section implies the existence of a $32$-generator (\cref{lfin traveling => 2^5-generator}). Unlike the analogous results with weakly wandering and locally weakly wandering sets (\cref{st:ww_imp_3-gen,finite generators for lww}), this result is nonconstructive --- its proof uses the same machinery as that of \cref{sigma-compact compressible => finite generator}.

The following diagram summarizes the results mentioned in this subsection; here ``c.s.'' stands for ``complete section''.

\begin{displaymath}\xymatrix@C-.2cm{
	\fbox{\parbox{.15\textwidth}{$\nexists$ invariant \\ probability \\ measure}}\ar@{<=>}[r]^-{\txt{\scriptsize \cite{Nadkarni}}} \ar@{=>}@<-2ex>@/^-3.5pc/[2,1]|-{\txt{x}}_-{\txt{\scriptsize \labelcref{failure of having lww mod meager} \ }}
	\ar@{==>}[dr]|-*+[o]{?} & \framebox{$\exists$ traveling c.s.}
	\ar@{=>}[dr]_-{\txt{\scriptsize \labelcref{sigma-compact compressible => finite generator}}}^(.55){\txt{\tiny \hspace{.58cm} for $\si$-compact $X$}}
	\ar@{==>}[r]|-*+[o]{?}^-{\txt{\tiny for non- \\ \tiny $\si$-compact $X$}} & \framebox{$\exists$ finite generator} \\
	& \framebox{$\exists$ locally finitely traveling c.s.}\ar@{=>}[r]^-{\txt{\scriptsize \labelcref{lfin traveling => 2^5-generator}}} \ar@{=>}[u] & \framebox{$\exists$ $32$-generator} \ar@{=>}[u]\\
	& \framebox{$\exists$ locally weakly wandering c.s.} \ar@{=>}@<-3ex>[d]|-{\txt{x}}_-{\txt{\scriptsize \cite{EHN}}}\ar@{=>}[r]^-{\txt{\scriptsize \labelcref{finite generators for lww}}} \ar@{=>}[u] & \framebox{$\exists$ $4$-generator} \ar@{=>}[u]\\
	& \framebox{$\exists$ weakly wandering c.s.} \ar@{=>}[r]^-{\txt{\scriptsize \labelcref{st:ww_imp_3-gen}}} \ar@{=>}[u] & \framebox{$\exists$ $3$-generator} \ar@{=>}[u]
}\end{displaymath}

\subsection{Separating smooth-many sets and maps to the aperiodic part of the shift action}

In the proofs of the dichotomy theorems mentioned above (\labelcref{dichotomyII,dichotomyI}), we needed to apply the following result to the equivalence relation $E$ of being in the same component of the ergodic decomposition; we mention it here as it may be of interest on its on.

\begin{namedthm*}{\cref*{separating smooth-many invariant sets}}
	Let $X$ be an aperiodic Borel $G$-space and let $E$ be a smooth (see \cref{defn:smooth_eq_rel}) $G$-invariant equivalence relation on $X$. There exists a partition $\Pa$ of $X$ into $4$ Borel sets such that $G \Pa$ separates any two $E$-nonequivalent points in $X$, i.e. for all $x, y \in X$, $[x]_E \ne [y]_E$ implies $f_{\Pa}(x) \neq f_{\Pa}(y)$.
\end{namedthm*}

The proof of this theorem in its turn uses the following result, which may also be of independent interest.

\begin{namedthm*}{\cref*{map_to_aperiodic_part_of_shift}}
	Any aperiodic Borel $G$-space admits a $G$-equivariant Borel map to the aperiodic part of the shift action $G \actson 2^G$.
\end{namedthm*}

\subsection{Nadkarni's theorem}\label{subsection:Nadkarni's theorem}

In order to explain the idea behind the proof of \cref{sigma-compact compressible => finite generator}, we now present an equivalent condition to the hypothesis of \cref{Weiss's question}, i.e. to the nonexistence of invariant measures. It was proved by Nadkarni in \cite{Nadkarni} and it is the analogue of Tarski's theorem about paradoxical decompositions (see \cite{Wagon}) for countably additive measures.

Let $X$ be a Borel $G$-space and denote the set of invariant Borel probability measures on $X$ by $\M_G(X)$. Also, for $S \subseteq X$, let $[S]_G$ denote the saturation of $S$, i.e. $[S]_G = \bigcup_{g \in G} gS$.

The following definition makes no reference to any invariant measure on $X$, yet provides a sufficient condition for the measure of two sets to be equal (resp. $\le$ or $<$).

\begin{defn}\label{defn of equidec}
	Two Borel sets $A,B \subseteq X$ are said to be equidecomposable (denoted by $A \sim B$) if there are Borel partitions $\{A_n\}_{n \in \N}$ and $\{B_n\}_{n \in \N}$ of $A$ and $B$, respectively, and $\{g_n\}_{n \in \N} \subseteq G$ such that $g_n A_n = B_n$. We write $A \preceq B$ if $A \sim B' \subseteq B$, and we write $A \prec B$ if moreover $[B \setminus B']_G = [B]_G$.
\end{defn}

The following explains the above definition.
\begin{prop}\label{st:equidec_vs_measure}
	Let $A,B \subseteq X$ be Borel sets and $\mu \in \M_G(X)$.
	\begin{enumerate}[(a)]
		\item If $A \sim B$, then $\mu(A) = \mu(B)$.
		\item If $A \preceq B$, then $\mu(A) \le \mu(B)$.
		\item\label{item:Dedekind_containment} If $A \prec B$, then either $\mu(A) = \mu(B) = 0$ or $\mu(A) < \mu(B)$.
	\end{enumerate}
\end{prop}
\begin{proof}
	The only part worth proving is \labelcref{item:Dedekind_containment} and to this end, let $B' \subseteq B$ be such that $A \sim B'$ and $[B \setminus B']_G = [B]_G$. If $C := B \setminus B'$ is $\mu$-null, then so is $B$, and hence $A$, because $B \subseteq [C]_G = \bigcup_{g \in G} g C$. On the other hand, if $\mu(C)>0$, then, because $\mu$ is a finite measure, we have $\mu(A) = \mu(B') = \mu(B) - \mu(C) < \mu(B)$.
\end{proof}

\begin{defn}\label{defn of compressibility}
	A Borel set $A \subseteq X$ is called compressible if $A \prec A$.
\end{defn}

In other words, $A \prec A$ means that there is a Borel witness to the fact that each orbit of $A$ is Dedekind infinite, i.e. is equinumerous to its proper subset.

It is clear from \labelcref{item:Dedekind_containment} of \cref{st:equidec_vs_measure} that if a Borel set $A \subseteq X$ is compressible, then $\mu(A) = 0$ for all $\mu \in \M_G(X)$. In particular, if $X$ itself is compressible then $\M_G(X) = \0$. Thus compressibility is an apparent obstruction to having an invariant probability measure. It turns out that it is the only one:
\begin{theorem}[Nadkarni, '91]\label{Nadkarni's thm}
	A Borel $G$-space $X$ admits an invariant Borel probability measure if and only if it is not compressible.
\end{theorem}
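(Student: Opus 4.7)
The ``only if'' direction is immediate from part (c) of the observation just above: if $X \prec X$, then any $\mu \in \M_G(X)$ would satisfy $\mu(X) = 0$, contradicting the requirement that $\mu$ be a probability measure. The substantive content is the converse, and my plan is to pass to the ``type semigroup'' of equidecomposability classes, run a Tarski-style state extension to produce a finitely additive invariant probability, and then upgrade this to countable additivity using the Borel structure of $X$.

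First I would establish a Borel Banach--Schroeder--Bernstein: if $A \preceq B$ and $B \preceq A$, then $A \sim B$. Each $\preceq$-witness is coded by a countable Borel partition together with one group element per piece, so the usual Cantor--Bernstein back-and-forth iteration stays inside $\B(X)$. With this in hand, the quotient $\Sm := \B(X)/\!\sim$ is a commutative cancellative monoid under formal disjoint union, preordered by $[A] \le [B] \iff A \preceq B$, and the hypothesis of non-compressibility of $X$ says precisely that $[X] \not< [X]$, i.e.\ $[X]$ is non-paradoxical in $\Sm$.

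Next I would invoke the algebraic form of Tarski's theorem (cf.\ \cite{Wagon}): in a cancellative commutative monoid with identity, any non-paradoxical element $u$ admits a monotone additive state $\varphi$ with $\varphi(u) = 1$, produced by a Zorn's lemma / Hahn--Banach extension from finitely-supported partial states. Pulling such a $\varphi$ back along the quotient map $\B(X) \to \Sm$ yields a $G$-invariant finitely additive probability on $\B(X)$.

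The main obstacle, and the step I expect to be hardest, is upgrading this to countable additivity. The plan is to argue by contradiction: given a decreasing Borel sequence $A_n \searrow \emptyset$ with $\varphi(A_n) \ge \varepsilon > 0$, exploit the countability of $G$ together with Schroeder--Bernstein to extract pairwise disjoint Borel translates $g_n B_n \subseteq X$, with $B_n \subseteq A_n$ and $[B_n] \ge [C]$ in $\Sm$ for some fixed $C$ of positive $\varphi$-mass. Since every point of $X$ escapes the $A_n$ eventually, a careful exhaustion repackages this sequence into a Borel compression witnessing $X \prec X$, contradicting the hypothesis. The resulting countably additive $\mu$ on $\B(X)$ is the desired element of $\M_G(X)$.
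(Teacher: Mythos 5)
The paper actually cites Nadkarni's theorem rather than proving it (the proof is attributed to \cite{Nadkarni} and Chapter 4 of \cite{BK}), but Section~\ref{section_Nadkarni's proof} of the paper is an explicit adaptation of Nadkarni's argument, and that is the right benchmark here. Nadkarni's route, which Theorem~\ref{Nadkarni for C_4} follows closely, is Haar-like and concrete: one defines a Borel ``counting'' function $[A/B]$, fixes a fundamental sequence $\{F_n\}$ of complete sections with $[F_n/F_{n+1}] \ge 2$, and sets $m(A,x) = \lim_n [A/F_n](x)/[X/F_n](x)$, deriving the needed additivity from the almost-additivity and almost-cancellation estimates for $[\cdot/\cdot]$. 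Your proposal is a genuinely different route: pass to the type semigroup $\B(X)/\!\sim$, apply Tarski's state-extension theorem to get a finitely additive invariant probability, and then try to upgrade. The ``only if'' direction and the Tarski step are fine as far as they go (non-compressibility does imply $(n+1)[X] \not\le n[X]$ in the type semigroup), but they only yield finite additivity, which is strictly weaker than what is needed.

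The genuine gap is the upgrade to countable additivity, and your sketch hides exactly the difficulty that Nadkarni's construction was designed to solve. Two concrete problems. First, a finitely additive state $\varphi$ produced abstractly by Zorn/Hahn--Banach is not coherent with the $\preceq$-preorder: $\varphi(A_n) \ge \varepsilon$ does not give you $[A_n] \succeq [C]$ in $\Sm$ for a fixed $C$ of positive mass, since two sets of positive $\varphi$-mass need not be $\preceq$-comparable (the comparability lemma, Lemma~\ref{comparability}, only compares two given sets after splitting $X$ into invariant pieces that depend on those sets). So the ``extract pairwise disjoint translates with $[B_n] \ge [C]$'' step has no justification, and without it there is no way to ``repackage'' the vanishing sequence into a compression witness. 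Second, the paper itself shows this is where the real work lives: the analogue Theorem~\ref{Nadkarni for C_4} (for $i$-compressibility) only manages finite additivity, and an additional topological hypothesis ($\sigma$-compactness, Proposition~\ref{fin additive >0 on compact => ctbl additive}) is needed to bootstrap to a countably additive measure. For plain compressibility Nadkarni does get countable additivity, but by exploiting the orbit-level structure of the ratio $[A/F_n]/[X/F_n]$ directly, not by a soft contradiction argument from finite additivity. The phrase ``a careful exhaustion repackages this sequence into a Borel compression'' is precisely the heart of the theorem, and as written the proposal gives no mechanism for it.
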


The proof of this first appeared in \cite{Nadkarni} for $G = \Z$ and is also presented in Chapter 4 of \cite{BK} for an arbitrary countable group $G$. Although we do not explicitly use this theorem in our arguments, we largely use ideas from its proof.

\subsection{Outline of the proof of \cref{sigma-compact compressible => finite generator}}

In our attempt to positively answer \cref{Weiss's question}, we take the non-constructive approach and try to prove the contrapositive:
\begin{center}
	No finite generator $\imp$ $\exists$ an invariant probability measure.
\end{center}

When constructing an invariant measure (e.g. Haar measure), one usually needs some notion of ``largeness'' so that $X$ is ``large'' (e.g. having nonempty interior, being incompressible). So we aim at something like this:
$$\begin{array}{rcl}
\text{No 32-generator} & & \text{$\exists$ an invariant probability measure} \\
\Searrow & & \Nearrow \\
& \text{$X$ is not ``small'' $=$ $X$ is ``large''} &
\end{array}$$

In the definition of equidecomposability of sets $A$ and $B$, the partitions $\set{A_n}_{n \in \N}$ and $\set{B_n}_{n \in \N}$ belong to the Borel $\sigma$-algebra. For $i \ge 1$, we define a finer notion of equidecomposability by restricting the Borel $\sigma$-algebra to \emph{some} $\sigma$-algebra that is generated by the $G$-translates of $i$-many Borel sets. In this case we say that $A$ and $B$ are \emph{$i$-equidecomposable} and denote by $A \sim_i B$. In other words, $A \sim_i B$ if $i$-many Borel sets are enough to generate a $G$-invariant $\sigma$-algebra that is sufficiently fine to carve out partitions $\set{A_n}_{n \in \N}$ and $\set{B_n}_{n \in \N}$ witnessing $A \sim B$.

As before, we say that a set $A$ is \emph{$i$-compressible} if $A \prec_i A$. Taking $i$-compressibility as our notion of ``smallness'', we prove the following:
$$\begin{array}{rcl}
\text{No 32-generator} & & \text{$\exists$ an invariant probability measure} \\
\text{\footnotesize{(1)}} \Searrow & & \Nearrow \text{\footnotesize{(2)}} \\
& \text{$X$ is not 4-compressible} &
\end{array}$$

We prove the contrapositive of Step (1). More precisely, assuming $i$-compressibility, we construct a $2^{i+1}$-generator by hand (see \cref{i-compressible => finite generator}), thus obtaining:
\begin{center}
	No $2^5$-generator $\imp$ $X$ is not $4$-compressible.
\end{center}

Step (2) is proving an analogue of Nadkarni's theorem for $i$-compressibility:
\begin{center}
	$X$ is not $4$-compressible $\imp$ $\exists$ an invariant probability measure.
\end{center}
To accomplish this step, firstly, we show that $i$-compressibility is indeed a notion of ``smallness'', i.e. that the set of $i$-compressible sets (roughly speaking) forms a $\sigma$-ideal (see \cref{C_i is a sigma-ideal}). The difficulty here is to prevent $i$ from growing when taking unions.

Secondly, we assume that $X$ is not $4$-compressible and give a construction of a measure (see \cref{section_Nadkarni's proof}) reminiscent of the one in the proof of Nadkarni's theorem or the existence of Haar measure. But unfortunately, our proof only yields a family of \emph{finitely additive} invariant probability measures because here we cannot prevent $i$ from growing when taking countable unions. However, with the additional assumption that $X$ is $\sigma$-compact, we are able to concoct a countably additive invariant probability measure out of this family of finitely additive measures, thus obtaining \cref{sigma-compact compressible => finite generator}.

\subsection{Open questions}
Here are some open questions that arose in this research. Let $X$ denote a Borel $G$-space.
\begin{enumerate}[(A)]
	\item\label{q:compressibility=i-compressibility} Is $X$ being compressible equivalent to $X$ being $i$-compressible for some $i \ge 1$?
	
	\item\label{q:traveling=locally-finitely-traveling} Does the existence of a Borel traveling complete section imply the existence of a Borel locally finitely traveling complete section?
	
	\item Can we get a $2$-generator instead of a $32$-generator in \cref{sigma-compact compressible => finite generator}?
\end{enumerate}

A positive answer to any of Questions \labelcref{q:compressibility=i-compressibility} and \labelcref{q:traveling=locally-finitely-traveling} would imply a positive answer to \cref{Weiss's question} since \labelcref{q:compressibility=i-compressibility} is just a rephrasing of \cref{Weiss's question} because of \cref{finite generator <=> i-compressibility}, and for \labelcref{q:traveling=locally-finitely-traveling}, it follows from \cref{i-compressibility <=> i-traveling,lfin traveling => 2^5-generator}.

\subsection*{Acknowledgements}
First and foremost, I thank my advisor Alexander Kechris for his help, support and encouragement, and for suggesting the problems and guiding me throughout the research. Many thanks to \Slawek Solecki for his positive feedback and valuable remarks, and especially for pointing out a nice way of thinking about the notion of $i$-equidecomposability. I also thank Benjamin Weiss, Mahendra Nadkarni and Shashi Srivastava for their positive feedback, useful remarks and corrections, as well as for pointing out that an instance of $\mathbf{\Pi_1^1}$-reflection in one of the proofs could be replaced with analytic separation. Many thanks to the UCLA and Caltech logic groups for running a series of seminars in which I presented my work. I also thank Patrick Allen, Ben Miller, Yiannis Moschovakis, Itay Neeman, Justin Palumbo, Todor Tsankov, and Robin Tucker-Drob for useful conversations and comments. Finally, I thank the referee for very helpful remarks, suggestions and corrections, which led to improvements in many places.

\section{The theory of $i$-compressibility}

Throughout this section, let $X$ be a Borel $G$-space and let $E_G$ denote the orbit equivalence relation on $X$ induced by the action of $G$. For a set $A \subseteq X$ and $G$-invariant set $P \subseteq X$, put $A^P := A \cap P$.

For an equivalence relation $E$ on $X$ and $A \subseteq X$, let $[A]_E$ denote the saturation of $A$ with respect to $E$, i.e. $[A]_E = \set{x \in X : \exists y \in A (x E y)}$. In case $E = E_G$, we use $[A]_G$ instead of $[A]_{E_G}$.

Let $\Bfrak$ denote the (proper) class of all Borel subsets of standard Borel spaces, i.e.
$$
\Bfrak = \set{B : \text{$B$ is a Borel subset of some standard Borel space $X$}}.
$$
Also, let $\Ga$ be a class $\sigma$-algebra of subsets of standard Borel spaces containing $\Bfrak$ and closed under Borel preimages, i.e. if $X,Y$ are standard Borel spaces and $f : X \to Y$ is a Borel map, then for a subset $A \subseteq Y$, if $A \in \Ga$ then $f^{-1}(A)$ is also in $\Ga$. For example, $\Ga = \Bfrak$, $\s$, universally measurable sets.

For a standard Borel space $X$, let $\Ga(X)$ denote the set of all subsets of $X$ that belong to $\Ga$. In particular, $\Bfrak(X)$ denotes the set of all Borel subsets of $X$.

\subsection{The notion of $\I$-equidecomposability}\label{subsection_I-equidec}

A countable partition of $X$ is called Borel if all the sets in it are Borel. For a finite Borel partition $\I = \{A_i : i < k\}$ of $X$, let $\EI$ denote the equivalence relation of not being separated by $G  \I := \set{g A_i : g \in G, i < k}$, more precisely, $\forall x,y\in X$,
$$
x \EI y \Leftrightarrow \f(x) = \f(y),
$$
where $\f$ is the symbolic representation map for $(X, G, \I)$ defined above. Note that if $\I$ is a generator, then $\EI$ is just the equality relation.

For $A \subseteq X$, put
$$
\Ga(X) \rest{A} = \set{A' \subseteq A : \exists B \in \Ga(X) \ (A' = B \cap A)}.
$$
Also, for an equivalence relation $E$ on $X$ and $A,B \subseteq X$, say that $A$ is $E$-invariant relative to $B$ or just $E \rest{B}$-invariant if $[A]_{E} \cap B = A \cap B$.

\begin{defn}[$\I$-equidecomposability]\label{I-equidecomposability}
	Let $A,B \subseteq X$, and $\I$ be a finite Borel partition of $X$. $A$ and $B$ are said to be equidecomposable with $\Ga$ pieces (denote by $A \sim^{\Ga} B$) if there are $\{g_n\}_{n \in \N} \subseteq G$ and partitions $\{A_n\}_{n \in \N}$ and $\{B_n\}_{n \in \N}$ of $A$ and $B$, respectively, such that for all $n \in \N$
	\begin{itemize}
		\item $g_n A_n = B_n$,
		\item $A_n \in \Ga(X) \rest{A}$ and $B_n \in \Ga(X) \rest{B}$.
	\end{itemize}
	If moreover,
	\begin{itemize}
		\item $A_n$ and $B_n$ are $\EI$-invariant relative to $A$ and $B$, respectively,
	\end{itemize}
	then we will say that $A$ and $B$ are $\I$-equidecomposable with $\Ga$ pieces and denote it by $A \sim_{\I}^{\Ga} B$. If $\Ga = \Bfrak$, we will not mention $\Ga$ and will just write $\sim$ and $\sim_{\I}$.
\end{defn}

Note that for any finite Borel partition $\I$ of $X$ and Borel sets $A, B \subseteq X$, $A$ and $B$ are $\I$-equidecomposable if and only if $\f(A)$ and $\f(B)$ are equidecomposable (although the images of Borel sets under $\f$ are analytic, they are Borel relative to $\f(X)$ due to the Luzin Separation Theorem for analytic sets, see \cite[14.7]{bible}). Also note that if $\I$ is a generator, then $\sim_{\I}$ coincides with $\sim$.

\begin{obs}\label{properties of I-equidecomposability}
	Below let $\I,\I_0,\I_1$ denote finite Borel partitions of $X$, and $A,B,C \in \Ga(X)$.
	\begin{enumerate}[(a)]
		\item (Quasi-transitivity) If $A \sim_{\I_0}^{\Ga} B \sim_{\I_1}^{\Ga} C$, then $A \sim_{\I}^{\Ga} C$ with $\I = \I_0 \vee \I_1$ (the least common refinement of $\I_0$ and $\I_1$).
		
		\item ($\EI$-disjoint countable additivity) Let $\{A_n\}_{n \in \N}, \{B_n\}_{n \in \N}$ be partitions of $A$ and $B$, respectively, into $\Ga$ sets such that $\forall n \neq m$, $[A_n]_{\EI} \cap [A_m]_{\EI} = [B_n]_{\EI} \cap [B_m]_{\EI} = \0$. If $\forall n\in \N$, $A_n \sim_{\I}^{\Ga} B_n$, then $A \sim_{I}^{\Ga} B$.
	\end{enumerate}
\end{obs}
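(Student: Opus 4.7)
The plan for both parts is to produce explicit witnesses and then do a short chase to verify the required relative $F$-invariance; the set-theoretic and $\G$-measurability aspects are immediate from the definitions.

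For (a), I would take the two given decompositions and refine by intersection. Concretely, letting $\{(A_n, B_n, g_n)\}_{n}$ witness $A \sim_{\I_0}^{\G} B$ and $\{(B_m', C_m, h_m)\}_{m}$ witness $B \sim_{\I_1}^{\G} C$, I would set $B_{n,m} := B_n \cap B_m'$, pull back to $A_{n,m} := g_n^{-1} B_{n,m}$, push forward to $C_{n,m} := h_m B_{n,m}$, and use $h_m g_n^{-1}$ as the connecting group element. These form countable $\G$-partitions of $A$ and $C$, so the content lies in verifying $F_{\I}|_A$- and $F_{\I}|_C$-invariance for $\I := \I_0 \vee \I_1$.

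The crucial input is the inclusion $F_{\I} \subseteq F_{\I_0} \cap F_{\I_1}$: since $G\I$ separates more points than either $G\I_0$ or $G\I_1$ (as each set in $G\I_i$ is a $G$-translate of a union of atoms of $\I$), two points that are $F_{\I}$-equivalent are in particular $F_{\I_i}$-equivalent for $i=0,1$. Combined with $G$-invariance of each $F_{\I_i}$, invariance then drops out by a chase: given $x \in A_{n,m}$ and $y \in A$ with $x\, F_{\I}\, y$, apply $F_{\I_0}|_A$-invariance of $A_n$ to get $y \in A_n$; then transport by $g_n$ (using $G$-invariance of $F_{\I_1}$) and apply $F_{\I_1}|_B$-invariance of $B_m'$ to conclude $g_n y \in B_{n,m}$, hence $y \in A_{n,m}$. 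The argument for $C_{n,m}$ is symmetric.

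For (b), I would simply collect the given witnesses. For each $n$, pick a witness $\{(A_{n,k}, B_{n,k}, g_{n,k})\}_k$ of $A_n \sim_{\I}^{\G} B_n$, and reindex by pairs $(n,k)$ to obtain countable $\G$-partitions of $A$ and $B$ linked by the $g_{n,k}$'s. The only thing needing check is $F_{\I}|_A$-invariance of each $A_{n,k}$ (and symmetrically for $B_{n,k}$). Here the $F_{\I}$-separation hypothesis on $\{A_n\}$ enters: if $x \in A_{n,k}$ and $y \in A$ satisfy $x\, F_{\I}\, y$, then pairwise disjointness of the $F_{\I}$-saturations forces $y \in A_n$ (else $y \in A_m \cap [A_n]_{F_{\I}}$ for some $m \neq n$, contradicting the hypothesis), after which the given $F_{\I}|_{A_n}$-invariance of $A_{n,k}$ finishes the job. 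I do not foresee any genuine obstacle; the one thing to be careful about is bookkeeping the various relative invariances, but the two tools $F_{\I_0 \vee \I_1} \subseteq F_{\I_0} \cap F_{\I_1}$ and $G$-invariance of the $F_{\I_i}$'s are the entire toolkit.
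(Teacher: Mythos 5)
The paper states this as an Observation with no accompanying proof, so there is nothing to compare against; what matters is whether your argument is correct, and it is. In (a), the refined double-indexed decomposition $B_{n,m} = B_n \cap B_m'$, $A_{n,m} = g_n^{-1}B_{n,m}$, $C_{n,m} = h_m B_{n,m}$ is the natural one, and your two observations — that $F_{\I_0 \vee \I_1} \subseteq F_{\I_0} \cap F_{\I_1}$ (each $\I_i$-atom is a union of $\I$-atoms, so $G\I$ separates at least as much as $G\I_i$) and that each $F_{\I_i}$ is $G$-invariant because $\f(gx)$ is a shift of $\f(x)$ — are exactly what is needed to transport the relative invariance of $A_n$ across $g_n$ and combine it with that of $B_m'$. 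The chase you describe (use $F_{\I_0}\!\rest{A}$-invariance of $A_n$ to put $y\in A_n$, push by $g_n$, use $F_{\I_1}\!\rest{B}$-invariance of $B_m'$ to put $g_n y \in B_{n,m}$) is complete, and the symmetric argument handles $C_{n,m}$. In (b), the point you isolate is the right one: the pairwise-disjointness of the $\E$-saturations of the $A_n$'s forces $y\in A_n$ whenever $y\in A$ is $\E$-related to a point of $A_n$, after which the given $\E\!\rest{A_n}$-invariance of $A_{n,k}$ upgrades to $\E\!\rest{A}$-invariance. The $\G$-measurability assertions are immediate from closure of $\G$ under Borel preimages and countable intersections. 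This is the proof the author evidently had in mind in labeling the result an Observation.
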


If $A \sim B$, then there is a Borel isomorphism $\phi$ of $A$ onto $B$ with $\phi(x) E_G x$ for all $x \in A$; namely $\phi(x) = g_n x$ for all $x \in A_n$, where $A_n, g_n$ are as in \cref{defn of equidec}. It is easy to see that the converse is also true, i.e. if such $\phi$ exists, then $A \sim B$. In \cref{witnessing map} we prove the analogue of this for $\sim_{\I}^{\Ga}$, but first we need the following lemma and definition that take care of definability and $\EI$-invariance, respectively.

\medskip

For a Polish space $Y$, $f : X \to Y$ is said to be $\Ga$-measurable if the preimages of open sets under $f$ are in $\Ga$. For $A \in \Ga(X)$ and $h : A \rightarrow G$, define $\hat{h} : A \rightarrow X$ by $x \mapsto h(x)x$.

\begin{lemma}\label{measurability of roadmap}
	If $h : A \rightarrow G$ is $\Ga$-measurable, then the images and preimages of sets in $\Ga$ under $\hat{h}$ are in $\Ga$.
\end{lemma}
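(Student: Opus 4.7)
The plan is to reduce the lemma to the countable-union and Borel-preimage closure properties of $\G$, by slicing $A$ according to the fibers of $h$.

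First I would observe that since $G$ is countable discrete, the hypothesis that $h : A \to G$ is $\G$-measurable just says that for each $g \in G$, the fiber $A_g := h^{-1}(\{g\})$ lies in $\G(A)$. Thus $A = \bigsqcup_{g \in G} A_g$ is a partition into countably many $\G$ sets, and on each piece the map $\hat{h}$ acts simply as the Borel action of a single group element: $\hat{h} \rest{A_g}(x) = gx$. In particular, for every $B \subseteq A$,
$$
\hat{h}(B) = \bigcup_{g \in G} g(B \cap A_g), \qquad \hat{h}^{-1}(S) = \bigcup_{g \in G} \bigl(A_g \cap g^{-1}S\bigr).
$$

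For the preimage statement, take any $S \in \G$. For each $g \in G$, the translation map $X \to X$ given by $x \mapsto gx$ is Borel (since the $G$-action on $X$ is Borel), so $g^{-1}S$ is a Borel preimage of a $\G$ set, hence $g^{-1}S \in \G$ by the closure hypothesis on $\G$. Intersecting with $A_g \in \G$ and taking a countable union, we conclude $\hat{h}^{-1}(S) \in \G(A)$.

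For the image statement, the crucial point is that $x \mapsto gx$ is a Borel \emph{automorphism} of $X$ whose inverse is $x \mapsto g^{-1}x$, also Borel. Therefore for $B \in \G(A)$,
$$
g(B \cap A_g) = (g^{-1})^{-1}(B \cap A_g),
$$
which is again a Borel preimage of a $\G$ set and hence in $\G$. Taking the countable union over $g \in G$ shows $\hat{h}(B) \in \G$, as required. I do not anticipate an essential obstacle: the whole lemma is just bookkeeping exploiting that $G$ is countable, that each $g \in G$ acts by a Borel automorphism, and that $\G$ is closed under countable unions and Borel preimages.
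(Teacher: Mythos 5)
Your proof is correct and follows essentially the same route as the paper: decompose $A$ into the fibers $A_g = h^{-1}(g)$, express $\hat{h}(B)$ and $\hat{h}^{-1}(S)$ as countable unions of translates over $g \in G$, and invoke closure of $\G$ under Borel preimages (applied to the automorphism $x \mapsto g^{-1}x$ for images, and $x \mapsto gx$ for preimages) together with closure under countable unions. The paper's version is just terser, stating the identities $\hat{h}(B) = \bigcup_g g(A_g \cap B)$ and $\hat{h}^{-1}(C) = \bigcup_g g^{-1}(gA_g \cap C)$ and citing the hypotheses on $\G$; you have filled in the same bookkeeping that the paper leaves implicit.
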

\begin{proof}
	Let $B \subseteq A$, $C \subseteq X$ be in $\Ga$. For $g \in G$, set $A_g = h^{-1}(g)$ and note that $\hat{h}(B) = \bigcup_{g \in G} g(A_g \cap B)$ and $\hat{h}^{-1}(C) = \bigcup_{g \in G} g^{-1}(gA_g \cap C)$. Thus $\hat{h}(B)$ and $\hat{h}^{-1}(C)$ are in $\Ga$ by the assumptions on $\Ga$.
\end{proof}

The following technical definition is needed in the proofs of \cref{witnessing map,orbit-disjoint ctbl unions}.
\begin{defn}\label{defn of sensitivity}
	For $A \subseteq X$ and a finite Borel partition $\I$ of $X$, we say that $\I$ is $A$-sensitive or that $A$ respects $\I$ if $A$ is $\EI$-invariant relative to $[A]_G$, i.e. $[A]_{\EI}^{[A]_G} = A$.
\end{defn}

For example, if $\I$ is finer than $\{A, A^c\}$, then $\I$ is $A$-sensitive. Note that if $A \sim_{\I} B$ and $A$ respects $\I$, then so does $B$.

\begin{prop}\label{witnessing map}
	Let $A,B \in \Ga(X)$ and let $\I$ be a Borel partition of $X$ that is $A$-sensitive. Then, $A \sim_{\I}^{\Ga} B$ if and only if there is an $\EI$-invariant $\Ga$-measurable map $\ga : A \to G$ such that $\hat{\ga}$ is a bijection between $A$ and $B$. We refer to such $\ga$ as a witnessing map for $A \sim_{\I}^{\Ga} B$. The same holds if we delete ``$\EI$-invariant'' and ``$\I$'' from the statement.
\end{prop}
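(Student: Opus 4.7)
The strategy is to move between the "partition-with-translations" data and a single "roadmap" function $\g \colon A \to G$: the partitions $\{A_n\}, \{B_n\}$ correspond to the level sets $\g^{-1}(g_n)$, $(g_n)\g^{-1}(g_n)$, and the translation assignment $A_n \mapsto g_n A_n$ is encoded by $\hat\g$. The two properties we need to track are $\G$-measurability/Borelness of pieces (handled by Lemma \ref{measurability of roadmap} and the closure of $\G$ under Borel preimages) and $\E$-invariance of the pieces relative to $A$ and $B$ (which corresponds exactly to $\E$-invariance of $\g$ on the $A$-side; the $B$-side will require a small argument using $A$-sensitivity).

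For the forward direction, assume $A \sim_\I^\G B$ is witnessed by $\{A_n\}, \{B_n\}, \{g_n\}$. Define $\g(x) = g_n$ for $x \in A_n$. Then $\g^{-1}(g) = \bigcup_{n : g_n = g} A_n \in \G$, giving $\G$-measurability. $\E$-invariance of $\g$ on $A$ is immediate from $\E$-invariance of each $A_n$ relative to $A$. Finally, $\hat\g$ sends $A_n$ onto $B_n = g_n A_n$, so $\hat\g(A) = B$; injectivity follows from the disjointness of the $B_n$.

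For the backward direction, set $A_g = \g^{-1}(g)$ and $B_g = g A_g = \hat\g(A_g)$. Since $G$ is countable and $\g$ is $\G$-measurable, $\{A_g\}_{g \in G}$ is a $\G(A)$-partition of $A$; by Lemma \ref{measurability of roadmap}, each $B_g$ lies in $\G$, and the $B_g$ partition $B$ because $\hat\g$ is a bijection. $\E$-invariance of $A_g$ relative to $A$ is automatic from $\E$-invariance of $\g$. The one step that needs care is showing $B_g$ is $\E$-invariant relative to $B$: given $y = g x \in B_g$ (with $x \in A_g$) and $z \in B$ with $y \E z$, write $z = g' x'$ with $x' \in A_g'$. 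Set $z' := g^{-1} z$; since $\E$ is $G$-invariant, $x = g^{-1} y \E z'$. Now $z' \in [A]_G$ and $x \in A$, so $A$-sensitivity of $\I$ forces $z' \in A$; then $\E$-invariance of $\g$ gives $\g(z') = \g(x) = g$, i.e.\ $z' \in A_g$, whence $z = g z' = \hat\g(z') \in B_g$.

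The main technical obstacle is precisely this last step, transporting the $\E$-invariance property across $\hat\g$: without $A$-sensitivity there is no reason for an $\E$-neighbor of $z'$ in $X$ to lie in $A$, and thus no reason for $\g(z')$ to be defined, let alone equal to $g$. The "delete $\E$-invariant and $\I$" variant follows by running exactly the same argument while ignoring every appearance of $\E$ and $\I$: one simply needs $\{A_g\}$ to partition $A$ into $\G$-sets with $\{g A_g\}$ partitioning $B$, and no sensitivity hypothesis is required.
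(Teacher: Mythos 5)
Your proof is correct and follows essentially the same route as the paper: define $\g$ piecewise constant on the $A_n$ for the forward direction, and for the backward direction take $A_g = \g^{-1}(g)$, $B_g = gA_g$, with $A$-sensitivity used precisely to pull an $\E$-neighbor $z' = g^{-1}z$ of $x \in A_g$ back into $A$ so that $\E$-invariance of $\g$ applies. The only cosmetic difference is that you inline the step the paper isolates (promoting $\E$-invariance of $A_n$ relative to $A$ to $\E$-invariance relative to $[A]_G$ via sensitivity), and the clause "write $z = g'x'$ with $x' \in A_{g'}$" is unused and can be dropped.
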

\begin{proof}
	\noindent $\Rightarrow$: If $\{g_n\}_{n \in \N}$, $\{A_n\}_{n \in \N}$ and $\{B_n\}_{n \in \N}$ are as in \cref{I-equidecomposability}, then define $\ga : A \rightarrow G$ by setting $\ga \rest{A_n} \equiv g_n$.
	
	\noindent $\Leftarrow$: Let $\ga$ be as in the lemma. Fixing an enumeration $\{g_n\}_{n \in \N}$ of $G$ with no repetitions, put $A_n = \ga^{-1}(g_n)$ and $B_n = g_n A_n$. It is clear that $\{A_n\}_{n \in \N}, \{B_n\}_{n \in \N}$ are partitions of $A$ and $B$, respectively, into $\Ga$ sets. Since $\ga$ is $\EI$-invariant, each $A_n$ is $\EI$-invariant relative to $A$ and hence relative to $P := [A]_G = [B]_G$ because $A$ respects $\I$. It remains to show that each $B_n$ is $\EI$-invariant relative to $B$. To this end, let $y \in [B_n]_{\EI} \cap B$ and thus there is $x \in A_n$ such that $y \EI g_n x$. Hence $z := g_n^{-1} y \ \EI \ g_n^{-1} g_n x = x$ and therefore $z \in A_n$ because $A_n$ is $\EI$-invariant relative to $P$. Thus $y = g_n z \in B_n$.
\end{proof}

In the rest of the subsection we work with $\Ga = \Bfrak$.

Next we prove that $\I$-equidecomposability can be extended to $\EI$-invariant Borel sets. First we need the following separation lemma for analytic sets\footnote{My original argument used $\Pi_1^1$ reflection principles, but it was pointed out to me by Shashi Srivastava that one could use analytic separation instead. I chose to present this latter argument here since analytic separation may be more transparent for non-logicians than $\Pi_1^1$ reflection principles.}:
\begin{lemma}[Invariant analytic separation]\label{analytic_separation}
	Let $E$ be an analytic equivalence relation on $X$. For any disjoint family $\set{A_n}_{n \in \N}$ of $E$-invariant analytic sets, there exists a disjoint family $\set{B_n}_{n \in \N}$ of $E$-invariant Borel sets such that $A_n \subseteq B_n$.
\end{lemma}
\begin{proof}[Proof \emph{(Vaught)}]
	We give the proof for two disjoint $E$-invariant analytic sets $A_0, A_1$ since this easily implies the statement for countably many. Recursively define analytic sets $C_n \subseteq X$ and Borel sets $D_n \subseteq X$ such that for every $n \in \N$ we have
	\begin{enumerate}[(i)]
		\item $A_0 \subseteq C_n \subseteq D_n \subseteq C_{n+1} \subseteq A_1^c$,
		\item $C_n$ is $E$-invariant.
	\end{enumerate}
	To do this, let $C_0 = A_0$, and, assuming that $C_n$ is defined, define $D_n, C_{n+1}$ as follows: since $C_n$ and $A_1$ are disjoint analytic sets, there is a Borel set $D_n$ separating them (by the Luzin separation theorem), i.e. $D_n \supseteq C_n$ and $D_n \cap A_1 = \0$. Let $C_{n+1} = [D_n]_E$, and note that $C_{n+1}$ is analytic and disjoint from $A_1$ since $A_1$ is $E$-invariant and disjoint from $D_n$. This finishes the construction.
	
	Now let $B = \Uni D_n$; hence $B$ is Borel, contains $A_0$ and is disjoint from $A_1$. On the other hand, $B = \Uni C_n$ and thus is $E$-invariant.
\end{proof}

\begin{prop}[$\EI$-invariant extensions]\label{saturated supersets}
	Let $\I$ be a Borel partition of $X$ and let $A,B \subseteq X$ be Borel sets with $[A]_\EI \cap [B]_\EI = \0$. If $A \sim_{\I} B$, then there exists Borel sets $A' \supseteq A$ and $B' \supseteq B$ such that $A',B'$ are $\EI$-invariant and $A' \sim_{\I} B'$. In fact, if $\{g_n\}_{n \in \N}, \{A_n\}_{n \in \N}, \{B_n\}_{n \in \N}$ witness $A \sim_{\I} B$, then there are $\EI$-invariant Borel partitions $\{A'_n\}_{n \in \N}, \{B'_n\}_{n \in \N}$ of $A'$ and $B'$ respectively, such that $g_n A'_n = B'_n$ and $A'_n \supseteq A_n$ (and hence $B'_n \supseteq B_n$).
\end{prop}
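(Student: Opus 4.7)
The plan is to work inside $k^G$ through the symbolic representation map $\f$ and then pull back. The key point is that $\E$ equals $\ker \f$, so the Borel $\E$-invariant subsets of $X$ are precisely the $\f$-preimages of Borel subsets of $k^G$; moreover, since $\f$ is a $G$-map, $g \f^{-1}(\tilde C) = \f^{-1}(g \tilde C)$ for every $g \in G$ and every $\tilde C \subseteq k^G$. Thus the problem reduces to finding Borel sets $\tilde C_n \subseteq k^G$ with $\f(A_n) \subseteq \tilde C_n$ such that both $\{\tilde C_n\}$ and $\{g_n \tilde C_n\}$ are pairwise disjoint; the preimages $A'_n := \f^{-1}(\tilde C_n)$ will then do the job.

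First I would check that the images $I_n := \f(A_n)$ are pairwise disjoint analytic subsets of $k^G$: if $\f(x) = \f(y)$ with $x \in A_n$ and $y \in A_m$, then $x \E y$ with both $x,y \in A$, so the $\E$-invariance of $A_m$ relative to $A$ forces $x \in A_m$, contradicting $A_n \cap A_m = \emptyset$ when $n \ne m$. Likewise $J_n := \f(B_n) = g_n I_n$ are pairwise disjoint analytic. Applying the Lusin separation theorem for countably many pairwise disjoint analytic sets to the families $\{I_n\}$ and $\{J_n\}$, I obtain pairwise disjoint Borel supersets $C_n \supseteq I_n$ and $D_n \supseteq J_n$ in $k^G$. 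To reconcile the two disjointness requirements at once, I would set $\tilde C_n := C_n \cap g_n^{-1} D_n$; this still contains $I_n$ (because $g_n I_n = J_n \subseteq D_n$), the $\tilde C_n$ are pairwise disjoint as subsets of the $C_n$, and $g_n \tilde C_n \subseteq D_n$, so they too are pairwise disjoint.

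Finally, I would set $A'_n := \f^{-1}(\tilde C_n)$ and $B'_n := g_n A'_n = \f^{-1}(g_n \tilde C_n)$, and take $A' := \bigcup_n A'_n$, $B' := \bigcup_n B'_n$. Each $A'_n$ is Borel and $\E$-invariant as a preimage under $\f$, contains $A_n$ since $A_n \subseteq \f^{-1}(I_n)$, and the $A'_n$ are pairwise disjoint; symmetrically, each $B'_n$ is Borel, $\E$-invariant, contains $B_n$, and the $B'_n$ are pairwise disjoint. Since each $A'_n$ is $\E$-invariant in $X$, it is in particular $\E$-invariant relative to $A'$ (and likewise for $B'_n$ relative to $B'$), so the data $\{g_n\}, \{A'_n\}, \{B'_n\}$ witness $A' \sim_\I B'$. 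The only real obstacle is ensuring pairwise disjointness of both $\{A'_n\}$ and $\{g_n A'_n\}$ simultaneously, which is precisely what the intersection $\tilde C_n = C_n \cap g_n^{-1} D_n$ accomplishes.
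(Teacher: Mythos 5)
Your proof is correct, but it takes a genuinely different route from the paper's. The paper works entirely inside $X$: it passes to the saturations $[A_n]_{\E}$, which are analytic, $\E$-invariant, pairwise disjoint, and have pairwise disjoint translates $g_n[A_n]_{\E}$, and then invokes reflection machinery (the dual First Reflection Theorem for $\mathbf{\Pi}^1_1$ together with a Burgess-reflection lemma) to replace these analytic sets by Borel $\E$-invariant supersets preserving both disjointness conditions. You instead push everything forward to $k^G$ via $\f$, where $\E$-invariance of preimages is automatic because $\E = \ker\f$, and you handle the two disjointness requirements with the countable Lusin separation theorem applied to $\{\f(A_n)\}$ and $\{\f(B_n)\}$ separately, reconciling them by intersecting $C_n$ with $g_n^{-1}D_n$ — a clean trick, and all the verifications (that $\f(A_n)$ are pairwise disjoint via relative $\E$-invariance of $A_n$ in $A$, that $\f$ being a $G$-map gives $g_n\f^{-1}(\tilde C_n) = \f^{-1}(g_n\tilde C_n)$, etc.) go through. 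What your approach buys is elementarity: it replaces the reflection theorems with the most basic separation result, exploiting the fact that $\E$ is smooth, i.e., induced by the Borel map $\f$ into a standard Borel space. What the paper's approach buys is generality: the reflection argument would work verbatim for an arbitrary Borel equivalence relation in place of $\E$, whereas your reduction is tied to $\E$ having the form $\ker\f$ (which it always does here, so nothing is lost for this proposition).
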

\begin{proof}
	Let $\{g_n\}_{n \in \N}, \{A_n\}_{n \in \N}, \{B_n\}_{n \in \N}$ be as in \cref{I-equidecomposability} and put $\cl{A}_n = [A_n]_{\EI}$. It is easy to see that for $n \neq m \in \N$,
	\begin{enumerate}[(i)]
		\item $\cl{A}_n \cap \cl{A}_m = \0$;
		\item $g_n \cl{A}_n \cap g_m \cl{A}_m = \0$.
	\end{enumerate}
	
	Put $\cl{A} = [A]_{\EI}$ and note that $\{\cl{A}_n\}_{n \in \N}$ is a partition of $\cl{A}$. Although $\cl{A}_n$ and $\cl{A}$ are $\EI$-invariant, they are analytic and in general not Borel. We obtain Borel analogues of these sets using invariant analytic separation as follows: \cref{analytic_separation} applied to $\set{A_n}_{n \in \N}$ and $\set{g_n A_n}_{n \in \N}$ (separately), gives us sequences $\set{C_n}_{n \in \N}$ and $\set{D_n}_{n \in \N}$ of $\EI$-invariant Borel sets such that $C_n \supseteq A_n$, $D_n \supseteq g_n A_n$ and $C_n \cap C_m = D_n \cap D_m = \0$ for $n \ne m$. Taking $A'_n = C_n \cap g_n^{-1} D_n$, we see that $\set{A'_n}_{n \in \N}$ is a pairwise disjoint family of $\EI$-invariant Borel sets such that $A'_n \supseteq A_n$. Moreover, $\set{g_n A'_n}_{n \in \N}$ is also a pairwise disjoint family. Thus, taking $B'_n = g_n A'_n$, we are done.
\end{proof}

\begin{lemma}[Orbit-disjoint unions]\label{orbit-disjoint union}
	Let $A_k,B_k \in \Bfrak(X)$, $k=0,1$, be such that $[A_0]_G$ and $[A_1]_G$ are disjoint and put $A = A_0 \cup A_1$ and $B = B_0 \cup B_1$. If $\I$ is an $A,B$-sensitive finite Borel partition of $X$ such that $A_k \sim_{\I} B_k$ for $k=0,1$, then $A \sim_{\I} B$. Moreover, if $\ga_0 : A_0 \rightarrow G$ is a Borel map witnessing $A_0 \sim_{\I} B_0$, then there exists a Borel map $\ga : A \rightarrow G$ extending $\ga_0$ that witnesses $A \sim_{\I} B$.
\end{lemma}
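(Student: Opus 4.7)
The natural approach is to invoke Proposition \ref{witnessing map} to rephrase each $A_k \sim_{\mathcal{I}} B_k$ in terms of a Borel $F_{\mathcal{I}}$-invariant witnessing map $\gamma_k : A_k \to G$ (with $\gamma_0$ being the given one), and then to glue $\gamma_0$ to an appropriate map on $A_1$ so as to produce $\gamma : A \to G$. Since $B_k \subseteq [A_k]_G$ for $k = 0,1$, orbit-disjointness of $A_0, A_1$ forces $B_0, B_1$ to be disjoint, so $A = A_0 \sqcup A_1$ and $B = B_0 \sqcup B_1$ set-theoretically.

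The naive combination $\gamma := \gamma_0 \cup \gamma_1$ is Borel and has $\hat\gamma$ biject $A$ onto $B$, but it may fail the $F_{\mathcal{I}}$-invariance on $A$: orbit-disjoint does not imply $F_{\mathcal{I}}$-disjoint, so Observation \ref{properties of I-equidecomposability}(b) does not apply directly. To handle this, I will modify $\gamma$ on the ``cross'' region $A_1^{\mathrm{cr}} := A_1 \cap [A_0]_{F_{\mathcal{I}}}$: set $\gamma|_{A_0} := \gamma_0$, put $\gamma(y) := \gamma_0(x)$ for $y \in A_1^{\mathrm{cr}}$ with any $x \in A_0$ such that $y \, F_{\mathcal{I}} \, x$ (well-defined because $\gamma_0$ is $F_{\mathcal{I}}$-invariant on $A_0$), and keep $\gamma := \gamma_1$ on $A_1 \setminus A_1^{\mathrm{cr}}$. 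Since both $A_1^{\mathrm{cr}}$ and its complement in $A_1$ are $F_{\mathcal{I}}$-saturated inside $A_1$, a routine case analysis then yields $F_{\mathcal{I}}$-invariance of $\gamma$ on all of $A$.

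The heart of the proof is checking that this modified $\hat\gamma$ still bijects $A$ onto $B$. The sensitivity hypotheses are essential here. For $y \in A_1^{\mathrm{cr}}$ with $y \, F_{\mathcal{I}} \, x \in A_0$, the $G$-equivariance of $F_{\mathcal{I}}$ gives $\gamma_0(x) y \ F_{\mathcal{I}} \ \gamma_0(x) x \in B_0$, so $\gamma_0(x) y \in [B_0]_{F_{\mathcal{I}}} \cap [B_1]_G \subseteq [B]_{F_{\mathcal{I}}} \cap [B]_G = B$ by $B$-sensitivity; orbit-disjointness of $B_0, B_1$ then forces $\gamma_0(x) y \in B_1$. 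Symmetrically, using $A$-sensitivity to pull any $b \in B_1 \cap [B_0]_{F_{\mathcal{I}}}$ back through $\gamma_0$, every such $b$ arises as $\gamma_0(x) y$ for some $y \in A_1^{\mathrm{cr}}$ and some $x \in A_0$ with $y \, F_{\mathcal{I}} \, x$, yielding surjectivity of $\hat\gamma$ onto $B_1 \cap [B_0]_{F_{\mathcal{I}}}$. The remaining part of $B_1$ is then covered by the unmodified $\gamma_1$ on $A_1 \setminus A_1^{\mathrm{cr}}$, and a parallel sensitivity argument handles injectivity.

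I expect the main technical obstacle to be the Borel measurability of $\gamma$, since the saturation $[A_0]_{F_{\mathcal{I}}}$ (and hence $A_1^{\mathrm{cr}}$) is a priori only analytic. I plan to overcome this by applying Proposition \ref{saturated supersets} to the partition pieces of $A_0$ underlying $\gamma_0$: each level set $\gamma_0^{-1}(g)$ extends to an $F_{\mathcal{I}}$-invariant Borel superset, and intersecting these with $A_1$ carves out Borel subsets of $A_1^{\mathrm{cr}}$ on which $\gamma$ is constant, so that $\gamma$ is Borel. The ``moreover'' clause that $\gamma$ extends $\gamma_0$ then holds by construction.
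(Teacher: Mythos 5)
Your decomposition of $A_1$ into a ``cross'' region $A_1^{\mathrm{cr}} = A_1 \cap [A_0]_{\E}$ and its complement is a natural idea and is related to what the paper does, but I see two gaps, one of them serious.

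First, you verify that the modified $\hat\gamma$ bijects $A_1^{\mathrm{cr}}$ onto $B_1 \cap [B_0]_{\E}$, but you do not justify the assertion that the unmodified $\hat{\gamma_1}$ then bijects $A_1 \setminus A_1^{\mathrm{cr}}$ onto $B_1 \setminus [B_0]_{\E}$. Since $\hat{\gamma_1}$ is a bijection $A_1 \to B_1$, this amounts to the claim $\hat{\gamma_1}(A_1^{\mathrm{cr}}) = B_1 \cap [B_0]_{\E}$, which is not automatic: $\hat{\gamma_1}$ has no a priori relation to $A_0$ or $B_0$, and it moves $\E$-classes. The claim does turn out to be true, but only because $A_1^{\mathrm{cr}}$ is in fact $E_G$-invariant \emph{relative to} $A_1$: one checks, using $A$-sensitivity, that for $z \in A_1$ one has $z \in [A_0]_{\E}$ if and only if $\f(z) \in \f([A_0]_G)$, and the latter set is shift-invariant in $k^G$; then $B$-sensitivity gives the analogous description of $B_1 \cap [B_0]_{\E}$, and since $\hat{\gamma_1}$ preserves orbits the two sides correspond. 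This nontrivial observation (which is really the crux of why a pointwise, rather than orbit-by-orbit, patch can be consistent) is missing.

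Second, and more seriously, the measurability fix runs in the wrong direction. Proposition \ref{saturated supersets} produces Borel $\E$-invariant \emph{supersets} $A'_g \supseteq \gamma_0^{-1}(g)$, hence an $\E$-invariant Borel $A'_0 \supseteq [A_0]_{\E}$ that will in general be strictly larger. Then $A_1 \cap A'_g$ is a superset, not a subset, of $A_1 \cap [\gamma_0^{-1}(g)]_{\E}$, so $\bigcup_g (A_1 \cap A'_g) = A_1 \cap A'_0$ overshoots $A_1^{\mathrm{cr}}$. On the excess $A_1 \cap A'_0 \setminus A_1^{\mathrm{cr}}$ there is no reason for $\hat\delta$ (the extension of $\gamma_0$ to $A'_0$) to map into $B$ at all; it maps into $B'_0$, which may exit $B$. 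So replacing $A_1^{\mathrm{cr}}$ by $A_1 \cap A'_0$ can break the bijection, and your proposal does not address this. The paper resolves exactly this point differently: rather than trying to recover $[A_0]_{\E}$ exactly, it forms $C = A'_0 \cap A$ and then passes to the Borel set $\bar{A}_0 := \{x \in C : C^{[x]_G} = A^{[x]_G} \wedge \hat\delta(C^{[x]_G}) = B^{[x]_G}\}$, i.e., it keeps only the whole orbits on which $\hat\delta$ carries $C$ correctly onto $B$. This set is Borel by construction, contains $A_0$, is $\E$-invariant (a separate claim requiring an argument), and is $E_G$-invariant relative to $A$, so one can safely use $\delta$ there and $\gamma_1$ on the complementary orbits. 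That trimming step is what your argument is missing; without it (or a comparable device) the construction does not go through.
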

\begin{proof}
	First assume without loss of generality that $X = [A]_G$ ($= [B]_G$) since the statement of the lemma is relative to $[A]_G$. Thus $A,B$ are $\EI$-invariant.
	
	Applying \cref{saturated supersets} to $A_0 \sim_{\I} B_0$, we get $\EI$-invariant $A'_0 \supseteq A_0, B'_0 \supseteq B_0$ such that $A' \sim_{\I} B'$. Moreover, by the second part of the same lemma, if $\ga_0 : A_0 \rightarrow G$ is a witnessing map for $A_0 \sim_{\I} B_0$, then there is a witnessing map $\de : A'_0 \rightarrow G$ for $A' \sim_{\I} B'$ extending $\ga_0$. Put $C = A'_0 \cap A$ and note that $C$ is $\EI$-invariant since so are $A'_0$ and $A$. Finally, put $\cl{A}_0 = \{x \in C : C^{[x]_G} = A^{[x]_G} \wedge \hat{\de}(C^{[x]_G}) = B^{[x]_G}\}$ and note that $\cl{A}_0 \supseteq A_0$ since $\de \supseteq \ga_0$ and $[A_0]_G \cap [A_1]_G = \0$.
	
	\begin{claim*}
		$\cl{A}_0$ is $\EI$-invariant.
	\end{claim*}
	\begin{pfof}
		First note that for any $\EI$-invariant $D \subseteq X$ and $z \in X$, $[D^{[z]_G}]_{\EI} = D^{[[z]_{\EI}]_G}$. Furthermore, if $D \subseteq C$, then $[\hat{\de}(D)]_{\EI} = \hat{\de}([D]_{\EI})$ since $\hat{\de}$ and its inverse map $\EI$-invariant sets to $\EI$-invariant sets.
		
		Now take $x \in \cl{A}_0$ and let $Q = [[x]_{\EI}]_G$. Since $A, B, C$ are $\EI$-invariant, $C^Q = [C^{[x]_G}]_{\EI} = [A^{[x]_G}]_{\EI} = A^Q$. Furthermore, $\hat{\de}(C^Q) = \hat{\de}([C^{[x]_G}]_{\EI}) = [\hat{\de}(C^{[x]_G})]_{\EI} = [B^{[x]_G}]_{\EI} = B^Q$. Thus, $\forall y \in [x]_{\EI}$, $C^{[y]_G} = A^{[y]_G}$ and $\hat{\de}(C^{[y]_G}) = B^{[y]_G}$; hence $[x]_{\EI} \subseteq \cl{A}_0$.
	\end{pfof}
	
	Put $\cl{A}_1 = A \setminus \cl{A}_0$, $\alpha_0 = \de \rest{\cl{A}_0}$, $\alpha_1 = \ga_1 \rest{\cl{A}_1}$, where $\ga_1$ is a witnessing map for $A_1 \sim_{\I} B_1$. It is clear from the definition of $\cl{A}_0$ that $\cl{A}_0$ is $E_G$-invariant relative to $A$ and hence $[\cl{A}_0]_G \cap [\cl{A}_1]_G = \0$. Thus, for $k=0,1$, it follows that $\alpha_k$ witnesses $\cl{A}_k \sim_{\I} \cl{B}_k$, where $\cl{B}_k = \hat{\alpha_k}(\cl{A}_k)$. Furthermore, it is clear that $B^{[\cl{A}_k]_G} = \cl{B}_k$ and, since $[\cl{A}_0]_G \cup [\cl{A}_1]_G = X$, $\cl{B}_0 \cup \cl{B}_1 = B$. Now since $\cl{A}_k$ are $\EI$-invariant, $\ga = \alpha_0 \cup \alpha_1$ is $\EI$-invariant and hence witnesses $A \sim_{\I} B$. Finally, $\alpha_0 \rest{A_0} = \de \rest{A_0} = \ga_0$ and hence $\alpha_0 \supseteq \ga_0$.
\end{proof}

\begin{prop}[Orbit-disjoint countable unions]\label{orbit-disjoint ctbl unions}
	For $k \in \N$, let $A_k,B_k \in \Bfrak(X)$ be such that $[A_k]_G$ are disjoint and put $A = \bigcup_{k \in \N} A_k$, $B = \bigcup_{k \in \N} B_k$. Suppose that $\I$ is an $A,B$-sensitive finite Borel partition of $X$ such that $A_k \sim_{\I} B_k$ for all $k$. Then $A \sim_{\I} B$.
\end{prop}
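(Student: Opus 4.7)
The plan is to iterate Lemma \ref{orbit-disjoint union} across the countable index set and then glue the resulting witnessing maps into a single Borel map, invoking Proposition \ref{witnessing map} at both ends.

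For each $n \in \N$, set $\hat A_n = \bigcup_{k \le n} A_k$ and $\hat B_n = \bigcup_{k \le n} B_k$. The first step is to check that $\I$ is $\hat A_n$- and $\hat B_n$-sensitive, so that Lemma \ref{orbit-disjoint union} can be applied at each stage. For $\hat A_n$: take $y \in [\hat A_n]_{\E} \cap [\hat A_n]_G$. Since $y \in [A]_{\E} \cap [A]_G$ and $A$ respects $\I$, we have $y \in A$, so $y \in A_\ell$ for a unique $\ell$. But $y \in [\hat A_n]_G$ forces $y \in [A_j]_G$ for some $j \le n$, and pairwise disjointness of the saturations $[A_k]_G$ pins $\ell = j$. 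Hence $y \in A_j \subseteq \hat A_n$. The argument for $\hat B_n$ is identical, using that $[B_k]_G = [A_k]_G$ and that $B$ respects $\I$.

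Next I construct witnessing maps $\g^{(n)} : \hat A_n \to G$ for $\hat A_n \sim_{\I} \hat B_n$ by induction on $n$, with $\g^{(n)} \supseteq \g^{(n-1)}$. The base case $n=0$ uses Proposition \ref{witnessing map} on $A_0 \sim_{\I} B_0$. For the inductive step, the saturations $[\hat A_{n-1}]_G$ and $[A_n]_G$ are disjoint by hypothesis, so Lemma \ref{orbit-disjoint union}, applied with input $\g^{(n-1)}$ for $\hat A_{n-1} \sim_{\I} \hat B_{n-1}$, produces the desired extension $\g^{(n)}$ witnessing $\hat A_n \sim_{\I} \hat B_n$ via the ``moreover'' clause.

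Finally, define $\g = \bigcup_{n \in \N} \g^{(n)} : A \to G$. This is well-defined by the chain of extensions, and Borel since $\g^{-1}(g) = \Uni (\g^{(n)})^{-1}(g)$ is a countable union of Borel sets. The map $\g$ is $\E$-invariant: any pair $x,y \in A$ with $x \E y$ lies in a common $\hat A_N$, and $\g^{(N)}$ is $\E$-invariant there. And $\hat{\g} : A \to B$ is a bijection because its restriction to each $\hat A_n$ is a bijection onto $\hat B_n$ and the $\hat A_n$, $\hat B_n$ exhaust $A$ and $B$. Since $A$ respects $\I$, Proposition \ref{witnessing map} converts $\g$ back into a witness for $A \sim_{\I} B$.

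The main technical point — and the only reason this argument does not collapse to a one-line ``take $\g = \bigsqcup_k \g_k$'' — is $\E$-invariance of the glued map: two $\E$-related points of $A$ need not lie in the same orbit and so are not forced to lie in a common $A_k$, meaning independently chosen $\g_k$'s need not agree across the pieces. The inductive construction finesses this by using the $\E$-invariant extension machinery already encoded inside Lemma \ref{orbit-disjoint union} (via \ref{saturated supersets}), which adjusts each stage so that it is compatible with the previous one.
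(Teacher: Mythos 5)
Your proof is correct and takes essentially the same route as the paper: induct on $n$ to build a chain of compatible witnessing maps $\g^{(n)}$ for $\hat A_n \sim_{\I} \hat B_n$ via Lemma \ref{orbit-disjoint union}, then glue them into $\g = \bigcup_n \g^{(n)}$ and check $\E$-invariance pairwise, which is local to some $\hat A_N$. The only departure is that you additionally verify explicitly that $\I$ is $\hat A_n$- and $\hat B_n$-sensitive at each stage (using disjointness of the saturations $[A_k]_G$ and the given $A,B$-sensitivity), a hypothesis needed to invoke Lemma \ref{orbit-disjoint union} and Proposition \ref{witnessing map} that the paper's proof leaves implicit.
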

\begin{proof}
	We recursively apply \cref{orbit-disjoint union} as follows. Put $\cl{A}_n = \bigcup_{k \leq n} A_k$ and $\cl{B}_n = \bigcup_{k \leq n} B_k$. Inductively define Borel maps $\ga_n : \bigcup_{k \leq n} A_k \rightarrow G$ such that $\ga_n$ is a witnessing map for $\cl{A}_n \sim_{\I} \cl{B}_n$ and $\ga_n \sqsubseteq \ga_{n+1}$. Let $\ga_0$ be a witnessing map for $A_0 \sim_{\I} B_0$. Assume $\ga_n$ is defined. Then $\ga_{n+1}$ is provided by \cref{orbit-disjoint union} applied to $\cl{A}_n$ and $A_{n+1}$ with $\ga_n$ as a witness for $\cl{A}_n \sim_{\I} \cl{B}_n$. Thus $\ga_n \sqsubseteq \ga_{n+1}$ and $\ga_{n+1}$ witnesses $\cl{A}_{n+1} \sim_{\I} \cl{B}_{n+1}$.
	
	Now it just remains to show that $\ga := \Uni \ga_n$ is $\EI$-invariant since then it follows that $\ga$ witnesses $A \sim_{\I} B$. Let $x,y \in A$ be $\EI$-equivalent. Then there is $n$ such that $x,y \in \cl{A}_n$. By induction on $n$, $\ga_n$ is $\EI$-invariant and, since $\ga \rest{\cl{A}_n} = \ga_n$, $\ga(x) = \ga(y)$.
\end{proof}

\begin{cor}[Finite quasi-additivity]\label{quasi-additivity}
	For $k=0,1$, let $A_k,B_k \in \Bfrak(X)$ be such that $A_0 \cap A_1 = B_0 \cap B_1 = \0$ and put $A = A_0 \cup A_1$, $B = B_0 \cup B_1$. Let $\I_k$ be an $A_k,B_k$-sensitive finite Borel partition of $X$. If $A_0 \sim_{\I_0} B_0$ and $A_1 \sim_{\I_1} B_1$, then $A \sim_{\I_0 \vee \I_1} B$.
\end{cor}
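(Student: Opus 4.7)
The plan is to set $\I := \I_0 \vee \I_1$ and combine the two equidecompositions using the orbit-disjoint countable union machinery of Proposition~\ref{orbit-disjoint ctbl unions}. The first step is to upgrade $A_k \sim_{\I_k} B_k$ to $A_k \sim_{\I} B_k$: since $\I$ refines each $\I_k$, the inclusion $F_{\I} \subseteq F_{\I_k}$ means that any partition piece $F_{\I_k}$-invariant relative to $A_k$ (resp.\ $B_k$) is automatically $F_{\I}$-invariant relative to it, and the same refinement shows that $\I$ inherits the $A_k$- and $B_k$-sensitivity from $\I_k$.

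Next I would use that $[A_k]_G = [B_k]_G$ (which follows from $A_k \sim B_k$) to partition the common saturation $[A]_G = [B]_G$ into three $G$-invariant Borel pieces: $R_0 = [A_0]_G \setminus [A_1]_G$, $R_1 = [A_1]_G \setminus [A_0]_G$, and $R_{01} = [A_0]_G \cap [A_1]_G$. On $R_0$ we have $A \cap R_0 = A_0 \cap R_0$ and $B \cap R_0 = B_0 \cap R_0$, so restricting $A_0 \sim_{\I} B_0$ gives $A \cap R_0 \sim_{\I} B \cap R_0$, and symmetrically on $R_1$. On $R_{01}$ I would merge the restricted partitions of $A_0 \cap R_{01}$ and $A_1 \cap R_{01}$ index-by-index; the resulting partition is $F_{\I}$-invariant relative to $A \cap R_{01}$ because no point of $A_1 \cap R_{01}$ can be $F_{\I}$-equivalent to a point of $A_0$: such a point would lie in $[A_0]_{F_{\I}} \cap [A_0]_G$ (since $R_{01} \subseteq [A_0]_G$), forcing it into $A_0$ by $A_0$-sensitivity of $\I$ and contradicting $A_0 \cap A_1 = \emptyset$. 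Since $R_0$, $R_1$, and $R_{01}$ are pairwise $G$-disjoint, Proposition~\ref{orbit-disjoint ctbl unions} then combines the three equidecompositions into $A \sim_{\I} B$.

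The hard part will be the $R_0 \cup R_1$ combination inside the appeal to Proposition~\ref{orbit-disjoint ctbl unions}: although $A_0 \cap R_0$ and $A_1 \cap R_1$ have disjoint $G$-saturations, the $A_k$-sensitivities of $\I$ do not forbid $F_{\I}$-equivalences between them across disjoint orbits. This is exactly the scenario Lemma~\ref{orbit-disjoint union} is engineered to handle, using the $F_{\I}$-invariant Borel extensions from Proposition~\ref{saturated supersets} to manufacture a combined witness whose partition pieces are $F_{\I}$-invariant relative to the full $A$ (and $B$), not merely relative to the individual $A_k$ (and $B_k$).
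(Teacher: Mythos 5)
Your proposal is correct and follows essentially the same route as the paper: pass to $\I = \I_0 \vee \I_1$, split $[A]_G$ into $[A_0]_G \cap [A_1]_G$, $[A_0]_G \setminus [A_1]_G$, $[A_1]_G \setminus [A_0]_G$, verify $A \sim_{\I} B$ on each invariant piece (using sensitivity to see that $[A_0]_{F_{\I}}$ and $[A_1]_{F_{\I}}$ are disjoint over the common saturation), and finish with Proposition~\ref{orbit-disjoint ctbl unions}. Your sensitivity argument on the overlap piece is exactly the paper's observation that $[A_0]_{F_{\I}}^{P} \cap [A_1]_{F_{\I}}^{P} = \emptyset$.
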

\begin{proof}
	Put $\I = \I_0 \vee \I_1$, $P = [A_0]_G \cap [A_1]_G$, $Q = [A_0]_G \setminus [A_1]_G$ and $R = [A_1]_G \setminus [A_0]_G$. Then $A_k^P, B_k^P$ respect $\I$, and thus $[A_0]_{\EI}^P \cap [A_1]_{\EI}^P = \0$, $[B_0]_{\EI}^P \cap [B_1]_{\EI}^P = \0$. Hence $A^P \sim_{\I} B^P$ since the sets that are $\EI$-invariant relative to $A_k^P$ are also $\EI$-invariant relative to $A^P$, and the same is true for $B_k^P$ and $B^P$. Also, $A^Q \sim_{\I} B^Q$ and $A^R \sim_{\I} B^R$ because $A^Q = A_0$, $B^Q = B_0$, $A^R = A_1$, $B^R = B_1$. Now since $P,Q,R$ are pairwise disjoint, it follows from \cref{orbit-disjoint ctbl unions} that $A \sim_{\I} B$.
\end{proof}

\subsection{The notion of $i$-compressibility}\label{subsection_i-compressibility}

For a finite collection $\F$ of subsets of $X$, let $\gen{\F}$ denote the partition of $X$ generated by $\F$.

\begin{defn}[$i$-equidecomposability]\label{i-equidecomposability}
	For $i \ge 1$, $A,B \subseteq X$, we say that $A$ and $B$ are $i$-equidecomposable with $\Ga$ pieces (write $A \sim_i^{\Ga} B$) if there is an $A$-sensitive partition $\I$ of $X$ generated by $i$ Borel sets such that $A \sim_{\I}^{\Ga} B$ (in particular, $\I$ must also be $B$-sensitive). For a collection $\F$ of Borel sets, we say that $\F$ witnesses $A \sim_i^{\Ga} B$ if $|\F| = i$, $\I := \gen{\F}$ is $A$-sensitive and $A \sim_{\I}^{\Ga} B$.
\end{defn}

\begin{remark}
	In the above definition, it might seem more natural to have $i$ be the cardinality of the partition $\I$ instead of the cardinality of the collection $\F$ generating $\I$. However, our definition above of $i$-equidecomposability is needed in order to show that the collection $\Cfrak_i$ defined below forms a $\sigma$-ideal. More precisely, the presence of $\F$ is needed in the definition of $i^*$-compressibility, which ensures that the partition $\I$ in the proof of \cref{C_i is a sigma-ideal} is $B$-sensitive.
\end{remark}

For a family $\F$ of subsets of $X$, let $\sigma_G(\F)$ denote the $\sigma$-algebra generated by $G \F$.

\begin{remark}
	\Slawomir Solecki pointed out that for $i \ge 1$ and Borel sets $A,B \subseteq X$, $A \sim_i B$ if and only if $A \sim B$ and the partitions $\set{A_n}_{n \in \N}$, $\set{B_n}_{n \in \N}$ witnessing the equidecomposability of $A$ and $B$ can be taken from a $\sigma$-algebra generated by the $G$-translates of $i$-many Borel sets. More precisely, $A \sim_i B$ if and only if there are a family $\F$ of $i$-many Borel sets, a sequence $\set{g_n}_{n \in \N} \subseteq G$, and partitions $\set{A_n}_{n \in \N}$ and $\set{B_n}_{n \in \N}$ of $A$ and $B$, respectively, such that $A_n,B_n \in \sigma_G (\F)$ and $g_n A_n = B_n$. Thus, $i$-equidecomposability is obtained from equidecomposability by restricting the Borel $\sigma$-algebra to \emph{some} $\sigma$-algebra generated by the $G$-translates of $i$-many Borel sets. Finally, note that every instance of $\sim_i$ uses a (potentially) different $\sigma$-algebra.
\end{remark}

For $i \ge 1$, $A,B \subseteq X$, we write $A \preceq_i^{\Ga} B$ if there is a $\Ga$ set $B' \subseteq B$ such that $A \sim_i^{\Ga} B'$. If moreover $[A \setminus B]_G = [A]_G$, then we write $A \prec_i^{\Ga} B$. If $\Ga = \Bfrak$, we simply write $\sim_i, \preceq_i, \prec_i$.

\begin{defn}[$i$-compressibility]\label{defn i-compressibility}
	For $i \in \N$, $A \subseteq X$, we say that $A$ is $i$-compressible with $\Ga$ pieces if $A \prec_i^{\Ga} A$.
\end{defn}

Unless specified otherwise, we will be working with $\Ga = \Bfrak$, in which case we simply say $i$-compressible.

For a collection of sets $\F$ and a $G$-invariant set $P$, set $\F^P = \{A^P : A \in \F\}$. We will use the following observations without mentioning.

\begin{obs}\label{properties of i-equidecomposability}
	Let $i,j \geq 2$, $A,A',B,B',C \in \Bfrak$. Let $P \subseteq [A]_G$ denote a $G$-invariant Borel set and $\F, \F_0, \F_1$ denote finite collections of Borel sets.
	\begin{enumerate}[(a)]
		\item If $A \sim_i B$ then $A^P \sim_i B^P$.
		\item If $\F$ witnesses $A \sim_i B$, then so does $\F^{[A]_G}$.
		\item If $A \sim_i B \sim_j C$, then $A \sim_{(i+j)} C$. In fact, $\F_0$ and $\F_1$ witness $A \sim_i B$ and $B \sim_j C$, respectively, then $\F = \F_0 \cup \F_1$ witnesses $A \sim_{(i+j)} C$.
		\item If $A \preceq_i B \preceq_j C$, then $A \preceq_{(i+j)} C$. If one of the first two $\preceq$ is $\prec$ then $A \prec_{(i+j)} C$.
		\item If $A \sim_i B$ and $A' \sim_j B'$ with $A \cap A' = B \cap B' = \0$, then $A \cup A' \sim_{(i+j)} B \cup B'$.
	\end{enumerate}
\end{obs}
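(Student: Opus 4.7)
All five properties follow from the machinery of Subsection~2.1 together with the elementary fact that a refinement of an $A$-sensitive partition is $A$-sensitive (since $F_{\I'} \subseteq F_{\I}$ implies $[A]_{F_{\I'}} \subseteq [A]_{F_{\I}}$). I would treat them in the order listed. For (a), given $\F$ witnessing $A \sim_i B$ with witnessing map $\g : A \to G$ (from Proposition~\ref{witnessing map}), the restricted family $\F^P := \{F \cap P : F \in \F\}$ has cardinality at most $i$, and $F_{\gen{\F^P}}$ agrees with $F_{\gen{\F}}$ on $P$; this gives $A^P$-sensitivity, and $\g \rest{A^P}$ still bijects $A^P$ with $B^P$. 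Part (b) is the analogous restriction to $[A]_G$; the only point to check is that $F_{\gen{\F^{[A]_G}}}$ and $F_{\gen{\F}}$ agree on $[A]_G$, which is immediate. Part (c) is an application of Observation~\ref{properties of I-equidecomposability}(a): the join $\gen{\F_0} \vee \gen{\F_1}$ equals $\gen{\F_0 \cup \F_1}$, which refines $\gen{\F_0}$ and hence remains $A$-sensitive, so padding if needed gives $|\F_0 \cup \F_1| = i+j$.

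Part (d) will be proved by composing the witnessing maps. Let $\g_0 : A \to G$ witness $A \sim_{\gen{\F_0}} B''$ (with $B'' \subseteq B$) and $\g_1 : B \to G$ witness $B \sim_{\gen{\F_1}} C'$ (with $C' \subseteq C$), and define $\g(x) := \g_1(\hat{\g_0}(x)) \g_0(x)$, so that $\hat{\g} = \hat{\g_1} \circ \hat{\g_0}$ injects $A$ into $C$. To check $F_{\gen{\F_0 \cup \F_1}}$-invariance of $\g$, I use that $F_{\gen{\F_0 \cup \F_1}} = F_{\gen{\F_0}} \cap F_{\gen{\F_1}}$: if $x F_{\gen{\F_0 \cup \F_1}} y$, then $\g_0(x) = \g_0(y)$, whence $\hat{\g_0}(x) F_{\gen{\F_0 \cup \F_1}} \hat{\g_0}(y)$ by $G$-invariance of $F_{\gen{\F_0 \cup \F_1}}$, and therefore $\g_1(\hat{\g_0}(x)) = \g_1(\hat{\g_0}(y))$. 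Proposition~\ref{witnessing map} then yields $A \sim_{\gen{\F_0 \cup \F_1}} \hat{\g}(A) \subseteq C$, and since $\gen{\F_0 \cup \F_1}$ refines $\gen{\F_0}$ it is $A$-sensitive, so $A \preceq_{i+j} C$. For the strict version, the inclusion $C \setminus \hat{\g}(A) \supseteq \hat{\g_1}(B \setminus B'') \cup (C \setminus C')$ combined with the strictness hypothesis on one of the two $\preceq$'s gives $[C \setminus \hat{\g}(A)]_G = [C]_G$.

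Part (e) is where I expect the main obstacle. The strategy is to apply Corollary~\ref{quasi-additivity} with $\I_k := \gen{\F_k}$: each $\I_k$ is $A_k$- and $B_k$-sensitive (the latter by the remark following Definition~\ref{defn of sensitivity}), so quasi-additivity yields $A \cup A' \sim_{\gen{\F_0 \cup \F_1}} B \cup B'$ with $|\F_0 \cup \F_1| \le i + j$. The subtle step is to verify that $\gen{\F_0 \cup \F_1}$ is $(A \cup A')$-sensitive. Being a refinement of both $\gen{\F_0}$ and $\gen{\F_1}$, it is $A$-sensitive and $A'$-sensitive individually; joint sensitivity should follow by a case analysis on which of $[A]_G$, $[A']_G$ contains a given witness $y$ to a failure, using the disjointness $A \cap A' = \emptyset$. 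The hard case is when $y \in [A']_G \setminus [A]_G$ is $F_{\gen{\F_0 \cup \F_1}}$-equivalent to some $z \in A$: here the plan is to leverage both sensitivities simultaneously, or, if that is insufficient, to replace one set of $\F_0 \cup \F_1$ with $A$ (or $A \cup A'$), forcing $A$ to become a union of atoms without inflating the count beyond $i+j$.
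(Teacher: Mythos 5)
Parts (a)--(d) are correct. In particular, in (d) the composite map $\g(x) = \g_1(\hat{\g}_0(x))\g_0(x)$ neatly avoids the difficulty that chaining quasi-transitivity through the intermediate set $B'' = \hat{\g}_0(A) \subseteq B$ would require $B''$ to respect $\gen{\F_1}$, which is not part of the hypotheses.

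Your worry about (e) is the genuine gap, and neither of your fallback ideas closes it: $\gen{\F_0 \cup \F_1}$ really can fail to be $(A \cup A')$-sensitive, while swapping a generator for $A$ or $A \cup A'$ will in general destroy the $\I$-equidecomposability that the original family witnesses. The missing move is simply to invoke part (b) first. Set $\F_0' = \F_0^{[A]_G}$, $\F_1' = \F_1^{[A']_G}$, and abbreviate $E_0 = F_{\gen{\F_0'}}$, $E_1 = F_{\gen{\F_1'}}$, $E = F_{\gen{\F_0' \cup \F_1'}}$, so that $E \subseteq E_0 \cap E_1$. Suppose $y \mathrel{E} z$ with $z \in A$ and $y \in [A \cup A']_G$. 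If $y \in [A]_G$, then $y \mathrel{E_0} z$ and $A$-sensitivity of $\gen{\F_0'}$ gives $y \in A$. If $y \notin [A]_G$, then, since every set in $\F_0'$ lies in $[A]_G$, the whole orbit $[y]_G$ avoids $\bigcup \F_0'$; as $y \mathrel{E_0} z$, so does $[z]_G$. Hence every point of $[z]_G$ is $E_0$-equivalent to $z$, and $A$-sensitivity yields $[z]_G \subseteq A$; therefore $[z]_G \cap A' = \emptyset$, so $[z]_G \cap [A']_G = \emptyset$ (as $[A']_G = \bigcup_{g} gA'$), so $[z]_G$ avoids $\bigcup \F_1' \subseteq [A']_G$; as $y \mathrel{E_1} z$, so does $[y]_G$. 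Taking $a' \in [y]_G \cap A'$ (nonempty since $y \in [A']_G$) gives $y \mathrel{E_1} a'$, and $A'$-sensitivity yields $y \in A'$. The case $z \in A'$ is symmetric, so $\gen{\F_0' \cup \F_1'}$ is $(A \cup A')$-sensitive with at most $i + j$ generators, and Corollary~\ref{quasi-additivity} finishes (e). (The paper's one-line proof cites only~\ref{quasi-additivity} and elides exactly this sensitivity check, so your instinct to scrutinize it was sound.)
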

\begin{proof}
	Part (e) follows from \cref{quasi-additivity}, and the rest follows directly from the definition of $i$-equidecomposability and \cref{properties of I-equidecomposability}.
\end{proof}

\begin{lemma}\label{A compressible => [A]_G compressible}
	If a Borel set $A \subseteq X$ is $i$-compressible, then so is $[A]_G$. In fact, if $\F$ is a finite collection of Borel sets witnessing the $i$-compressibility of $A$, then it also witnesses that of $[A]_G$.
\end{lemma}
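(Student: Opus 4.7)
The plan is to use the witnessing-map characterization of $\sim_\I$ given by Proposition \ref{witnessing map} and extend the witness on $A$ to the full saturation $[A]_G$ by the identity on the complement. Concretely, let $\F$ witness the $i$-compressibility of $A$, so $\I := \gen{\F}$ is $A$-sensitive and there is a Borel $A' \subseteq A$ with $A \sim_\I A'$ and $[A \setminus A']_G = [A]_G$. By Proposition \ref{witnessing map}, pick an $\E$-invariant Borel map $\g : A \to G$ whose associated map $\hat{\g}$ is a bijection $A \to A'$. Define $\tilde{\g} : [A]_G \to G$ by $\tilde{\g}|_A = \g$ and $\tilde{\g}|_{[A]_G \setminus A} \equiv 1_G$; this is Borel because $[A]_G = \bigcup_{g \in G} g A$ is Borel.

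Next, I would verify the three things needed to apply Proposition \ref{witnessing map} to $[A]_G$. First, $\I$ is automatically $[A]_G$-sensitive: since $[A]_G$ is $G$-invariant, we have $[[A]_G]_\E \cap [[A]_G]_G = [[A]_G]_\E \cap [A]_G = [A]_G$ trivially. Second, $\tilde{\g}$ is $\E$-invariant on $[A]_G$: suppose $x, y \in [A]_G$ with $x \E y$; if $x \in A$, then by $A$-sensitivity, $y \in [A]_\E \cap [A]_G = A$, so $\tilde{\g}(y) = \g(y) = \g(x) = \tilde{\g}(x)$ by $\E$-invariance of $\g$; symmetrically, if $x \in [A]_G \setminus A$, then $y \in [A]_G \setminus A$ and $\tilde{\g}(x) = 1_G = \tilde{\g}(y)$. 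Third, $\hat{\tilde{\g}}$ is a bijection from $[A]_G$ onto $B := A' \cup ([A]_G \setminus A)$: on $A$ it agrees with $\hat{\g}$ (bijection onto $A'$), on $[A]_G \setminus A$ it is the identity, and $A'$ and $[A]_G \setminus A$ are disjoint since $A' \subseteq A$.

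So Proposition \ref{witnessing map} gives $[A]_G \sim_\I B$, with witness $\F$. Finally, $[A]_G \setminus B = A \setminus A'$, and by hypothesis $[A \setminus A']_G = [A]_G = [[A]_G]_G$, so $[A]_G \prec_\I [A]_G$, i.e., $[A]_G$ is $i$-compressible and the very same $\F$ witnesses this.

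The one place that needs a bit of care is the $\E$-invariance of the extension $\tilde{\g}$; this is where the $A$-sensitivity of $\I$ is essential, since it prevents the $\E$-class of a point of $A$ from leaking into $[A]_G \setminus A$ and forcing an inconsistent definition. Everything else is a routine unpacking of definitions.
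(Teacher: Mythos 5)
Your proof is correct and follows essentially the same route as the paper's: both extend the witnessing map $\g$ for $A \sim_\I B$ (your $A'$) to $[A]_G$ by declaring it to be the identity on $[A]_G \setminus A$, then verify $\E$-invariance of the extension using the $A$-sensitivity of $\I$ together with $G$-invariance of $[A]_G$. The paper states the verification more tersely, but the content is identical; your careful split into cases $x \in A$ and $x \in [A]_G \setminus A$ is exactly the point the paper's phrase ``since $A'$ respects $\I$ and $id\rest{A'\setminus A}, \g$ are $\E$-invariant'' is compressing.
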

\begin{proof}
	Let $B \subseteq A$ be a Borel set such that $[A \setminus B]_G = [A]_G$ and $A \sim_i B$. Furthermore, let $\I$ be an $A,B$-sensitive partition generated by a collection $\F$ of $i$ Borel sets such that $A \sim_{\I} B$. Let $\ga : A \rightarrow G$ be a witnessing map for $A \sim_{I} B$. Put $A' = [A]_G$, $B' = B \cup (A' \setminus A)$ and note that $A',B'$ respect $\I$. Define $\ga' : A' \rightarrow G$ by setting $\ga' \rest{A' \setminus A} = id \rest{A' \setminus A}$ and $\ga' \rest{A} = \ga$. Since $A'$ respects $\I$ and $id \rest{A' \setminus A}, \ga$ are $\EI$-invariant, $\ga'$ is $\EI$-invariant and thus clearly witnesses $A' \sim_{\I} B'$.
\end{proof}

The following is a technical refinement of the definition of $i$-compressibility that is (again) necessary for $\Cfrak_i$, defined below, to be a $\sigma$-ideal.
\begin{defn}[$i^*$-compressibility]\label{defn of i*-compressibility}
	For $i \ge 1$, we say that a Borel set $A$ is $i^*$-compressible if there is a Borel set $B \subseteq A$ such that $[A \setminus B]_G = [A]_G =: P$, $A \sim_i B$, and the latter is witnessed by a collection $\F$ of Borel sets such that $B \in \F^{P}$.
\end{defn}

Finally, for $i \geq 1$, put
$$
\Cfrak_i = \set{A \subseteq X : \text{there is a $G$-invariant Borel set $P \supseteq A$ such that $P$ is $i^*$-compressible}}.
$$

\begin{lemma}\label{i-compressible is in C(i+1)}
	Let $i \ge 1$ and $A \subseteq X$ be Borel. If $A \prec_i A$, then $A \in \Cfrak_{i+1}$.
\end{lemma}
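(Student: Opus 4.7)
The plan is to take $P := [A]_G$, apply Lemma~\ref{A compressible => [A]_G compressible} to lift the $i$-compressibility of $A$ to $P$, and then enlarge the witnessing collection by one set so as to satisfy the extra clause in the definition of $(i+1)^*$-compressibility.

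\textbf{Step 1 (lifting to the saturation).} Let $\F$ be a collection of $i$ Borel sets witnessing $A \prec_i A$, and let $B \subseteq A$ be Borel with $A \sim_{\gen{\F}} B$ and $[A \setminus B]_G = [A]_G$. By Lemma~\ref{A compressible => [A]_G compressible} (using the explicit construction in its proof), the same $\F$ witnesses $P \sim_{\gen{\F}} B'$ for $B' := B \cup (P \setminus A)$, via the Borel, $F_{\gen{\F}}$-invariant map $\g' : P \to G$ obtained by extending a witnessing map for $A \sim_{\gen{\F}} B$ by the identity on $P \setminus A$. Since $B \subseteq A$, we have $P \setminus B' = A \setminus B$, so $[P \setminus B']_G = [A \setminus B]_G = P$.

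\textbf{Step 2 (enlarging the witnessing collection).} Choose any Borel collection $\F' \supseteq \F \cup \{B'\}$ with $|\F'| = i+1$: take $\F' = \F \cup \{B'\}$ when $B' \notin \F$, and otherwise pad $\F$ with any Borel set not already in it. Since $B' \subseteq P$, we have $B' = B' \cap P \in (\F')^{P}$, which is precisely the extra clause required by $(i+1)^*$-compressibility over plain $(i+1)$-compressibility. Setting $\I' := \gen{\F'}$, the partition $\I'$ refines $\I := \gen{\F}$, and $\I'$ is trivially $P$-sensitive since $P$ is $G$-invariant.

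\textbf{Step 3 (the refined equidecomposition).} The remaining task is to verify $P \sim_{\I'} B'$; this is the only step with any real content, but it is essentially automatic. Refining a partition can only shrink its induced equivalence relation, so $F_{\I'} \subseteq F_{\I}$, and consequently the map $\g'$ of Step 1---already $F_{\I}$-invariant and Borel, with $\hat{\g'}$ a bijection $P \to B'$---is also $F_{\I'}$-invariant. The backward direction of Proposition~\ref{witnessing map} (applicable because $P$ respects $\I'$) then yields $P \sim_{\I'} B'$. Combined with $[P \setminus B']_G = P$ and $B' \in (\F')^P$, this shows that $P$ is $(i+1)^*$-compressible; since $P \supseteq A$ is $G$-invariant and Borel, $A \in \C_{i+1}$ as required.
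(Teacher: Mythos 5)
Your proof is correct and follows essentially the same route as the paper: pass to $P=[A]_G$ via Lemma~\ref{A compressible => [A]_G compressible}, then adjoin $B'$ to the witnessing collection so that the extra clause of $(i+1)^*$-compressibility holds. Your Step 3 merely spells out the detail the paper leaves implicit, namely that refining the partition shrinks $F_{\I}$, so the old witnessing map remains invariant and Proposition~\ref{witnessing map} still applies.
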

\begin{proof}
	Setting $P = [A]_G$ and applying \cref{A compressible => [A]_G compressible}, we get that $P \prec_i P$, i.e. there is $B \subseteq P$ such that $[P \setminus B]_G = P$ and $P \sim_i B$. Let $\F$ be a collection of Borel sets witnessing the latter fact. Then $\F' = \F \cup \{B\}$ witnesses $P \sim_{(i+1)} B$ and contains $B$.
\end{proof}

\begin{prop}\label{C_i is a sigma-ideal}
	For all $i \ge 1$, $\Cfrak_i$ is a $\sigma$-ideal.
\end{prop}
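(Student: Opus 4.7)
The plan is to verify closure under subsets and under countable unions (the containment $\emptyset \in \C_i$ is trivial with $P = \emptyset$). Closure under subsets is immediate from the definition: if $A \in \C_i$ is witnessed by a $G$-invariant Borel $P \supseteq A$, then any $A' \subseteq A$ satisfies $A' \subseteq P$, so $A' \in \C_i$ with the same $P$. For countable unions, let $\{A_n\}_{n \in \N} \subseteq \C_i$ with witnesses $P_n \supseteq A_n$, each $P_n$ being $i^*$-compressible via some $B_n \subseteq P_n$ and $\F_n$ with $B_n \in \F_n^{P_n}$. I would disjointify by setting $Q_n := P_n \setminus \bigcup_{m<n} P_m$ and $Q := \bigcup_n Q_n$; then the $Q_n$'s are pairwise disjoint $G$-invariant Borel sets with $\bigcup_n A_n \subseteq Q = \bigcup_n P_n$.

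A routine argument---restrict a witnessing map $\gamma_n : P_n \to G$ for $P_n \sim_{\gen{\F_n}} B_n$ (from Proposition \ref{witnessing map}) to the $G$-invariant subset $Q_n \subseteq P_n$---shows that each $Q_n$ is itself $i^*$-compressible, with witnesses $B_n' := B_n \cap Q_n$ and $\F_n' := \F_n^{Q_n}$. Indeed, writing $B_n = F \cap P_n$ for some $F \in \F_n$ yields $B_n' = F \cap Q_n \in \F_n'$, and $[Q_n \setminus B_n']_G = Q_n$ follows from $[P_n \setminus B_n]_G = P_n$ combined with $G$-invariance of $Q_n$. Now enumerate each $\F_n' = \{F_{n,j} : j < i\}$ with $F_{n,0} = B_n'$ (padding by $\emptyset$ where needed), and set $F_j := \bigcup_n F_{n,j}$, $\F := \{F_j : j < i\}$, $B := F_0 = \bigcup_n B_n'$, and $\I := \gen{\F}$.

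The conditions $|\F| = i$, $B \in \F^Q$, and $[Q \setminus B]_G = Q$ are then immediate, leaving only $Q \sim_\I B$ to establish, which I would derive by applying Proposition \ref{orbit-disjoint ctbl unions} to the orbit-disjoint decomposition $Q = \bigcup_n Q_n$, $B = \bigcup_n B_n'$. Its three hypotheses split as follows. First, $Q$-sensitivity of $\I$ is trivial since $Q$ is $G$-invariant. Second, $B$-sensitivity of $\I$ holds because $B = F_0 \in \F$ is a union of $\I$-pieces and hence globally $F_\I$-invariant; this is precisely the point at which the clause $B \in \F^P$ in the definition of $i^*$-compressibility is essential. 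Third, for $Q_n \sim_\I B_n'$, the key observation is that because each $F_{n,j} \subseteq Q_n$ and the $Q_m$'s are $G$-invariant and pairwise disjoint, for every $x \in Q_n$ and $g \in G$ we have $gx \in F_j$ iff $gx \in F_{n,j}$; hence $F_\I$ and $F_{\gen{\F_n'}}$ coincide on $Q_n \times Q_n$, so the very same partition pieces and witnessing maps that give $Q_n \sim_{\gen{\F_n'}} B_n'$ also give $Q_n \sim_\I B_n'$. Proposition \ref{orbit-disjoint ctbl unions} then produces $Q \sim_\I B$, so $Q$ is $i^*$-compressible and $\bigcup_n A_n \in \C_i$, as required. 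The most delicate points to verify carefully are the $B$-sensitivity of $\I$ and the coincidence of $F_\I$ with $F_{\gen{\F_n'}}$ on each $Q_n$; both hinge on the construction $F_j = \bigcup_n F_{n,j}$ with $F_{n,j} \subseteq Q_n$, and on placing $B_n'$ uniformly in the zeroth slot so that the clause $B \in \F$ is preserved under the gluing.
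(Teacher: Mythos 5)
Your proof is correct and follows essentially the same route as the paper's: disjointify the witnessing $G$-invariant supersets, normalize each witness family $\F_n$ so that every set is contained in its invariant set and the compressing set sits in slot zero, glue coordinatewise to form $\F$ and $\I = \gen{\F}$, and finish with the orbit-disjoint countable additivity proposition. Where you go further than the paper is in explicitly verifying that an $i^*$-compressible $G$-invariant set remains $i^*$-compressible after passing to an invariant Borel subset (the paper tucks this into a ``we may assume pairwise disjoint'' remark), and in explicitly checking the three hypotheses of the orbit-disjoint lemma---in particular, that $\E$ restricted to $Q_n\times Q_n$ agrees with the relation coming from $\F_n'$, and that $B$-sensitivity is secured by $B = F_0 \in \F$. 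These are exactly the points the paper relies on implicitly, and you have correctly identified the role of the technical clause $B \in \F^P$ in the definition of $i^*$-compressibility.
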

\begin{proof}
	We only need to show that $\Cfrak_i$ is closed under countable unions. For this it is enough to show that if $A_n \in \Bfrak(X)$ are $i^*$-compressible $G$-invariant Borel sets, then so is $A := \Uni A_n$.
	
	We may assume that $A_n$ are pairwise disjoint since we could replace each $A_n$ by $A_n \setminus (\bigcup_{k<n} A_k)$. Let $B_n \subseteq A_n$ be a Borel set and $\F_n = \{F^n_k\}_{k<i}$ be a collection of Borel sets with $(F^n_0)^{A_n} = B_n$ such that $\F_n$ witnesses $A_n \sim_i B_n$ and $[A_n \setminus B_n]_G = A_n$. Using part (b) of \cref{properties of i-equidecomposability}, we may assume that $\F_n^{A_n} = \F_n$; in particular, $F^n_0 = B_n$.
	
	Put $B = \Uni B_n$ and $F_k = \Uni F^n_k$, $\forall k<i$; note that $F_0 = B$. Set $\F = \{F_k\}_{k<i}$ and $\I = \gen{\F}$. Since $B \in \F$ and $A$ is $G$-invariant, $\I$ is $A,B$-sensitive. Furthermore, since $\F^{A_n} = \F_n$, $A_n \sim_{\I} B_n$ for all $n \in \N$. Thus, by \cref{orbit-disjoint ctbl unions}, $A \sim_{\I} B$ and hence $A$ is $i^*$-compressible.
\end{proof}


\section{Traveling sets and finite generators}

Throughout this section, we again let $X$ be a Borel $G$-space and $\Ga$ be a $\sigma$-algebra of subsets of standard Borel spaces as in the previous section.

\subsection{Traveling and $i$-traveling sets}\label{subsection_traveling sets}

\begin{defn}\label{defn of traveling sets}
	Let $A \in \Ga(X)$.
	\begin{itemize}
		\item We call $A$ a traveling set with $\Ga$ pieces if there exists pairwise disjoint sets $\{A_n\}_{n \in \N}$ in $\Ga(X)$ such that $A_0 = A$ and $A \sim^{\Ga} A_n$, $\forall n \in \N$.
		
		\item For a finite Borel partition $\I$, we say that $A$ is $\I$-traveling with $\Ga$ pieces if $A$ respects $\I$ and the above condition holds with $\sim^{\Ga}$ replaced by $\sim_{\I}^{\Ga}$.
		
		\item For $i \ge 1$, we say that $A$ is $i$-traveling if it is $\I$-traveling for some $A$-sensitive partition $\I$ generated by a collection of $i$ Borel sets.
	\end{itemize}
\end{defn}

\begin{defn}
	For a set $A \subseteq X$, a function $\ga : A \to \GN$ is called a travel guide for $A$ if $\forall x \in A, \ga(x)(0) = 1_G$ and $\forall (x,n) \neq (y,m) \in A \times \N$, $\ga(x)(n)x \neq \ga(y)(m)y$.
\end{defn}

For $A \in \Ga(X)$, a $\Ga$-measurable map $\ga : A \rightarrow \GN$ and $n \in \N$, set $\ga_n := \ga(\cdot)(n) : A \to G$ and note that $\ga_n$ is also $\Ga$-measurable.

\begin{obs}
	Suppose $A \in \Ga(X)$ and $\I$ is an $A$-sensitive finite Borel partition of $X$. Then $A$ is $\I$-traveling with $\Ga$ pieces if and only if it has a $\Ga$-measurable $\EI$-invariant travel guide.
\end{obs}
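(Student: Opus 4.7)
The plan is to unpack both directions using Proposition~\ref{witnessing map}, which lets us replace the combinatorial data of $\sim_{\I}^{\G}$ with a single $\E$-invariant $\G$-measurable selector $A \to G$, together with Lemma~\ref{measurability of roadmap}, which ensures that images of $\G$ sets under such selectors stay in $\G$. The key insight is that a travel guide $\g : A \to G^\N$ is simply a coherent packaging of countably many such witnessing maps $\g_n : A \to G$, one for each $\sim_{\I}^{\G}$ between $A$ and $A_n$, and the travel guide condition at pairs $(x,n) \neq (y,m)$ corresponds exactly to injectivity of each $\hat{\g_n}$ together with pairwise disjointness of the $A_n$'s.

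For the forward direction, I would start with a traveling witness $\{A_n\}_{n \in \N} \subseteq \G(X)$ with $A_0 = A$ and $A \sim_\I^\G A_n$. Since $\I$ is $A$-sensitive, Proposition~\ref{witnessing map} supplies, for each $n$, an $\E$-invariant $\G$-measurable map $\g_n : A \to G$ with $\hat{\g_n}$ a bijection from $A$ onto $A_n$; take $\g_0 \equiv 1_G$. Define $\g(x)(n) = \g_n(x)$; then $\g : A \to G^\N$ is $\G$-measurable (each coordinate is) and $\E$-invariant (each coordinate is). To verify the travel guide condition, let $(x,n) \neq (y,m)$. If $n \neq m$, then $\g(x)(n)x \in A_n$ and $\g(y)(m)y \in A_m$, which are disjoint; if $n=m$ then injectivity of $\hat{\g_n}$ does the job.

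For the reverse direction, starting from a $\G$-measurable $\E$-invariant travel guide $\g$, set $\g_n := \g(\cdot)(n)$ and $A_n := \hat{\g_n}(A)$. Each $\g_n$ inherits $\G$-measurability and $\E$-invariance, so by Lemma~\ref{measurability of roadmap}, $A_n \in \G$. The condition $\g(x)(0) = 1_G$ gives $A_0 = A$, and the travel guide condition with $n = m$ fixed makes each $\hat{\g_n}$ injective (hence a bijection onto $A_n$), while the condition with $n \neq m$ gives pairwise disjointness of the $A_n$'s. Applying Proposition~\ref{witnessing map} in reverse yields $A \sim_\I^\G A_n$ for every $n$, completing the proof.

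I do not expect a genuine obstacle here; the statement is essentially a repackaging lemma. The one point that requires a moment's care is making sure that the single map $\g$ inherits $\E$-invariance and $\G$-measurability from its coordinates (trivial from the product topology on $G^\N$ with $G$ discrete) and, conversely, that each coordinate $\g_n$ inherits these properties from $\g$; both follow because projection onto the $n$th coordinate is continuous, and $\E$-invariance is a pointwise condition preserved by projections.
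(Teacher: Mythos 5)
Your proof is correct and follows exactly the route the paper intends: the paper's own proof is the one-line remark that the observation ``follows from definitions and Proposition \ref{witnessing map},'' and your argument is precisely the unpacking of that, assembling the coordinate witnessing maps $\g_n$ into a travel guide and conversely. No gaps.
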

\begin{proof}
	Follows from definitions and \cref{witnessing map}.
\end{proof}

Now we establish the connection between compressibility and traveling sets.
\begin{lemma}\label{lemma i-compressibility <=> i-traveling}
	Let $\I$ be a finite Borel partition of $X$, $P \in \Ga(X)$ be a Borel $G$-invariant set and let $A,B$ be $\Ga$ subsets of $P$. If $P \sim_{\I}^{\Ga} B$, then $P \setminus B$ is $\I$-traveling with $\Ga$ pieces. Conversely, if $A$ is $\I$-traveling with $\Ga$ pieces, then $P \sim_{\I}^{\Ga} (P \setminus A)$. The same is true if we replace $\sim_{\I}^{\Ga}$ and ``$\I$-traveling'' with $\sim^{\Ga}$ and ``traveling'', respectively.
\end{lemma}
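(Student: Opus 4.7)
The plan is to prove both directions by explicitly constructing witnessing maps and invoking Proposition~\ref{witnessing map}. Throughout, the subtle point is to track $\E$-invariance carefully.

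For the forward direction, assume $P \sim_{\I}^{\G} B$ with $B \subseteq P$, and set $A := P \setminus B$. Since $P$ is $G$-invariant (hence automatically respects $\I$), Proposition~\ref{witnessing map} yields an $\E$-invariant $\G$-measurable $\g : P \to G$ such that $\hat{\g} : P \to B$ is a bijection. Because $\E$ is $G$-invariant and $\g$ is $\E$-invariant, $\hat{\g}$ preserves $\E$. I would then iterate: define $\g^{(n)} : P \to G$ by the product $\g^{(n)}(x) := \g(\hat{\g}^{n-1}(x))\cdots\g(\hat{\g}(x))\g(x)$, so $\hat{\g}^n(x) = \g^{(n)}(x)\cdot x$. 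Each $\g^{(n)}$ is $\E$-invariant and $\G$-measurable by induction, using Lemma~\ref{measurability of roadmap}. Set $A_n := \hat{\g}^n(A)$. The key point is the telescoping identity $A_n = \hat{\g}^{n-1}(P) \setminus \hat{\g}^n(P)$, which makes the $A_n$ pairwise disjoint $\G$-subsets of $P$ with $A_0 = A$. Defining $\tilde{\g}(x)(n) := \g^{(n)}(x)$ gives the required $\E$-invariant $\G$-measurable travel guide for $A$. A short calculation shows that $A$ respects $\I$ (using that $B$ does and $P$ is $G$-invariant), and that each $A_n$ does too: if $x' \in [A_n]_{\E} \cap [A_n]_G$ with $x' \E \g^{(n)}(y)y$ for some $y \in A$, then $y' := \g^{(n)}(y)^{-1}x' \in [A]_{\E} \cap [A]_G = A$, and $\E$-invariance of $\g^{(n)}$ gives $\g^{(n)}(y')=\g^{(n)}(y)$, whence $x' = \hat{\g}^n(y') \in A_n$.

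For the converse, suppose $A \subseteq P$ is $\I$-traveling, witnessed by disjoint $A_n = \hat{\g}_n(A)$ for $\E$-invariant $\G$-measurable $\g_n : A \to G$ with $\g_0 \equiv 1_G$. I first reduce to the case $[A]_G = P$ by handling $P \setminus [A]_G$ via the identity (this set is $G$-invariant, hence $\E$-invariant relative to $P$, and disjoint from $A$). Under this assumption, $[A_n]_G = [A]_G = P$ for every $n$, so the hypothesis that $A_n$ respects $\I$ upgrades to: each $A_n$ is $\E$-invariant \emph{relative to all of} $P$. This is the crucial structural fact that makes the construction $\E$-invariant. Next, using that $A$ respects $\I$, one checks that each $\hat{\g}_n^{-1} : A_n \to A$ preserves $\E$: if $x \E x'$ in $A_n$ with $y := \hat{\g}_n^{-1}(x)$, then $z := \g_n(y)^{-1}x' \in [A]_{\E} \cap [A]_G = A$, and since $\g_n(z) = \g_n(y)$ by $\E$-invariance, $\hat{\g}_n(z) = x'$, so $z = \hat{\g}_n^{-1}(x')$ and $y \E z$.

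Now define $h : P \to G$ by
\[
h(x) = \begin{cases} \g_{n+1}(y)\,\g_n(y)^{-1} & \text{if } x \in A_n,\ y := \hat{\g}_n^{-1}(x), \\ 1_G & \text{if } x \in P \setminus \bigcup_n A_n. \end{cases}
\]
Then $\hat{h}$ sends each $A_n$ bijectively onto $A_{n+1}$ and fixes $P \setminus \bigcup_n A_n$, so its image is $(P \setminus \bigcup_n A_n) \cup \bigcup_{n \ge 1} A_n = P \setminus A$, and injectivity is immediate from disjointness of the $A_n$. Lemma~\ref{measurability of roadmap} and $\G$-measurability of the $\g_n$ and $\hat{\g}_n^{-1}$ give $\G$-measurability of $h$. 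For $\E$-invariance on $P$, the partition $\{P \setminus \bigcup_n A_n\} \cup \{A_n\}_n$ consists of pieces that are $\E$-invariant relative to $P$ (this is where we use the upgraded invariance above), and on each piece $h$ is $\E$-invariant by the preservation of $\E$ under $\hat{\g}_n^{-1}$ and $\E$-invariance of the $\g_n$. Proposition~\ref{witnessing map} then gives $P \sim_{\I}^{\G} (P \setminus A)$. The non-$\I$ version is easier: the same constructions work while simply discarding every $\E$-invariance check.

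The main obstacle is the converse direction, specifically the need to promote the weak $\E$-invariance of the pieces $A_n$ (relative to $[A_n]_G$) to $\E$-invariance relative to the possibly larger set $P$. Reducing to $[A]_G = P$ is what makes this promotion automatic; without it, the piecewise-defined $h$ could fail to be $\E$-invariant across the boundary between $[A]_G$ and its complement in $P$.
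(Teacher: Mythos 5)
Your construction is essentially the paper's: in the forward direction you iterate the witnessing map $\g$ of $P \sim_{\I}^{\G} B$ and read the travel guide off the telescoping sets $\hat{\g}^{n}(P)\setminus\hat{\g}^{n+1}(P)$ (your displayed identity is off by one index, harmlessly), and in the converse you shift each $A_n$ onto $A_{n+1}$ and glue with the identity off $\bigcup_n A_n$. The paper packages the converse through $\E$-disjoint countable additivity (Observation \ref{properties of I-equidecomposability}(b)) rather than an explicit map $h$, but the underlying bijection is the same. The forward direction is fine, your converse is correct on $[A]_G$, and you have correctly isolated the crux: upgrading the $\E$-invariance of the $A_n$ from ``relative to $[A_n]_G$'' to ``relative to $P$''.

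The gap is in the reduction meant to handle $P\setminus[A]_G$: the claim that this set is ``$G$-invariant, hence $\E$-invariant relative to $P$'' is false. $\E$ is not a subequivalence relation of $E_G$; two points with the same symbolic representation under $\f$ need not lie in the same orbit, so a point $x\in A$ may be $\E$-equivalent to a point $x'\in P\setminus[A]_G$. In that case your glued $h$ satisfies $h(x')=1_G$ but $h(x)=\g_1(x)\neq 1_G$ (equality would put $\hat{\g}_1(x)\in A_0\cap A_1=\0$), so $h$ is not $\E$-invariant on $P$; and the step cannot in general be repaired, since if $P\sim_{\I}^{\G}(P\setminus A)$ held, the witnessing partition of $P$ would consist of pieces $\E$-invariant relative to $P$, whose translates are again $\E$-invariant relative to $P$, forcing $P\setminus A$ to be $\E$-invariant relative to $P$ --- which it is not, as it contains $x'$ but not $x$. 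So the reduction relocates the obstruction rather than removing it. In fairness, the paper's own proof silently elides the same point: its last step, merging $P'\sim_{\I}^{\G}B'$ with the identity on $P\setminus P'$, is not licensed by Observation \ref{properties of I-equidecomposability}(b) unless $[P']_{\E}\cap[P\setminus P']_{\E}=\0$, which is exactly what can fail. In every application of the lemma $A$ is a complete section for $P$, so $[A]_G=P$ and the issue is vacuous; but as a proof of the statement in the generality given, your argument needs the hypothesis $[A]_G=P$, or an argument that $\E$-classes of points of $A$ do not leave $[A]_G$, and neither is available.
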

\begin{proof}
	For the first statement, let $\ga : P \to G$ be a witnessing map for $P \sim_{\I}^{\Ga} B$. Put $A' = P \setminus B$ and note that $A'$ respects $\I$ since so does $P$ and hence $B$. We show that $A'$ is $\I$-traveling. Put $A_n = (\hat{\ga})^n(A')$, for each $n \ge 0$. It follows from injectivity of $\hat{\ga}$ that the $A_n$ are pairwise disjoint. For $n \in \N$, recursively define $\de_n : A' \to G$ as follows
	$$
	\left\{ \begin{array}{l}
	\de_0 = \ga \rest{A'} \\
	\de_{n+1} = \ga \circ \hat{\de}_n
	\end{array}\right..
	$$
	It follows from $\EI$-invariance of $\ga$ that each $\de_n$ is $\EI$-invariant. It is also clear that $\hat{\de}_n = (\hat{\ga})^n$ and hence $\de_n$ is a witnessing map for $A' \sim_{\I}^{\Ga} A_n$. Thus $A'$ is $i$-traveling with $\Ga$ pieces.
	
	For the converse, assume that $A$ is $\I$-traveling and let $\{A_n\}_{n \in \N}$ be as in \cref{defn of traveling sets}. In particular, each $A_n$ respects $\I$ and $A_n \sim_{\I}^{\Ga} A_m$, for all $n,m \in \N$. Let $P' = \Uni A_n$ and $B' = \bigcup_{n \ge 1} A_n$. Since $A_n \sim_{\I}^{\Ga} A_{n+1}$, part (b) of \cref{properties of I-equidecomposability} implies that $P' \sim_{\I}^{\Ga} B'$. Moreover, since $P \setminus P' \sim_{\I}^{\Ga} P \setminus P'$, we get $P \sim_{\I}^{\Ga} (B' \cup (P \setminus P')) = P \setminus A$.
\end{proof}

For a $G$-invariant set $P$ and $A \subseteq P$, we say that $A$ is a complete section for $P$ if $[A]_G = P$. The above lemma immediately implies the following.
\begin{prop}\label{i-compressibility <=> i-traveling}
	Let $P \in \Ga(X)$ be $G$-invariant and $i \ge 1$. $P$ is $i$-compressible with $\Ga$ pieces if and only if there exists a complete section for $P$ that is $i$-traveling with $\Ga$ pieces. The same is true with ``$i$-compressible'' and ``$i$-traveling'' replaced by ``compressible'' and ``traveling''.
\end{prop}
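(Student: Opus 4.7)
The proof is essentially just unpacking definitions and invoking Lemma~\ref{lemma i-compressibility <=> i-traveling}, as the author suggests with the word ``immediately.'' I would prove only the $i$-version; the unindexed version is identical after deleting the $\I$-sensitivity bookkeeping.

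\smallskip

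\noindent\textbf{Forward direction.} Assume $P$ is $i$-compressible with $\G$ pieces, i.e.\ $P \prec_i^{\G} P$. Unwinding the definition, there is a $\G$-set $B \subseteq P$ with $[P \setminus B]_G = [P]_G = P$ and a finite Borel collection $\F$ with $|\F| = i$ such that the partition $\I := \langle \F \rangle$ is $P$-sensitive and $P \sim_{\I}^{\G} B$. Set $A := P \setminus B$; the hypothesis $[P \setminus B]_G = P$ says precisely that $A$ is a complete section for $P$. Applying the first half of Lemma~\ref{lemma i-compressibility <=> i-traveling} to $P \sim_{\I}^{\G} B$, we conclude that $A = P \setminus B$ is $\I$-traveling with $\G$ pieces. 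Since $\I$ is generated by the $i$ Borel sets in $\F$, and $A$-sensitivity of $\I$ is the only ingredient left to verify, I will need the small observation that $P$-sensitivity of $\I$ together with $A \subseteq P$ forces $A$ to respect $\I$: indeed, because $\I$ is $P$-sensitive, $\E$-classes meeting $P$ are contained in $P$, so if $x \in A$ and $x\,\E\,y$ with $y \in [A]_G \subseteq P$, then $y \in B$ would contradict $A \sim_{\I}^{\G} B$-witnessed $\E$-invariance of $A$ (equivalently, $\f(x) = \f(y)$ would put $y$ and $x$ on the same side of the $(P,B)$-split). Hence $A$ is $i$-traveling.

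\smallskip

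\noindent\textbf{Reverse direction.} Assume there is a complete section $A$ for $P$ that is $i$-traveling with $\G$ pieces. By Definition~\ref{defn of traveling sets}, there is an $A$-sensitive partition $\I$ generated by a collection $\F$ of $i$ Borel sets such that $A$ is $\I$-traveling with $\G$ pieces. The second half of Lemma~\ref{lemma i-compressibility <=> i-traveling} then yields $P \sim_{\I}^{\G} (P \setminus A)$. Now $[P \setminus (P \setminus A)]_G = [A]_G = P = [P]_G$, where the second equality is precisely the complete section hypothesis. So, witnessed by the same collection $\F$ of size $i$, we have $P \prec_i^{\G} P$, i.e.\ $P$ is $i$-compressible with $\G$ pieces.

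\smallskip

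\noindent\textbf{Anticipated obstacle.} There is essentially no obstacle, only two bookkeeping checks about sensitivity: (i) in the forward direction, that the witness partition $\I$ for $i$-compressibility of $P$ is automatically $(P \setminus B)$-sensitive, which I sketched above; and (ii) in the backward direction, that one may actually invoke the lemma, which needs $P$ and $P \setminus A$ to respect $\I$. The first holds since $P$ is $G$-invariant and $\I$ is $A$-sensitive with $[A]_G = P$, which forces $\E$-saturations within $P$ to respect the $A$/$P\setminus A$ split and thus $P \setminus A$ to respect $\I$; $P$ respects $\I$ trivially as $[P]_G = P$. Once these are observed, the conclusion falls out of the lemma without any further work, and the ``compressible/traveling'' version is obtained verbatim by dropping all references to $\I$, $\F$, and sensitivity throughout.
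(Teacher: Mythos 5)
Your proof is correct and matches the paper's intent exactly: the paper offers no separate argument, simply noting that Lemma \ref{lemma i-compressibility <=> i-traveling} "immediately implies" the proposition, and your unpacking of the definitions plus the two sensitivity checks (which the lemma's own proof already records, e.g.\ "note that $A'$ respects $\I$ since so does $P$ and hence $B$") is precisely that. The only slip is notational: in the forward direction the equidecomposition is $P \sim_{\I}^{\G} B$, not $A \sim_{\I}^{\G} B$, so the clean way to phrase your observation is that $P$ respects $\I$ trivially (being $G$-invariant), hence $B$ does, hence $A = P \setminus B$ does.
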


We need the following lemma in the proofs of \cref{equivalences to compressibility,equivalences to i-compressibility}.
\begin{lemma}\label{reflection of not having measure}
	Suppose $A \subseteq X$ is an invariant analytic set that does not admit an invariant Borel probability measure. Then there is an invariant Borel set $A' \supseteq A$ that still does not admit an invariant Borel probability measure.
\end{lemma}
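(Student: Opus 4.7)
The plan is to apply a reflection argument analogous to the one in Lemma~\ref{reflection}. Define the predicate
\[
\Phi(D) \iff \forall \mu \in \M_G(X),\,\mu(D) = 0,
\]
where $\mu(D)$ is interpreted via universal measurability when $D$ is analytic. For any $G$-invariant set $D$, $\Phi(D)$ is equivalent to ``$D$ admits no invariant Borel probability measure'': any $\mu \in \M_G(X)$ with $\mu(D) > 0$ yields an invariant probability measure on $D$ by restricting and normalizing $\mu\!\restriction\! D$ (which uses $G$-invariance of both $\mu$ and $D$), while conversely any invariant probability measure on $D$ extends to an element of $\M_G(X)$ giving mass $1$ to $D$. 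In particular, the hypothesis of the lemma yields $\Phi(A)$.

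The key step is to verify that $\Phi$ is $\mathbf{\Pi}_1^1$ on $\mathbf{\Sigma}_1^1$. Using Choquet capacitability of analytic sets, I would rewrite
\[
\Phi(D) \iff \forall \mu \in \M_G(X)\,\forall K \in \K(X)\,(K \subseteq D \Rightarrow \mu(K) = 0).
\]
Here $\M_G(X)$ is a Borel subset of the Polish space of Borel probability measures on $X$, $\K(X)$ is the Effros Borel space of compact subsets of $X$, and ``$\mu(K)=0$'' is Borel in $(\mu,K)$. For analytic $D$ coded as a $\mathbf{\Sigma}_1^1$ set, ``$K \subseteq D$'' amounts to the disjointness of the compact $K$ from the coanalytic $D^c$, and by unpacking the quantifiers in the style of the complexity analysis for $\Lambda$ in the proof of Proposition~\ref{saturated supersets}, the whole expression is $\mathbf{\Pi}_1^1$ in the code of $D$. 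Applying the First Reflection Theorem (cf.\ 35.10 and 35.18 in \cite{bible}) to $\Phi$ and the analytic set $A$ then produces a Borel set $A'' \supseteq A$ with $\Phi(A'')$.

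To upgrade to an invariant set, I would saturate: set
\[
A' := [A'']_G = \bigcup_{g \in G} g A'',
\]
which is a Borel (countable union), $G$-invariant superset of $A$. For any $\mu \in \M_G(X)$, the $G$-invariance of $\mu$ gives $\mu(g A'') = \mu(A'') = 0$ for every $g$, whence
\[
\mu(A') \leq \sum_{g \in G} \mu(g A'') = 0.
\]
Thus $A'$ is an invariant Borel superset of $A$ with $\Phi(A')$, i.e., admitting no invariant Borel probability measure.

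The principal obstacle is the complexity calculation. The overall structure---reflect, then saturate---is a familiar pattern, but establishing that $\Phi$ is $\mathbf{\Pi}_1^1$ on $\mathbf{\Sigma}_1^1$ requires careful tracking of the universal quantifier over compact subsets of the analytic set $D$ and of the measure-theoretic condition ``$\mu(K)=0$'' across the Polish parameter $\mu$; the complexity argument in Proposition~\ref{saturated supersets} provides a close template for handling this.
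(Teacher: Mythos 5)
Your overall structure---define $\Phi$, verify it is $\mathbf{\Pi}_1^1$ on $\mathbf{\Sigma}_1^1$, reflect to a Borel superset, then restore invariance---matches the paper's, and your final saturation step $A' := [A'']_G$ (instead of the paper's invariant interior $(B)_G := \{x \in B : [x]_G \subseteq B\}$) is a legitimate variant, using $G$-invariance of each $\mu$ together with the countability of $G$. The genuine gap is in the complexity calculation. In your rewriting $\Phi(D) \iff \forall\mu\,\forall K\,(K \subseteq D \Rightarrow \mu(K)=0)$, the clause ``$K \subseteq D$'' is a \emph{positive} occurrence of the analytic set $D$, unlike the predicate $\Lambda$ in the proof of Proposition~\ref{saturated supersets}, which contains only \emph{negated} memberships ``$x \notin W_n$.'' That distinction is exactly what makes the $\Lambda$-style quantifier count work there and fail here: for $\mathbf{\Sigma}_1^1$ $D \subseteq Y \times X$, the relation $\{(y,K) : K \subseteq D_y\}$ is only $\mathbf{\Pi}_2^1$ on its face, and it is not analytic even when $D$ is Borel (already $\{K : K \subseteq \Q\}$, the hyperspace of compact subsets of $\Q$, is $\mathbf{\Pi}_1^1$-complete). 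So ``unpacking the quantifiers'' as described does not establish that $\Phi$ is $\mathbf{\Pi}_1^1$ on $\mathbf{\Sigma}_1^1$.

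The conclusion is nonetheless correct, and the way to close the gap is precisely the tool the paper uses: the Kond\^{o}-Tugu\'{e} theorem (29.26 in \cite{bible}), which asserts that for $\mathbf{\Sigma}_1^1$ $D \subseteq Y \times X$, the set $\{(\mu,y) : \mu^*(D_y) > r\}$ is $\mathbf{\Sigma}_1^1$. Granting this, $\{y : \neg\Phi(D_y)\}$ is the projection of $\bigcup_n \{(\mu,y) : \mu(D_y) > 1/n\}$, hence $\mathbf{\Sigma}_1^1$. Your capacitability reduction is morally the same observation---the inner existential over $K$ recovers exactly ``$\mu(D_y)>0$''---but establishing that this projection over $K$ has the required complexity \emph{is} the content of Kond\^{o}-Tugu\'{e} and does not follow from a direct syntactic quantifier count; you should invoke that theorem explicitly rather than appeal to the $\Lambda$ computation.
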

\begin{proof}
	Let $\M$ denote the standard Borel space of $G$-invariant Borel probability measures on $X$ (see \cite[Section 17]{bible}). Let $\Phi \subseteq Pow(X)$ be the following predicate:
	$$
	\Phi(W) \Leftrightarrow \forall \mu \in \M (\mu(W) = 0).
	$$
	\begin{claim*}
		There is a Borel set $B \supseteq A$ with $\Phi(B)$.
	\end{claim*}
	\begin{pfof}
		By the dual form of the First Reflection Theorem for $\mathbf{\Pi}_1^1$ (see \cite[the discussion following 35.10]{bible}), it is enough to show that $\Phi$ is $\mathbf{\Pi}_1^1$ on $\mathbf{\Sigma}_1^1$. To this end, let $Y$ be a Polish space and $D \subseteq Y \times X$ be analytic. Then, for any $n \in \N$, the set
		$$
		H_n = \set{(\mu, y) \in \M \times Y : \mu(D_y) > {1 \over n}},
		$$
		is analytic by a theorem of Kond\^{o}--Tugu\'{e} (see \cite[29.26]{bible}), and hence so are the sets $H'_n := \proj{Y}(H_n)$ and $H := \Uni H'_n$. Finally, note that
		$$
		\set{y \in Y : \Phi(A_y)} = \set{y \in Y : \exists \mu \in \M \exists n \in \N (\mu(A_y) > {1 \over n})}^c = H^c,
		$$
		and so $\set{y \in Y : \Phi(A_y)}$ is $\mathbf{\Pi}_1^1$.
	\end{pfof}
	
	Now put $A' = (B)_G$, where $(B)_G = \set{x \in B : [x]_G \subseteq B}$. Clearly, $A'$ is an invariant Borel set, $A' \supseteq A$, and $\Phi(A')$ since $A' \subseteq B$ and $\Phi(B)$.
\end{proof}

\begin{prop}\label{equivalences to compressibility}
	Let $X$ be a Borel $G$-space. The following are equivalent:
	\begin{enumerate}[(1)]
		\item $X$ is compressible with universally measurable pieces;
		\item There is a universally measurable complete section that is a traveling set with universally measurable pieces;
		\item There is no $G$-invariant Borel probability measure on $X$;
		\item $X$ is compressible with Borel pieces;
		\item There is a Borel complete section that is a traveling set with Borel pieces.
	\end{enumerate}
\end{prop}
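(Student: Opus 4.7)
The plan is to close a cycle by establishing four cheap arrows plus one direct computation. The equivalences (1) $\Leftrightarrow$ (2) and (4) $\Leftrightarrow$ (5) are immediate instances of the unindexed part of Proposition \ref{i-compressibility <=> i-traveling}, applied with $P = X$ and with $\G$ being, respectively, the universally measurable $\sigma$-algebra and $\B$. The implication (4) $\Rightarrow$ (1) is trivial since Borel sets are universally measurable, and (3) $\Rightarrow$ (4) is precisely Nadkarni's theorem (Theorem \ref{Nadkarni's thm}). So everything reduces to the single implication (1) $\Rightarrow$ (3).

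For (1) $\Rightarrow$ (3), I would argue directly. Assume $X$ is compressible with universally measurable pieces: there is a universally measurable $B \subseteq X$ with $[X \setminus B]_G = X$ and $X \sim^{\G} B$ (where $\G$ denotes the universally measurable $\sigma$-algebra), witnessed by partitions $\{A_n\}_n$, $\{B_n\}_n$ and group elements $\{g_n\}_n \subseteq G$. Suppose for contradiction that $\mu$ is a $G$-invariant Borel probability measure on $X$; extend $\mu$ to the universally measurable $\sigma$-algebra, where it remains $G$-invariant and countably additive. Then
$$\mu(X) \;=\; \sum_n \mu(A_n) \;=\; \sum_n \mu(g_n A_n) \;=\; \sum_n \mu(B_n) \;=\; \mu(B),$$
so $\mu(X \setminus B) = 0$, and by $G$-invariance $\mu([X \setminus B]_G) = 0$; since $[X \setminus B]_G = X$, this contradicts $\mu(X) = 1$.

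The main (and only nontrivial) step is the measure calculation in (1) $\Rightarrow$ (3), and it is short. The only subtlety is that the invariant Borel probability measure $\mu$ must extend uniquely and invariantly to the universally measurable $\sigma$-algebra; this is a standard fact, so no serious obstacle arises. Everything else is bookkeeping from Proposition \ref{i-compressibility <=> i-traveling} and an application of Nadkarni's theorem.
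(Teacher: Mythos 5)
Your proposal is correct and follows essentially the same route as the paper: the cheap arrows via Proposition \ref{i-compressibility <=> i-traveling}, Nadkarni's theorem for (3)$\Rightarrow$(4), and the same short measure computation for (1)$\Rightarrow$(3), where $\mu(X)=\mu(B)$ forces $\mu(X\setminus B)=0$ and hence $\mu(X)=0$ since $X\setminus B$ is a complete section. Your explicit remark about extending $\mu$ to the universally measurable $\sigma$-algebra is a point the paper leaves implicit, but it is indeed standard and causes no difficulty.
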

\begin{proof}
	Equivalence of (1) and (2) as well as (4) and (5) is asserted in \cref{i-compressibility <=> i-traveling}, (4)$\Rightarrow$(1) is trivial, and (3)$\Rightarrow$(4) follows from Nadkarni's theorem (see \labelcref{Nadkarni's thm}). It remains to show (1)$\Rightarrow$(3). To this end, suppose $X \sim^{\Ga} B$, where $B^c = X \setminus B$ is a complete section and $\Ga$ is the class of universally measurable sets. If there was a $G$-invariant Borel probability measure $\mu$ on $X$, then $\mu(X) = \mu(B)$ and hence $\mu(B^c) = 0$. But since $B^c$ is a complete section, $X = \bigcup_{g \in G} gB^c$, and thus $\mu(X) = 0$, a contradiction.
\end{proof}

Now we prove an analogue of this for $i$-compressibility.

\begin{prop}\label{equivalences to i-compressibility}
	Let $X$ be a Borel $G$-space. For $i \ge 1$, the following are equivalent:
	\begin{enumerate}[(1)]
		\item $X$ is $i$-compressible with universally measurable pieces;
		\item There is a universally measurable complete section that is an $i$-traveling set with universally measurable pieces;
		\item There is a partition $\I$ of $X$ generated by $i$ Borel sets such that $Y = \f(X) \subseteq |\I|^G$ does not admit a $G$-invariant Borel probability measure;
		\item $X$ is $i$-compressible with Borel pieces;
		\item There is a Borel complete section that is an $i$-traveling set with Borel pieces.
	\end{enumerate}
\end{prop}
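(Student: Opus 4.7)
The plan is to model the proof on that of \ref{equivalences to compressibility}, using the symbolic representation map $\f$ to reduce the $i$-compressibility situation on $X$ to the ordinary compressibility situation on $Y := \f(X) \subseteq |\I|^G$. The equivalences (1)$\Leftrightarrow$(2) and (4)$\Leftrightarrow$(5) will be immediate applications of \ref{i-compressibility <=> i-traveling} with $P = X$ and $\G$ being, respectively, the universally measurable sets and $\B$. The implication (4)$\Rightarrow$(1) is trivial since Borel sets are universally measurable. So the real content is to prove (1)$\Rightarrow$(3) and (3)$\Rightarrow$(4), which together close the loop.

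For (1)$\Rightarrow$(3), I would fix a partition $\I$ generated by $i$ Borel sets and a witness $X \sim_{\I}^{\G} B$ with $[X \setminus B]_G = X$, and push the configuration forward to $Y$ via $\f$. The point is that every piece $A_n$ in the witnessing decomposition is $\E$-invariant, so $\bar{A}_n := \f(A_n)$ and $\bar{B}_n := \f(B_n)$ are well-defined analytic (hence universally measurable in $|\I|^G$) subsets of $Y$; they partition $Y$ and $\bar{B} := \f(B)$ respectively, satisfy $g_n \bar{A}_n = \bar{B}_n$ because $\f$ is a $G$-map, and $Y \setminus \bar{B} = \f(X \setminus B)$ is a complete section for $Y$ in $|\I|^G$. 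Any invariant Borel probability measure $\nu$ on $|\I|^G$ concentrating on $Y$ would then give $\nu(\bar{A}_n) = \nu(\bar{B}_n)$ by $G$-invariance, hence $\nu(Y) = \nu(\bar{B})$, so $\nu(Y \setminus \bar{B}) = 0$; but since $Y \setminus \bar{B}$ is a complete section and $G$ is countable, this forces $\nu(Y) = 0$, a contradiction, exactly as in the proof of \ref{equivalences to compressibility}.

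For (3)$\Rightarrow$(4), the main obstacle is that $Y$ is only invariant analytic in $|\I|^G$, not a priori Borel, so Nadkarni's theorem does not apply directly to $Y$; this is precisely what the reflection Lemma \ref{reflection of not having measure} is designed to bypass. Applying it to $Y$ yields an invariant Borel set $Y' \supseteq Y$ in the Borel $G$-space $|\I|^G$ that still admits no invariant Borel probability measure. Then \ref{Nadkarni's thm}, applied to $Y'$, produces a Borel compressibility witness: partitions $\{C_n\}$ of $Y'$ and $\{D_n\}$ of some Borel $C \subseteq Y'$ with $g_n C_n = D_n$ and $Y' \setminus C$ a complete section for $Y'$. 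Pulling back along the Borel $G$-map $\f$, set $A_n := \f^{-1}(C_n)$, $B_n := \f^{-1}(D_n)$, and $B := \f^{-1}(C)$: these are automatically Borel and $\E$-invariant (as preimages under $\f$), they partition $X$ and $B$ respectively, $g_n A_n = B_n$ by $G$-equivariance of $\f$, and $[X \setminus B]_G = \f^{-1}([Y' \setminus C]_G) = \f^{-1}(Y') = X$ since $Y = \f(X) \subseteq Y'$. This exhibits $X$ as $i$-compressible with Borel pieces via the same $\I$, establishing (4).
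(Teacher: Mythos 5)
Your architecture is the paper's: (1)$\Leftrightarrow$(2) and (4)$\Leftrightarrow$(5) from \ref{i-compressibility <=> i-traveling}, (4)$\Rightarrow$(1) trivially, then (1)$\Rightarrow$(3)$\Rightarrow$(4). Your (3)$\Rightarrow$(4) is correct and matches the paper's (3)$\Rightarrow$(5): apply \ref{reflection of not having measure} to get a Borel invariant $Y' \supseteq Y$ admitting no invariant measure, apply Nadkarni to $Y'$, pull back by $\f$; the preimage pieces are automatically $\E$-invariant and $[\f^{-1}(Y' \setminus C)]_G = \f^{-1}([Y' \setminus C]_G) = \f^{-1}(Y') = X$ finishes it.

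There is, however, a gap in (1)$\Rightarrow$(3). You push the decomposition forward and justify universal measurability of $\bar A_n := \f(A_n)$ by claiming it is ``analytic.'' But under hypothesis (1) the pieces $A_n$ are only universally measurable, not Borel or analytic, and the Borel image of a universally measurable set is in general neither analytic nor universally measurable. Without knowing that $\bar A_n$ is $\nu$-measurable, the quantities $\nu(\bar A_n)$ in your contradiction argument are undefined. This is exactly why the paper invokes the Jankov--von Neumann uniformization theorem here: taking a $\s$-measurable (hence universally measurable) right inverse $h : Y \to X$ of $\f$, one has $\f(A_n) = h^{-1}(A_n)$ (using $\E$-invariance of $A_n$ together with $\f \circ h = \mathrm{id}_Y$), and preimages of universally measurable sets under universally measurable maps are universally measurable. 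The paper packages this by composing the witnessing map $\g$ with $h$ to obtain a universally measurable witnessing map $\dt = \g \circ h$ for $Y \sim^{\G} \f(B)$, and then quotes (1)$\Rightarrow$(3) of \ref{equivalences to compressibility}. Your decomposition is in fact the same one, and your inlined contradiction is fine once measurability is secured, but the ``analytic'' justification must be replaced by the uniformization argument.
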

\begin{proof}
	Equivalence of (1) and (2) as well as (4) and (5) is asserted in \cref{i-compressibility <=> i-traveling} and (4)$\Rightarrow$(1) is trivial. It remains to show (1)$\Rightarrow$(3)$\Rightarrow$(5).
	
	\noindent (1)$\Rightarrow$(3): Suppose $X \sim_{\I}^{\Ga} B$, where $B^c = X \setminus B$ is a complete section, $\I$ is a partition of $X$ generated by $i$ Borel sets, and $\Ga$ denotes the class of universally measurable sets. Let $\ga : X \to G$ be a witnessing map for $X \sim_i^{\Ga} B$. By the Jankov-von Neumann uniformization theorem (see \cite[18.1]{bible}), $\f$ has a $\s$-measurable (hence universally measurable) right inverse $h : Y \to X$. Define $\de : Y \to G$ by $\de(y) = \ga(h(y))$ and note that $\de$ is universally measurable being a composition of such functions. Letting $B' = \hat{\de}(Y)$, it is straightforward to check that $\hat{\de} \circ \f = \f \circ \hat{\ga}$ and thus $B' = \f(\hat{\ga}(X)) = \f(B)$. Now it follows that $\de$ is a witnessing map for $Y \sim^{\Ga} B'$ and hence $Y$ is compressible with universally measurable pieces. Finally, (1)$\Rightarrow$(3) of \cref{equivalences to compressibility} implies that $Y$ does not admit an invariant Borel probability measure.
	
	\noindent (3)$\Rightarrow$(5): Assume $Y$ is as in (3). Then by \cref{reflection of not having measure}, there is a Borel $G$-invariant $Y' \supseteq Y$ that does not admit a $G$-invariant Borel probability measure. Viewing $Y'$ as a Borel $G$-space, we apply (3)$\Rightarrow$(4) of \cref{equivalences to compressibility} and get that $Y'$ is compressible with Borel pieces; thus there is a Borel $B' \subseteq Y'$ with $[Y' \setminus B']_G = Y'$ such that $Y' \sim B'$. Let $\de : Y' \to G$ be a witnessing map for $Y' \sim B'$. Put $B = \f^{-1}(B')$ and $\ga = \de \circ \f$. By definition, $\ga$ is $\EI$-invariant. In fact, it is straightforward to check that $\ga$ is a witnessing map for $X \sim_{\I} B$ and $[X \setminus B]_G = [\f^{-1}(Y \setminus B')]_G = \f^{-1}([Y \setminus B']_G) = \f^{-1}(Y) = X$. Hence $X$ is $\I$-compressible.
\end{proof}

\medskip

We now give an example of a $1$-traveling set. First we need some definitions.

\begin{defn}\label{defn:smoothness_via_transversal}
	Let $X$ be a Borel $G$-space. A set $A \subseteq X$ is called
	\begin{itemize}
		\item aperiodic if it intersects every orbit in either $0$ or infinitely many points;
		
		\item a partial transversal if it intersects every orbit in at most one point;
		
		\item a transversal if it intersects every orbit in exactly one point;
		
		\item smooth if there is a Borel partial transversal $T \subseteq A$ such that $[T]_G = [A]_G$.
	\end{itemize}
	Finally, the action $G \actson X$ is called smooth if $X$ is a smooth set, i.e. admits a Borel transversal.
\end{defn}

\begin{prop}\label{transversals are 1-traveling}
	Let $X$ be an aperiodic Borel $G$-space and $T \subseteq X$ be Borel. If $T$ is a partial transversal, then $T$ is $\gen{T}$-traveling.
\end{prop}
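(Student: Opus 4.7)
I plan to unpack the claim as follows: $T$ being $\gen{T}$-traveling means (i) $T$ respects $\gen{T}$, and (ii) there exist pairwise disjoint Borel sets $\set{A_n}_{n \in \N}$ with $A_0 = T$ such that $T \sim_{\gen{T}} A_n$ for every $n$. Setting $\I := \gen{T}$ so that $\E$ denotes the induced equivalence $F_\I$, claim (i) is immediate: if $x \in T$ and $y \in [T]_G$ satisfy $y \E x$, then $x = 1_G x$ and $y = 1_G y$ lie in the same piece of $\gen{T}$, forcing $y \in T$. For (ii), by Proposition \ref{witnessing map} it suffices to produce, for each $n \in \N$, a Borel $\E$-invariant map $\g_n : T \to G$ with $\g_0 \equiv 1_G$ such that the sets $A_n := \hat{\g}_n(T)$ are pairwise disjoint.

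The key observation is that on $T$ the relation $\E$ admits a very simple description: since $T$ is a partial transversal, for $x \in T$ we have $gx \in T \Leftrightarrow gx = x \Leftrightarrow g \in \mathrm{Stab}(x)$. Consequently, for $x,y \in T$, we have $x \E y$ if and only if $\mathrm{Stab}(x) = \mathrm{Stab}(y)$, so arranging $\E$-invariance of $\g_n \rest{T}$ reduces to requiring that $\g_n(x)$ depend only on $\mathrm{Stab}(x)$.

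I construct $\g_n$ as follows. Fix an enumeration $G = \set{h_k}_{k \in \N}$ with $h_0 = 1_G$, and recursively define $\g_n : X \to G$ by $\g_0(x) = 1_G$ and $\g_{n+1}(x) := $ the least $h_k$ such that $h_k x \notin \set{\g_j(x) x : j \le n}$. Aperiodicity of $X$ guarantees that such $h_k$ exists, and a routine induction shows each $\g_n$ is Borel. Moreover, the equation $h_k x = \g_j(x) x$ is equivalent to the coset equality $h_k \cdot \mathrm{Stab}(x) = \g_j(x) \cdot \mathrm{Stab}(x)$, so induction gives that $\g_n(x)$ depends only on $\mathrm{Stab}(x)$; in particular $\g_n \rest{T}$ is $\E$-invariant, as required.

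For pairwise disjointness of $A_n := \hat{\g}_n(T)$: if $\hat{\g}_n(x) = \hat{\g}_m(y)$ for some $x,y \in T$ and $n \neq m$, then $x$ and $y$ share a $G$-orbit, forcing $x = y$ by the partial transversal property; but then $\g_n(x) x \neq \g_m(x) x$ by construction, a contradiction. Each $A_n$ is Borel by Lemma \ref{measurability of roadmap}, and $A_0 = T$ since $\g_0 \equiv 1_G$. Proposition \ref{witnessing map} then yields $T \sim_{\gen{T}} A_n$ for every $n$, completing the proof. The only real subtlety is arranging $\E$-invariance in a Borel way, which the stabilizer-only recursion handles cleanly.
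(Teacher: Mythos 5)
Your proof is correct and follows essentially the same route as the paper: the same greedy recursion choosing the least group element moving $x$ to a point not yet visited, with aperiodicity guaranteeing well-definedness and an induction on $n$ giving $\E$-invariance. The only cosmetic difference is that you verify $\E$-invariance by observing that on a partial transversal $\E$ coincides with equality of stabilizers, whereas the paper argues via the $\E$-invariance of the images $\hat{\g}_k(T)$; the two arguments amount to the same thing.
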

\begin{proof}
	Let $G = \{g_n\}_{n \in \N}$ with $g_0 = 1_G$. For each $n \in \N$, define $\bar{n} : X \rightarrow \N$ and $\ga_n : T \to G$ recursively in $n$ as follows:
	$$
	\left\{ \begin{array}{l}
	\bar{n}(x) = \text{the least $k$ such that } g_k x \notin \{\hat{\ga}_i(x) : i<n\} \\
	\ga_n(x) = g_{\n(x)}
	\end{array}\right..
	$$
	Clearly, $\bar{n}$ and $\ga_n$ are well-defined and Borel. Define $\ga : T \to \GN$ by setting $\ga(\cdot)(n) = \ga_n$. It follows from the definitions that $\ga$ is a Borel travel guide for $T$ and hence, $T$ is a traveling set. It remains to show that $\ga$ is $\EI$-invariant, where $\I = \gen{T}$. For this it is enough to show that $\n$ is $\EI$-invariant, which we do by induction on $n$. Since it trivially holds for $n = 0$, we assume it is true for all $0 \le k < n$ and show it for $n$. To this end, suppose $x,y \in T$ with $x \EI y$, and assume for contradiction that $m := \n(x) < \n(y)$. Thus it follows that $g_m y = \hat{\ga}_k(y) \in \hat{\ga}_k(T)$, for some $k < n$. By the induction hypothesis, $\hat{\ga}_k(T)$ is $\EI$-invariant and hence, $g_m x \in \hat{\ga}_k(T)$, contradicting the definition of $\n(x)$.
\end{proof}

\begin{cor}\label{smooth sets are in C_1}
	Let $X$ be an aperiodic Borel $G$-space. If a Borel set $A \subseteq X$ is smooth, then $A \in \Cfrak_1$.
\end{cor}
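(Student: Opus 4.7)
The plan is to combine the preceding proposition on partial transversals with the characterization of compressibility via traveling sets, and then verify the slightly stricter bookkeeping required by the definition of $1^*$-compressibility (as opposed to plain $1$-compressibility) so as to land in $\C_1$ rather than $\C_2$.

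Concretely, I would start by picking a Borel partial transversal $T \subseteq A$ with $[T]_G = [A]_G =: P$, which exists because $A$ is smooth. By Proposition \ref{transversals are 1-traveling} (using that $X$ is aperiodic), $T$ is $\gen{T}$-traveling. Then, applying the converse direction of Lemma \ref{lemma i-compressibility <=> i-traveling} to the $G$-invariant set $P$ with complete section $T$, I would obtain $P \sim_{\gen{T}} (P \setminus T)$.

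The only point of care is matching the definition of $1^*$-compressibility, which requires a witnessing collection $\F$ with $|\F|=1$ and with the smaller set lying in $\F^P$. The key trick is to take $\F = \{T^c\}$ rather than $\{T\}$: since $\gen{T^c} = \gen{T}$, the equidecomposition $P \sim_{\gen{T}} (P \setminus T)$ is still witnessed by $\F$, and now $B := P \setminus T = T^c \cap P \in \F^P$, as needed. Furthermore, $[P \setminus B]_G = [T]_G = P$, and $\gen{\F}$ is automatically $P$-sensitive because $P$ is $G$-invariant. Hence $P$ is a $G$-invariant Borel superset of $A$ which is $1^*$-compressible, so $A \in \C_1$ by definition of $\C_1$. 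The main (and only) conceptual obstacle is noticing the $\F = \{T^c\}$ substitution; otherwise the proof is a direct composition of the two preceding results.
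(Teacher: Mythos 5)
Your proof is correct and is essentially the paper's own argument: the paper likewise passes through Proposition \ref{transversals are 1-traveling} and Lemma \ref{lemma i-compressibility <=> i-traveling}, and then closes the gap to $1^*$-compressibility by observing that $\I = \gen{T} = \gen{T^c}$ and $P\setminus T \in \set{T^c}^P$.
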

\begin{proof}
	Let $P = [A]_G$ and let $T$ be a Borel partial transversal with $[T]_G = P$. By \cref{transversals are 1-traveling}, $T$ is $\I$-traveling, where $\I = \gen{T}$. Hence, $P \sim_{\I} P \setminus T$, by \cref{lemma i-compressibility <=> i-traveling}. This implies that $P$ is $1^*$-compressible since $\I = \gen{T^c}$ and $P \setminus T \in \set{T^c}^P$.
\end{proof}

\subsection{Constructing finite generators using $i$-traveling sets}\label{subsection_i-traveling => fin gen}

\begin{lemma}\label{I-traveling implies finite generator}
	Let $A \in \Bfrak(X)$ be a complete section and $\I$ be an $A$-sensitive finite Borel partition of $X$. If $A$ is $\I$-traveling (with Borel pieces), then there is a Borel $2|\I|$-generator. If moreover $A \in \I$, then there is a Borel $(2|\I|-1)$-generator.
\end{lemma}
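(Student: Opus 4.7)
The plan is to construct the generator $\J$ by refining $\I$ using the complete section $A$ and the travel guide witnessing $\I$-traveling. Let $\g : A \to G^{\N}$ be an $\E$-invariant travel guide for $A$, with pairwise disjoint translates $A_n := \hat{\g}_n(A)$, and $A_0 = A$; each $A_n$ is $\E$-invariant (as shown in the argument for Proposition \ref{witnessing map} and in the construction of $\sim_{\I}$-witnesses).

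For the general case ($A \notin \I$), I would take $\J = \I \vee \{A, A^c\}$; each piece of $\I$ splits at most into two, giving $|\J| \leq 2|\I|$. For the improved bound when $A \in \I$, I would refine instead only the pieces $B \in \I \setminus \{A\}$, e.g.\ by setting $\J = \{A\} \cup \{B \cap A_1, B \setminus A_1 : B \in \I \setminus \{A\}\}$, which gives $|\J| \leq 1 + 2(|\I|-1) = 2|\I|-1$ pieces and still refines $\I$ because $A \cap A_1 = \emptyset$.

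The heart of the argument is showing that the associated symbolic map $\f[\J] : X \to |\J|^G$ is injective. Given distinct $x, y \in X$, if $\f(x) \neq \f(y)$ we are immediately done because $\J$ refines $\I$. So I would reduce to the delicate case $x \E y$ with $x \neq y$, and use the travel guide to produce some $g \in G$ for which $gx$ and $gy$ lie in different pieces of $\J$. This is where the $\I$-traveling hypothesis is essential: the injective map $\tilde{\g} : A \times \N \to X$, $(a,n) \mapsto \g_n(a)a$, endows $X$ with a Borel "coordinate system" over $A$, and combined with the $G$-equivariance of $\f[\J]$, one can extract from $\f[\J](x)$ enough information about the $A$-trajectory through $x$ to locate $x$ uniquely within its $\E$-class, using the pairwise disjointness of the $A_n$ inside each $G$-orbit.

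The main obstacle is precisely this last step: because $A$ and each $A_n$ are $\E$-invariant (as forced by $A$-sensitivity of $\I$ together with $\I$-equidecomposability), naively recording which $g$-translates of a point land in $A$ does not, on its face, distinguish two $\E$-equivalent points. The resolution is to exploit that within a single $G$-orbit $O$, the sets $A \cap O, A_1 \cap O, A_2 \cap O, \ldots$ are pairwise disjoint, hence the $G$-pattern of hits and misses of $A$ encodes the orbit-local location of $x$ relative to the travel guide's indexing; paired with the $\I$-symbolic data, this forces equality of $x$ and $y$. I expect that carrying out this step cleanly requires casework on whether $x$ and $y$ lie in the same $G$-orbit (where one uses the stabilizer structure and the injectivity of $\tilde{\g}$ to derive a contradiction from $x \neq y$) or in different orbits (where one uses that $\I$-traveling constrains the distribution of $\E$-classes across orbits compatibly with $A$'s presence as a complete section).
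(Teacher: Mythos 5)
Your concrete choices of $\J$ cannot work, and you've actually identified why in your last paragraph without drawing the conclusion. In the general case you propose $\J = \I \vee \{A, A^c\}$, and in the improved case $\J = \{A\} \cup \{B \cap A_1, B \setminus A_1 : B \in \I \setminus \{A\}\}$. But $A$ is $\E$-invariant (on all of $X$, since $A$ is a complete section respecting $\I$), and each $A_n = \hat{\g}_n(A)$ is likewise $\E$-invariant because $\g$ is an $\E$-invariant travel guide. Consequently, for any $g \in G$ and any $x \E y$, we have $gx \in A \Leftrightarrow gy \in A$ and $gx \in A_1 \Leftrightarrow gy \in A_1$, so neither refinement ever separates an $\E$-equivalent pair. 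Both of your candidate partitions have exactly the same separating power as $\I$ itself, so $\f[\J]$ cannot be injective unless $\I$ was already a generator. Your remark about the ``pattern of hits and misses of $A$'' encoding orbit-local position is not a resolution: that pattern is identical for $\E$-equivalent points, which is precisely the problem.

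The missing idea is to build a set $B$ that is \emph{not} $\E$-invariant by weaving a countable generating family into the travel guide's indexing. Fix Borel sets $\{U_n\}_{n \in \N}$ generating $\B(X)$, and set $B = \bigcup_{n \ge 1} \hat{\g}_n(A \cap U_n)$. Take $\Pa = \I \vee \gen{B}$. Then $|\Pa| \le 2|\I|$, and since $B$ is disjoint from $A = \hat{\g}_0(A)$ (by pairwise disjointness of the $\hat{\g}_n(A)$), when $A \in \I$ the piece $A$ is not split and $|\Pa| \le 2|\I| - 1$. For separation: given $x \ne y$ with $x \E y$, translate to assume $x \in A$, whence $y \in A$ by $\E$-invariance of $A$; by $\E$-invariance of $\g$ we have $\g_n(x) = \g_n(y)$ for all $n$; choose $n \ge 1$ with $x \in U_n$, $y \notin U_n$, and set $g = \g_n(x)$. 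Then $gx \in \hat{\g}_n(A \cap U_n) \subseteq B$, while $gy \in \hat{\g}_n(A) \setminus \hat{\g}_n(A \cap U_n)$, which is disjoint from $B$ because $\hat{\g}_n(A)$ meets no other $\hat{\g}_m(A)$. So $G\gen{B}$ separates $x$ and $y$. The essential point you were missing is that the index $n$ of the translate is used to encode \emph{which} $U_n$ contained $x$; the $U_n$ do distinguish $\E$-equivalent points, and the pairwise disjointness of the translates prevents cross-contamination between different indices.
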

\begin{proof}
	Let $\ga$ be an $\EI$-invariant Borel travel guide for $A$. Fix a countable family $\{U_n\}_{n \in \N}$ generating the Borel structure of $X$ and let $B = \Uone \hat{\ga}_n(A \cap U_n)$. By \cref{measurability of roadmap}, each $\hat{\ga}_n$ maps Borel sets to Borel sets and hence $B$ is Borel. Set $\J = \gen{B}$ , $\Pa = \I \vee \J$ and note that $|\Pa| \le 2 |\I|$. $A$ and $B$ are disjoint since $\{\hat{\ga}_n(A)\}_{n \in \N}$ is a collection of pairwise disjoint sets and $\hat{\ga}_0(A) = A$; thus if $A \in \I$, $|\Pa| \leq 1 + 2(|\I|-1) = 2|\I|-1$. We show that $\Pa$ is a generator, that is $G \Pa$ separates points in $X$.
	
	Let $x \neq y \in X$ and assume they are not separated by $G \I$, thus $x \EI y$. We show that $G \J$ separates $x$ and $y$. Because $A$ is a complete section, multiplying $x$ by an appropriate group element, we may assume that $x \in A$. Since $A$ respects $\I$, $A$ is $\EI$-invariant and thus $y \in A$. Also, because $\ga$ is $\EI$-invariant, $\ga_n(x) = \ga_n(y)$, $\forall n \in \N$. Let $n \ge 1$ be such that $x \in U_n$ but $y \notin U_n$. Put $g = \ga_n(x)$($= \ga_n(y)$). Then $gx = \hat{\ga}_n(x) \in \hat{\ga}_n(A \cap U_n)$ while $gy = \hat{\ga}_n(y) \notin \hat{\ga}_n(A \cap U_n)$. Hence, $gx \in B$ and $gy \notin B$ because $\ga_m(A) \cap \ga_n(A) = \0$ for all $m \ne n$ and $gy = \hat{\ga}_n(y) \in \hat{\ga}_n(A)$. Thus $G \J$ separates $x$ and $y$.
\end{proof}

Now \cref{equivalences to i-compressibility,I-traveling implies finite generator} together imply the following.

\begin{prop}\label{i-compressible => finite generator}
	Let $X$ be a Borel $G$-space and $i \ge 1$. If $X$ is $i$-compressible then there is a Borel $2^{i+1}$-generator.
\end{prop}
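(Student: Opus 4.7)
The plan is to simply concatenate Propositions \ref{equivalences to i-compressibility} and \ref{I-traveling implies finite generator}, tracking partition sizes. Since $X$ is $i$-compressible (with Borel pieces), the equivalence (4)$\Leftrightarrow$(5) of \ref{equivalences to i-compressibility} furnishes a Borel complete section $A \subseteq X$ that is $i$-traveling with Borel pieces. Unfolding Definition \ref{defn of traveling sets}, this means there is an $A$-sensitive finite Borel partition $\I$ of $X$, generated by a collection $\F$ of $i$ Borel sets, such that $A$ is $\I$-traveling with Borel pieces.

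Next I would bound $|\I|$. A partition generated by $i$ Borel sets has at most $2^i$ atoms (the non-empty Boolean atoms of $\F$), so $|\I| \le 2^i$. Now I apply Lemma \ref{I-traveling implies finite generator} to the complete section $A$ and the $A$-sensitive partition $\I$: it produces a Borel $2|\I|$-generator $\Pa = \I \vee \J$, where $\J = \gen{B}$ is built from a single Borel set $B$ obtained from a countable generating family of the Borel structure via the $\E$-invariant travel guide. Since $2|\I| \le 2 \cdot 2^i = 2^{i+1}$, this is a Borel $2^{i+1}$-generator for $X$, which is exactly what is claimed.

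There is essentially no obstacle here: the entire content of the proposition is already packaged in the preceding two results, and the only quantitative input is the trivial bound $|\gen{\F}| \le 2^{|\F|}$. One could in principle sharpen the constant slightly (e.g.\ to $2^{i+1}-1$ when $A$ itself is a Boolean atom of $\gen{\F}$, via the second clause of \ref{I-traveling implies finite generator}), but such a refinement is not needed for the stated bound.
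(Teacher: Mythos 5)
Your proof is correct and follows exactly the same route as the paper: pass from $i$-compressibility to a Borel $i$-traveling complete section via \ref{equivalences to i-compressibility}, then apply Lemma \ref{I-traveling implies finite generator} and use the bound $|\gen{\F}| \le 2^{|\F|} = 2^i$. The optional remark about sharpening to $2^{i+1}-1$ via the second clause of \ref{I-traveling implies finite generator} is a correct observation the paper does not make, but as you note it is not needed.
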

\begin{proof}
	By \cref{equivalences to i-compressibility}, there exists a Borel $i$-traveling complete section $A$. Let $\I$ witness $A$ being $i$-traveling and thus, by \cref{I-traveling implies finite generator}, there is a $2|\I| \le 2 \cdot 2^i = 2^{i+1}$-generator.
\end{proof}

\begin{example}
	For $2 \le n \le \w$, let $\Fr_n$ denote the free group on $n$ generators and let $X$ be the boundary of $\Fr_n$, i.e. the set of infinite reduced words. Clearly, the product topology makes $X$ a Polish space and $\Fr_n$ acts continuously on $X$ by left concatenation and cancellation. We show that $X$ is $1$-compressible and thus admits a Borel $2^2=4$-generator by \cref{i-compressible => finite generator}. To this end, let $a,b$ be two of the $n$ generators of $\Fr_n$ and let $X_a$ be the set of all words in $X$ that start with $a$. Then $X = (X_{a^{-1}} \cup X_{a^{-1}}^c) \sim_{\I} Y$, where $Y = bX_{a^{-1}} \cup aX_{a^{-1}}^c$ and $\I \gen{X_{a^{-1}}}$. Hence $X \sim_1 Y$. Since $X \setminus Y \supseteq X_{a^{-1}}$, $[X \setminus Y]_{\Fr_n} = X$ and thus $X$ is $1$-compressible.
\end{example}

Now we obtain a sufficient condition for the existence of an embedding into a finite Bernoulli shift.
\begin{cor}\label{k-surjection => 2k-injection}
	Let $X$ be a Borel $G$-space and $k \in \N$. If there exists a $G$-equivariant Borel map $f : X \rightarrow k^G$ such that $Y = f(X)$ does not admit a $G$-invariant Borel probability measure, then there is a $G$-equivariant Borel embedding of $X$ into $(2k)^G$.
\end{cor}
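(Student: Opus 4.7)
My plan is to transfer compressibility from $Y$ (or rather an invariant Borel superset of it) back to $X$ via the partition $\I_f := \{f^{-1}(V_n) : n < k\}$ induced by $f$, where $V_n = \{\alpha \in k^G : \alpha(1_G) = n\}$. This partition has at most $k$ atoms, and its key structural feature is that the associated equivalence relation $\E_{\I_f} = F_{\I_f}$ coincides with the fibre relation of $f$: namely, $x \, F_{\I_f} \, y$ if and only if $f(x) = f(y)$. Once I produce an $\I_f$-traveling Borel complete section $A \subseteq X$, Lemma~\ref{I-traveling implies finite generator} will yield a Borel $2|\I_f| \le 2k$-generator, equivalently a Borel $G$-embedding $X \hookrightarrow (2k)^G$.

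To produce $A$, I would first apply Lemma~\ref{reflection of not having measure} to the analytic invariant set $Y = f(X)$ to obtain a $G$-invariant Borel set $Y' \supseteq Y$ that still admits no $G$-invariant Borel probability measure. Viewing $Y'$ as a Borel $G$-space in its own right and invoking the implication (3)$\Rightarrow$(5) of Proposition~\ref{equivalences to compressibility} (which packages Nadkarni's theorem together with Lemma~\ref{lemma i-compressibility <=> i-traveling}), I obtain a Borel traveling complete section $A' \subseteq Y'$ equipped with a Borel travel guide $\dt : A' \to G^{\N}$. I then pull everything back to $X$ by setting $A := f^{-1}(A')$ and $\g := \dt \circ f : A \to G^{\N}$.

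It will remain to verify that $A$ is a complete section, that $\I_f$ is $A$-sensitive, and that $\g$ is an $F_{\I_f}$-invariant travel guide for $A$. Completeness of $A$ is immediate from the $G$-equivariance of $f$ together with $A'$ being a complete section for $Y' \supseteq f(X)$. The $F_{\I_f}$-invariance of $A$ (hence $A$-sensitivity of $\I_f$) and of $\g$ are automatic from the fibre-equivalence description of $F_{\I_f}$ noted above, since both $A$ and $\g$ factor through $f$. For the travel-guide property, $G$-equivariance of $f$ gives $f(\g(x)(n)x) = \dt(f(x))(n)f(x)$; so if $\g(x)(n)x = \g(y)(m)y$, then $\dt(f(x))(n)f(x) = \dt(f(y))(m)f(y)$, and the travel-guide property of $\dt$ on $A'$ forces $f(x) = f(y)$ and $n = m$, whence $\g(x) = \g(y)$ and $x = y$ by left cancellation. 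I do not anticipate a real obstacle; the only care needed is to apply Lemma~\ref{I-traveling implies finite generator} directly with its linear bound $2|\I_f| \le 2k$, rather than routing through Proposition~\ref{i-compressible => finite generator}, which would give only the far weaker exponential bound $2^{k+1}$.
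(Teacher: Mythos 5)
Your proof is correct and follows essentially the same route as the paper: both pass through (the proof of) $(3)\Rightarrow(5)$ of Proposition \ref{equivalences to i-compressibility} to obtain an $\I_f$-traveling Borel complete section for $X$, and both then apply Lemma \ref{I-traveling implies finite generator} directly to get the linear bound $2|\I_f| \le 2k$ rather than routing through the exponential bound of Proposition \ref{i-compressible => finite generator}. The only cosmetic difference is that you convert compressibility of $Y'$ into a traveling complete section with travel guide on $Y'$ and then pull that structure back through $f$, whereas the paper's proof of $(3)\Rightarrow(5)$ pulls back the compressibility witness $\dt : Y' \to G$ first and converts on $X$; both hinge on the same observation that anything factoring through $f$ is automatically $F_{\I_f}$-invariant.
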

\begin{proof}
	Let $\I = \I_{f}$ and hence $f = \f$. By (3)$\Rightarrow$(5) of \cref{equivalences to i-compressibility} (or rather the proof of it), $X$ admits a Borel $\I$-traveling complete section. Thus by \cref{I-traveling implies finite generator}, $X$ admits a $2|\I| = 2k$-generator and hence, there is a $G$-equivariant Borel embedding of $X$ into $(2k)^G$.
\end{proof}

\begin{lemma}\label{I generated by log(|I|)}
	Let $\I$ be a partition of $X$ into $n$ Borel sets. Then $\I$ is generated by $k = \lceil \log_2(n) \rceil$ Borel sets.
\end{lemma}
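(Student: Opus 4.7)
The plan is a straightforward binary encoding argument. Write $\I = \{A_0, A_1, \ldots, A_{n-1}\}$. Since $n \le 2^k$, we can choose an injection $i \mapsto (b^i_0, b^i_1, \ldots, b^i_{k-1}) \in \{0,1\}^k$ assigning each index $i < n$ a distinct binary label of length $k$. For each $j < k$, define the Borel set
$$
B_j = \bigcup \set{A_i : i < n \text{ and } b^i_j = 1}.
$$
The claim is that $\gen{B_0, B_1, \ldots, B_{k-1}} = \I$.

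To verify this, recall that $\gen{B_0, \ldots, B_{k-1}}$ consists of the nonempty atoms of the form
$$
C_\epsilon = \bigcap_{j<k} B_j^{\epsilon_j}, \qquad \epsilon = (\epsilon_0,\ldots,\epsilon_{k-1}) \in \set{0,1}^k,
$$
where $B_j^1 := B_j$ and $B_j^0 := X \setminus B_j$. The first step is to observe that each $A_i$ is entirely contained in the atom $C_{(b^i_0, \ldots, b^i_{k-1})}$: by construction of $B_j$, a point of $A_i$ lies in $B_j$ iff $b^i_j = 1$. Since the $A_i$ partition $X$ and the $C_\epsilon$ also partition $X$, it follows that every nonempty $C_\epsilon$ is a union of certain $A_i$'s, namely those with label $\epsilon$. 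The second step uses injectivity of the labeling: at most one $A_i$ has any given label $\epsilon$, so each nonempty $C_\epsilon$ equals exactly one $A_i$. Hence $\gen{B_0,\ldots,B_{k-1}} = \I$.

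There is no real obstacle here; the only thing to keep track of is that the $B_j$ really are Borel (which is immediate since each is a finite union of the Borel sets $A_i$) and that the encoding is injective (which is possible precisely because $n \le 2^k = 2^{\lceil \log_2 n \rceil}$).
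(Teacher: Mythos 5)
Your proposal is correct and follows essentially the same binary-encoding argument as the paper: index the pieces of $\I$ by distinct elements of $\set{0,1}^k$ (the paper simply reindexes $\I$ by $\mathbf{2^k}$, allowing empty pieces, which amounts to the same injection), set $B_j$ to be the union of pieces whose label has a $1$ in coordinate $j$, and observe that each piece is recovered as the corresponding atom $\bigcap_{j<k} B_j^{\epsilon_j}$.
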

\begin{proof}
	Since $2^k \ge n$, we can index $\I$ by the set $\mathbf{2^k}$ of all $k$-tuples of $\set{0,1}$, i.e. $\I = \{A_{\sigma}\}_{\sigma \in \mathbf{2^k}}$. For all $i < k$, put
	$$
	B_i = \bigcup_{\sigma \in \mathbf{2^k} \wedge \sigma(i) = 1} A_{\sigma}.
	$$
	Now it is clear that for all $\sigma \in \mathbf{2^k}$, $A_{\sigma} = \bigcap_{i<k} B_i^{\sigma(i)}$, where $B_i^{\sigma(i)}$ is equal to $B_i$ if $\sigma(i) = 1$, and equal to $B_i^c$, otherwise. Thus $\I = \gen{B_i : i<k}$.
\end{proof}

\begin{prop}\label{n-generator => log(n)-compressible}
	If $X$ is compressible and there is a Borel $n$-generator, then $X$ is $\lceil \log_2(n) \rceil$-compressible.
\end{prop}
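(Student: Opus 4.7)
The plan is to combine three observations already available in the text: (i) a generator makes $F_{\I}$ trivial, so $\sim_{\I}$ collapses to $\sim$; (ii) any $n$-partition can be presented as generated by $\lceil \log_2 n\rceil$ Borel sets; and (iii) ordinary compressibility is available by hypothesis.

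More concretely, let $\I = \{A_j\}_{j<n}$ be the assumed Borel $n$-generator. First I would invoke Lemma \ref{I generated by log(|I|)} to produce a collection $\F = \{B_i\}_{i<k}$ of $k = \lceil \log_2(n) \rceil$ Borel subsets of $X$ with $\gen{\F} = \I$. Next, since $\I$ generates the Borel $\sigma$-algebra of $X$ under the $G$-action, the symbolic map $f_{\I}$ is injective, so the equivalence relation $F_{\I}$ of non-separation by $G\I$ is just equality on $X$. Consequently the invariance condition appearing in Definition \ref{I-equidecomposability} (that the pieces $A_n,B_n$ be $F_{\I}$-invariant relative to $A$ and $B$) is vacuous, and $\sim_{\I}$ coincides with ordinary equidecomposability $\sim$ for arbitrary Borel subsets of $X$.

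Now apply the hypothesis that $X$ is compressible: there is a Borel $B \subseteq X$ with $[X \setminus B]_G = X$ and $X \sim B$. By the previous paragraph this upgrades for free to $X \sim_{\I} B$. Since $X$ is $G$-invariant, it is $X$-sensitive with respect to any partition (in particular $\I$), so $\F$ witnesses $X \sim_k B$ in the sense of Definition \ref{i-equidecomposability}. Combined with $[X \setminus B]_G = X$, this gives $X \prec_k X$, i.e.\ $X$ is $k$-compressible, as required.

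There is essentially no obstacle: the only substantive input is the counting Lemma \ref{I generated by log(|I|)}, and the heart of the argument is the observation that being a generator forces $F_{\I}$ to be equality, which trivialises the $\E$-invariance clause in the definition of $\sim_{\I}$. (It is worth noting that this proposition together with Proposition \ref{i-compressible => finite generator} yields the equivalence ``compressible $X$ has a finite generator $\Longleftrightarrow$ $X$ is $i$-compressible for some $i \ge 1$'' alluded to in the introduction as Corollary \ref{finite generator <=> i-compressibility}.)
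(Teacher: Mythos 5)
Your proof is correct and follows essentially the same route as the paper's: invoke Lemma \ref{I generated by log(|I|)} to bound the number of generating sets, then observe that since $\I$ is a generator, $\E$ is equality, which makes the $\E$-invariance and sensitivity conditions vacuous and collapses $\prec_{\I}$ to $\prec$. The paper's own proof is a two-line compression of exactly this argument.
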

\begin{proof}
	Let $\I$ be an $n$-generator and hence, by \cref{I generated by log(|I|)}, $\I$ is generated by $\lceil \log_2(n) \rceil$ Borel sets. Since $G \I$ separates points in $X$, each $\EI$-class is a singleton and hence $X \prec X$ implies $X \prec_{\I} X$.
\end{proof}

From \cref{i-compressible => finite generator,n-generator => log(n)-compressible} we immediately get the following corollary, which justifies the use of $i$-compressibility in studying \cref{Weiss's question}.

\begin{cor}\label{finite generator <=> i-compressibility}
	Let $X$ be a Borel $G$-space that is compressible (equivalently, does not admit an invariant Borel probability measure). $X$ admits a finite generator if and only if $X$ is $i$-compressible for some $i \ge 1$.
\end{cor}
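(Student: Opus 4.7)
The plan is to derive the corollary by directly quoting Propositions \ref{i-compressible => finite generator} and \ref{n-generator => log(n)-compressible}, one for each direction of the biconditional.

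For the ``if'' direction, suppose $X$ is $i$-compressible for some $i \ge 1$. Then Proposition \ref{i-compressible => finite generator} immediately yields a Borel $2^{i+1}$-generator, which is in particular a finite generator. This direction does not use the hypothesis that $X$ is compressible.

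For the ``only if'' direction, suppose $X$ admits a finite generator $\I$ of size $n = |\I|$. Since $\I$ is a generator, $G\I$ separates points and hence $\E$ is the identity relation; this is precisely the ingredient used in Proposition \ref{n-generator => log(n)-compressible}, whose other hypothesis (that $X$ is compressible) is assumed here. Applying that proposition gives that $X$ is $\lceil \log_2(n) \rceil$-compressible, so the required $i$ exists (replace by $\max(1, \lceil \log_2(n) \rceil)$ if one worries about the degenerate case $n = 1$, which forces $X$ to be a single orbit anyway).

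There is no real obstacle here; the work has all been done in the two cited propositions, and the corollary is just a bookkeeping combination of them. The only minor point worth mentioning explicitly is that the compressibility hypothesis is needed only for the forward direction, since $i$-compressibility already entails compressibility and hence (by Nadkarni's theorem, \ref{Nadkarni's thm}) the nonexistence of an invariant Borel probability measure.
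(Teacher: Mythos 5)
Your proof is correct and is essentially the same as the paper's, which also derives this corollary as an immediate consequence of Propositions \ref{i-compressible => finite generator} and \ref{n-generator => log(n)-compressible}. Your remark about the degenerate $n=1$ case is a reasonable bit of care, though it cannot actually arise for nonempty compressible $X$ (a $1$-generator forces $X$ to be a single point, which always carries an invariant point mass).
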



\section{Finitely additive invariant measures and $i$-compressibility}\label{section_Nadkarni's proof}

This section is mainly devoted to the following theorem, together its corollaries and proof.

\begin{theorem}\label{Nadkarni for C_4}
	Let $X$ be a Borel $G$-space. If $X$ is aperiodic, then there exists a function $m : \Bfrak(X) \times X \to [0,1]$ satisfying the following properties for all $A,B \in \Bfrak(X)$:
	\begin{enumerate}[(a)]
		\item $m(A, \cdot)$ is Borel;
		\item $m(X, x) = 1$, $\forall x \in X$;
		\item If $A \subseteq B$, then $m(A, x) \le m(B,x)$, $\forall x \in X$;
		\item $m(A, x) = 0$ off $[A]_G$;
		\item $m(A, x) > 0$ on $[A]_G$ modulo $\Cfrak_4$;
		\item $m(A,x) = m(gA, x)$, for all $g \in G$, $x \in X$ modulo $\Cfrak_3$;
		\item If $A \cap B = \0$, then $m(A \cup B,x) = m(A,x) + m(B,x)$, $\forall x \in X$ modulo $\Cfrak_4$.
	\end{enumerate}
\end{theorem}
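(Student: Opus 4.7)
The strategy is to mirror the proof of Nadkarni's theorem (\ref{Nadkarni's thm}, as carried out in Chapter 4 of \cite{BK}), but replace ordinary equidecomposability $\sim$ throughout by $\sim_i$ for small values of $i$, so that the partitions generated along the way can be controlled by the $\sigma$-ideals $\C_i$ developed in Section \ref{section_finite gen and i-compressibility}. The function $m(A,x)$ should be thought of as a ``Borel-parametrized finitely additive invariant measure'' on each orbit, and the errors modulo $\C_3$ or $\C_4$ will arise from the blow-up in the number of generators when composing or adding $\sim_i$-equidecompositions via \ref{properties of I-equidecomposability}, \ref{properties of i-equidecomposability}, and \ref{quasi-additivity}.

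The first step is to construct, for each Borel $A \subseteq X$ and each $x \in X$, a value $m(A,x) \in [0,1]$. I would define it as an infimum of combinatorial ratios of the form
\[
m(A,x) = \inf\bigl\{p/q : \text{on } [x]_G,\ q \text{ pairwise disjoint copies of } A \text{ fit, in the sense of } \preceq_j,\ \text{inside } p \text{ copies of } X\bigr\},
\]
with $j$ chosen as small as possible (e.g.\ $j = 1$ or $2$). A standard exhaustion argument shows the infimum is attained up to any $\epsilon > 0$ by a Borel packing, and Borel dependence on $x$ follows because witnesses for $\preceq_j$ can be chosen Borel-measurably (\ref{witnessing map} and \ref{measurability of roadmap}). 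The normalization $m(X,x) = 1$, monotonicity, and vanishing off $[A]_G$ are then immediate from the definition.

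Next I would verify the three nontrivial properties. For invariance modulo $\C_3$: $A$ and $gA$ are trivially $\sim_1$-equidecomposable via the partition $\gen{A}$, so composing with the $\sim_j$-packing defining $m(A,x)$ yields a $\sim_{j+1}$-packing for $gA$; the invariant set where the two infima differ sits inside a $(j+1)^*$-compressible Borel set, placing it in $\C_3$. For finite additivity modulo $\C_4$: the inequality $m(A\cup B,x) \le m(A,x)+m(B,x)$ follows by concatenating near-optimal packings of $A$ and $B$ via \ref{quasi-additivity} (doubling $j$), and the reverse direction by arguing that any near-optimal packing of $A\cup B$ decomposes, outside a $\C_4$-set of orbits, into near-optimal packings of $A$ and $B$. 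For positivity on $[A]_G$ modulo $\C_4$: if $m(A,x) = 0$ on a $G$-invariant Borel set $Y \subseteq [A]_G$, then near-optimal packings of $A$ inside $X$ combined with \ref{equivalences to i-compressibility} yield an $i$-traveling complete section of $Y$ with $i \le 4$, certifying $Y \in \C_4$.

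The main obstacle is keeping the blow-up in generator-count uniformly bounded by $4$ rather than letting it grow through limits or iteration. Since the algebra of $\sim_i$'s only adds $i$'s under composition and disjoint union, one must design the defining ratios and the near-optimal packings using a \emph{fixed} small family of generators (essentially $A$ together with a few auxiliary sets), and show that every failure of an axiom extracts a compressibility witness whose generating family is uniformly bounded in size. This bookkeeping---ensuring that all compressibility witnesses produced from violations of invariance or additivity fall in $\C_3$ or $\C_4$---is the technical heart of the argument.
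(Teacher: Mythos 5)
Your high-level strategy — mirror Nadkarni's proof with $\sim_i$ replacing $\sim$, keeping the generator count small enough that exceptional sets fall in $\C_3$ or $\C_4$ — is indeed the paper's, and your last paragraph correctly identifies the bookkeeping as the crux. The gap is your definition of $m(A,x)$. Defining it as the infimum of $p/q$ over $\preceq_j$-packings of $q$ copies of $A$ into $p$ copies of $X$ is a type-semigroup-style direct comparison, and it fails the \emph{unconditional} normalization (b): on an orbit whose restriction is paradoxical, arbitrarily many disjoint copies of $X^{[x]_G}$ fit inside $X^{[x]_G}$, so your infimum for $A = X$ would be $0$ rather than $1$ — and for aperiodic actions that is the typical orbit. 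Without an intermediate gauge, the direct ratio does not quotient out compressibility; it is infected by it. Moreover, the Borel dependence of such an infimum on $x$ requires uniformizing over packings, which is not delivered by \ref{witnessing map} and \ref{measurability of roadmap} alone, and the superadditive half of (g) — that a near-optimal packing of $A \cup B$ decomposes into near-optimal packings of $A$ and of $B$ — is asserted but not justified.

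The paper sidesteps all three issues by introducing a gauge. It first builds a Borel ``division'' function $[A/B] : X \to \N \cup \{-1,\infty\}$ via the Euclidean decomposition (Prop.\ \ref{Euclidean decomposition}), then constructs a \emph{fundamental sequence} $\{F_n\}$ of decreasing complete sections with $F_0 = X$ and $[F_n/F_{n+1}] \geq 2$ modulo $\C_3$ (Prop.\ \ref{fundamental sequence exists}), and sets $m(A,x) = \lim_n [A/F_n](x)/[X/F_n](x)$ with the convention $\infty/\infty := 1$. Then (a) is immediate from Borel-ness of each $[A/F_n]$, (b) is literally $1$ at every stage (so no modulo-anything caveat is needed), and (e), (f), (g) follow from the almost cancelation and almost additivity lemmas for $[\cdot/\cdot]$ (\ref{almost cancelation}, \ref{almost additivity}), each argued inside partitions generated by at most $3$ or $4$ fixed Borel sets so that the exceptional set lands in $\C_3$ or $\C_4$ as required.
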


\begin{remark}
	A version of this theorem is what lies at the heart of the proof of Nadkarni's theorem. The conclusions of our theorem are modulo $\Cfrak_4$, which is potentially a smaller $\sigma$-ideal than the $\sigma$-ideal of sets contained in compressible Borel sets used in Nadkarni's version. However, the price we pay for this is that part (g) asserts only finite additivity instead of countable additivity asserted by Nadkarni's version.
\end{remark}

Before proceeding with the proof of this theorem, we draw a couple of corollaries. \cref{Nadkarni for C_4} will only be used via \cref{measures on ctbl Boolean algebras}.

\begin{defn}
	Let $X$ be a Borel $G$-space. $\BA \subseteq \Bfrak(X)$ is called a Boolean $G$-algebra, if it is a Boolean algebra, i.e. is closed under finite unions and complements, and is closed under the $G$-action, i.e. $G \BA = \BA$.
\end{defn}

\begin{cor}\label{measures on ctbl Boolean algebras}
	Let $X$ be a Borel $G$-space and let $\BA \subseteq \Bfrak(X)$ be a countable Boolean $G$-algebra. For any $A \in \BA$ with $A \notin \Cfrak_4$, there exists a $G$-invariant finitely additive probability measure $\mu$ on $\BA$ with $\mu(A)>0$. Moreover, $\mu$ can be taken such that there is $x \in A$ such that $\forall B \in \BA$ with $B \cap [x]_G = \0$, $\mu(B)=0$.
\end{cor}
\begin{proof}
	Let $A \in \BA$ be such that $A \notin \Cfrak_4$. We may assume that $X = [A]_G$ by setting the (to be constructed) measure to be $0$ outside $[A]_G$.
	
	If $X$ is not aperiodic, then by assigning equal point masses to the points of a finite orbit, we will have a probability measure on all of $\Bfrak(X)$, so assume $X$ is aperiodic.
	
	Since $\Cfrak_4$ is a $\sigma$-ideal and $\BA$ is countable, \cref{Nadkarni for C_4} implies that there is a $P \in \Cfrak_4$ such that (a)-(g) of the same theorem hold on $X \setminus P$ for all $A,B \in \BA$. Since $A \notin \Cfrak_4$, there exists $x_A \in A \setminus P$. Hence, letting $\mu(B) = m(B,x_A)$ for all $B \in \BA$, conditions (b),(f) and (g) imply that $\mu$ is a $G$-invariant finitely additive probability measure on $\BA$. Moreover, since $x_A \in [A]_G \setminus P$, $\mu(A) = m(A, x_A) > 0$. Finally, the last assertion follows from condition (d).
\end{proof}

\begin{cor}
	Let $X$ be a Borel $G$-space. For every Borel set $A \subseteq X$ with $A \notin \Cfrak_4$, there exists a $G$-invariant finitely additive Borel probability measure $\mu$ (defined on all Borel sets) with $\mu(A)>0$.
\end{cor}
\begin{proof}
	The statement follows from \cref{measures on ctbl Boolean algebras} and a standard application of the Compactness Theorem of propositional logic. Here are the details.
	
	We fix the following set of propositional variables
	$$
	\Pa = \set{P_{A,r} : A \in \Bfrak(X), r \in [0,1]},
	$$
	with the following interpretation in mind:
	$$
	P_{A,r} \Leftrightarrow \text{``the measure of $A$ is $\ge r$''}.
	$$
	Define the theory $T$ as the following set of sentences: for each $A,B \in \Bfrak(X)$, $r,s \in [0,1]$ and $g \in G$,
	\begin{enumerate}[(i)]
		\item ``$P_{A,0}$''$\in T$;
		
		\item if $r > 0$, then ``$\neg P_{\0, r}$''$\in T$;
		
		\item if $s \ge r$, then ``$P_{A,s} \to P_{A,r}$''$\in T$;
		
		\item if $A \cap B = \0$, then ``$(P_{A,r} \wedge P_{B,s}) \to P_{A \cup B, r+s}$'', ``$(\neg P_{A,r} \wedge \neg P_{B,s}) \to \neg P_{A \cup B, r+s}$''$\in T$;
		
		\item ``$P_{X,1}$''$\in T$;
		
		\item ``$P_{A,r} \to P_{gA,r}$''$\in T$.
	\end{enumerate}
	
	If there is an assignment of the variables in $\Pa$ satisfying $T$, then for each $A \in \Bfrak(X)$, we can define
	$$
	\mu(A) = \sup \set{r \in [0,1] : P_{A,r}}.
	$$
	Note that due to (i), $\mu$ is well defined for all $A \in \Bfrak(X)$. In fact, it is straightforward to check that $\mu$ is a finitely additive $G$-invariant probability measure. Thus, we only need to show that $T$ is satisfiable, for which it is enough to check that $T$ is finitely satisfiable, by the Compactness Theorem of propositional logic (or by Tychonoff's theorem).
	
	Let $T_0 \subseteq T$ be finite and let $\Pa_0$ be the set of propositional variables that appear in the sentences in $T_0$. Let $\BA$ denote the Boolean $G$-algebra generated by the sets that appear in the indices of the variables in $\Pa_0$. By \cref{measures on ctbl Boolean algebras}, there is a finitely additive $G$-invariant probability measure $\mu$ defined on $\BA$. Consider the following assignment of the variables in $\Pa_0$: for all $P_{A,r} \in \Pa_0$,
	$$
	P_{A,r} :\Leftrightarrow \mu(A) \ge r.
	$$
	It is straightforward to check that this assignment satisfies $T_0$, and hence, $T$ is finitely satisfiable.
\end{proof}

\bigskip

We now start working towards the proof of \cref{Nadkarni for C_4}, following the general outline of Nadkarni's proof of \cref{Nadkarni's thm}. The construction of $m(A,x)$ is somewhat similar to that of Haar measure. First, for sets $A,B$, we define a Borel function $[A/B] : X \to \N \cup \set{-1, \infty}$ that basically gives the number of copies of $B^{[x]_G}$ that fit in $A^{[x]_G}$ when moved by group elements (piecewise). Then we define a decreasing sequence of complete sections (called a fundamental sequence below), which serves as a gauge to measure the size of a given set.

Assume throughout that $X$ is an aperiodic Borel $G$-space (although we only use the aperiodicity assumption in \cref{fundamental sequence exists} to assert that smooth sets are in $\Cfrak_1$).

\subsection{Measuring the size of a set relative to another}

\begin{lemma}[Comparability]\label{comparability}
	$\forall A,B \in \Bfrak(X)$, there is a partition $X = P \cup Q$ into $G$-invariant Borel sets such that for any $A,B$-sensitive finite Borel partition $\I$ of $X$, $A^P \prec_{\I} B^P$ and $B^Q \preceq_{\I} A^Q$.
\end{lemma}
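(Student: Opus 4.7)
The plan is to prove this by a Borel adaptation of the classical Hopf comparability theorem (as used in Nadkarni's proof in Chapter~4 of \cite{BK}), with the extra bookkeeping needed to maintain the $\E$-invariance required by $\sim_\I$, and an additional step to upgrade $\preceq_\I$ to $\prec_\I$ on one side. Fix $A, B \in \B(X)$ and an $A,B$-sensitive finite Borel partition $\I$. The first step is to construct, by a transfinite exhaustion, maximal Borel sets $A^* \subseteq A$ and $B^* \subseteq B$, both $\E$-invariant relative to $A$ and $B$ respectively, with $A^* \sim_\I B^*$. At each successor stage, if there exist nonempty Borel $C \subseteq A \setminus A^*$ and $D \subseteq B \setminus B^*$ that are $\E$-invariant relative to $A \setminus A^*$ and $B \setminus B^*$ and satisfy $C \sim_\I D$, enlarge the packing by $(C,D)$; at limit stages combine stages via orbit-disjoint countable additivity (Proposition \ref{orbit-disjoint ctbl unions}). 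Standard definability and cardinality considerations force the recursion to stabilize at a countable ordinal with Borel $A^*$ and $B^*$.

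With the maximal packing in hand, set $Q = X \setminus [B \setminus B^*]_G$ and $P = [B \setminus B^*]_G$; both are $G$-invariant Borel and partition $X$. The easy half is $Q$: by the definition of $Q$, for every $x \in Q$ we have $B \cap [x]_G \subseteq B^*$, so $B^Q = B^* \cap Q$. Restricting the witnessing map for $A^* \sim_\I B^*$ to the $G$-invariant set $Q$ preserves all pieces and $\E$-invariance relative to the restricted ambient sets, yielding $B^Q = B^* \cap Q \sim_\I A^* \cap Q \subseteq A^Q$, hence $B^Q \preceq_\I A^Q$.

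The hard half is $A^P \prec_\I B^P$. I would first argue, using the maximality of $(A^*, B^*)$, that $A \cap P \subseteq A^*$: otherwise, picking $x \in (A \setminus A^*) \cap P$, the orbit $[x]_G$ also meets $B \setminus B^*$, and a nontrivial Borel extension of the packing can be produced by uniformizing, via Lusin--Novikov, matching group elements across $\E$-classes of $(A \setminus A^*) \cap P$ and $(B \setminus B^*) \cap P$ and gluing them via orbit-disjoint countable additivity, contradicting maximality. Once $A \cap P \subseteq A^*$ is established, $A^P = A^* \cap P \sim_\I B^* \cap P \subseteq B^P$, and strictness follows because $B^P \setminus (B^* \cap P) = (B \setminus B^*) \cap P$ is by construction a complete section for $P = [B \setminus B^*]_G$, so $[B^P \setminus (B^* \cap P)]_G = P = [B^P]_G$. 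The main obstacle is this Borel $\E$-invariant extension in the maximality contradiction: a single group element generally cannot transport an entire $\E$-class in $A \setminus A^*$ into $B \setminus B^*$, so different witnessing elements must be chosen on different $\E$-classes and then glued, which is precisely where the structural content of Proposition \ref{orbit-disjoint ctbl unions} (and implicitly the passage to the $\f$-quotient $Y \subseteq |\I|^G$, where $\sim_\I$ becomes ordinary $\sim$) is used.
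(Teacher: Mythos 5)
Your proposal has a fundamental gap: it fixes the partition $\I$ at the outset and then builds $A^*, B^*$ by a transfinite exhaustion using the relation $\sim_\I$. The resulting sets $P$ and $Q$ would therefore depend on $\I$, whereas the lemma demands a \emph{single} invariant partition $X = P \cup Q$ that works simultaneously for \emph{every} $A,B$-sensitive $\I$. Since $\E$ — and hence which Borel sets are $\E$-invariant, which maps are admissible, and what the $\sim_\I$-maximal packing looks like — varies with $\I$, there is no reason the ``maximal'' $A^*, B^*$ produced by your recursion (and in turn $P = [B\setminus B^*]_G$) would be $\I$-independent. This uniformity is not cosmetic: the Euclidean decomposition proposition that uses the lemma quantifies over $\I$ after fixing $\{P_n\}$, so a $\I$-dependent $(P,Q)$ would not suffice.

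The paper's proof sidesteps this entirely by making the construction of $A^*, B^*$ refer only to $A$, $B$ and a fixed enumeration $\{g_n\}$ of $G$, never to $\I$. One runs a concrete $\omega$-step recursion: $A_0' = A$, $B_0' = B$, $B_n = B_n' \cap g_n A_n'$, $A_n = g_n^{-1}B_n$, $A_{n+1}' = A_n' \setminus A_n$, $B_{n+1}' = B_n' \setminus B_n$. Since every set in sight is a Boolean combination of translates of $A$ and $B$, and $A,B$ are $\E$-invariant (relative to their saturations) for every $A,B$-sensitive $\I$, the pieces $A_n, B_n$ are automatically $\E$-invariant for all such $\I$ at once; $\sim_\I$ then holds because each pair $(A_n, B_n)$ is matched by the single group element $g_n$. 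The ``disjoint orbits'' fact you tried to extract from maximality — $[A\setminus A^*]_G \cap [B\setminus B^*]_G = \emptyset$ — drops out immediately: if $x \in A'$ and $g_n x \in B'$, then $x\in A_n'$ and $g_n x \in B_n'$, so $g_n x \in B_n$ and $x \in A_n$, a contradiction. Your transfinite exhaustion with its appeal to ``standard definability and cardinality considerations'' and an ad hoc Lusin--Novikov uniformization step is both unnecessary and, as it stands, not an argument — but the more important defect is the $\I$-dependence described above. To repair your proof you would need either to show the maximal packing is $\I$-independent (unlikely and unproven) or, better, to switch to an explicit $\I$-free construction as above.
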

\begin{proof}
	It is enough to prove the lemma assuming $X = [A]_G \cap [B]_G$ since we can always include $[B]_G \setminus [A]_G$ in $P$ and $X \setminus [B]_G$ in $Q$.
	
	Fix an enumeration $\{g_n\}_{n \in \N}$ for $G$. We recursively construct Borel sets $A_n,B_n,A_n',B_n'$ as follows. Set $A_0' = A$ and $B_0' = B$. Assuming $A_n', B_n'$ are defined, set $B_n = B_n' \cap g_n A_n'$, $A_n = g_n^{-1} B_n$, $A_{n+1}' = A_n' \setminus A_n$ and $B_{n+1}' = B_n' \setminus B_n$.
	
	It is easy to see by induction on $n$ that for any $A,B$-sensitive $\I$, $A_n,B_n$ are $\EI$-invariant since so are $A,B$. Thus, setting $A^* = \Uni A_n$ and $B^* = \Uni B_n$, we get that $A^* \sim_{\I} B^*$ since $B_n = g_n A_n$.
	
	Let $A' = A \setminus A^*$, $B' = B \setminus B^*$ and set $P = [B']_G$, $Q = X \setminus P$.
	
	\begin{claim*}
		$[A']_G \cap [B']_G = \0$.
	\end{claim*}
	\begin{pfof}
		Assume for contradiction that $\exists x \in A'$ and $n \in \N$ such that $g_n x \in B'$. It is clear that $A' = \bigcap_{k \in \N} A_k'$, $B' = \bigcap_{k \in \N} B_k'$; in particular, $x \in A_n'$ and $g_n x \in B_n'$. But then $g_n x \in B_n$ and $x \in A_n$, contradicting $x \in A'$.
	\end{pfof}
	
	Let $\I$ be an $A,B$-sensitive partition. Then $A^P = (A^*)^P$ and hence $A^P \prec_{\I} B^P$ since $(A^*)^P \sim_{\I} (B^*)^P \subseteq B^P$ and $[B^P \setminus (B^*)^P]_G = [B']_G = P = [B^P]_G$. Similarly, $B^Q = (B^*)^Q$ and hence $B^Q \preceq_{\I} A^Q$ since $(B^*)^Q \sim_{\I} (A^*)^Q \subseteq A^Q$.
\end{proof}

\begin{defn}[Divisibility]\label{defn of divisibility}
	Let $n \le \w$, $A,B,C \in \Bfrak(X)$ and $\I$ be a finite Borel partition of $X$.
	\begin{itemize}
		\item Write $A \sim_{\I} nB \oplus C$ if there are Borel sets $A_k \subseteq A$, $k<n$, such that $\set{A_k}_{k<n} \cup \set{C}$ is a partition of $A$, each $A_k$ is $\EI$-invariant relative to $A$ and $A_k \sim_{\I} B$.
		
		\item Write $n B \preceq_{\I} A$ if there is $C \subseteq A$ with $A \sim_{\I} n B \oplus C$, and write $n B \prec_{\I} A$ if moreover $[C]_G = [A]_G$.
		
		\item Write $A \preceq_{\I} nB$ if there is a Borel partition $\{A_k\}_{k<n}$ of $A$ such that each $A_k$ is $\EI$-invariant relative to $A$ and $A_k \preceq_{\I} B$. If moreover, $A_k \prec_{\I} B$ for at least one $k < n$, we write $A \prec_{\I} nB$.
	\end{itemize}
	For $i \ge 1$, we use the above notation with $\I$ replaced by $i$ if there is an $A,B$-sensitive partition $\I$ generated by $i$ sets for which the above conditions hold.
\end{defn}

\begin{prop}[Euclidean decomposition]\label{Euclidean decomposition}
	Let $A,B \in \Bfrak(X)$ and put $R = [A]_G \cap [B]_G$. There exists a partition $\{P_n\}_{n \le \w}$ of $R$ into $G$-invariant Borel sets such that for any $A,B$-sensitive finite Borel partition $\I$ of $X$ and $n \leq \w$, $A^{P_n} \sim_{\I} nB^{P_n} \oplus C_n$ for some $C_n$ such that $C_n \prec_{\I} B^{P_n}$, if $n < \w$.
\end{prop}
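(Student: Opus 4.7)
The plan is to greedily extract disjoint copies of $B$ from $A$ via iterated use of \ref{comparability}, producing a decreasing chain $R = R_0 \supseteq R_1 \supseteq \cdots$ of $G$-invariant Borel sets; I will then set $P_n := R_n \setminus R_{n+1}$ for $n < \w$ and $P_\w := \bigcap_n R_n$. The crucial subtlety is that the statement demands a single partition $\{P_n\}_{n \le \w}$ that works for \emph{every} $A,B$-sensitive $\I$ at once, so the construction must be $\I$-free.

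Fix an enumeration of $G$. Working inside $R$, I recursively define Borel sets $M_n \subseteq A^{R_n}$ (the running remainder) and $D_n \subseteq M_n^{R_{n+1}}$ (the copy extracted at step $n$). Start with $M_0 = A^R$ and $R_0 = R$. At step $n$, apply the explicit recursive construction inside the proof of \ref{comparability}, not merely its statement, to the pair $(M_n, B^{R_n})$ within $R_n$; this yields the invariant partition $R_n = P_n \cup R_{n+1}$ together with the set $A^*$ of that proof, which I take as $D_n$ after restriction to $R_{n+1}$. Put $M_{n+1} := M_n^{R_{n+1}} \setminus D_n$. Since this recursion refers only to $A$, $B$, and the enumeration of $G$, all of the sets $R_n, P_n, D_n, M_n$ are $\I$-free. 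A parallel induction mirroring the $\E$-invariance argument in the comparability proof shows that, for every $A,B$-sensitive $\I$, each $M_n$ is $\E$-invariant (hence $\I$ is $M_n$-sensitive), the comparability conclusions deliver $M_n^{P_n} \prec_{\I} B^{P_n}$, and $D_n \sim_{\I} B^{R_{n+1}}$ with $D_n$ $\E$-invariant.

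To read off the decomposition, a telescoping induction gives $A^{R_n} = M_n \cup \bigcup_{k<n} D_k^{R_n}$ as a disjoint union, and intersecting with $P_n$ yields $A^{P_n} = M_n^{P_n} \cup \bigcup_{k<n} D_k^{P_n}$. Setting $C_n := M_n^{P_n}$, each $D_k^{P_n}$ is $\E$-invariant relative to $A^{P_n}$ and, by restricting the witness of $D_k \sim_{\I} B^{R_{k+1}}$ to the $G$-invariant subset $P_n \subseteq R_{k+1}$, is $\I$-equidecomposable with $B^{P_n}$. Combined with $C_n \prec_{\I} B^{P_n}$, this is exactly $A^{P_n} \sim_{\I} n B^{P_n} \oplus C_n$. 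For the $\w$ case, take $C_\w := \bigcap_n M_n^{P_\w}$; a similar telescoping argument shows $A^{P_\w} = C_\w \cup \bigcup_n D_n^{P_\w}$, which witnesses $A^{P_\w} \sim_{\I} \w B^{P_\w} \oplus C_\w$, and no remainder bound is required in this case. The only nontrivial obstacle is the uniformity across $\I$, overcome precisely by the $\I$-free nature of the comparability construction noted above.
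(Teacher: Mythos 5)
Your proposal is correct and follows essentially the same route as the paper's proof: iterated application of Lemma \ref{comparability} to greedily peel off copies of $B$ from $A$, producing a decreasing chain of invariant sets whose successive differences form the partition. You are right to flag the subtle point that one must invoke the explicit $\I$-free construction inside the proof of the comparability lemma (so that the extracted copy $D_n$ does not depend on $\I$); the paper's phrasing ("Let $A_n \subseteq (A')^{R_{n+1}}$ be such that $B^{R_{n+1}} \sim_{\I} A_n$") leaves this implicit, and your version makes it explicit, but the argument is the same.
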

\begin{proof}
	We repeatedly apply \cref{comparability}. For $n < \w$, recursively define $R_n, P_n, A_n, C_n$ satisfying the following:
	\begin{enumerate}[(i)]
		\item $R_n$ are invariant decreasing Borel sets such that $n B^{R_n} \preceq_{\I} A^{R_n}$ for any $A,B$-sensitive $\I$;
		
		\item $P_n = R_n \setminus R_{n+1}$;
		
		\item $A_n \subseteq R_{n+1}$ are pairwise disjoint Borel sets such that for any $A,B$-sensitive $\I$, every $A_n$ respects $\I$ and $A_n \sim_{\I} B^{R_{n+1}}$;
		
		\item $C_n \subseteq P_n$ are Borel sets such that for any $A,B$-sensitive $\I$, every $C_n$ respects $\I$ and $C_n \prec_{\I} B^{P_n}$.
	\end{enumerate}
	
	Set $R_0 = R$. Given $R_n$, $\{A_k\}_{k<n}$ satisfying the above properties, let $A' = A^{R_n} \setminus \bigcup_{k<n} A_k$. We apply \cref{comparability} to $A'$ and $B^{R_n}$, and get a partition $R_n = P_n \cup R_{n+1}$ such that $(A')^{P_n} \prec_{\I} B^{P_n}$ and $B^{R_{n+1}} \preceq_{\I} (A')^{R_{n+1}}$. Set $C_n = (A')^{P_n}$. Let $A_n \subseteq (A')^{R_{n+1}}$ be such that $B^{R_{n+1}} \sim_{\I} A_n$. It is straightforward to check (i)-(iv) are satisfied.
	
	Now let $P_\w = \bigcap_{n \in \N} R_n$ and $C_{\w} = (A \setminus \Uni A_n)^{P_\w}$. It follows from (i)-(iv) that for all $n \le \w$, $\{A_k^{P_n}\}_{k<n} \cup \{C_n\}$ is a partition of $A^{P_n}$ witnessing $A^{P_n} \sim_{\I} nB \oplus C_n$, and for all $n < \w$, $C_n \prec B^{P_n}$.
\end{proof}

For $A,B \in \Bfrak(X)$, let $\{P_n\}_{n \le \w}$ be as in the above proposition. Define
$$[A / B](x) = \left\{
\begin{array}{ll}
n & \text{if } x \in P_n, n < \w \\
\infty & \text{if } x \in P_{\w} \text{ or } x \in [A]_G \setminus [B]_G \\
0 & \text{if } x \in [B]_G \setminus [A]_G \\
-1 & \text{otherwise}
\end{array}\right..$$
Note that $[A/B] : X \to \N \cup \set{-1, \infty}$ is a Borel function by definition.

\subsection{Properties of [A/B]}

\begin{lemma}[Infinite divisibility $\Rightarrow$ compressibility]\label{infinite divisibility => compressibility}
	Let $A,B \in \Bfrak(X)$ with $[A]_G = [B]_G$, and let $\I$ be a finite Borel partition of $X$. If $\w B \preceq_{\I} A$, then $A \prec_{\I} A$.
\end{lemma}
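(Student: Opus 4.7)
The strategy is a shift argument: since $A$ contains countably many pairwise $\I$-equidecomposable copies of $B$, I would shift each copy into the next, freeing up the zeroth copy (which already has full saturation $[A]_G$) to witness $A \prec_\I A$.

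Unpacking $\omega B \preceq_\I A$ gives a Borel partition $A = C \sqcup \bigsqcup_{n \in \N} A_n$ where each $A_n$ is $\E$-invariant relative to $A$ and $A_n \sim_\I B$. Using symmetry of $\sim_\I$ and quasi-transitivity (\ref{properties of I-equidecomposability}(a), with $\I \vee \I = \I$), the chain $A_n \sim_\I B \sim_\I A_{n+1}$ yields $A_n \sim_\I A_{n+1}$; let $\mu_n : A_n \to G$ be an $\E$-invariant Borel witnessing map for $A_n \sim_\I A_{n+1}$, provided by Proposition \ref{witnessing map}. Define $A' := C \cup \bigcup_{n \geq 1} A_n$; then $A \setminus A' = A_0$, and since $A_0 \sim_\I B$ implies $[A_0]_G = [B]_G = [A]_G$, the saturation requirement $[A \setminus A']_G = [A]_G$ for $\prec_\I$ holds.

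It remains to produce a witness for $A \sim_\I A'$. Define $\alpha : A \to G$ by $\alpha|_C \equiv 1_G$ and $\alpha|_{A_n} := \mu_n$; this is Borel, and $\E$-invariant because the pieces $C, A_0, A_1, \ldots$ are all $\E$-invariant relative to $A$, so any two $\E$-equivalent points of $A$ fall in the same piece, where $\alpha$ is either constant or the $\E$-invariant map $\mu_n$. The induced map $\hat\alpha$ equals $\mathrm{id}$ on $C$ and the bijection $\hat\mu_n : A_n \to A_{n+1}$ on each $A_n$, and since $\{C\} \cup \{A_n\}_{n \geq 1}$ partitions $A'$, $\hat\alpha$ is a Borel bijection $A \to A'$. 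Applying Proposition \ref{witnessing map} in reverse gives $A \sim_\I A'$, hence $A \prec_\I A$.

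The only delicate bookkeeping is tracking $\E$-invariance of the pieces and their images throughout the shift, which reduces to the $\E$-invariance of each $\mu_n$ (guaranteed by Proposition \ref{witnessing map}) and of the partition pieces $A_n, C$ (built into the hypothesis $\omega B \preceq_\I A$, using the elementary fact that a set $\E$-invariant relative to $A_{n+1}$ is $\E$-invariant relative to $A'$). Everything else is routine.
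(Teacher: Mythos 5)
Your shift argument is exactly the paper's strategy, and the underlying construction is correct; the only issue is the justification of the last step. Proposition \ref{witnessing map} (the direction you invoke ``in reverse'') is stated under the hypothesis that $\I$ is $A$-sensitive, i.e.\ that $A$ is $\E$-invariant relative to $[A]_G$. The present lemma does not assume this of $\I$, and you do not verify it. Without it, the $\E$-invariance of your glued $\alpha : A \to G$ gives that each source piece $\alpha^{-1}(g)$ is $\E$-invariant relative to $A$, but it does not automatically give that the target pieces $g\,\alpha^{-1}(g)$ are $\E$-invariant relative to $A'$ --- and Definition \ref{I-equidecomposability} demands both.

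In your specific setup the target pieces do have this property, but for reasons particular to the shift rather than because of the witnessing-map proposition: each part of $g\,\alpha^{-1}(g)$ contained in $A_{n+1}$ is a union of pieces coming from a genuine decomposition witnessing $A_n \sim_{\I} A_{n+1}$, hence is $\E$-invariant relative to $A_{n+1}$; and the ``elementary fact'' you mention at the end (that $\E$-invariance relative to $A_{n+1}$ upgrades to $\E$-invariance relative to $A'$, because $A_{n+1}$ is itself $\E$-invariant relative to $A \supseteq A'$) finishes the check. In effect you are re-proving Observation \ref{properties of I-equidecomposability}(b). The paper's proof cites that observation directly --- having noted $A_k \sim_{\I} A_{k+1}$ and $C \sim_{\I} C$, and that the saturations $[C]_{\E}, [A_0]_{\E}, [A_1]_{\E}, \ldots$ are pairwise disjoint because each piece is $\E$-invariant relative to $A$ --- which sidesteps the sensitivity hypothesis altogether. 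So: either replace the citation of Proposition \ref{witnessing map} $\Leftarrow$ with the direct verification you sketch, or simply appeal to Observation \ref{properties of I-equidecomposability}(b) as the paper does.
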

\begin{proof}
	Let $C \subseteq A$ be such that $A \sim_{\I} \w B \oplus C$ and let $\{A_k\}_{k < \w}$ be as in \cref{defn of divisibility}. $A_k \sim_{\I} B \sim_{\I} A_{k+1}$ and hence $A_k \sim_{\I} A_{k+1}$. Also trivially $C \sim_{\I} C$. Thus, letting $A' = \bigcup_{k < \w} A_{k+1} \cup C$, we apply (b) of \cref{properties of I-equidecomposability} to $A$ and $A'$, and get that $A \sim_{\I} A'$. Because $[A \setminus A']_G = [A_0]_G = [B]_G = [A]_G$, we have $A \prec_{\I} A$.
\end{proof}

\begin{lemma}[Ambiguity $\Rightarrow$ compressibility]\label{ambiguous divisibility => compressibility}
	Let $A,B \in \Bfrak(X)$ and $\I$ be a finite Borel partition of $X$. If $nB \preceq_{\I} A \prec_{\I} nB$ for some $n \ge 1$, then $A \prec_{\I} A$.
\end{lemma}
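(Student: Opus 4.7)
The approach is to splice together the witnesses for the two hypotheses to produce a Borel bijection of $A$ onto a proper subset of itself whose complement has full saturation in $[A]_G$. To set up, unpack the hypotheses: from $nB \preceq_{\I} A$, fix a partition $A = A_1 \sqcup \cdots \sqcup A_n \sqcup C$ with each $A_k$ being $\E$-invariant relative to $A$ and $A_k \sim_{\I} B$; from $A \prec_{\I} nB$, fix a partition $A = A_1' \sqcup \cdots \sqcup A_n'$ with each $A_k'$ being $\E$-invariant relative to $A$ and Borel sets $B_k' \subseteq B$ with $A_k' \sim_{\I} B_k'$, where (after relabeling) $[B \setminus B_1']_G = [B]_G$. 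A preliminary observation is that $[A]_G = [B]_G$: the inclusion $[B]_G \subseteq [A]_G$ is witnessed by $A_1 \sim_{\I} B$, while the reverse follows from $A = \bigcup_k A_k'$ and $[A_k']_G = [B_k']_G \subseteq [B]_G$.

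Next, for each $k$ invert the witness for $A_k \sim_{\I} B$ to pull $B_k'$ back to a Borel set $A_k^* \subseteq A_k$, so that $A_k^* \sim_{\I} B_k'$ and -- crucially for the saturation step below -- $A_k \setminus A_k^* \sim_{\I} B \setminus B_k'$. Quasi-transitivity (Observation \ref{properties of I-equidecomposability}(a)), applied with the same $\I$ on both sides (noting $\I \vee \I = \I$), then chains $A_k' \sim_{\I} B_k' \sim_{\I} A_k^*$ to give $A_k' \sim_{\I} A_k^*$. Since $\{A_k'\}$ partitions $A$ and the $A_k^*$ are pairwise disjoint (as subsets of the disjoint $A_k$), iterating finite quasi-additivity (Corollary \ref{quasi-additivity}) $n-1$ times yields $A \sim_{\I} A''$, where $A'' := \bigsqcup_k A_k^*$.

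It remains to check $[A \setminus A'']_G = [A]_G$. Directly, $A \setminus A'' \supseteq A_1 \setminus A_1^* \sim_{\I} B \setminus B_1'$, so
\[
[A \setminus A'']_G \;\supseteq\; [A_1 \setminus A_1^*]_G \;=\; [B \setminus B_1']_G \;=\; [B]_G \;=\; [A]_G,
\]
which gives $A \prec_{\I} A$. The main point to be careful about is that the $\E$-invariance and sensitivity conditions required by the relation $\sim_{\I}$ are preserved at every step; but since each of $A_k', A_k, A_k^*, B_k'$ arises as an $\E$-invariant piece of $A$ or $B$ (which themselves respect $\I$, as is implicit in the hypotheses), this bookkeeping is automatic, so each invocation of quasi-transitivity and quasi-additivity is legitimate.
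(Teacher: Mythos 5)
Your proof is correct and follows essentially the same approach as the paper's: both pull the sets $B_k'$ back into the pieces $A_k$ via the witness for $A_k \sim_{\I} B$, then apply $\E$-disjoint additivity to obtain $A \sim_{\I} A''$, and finally use $[B \setminus B_1']_G = [B]_G = [A]_G$ to conclude $A \prec_{\I} A$. The paper packages the pullback step as "$A'_k \preceq_{\I} B \sim_{\I} A_k$, hence $A'_k \preceq_{\I} A_k$" and invokes Observation \ref{properties of I-equidecomposability}(b) directly rather than iterating Corollary \ref{quasi-additivity}, but this is only a difference in presentation.
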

\begin{proof}
	Let $C \subseteq A$ be such that $A \sim_{\I} nB \oplus C$ and let $\{A_k\}_{k<n}$ be a partitions of $A \setminus C$ witnessing $A \sim_{\I} nB \oplus C$. Also let $\{A'_k\}_{k<n}$ be witnessing $A \prec_{\I} nB$ with $A'_0 \prec_{\I} B$. Since $A'_k \preceq_{\I} B \sim_{\I} A_k$, $A'_k \preceq_{\I} A_k$, for all $k<n$ and $A'_0 \prec_{\I} A_0$. Note that it follows from the hypothesis that $[A]_G = [B]_G$ and hence $[A_0]_G = [A]_G$ since $[A_0]_G = [B]_G$. Thus it follows from (b) of \cref{properties of I-equidecomposability} that $A = \bigcup_{k<n} A'_k \prec_{\I} \bigcup_{k<n} A_k \subseteq A$.
\end{proof}

\begin{prop}\label{properties of [A/B]}
	Let $n \in \N$ and $A,A',B,P \in \Bfrak(X)$, where $P$ is invariant.
	\begin{enumerate}[(a)]
		\item $[A/B] \in \N$ on $[B]_G$ modulo $\Cfrak_3$.
		
		\item If $A \subseteq A'$, then $[A / B] \le [A' / B]$.
		
		\item If $[A/B] = n$ on $P$ then $n B^P \preceq_{\I} A^P \prec_{\I} (n+1) B^P$, for any finite Borel partition $\I$ that is $A,B$-sensitive. In particular, $n B^P \preceq_2 A^P \prec_2 (n+1) B^P$ by taking $\I = \gen{A,B}$.
		
		\item For $n \ge 1$, if $A^P \prec_i nB^P$, then $[A/B] < n$ on $P$ modulo $\Cfrak_{i+1}$;
		
		\item If $A^P \subseteq [B]_G$ and $nB^P \preceq_i A^P$, then $[A/B] \ge n$ on $P$ modulo $\Cfrak_{i+1}$.
	\end{enumerate}
\end{prop}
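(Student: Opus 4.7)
The plan is to derive all five parts from the Euclidean decomposition \ref{Euclidean decomposition} applied to $(A,B)$, combined with the compressibility lemmas \ref{infinite divisibility => compressibility} and \ref{ambiguous divisibility => compressibility}. Part (c) is almost immediate: if $P \subseteq P_n$ ($P$ invariant), then restricting the decomposition $A^{P_n} \sim_{\I} nB^{P_n} \oplus C_n$ with $C_n \prec_{\I} B^{P_n}$ to $P$ yields $nB^P \preceq_{\I} A^P$ and $A^P \prec_{\I} (n+1)B^P$ for any $A,B$-sensitive $\I$; the special case $\I = \gen{A,B}$ gives a $2$-generated witness, hence the ``in particular'' statement. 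For (a), observe that on $[B]_G$ the function $[A/B]$ takes the value $\infty$ precisely on $P_\omega \subseteq [A]_G \cap [B]_G$, and the Euclidean decomposition gives $\w B^{P_\omega} \preceq_{\gen{A,B}} A^{P_\omega}$ with $[A^{P_\omega}]_G = P_\omega = [B^{P_\omega}]_G$. By \ref{infinite divisibility => compressibility}, $A^{P_\omega}$ is $2$-compressible, and then \ref{A compressible => [A]_G compressible} together with \ref{i-compressible is in C(i+1)} place $P_\omega$ in $\C_3$.

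Parts (d) and (e) follow a common template. Fix an $A,B$-sensitive $i$-generated partition $\I$ that witnesses the hypothesis and partition the exceptional set along the level sets of $[A/B]$. For (d), the hypothesis $A^P \prec_i nB^P$ forces $[A]_G \cap P \subseteq [B]_G$, so the exceptional set reduces to $\bigsqcup_{n \le k \le \w}(P \cap P_k)$. On each $P \cap P_k$ with $n \le k < \w$, combining (c) (with the same $\I$) and the hypothesis gives $nB^{P \cap P_k} \preceq_{\I} A^{P \cap P_k} \prec_{\I} nB^{P \cap P_k}$, hence $A^{P \cap P_k}$ is $i$-compressible by \ref{ambiguous divisibility => compressibility}; the level $k = \w$ is analogous via \ref{infinite divisibility => compressibility}. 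In either case, \ref{A compressible => [A]_G compressible} and \ref{i-compressible is in C(i+1)} place $P \cap P_k$ in $\C_{i+1}$, and the $\sigma$-ideal property (\ref{C_i is a sigma-ideal}) gives the claim. Part (e) is symmetric: for $k < n$, (c) gives $A^{P \cap P_k} \prec_{\I} (k+1)B^{P \cap P_k} \preceq_{\I} nB^{P \cap P_k}$ while the hypothesis gives $nB^{P \cap P_k} \preceq_{\I} A^{P \cap P_k}$, again producing $i$-compressibility on each level; the boundary cases are eliminated by noting that $A^P \subseteq [B]_G$ together with $nB^P \preceq_i A^P$ (for $n \ge 1$) forces $[A]_G \cap P = [B]_G \cap P$, with the convention that the inequality ``$[A/B] \ge n$'' is read on the meaningful range $[A]_G \cup [B]_G$.

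The subtlest part is (b), the pointwise monotonicity. Orbit-wise, $[A/B](x)$ records the number of disjoint $G$-translates of $B^{[x]_G}$ packable into $A^{[x]_G}$, and this is evidently monotone in $A$; the difficulty is upgrading this to a Borel-level pointwise inequality, since the Euclidean decomposition is not canonical. My plan is to reduce (b) to (e): by (c), $[A/B] = n$ on an invariant Borel $P$ gives $nB^P \preceq_{\gen{A,A',B}} A^P \subseteq (A')^P$, and then (e) applied to $(A', B)$ with the $3$-generated $\gen{A, A', B}$ yields $[A'/B] \ge n$ on $P$ modulo $\C_4$. The main obstacle is that this produces $[A/B] \le [A'/B]$ only modulo $\C_4$, not pointwise; closing this gap likely requires either building the Euclidean decompositions of $(A,B)$ and $(A',B)$ by a common recursive construction so that they are automatically compatible on each orbit, or exploiting the orbit-wise rigidity of $[A/B]$ modulo compressibility to eliminate the exceptional set. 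This coordination between the two decompositions is the step I expect to require the most care.
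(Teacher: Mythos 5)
Your parts (a), (c), (d), (e) are correct and match the paper's proof in all essentials. The paper bundles the bad level sets into a single exceptional block $Q = \{x \in P : [A/B](x) \ge n\}$ (resp.\ $< n$) rather than partitioning into $P \cap P_k$ and invoking \ref{C_i is a sigma-ideal}, but the underlying mechanism — combine (c) with the hypothesis, apply \ref{ambiguous divisibility => compressibility} or \ref{infinite divisibility => compressibility}, and conclude via \ref{i-compressible is in C(i+1)} — is the same, and your boundary-case observations for (d) and (e) are correct.

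The genuine gap is in (b). The paper's entire treatment is the phrase "(b) and (c) follow from the definition of $[A/B]$": since $[A/B]$ is defined via a fixed Euclidean decomposition, built greedily in the proof of \ref{comparability} from a fixed enumeration of $G$, the intended point is that this greedy construction is monotone in $A$ orbit by orbit (enlarging $A$ only helps the greedy packing of $B$ at each stage of the Euclidean algorithm). You are right that this is not spelled out and deserves care, but your proposed reduction of (b) to (e) yields $[A/B] \le [A'/B]$ only modulo $\C_4$, which is strictly weaker than the stated pointwise inequality, and you explicitly leave the upgrade open. (Your reduction also needs the hypothesis $(A')^P \subseteq [B]_G$ of (e), which does not hold for arbitrary $A'$; the complement of $[B]_G$ is handled trivially, but this should be said.) To prove (b) as stated one should argue directly that the greedy Euclidean decomposition is monotone in $A$, or construct the $(A',B)$-decomposition by extending a fixed $(A,B)$-decomposition, fitting additional copies of $B$ into $A' \setminus A$ within each $P_n$. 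It is worth noting that the mod-$\C_4$ version you do obtain would in fact suffice for the only downstream use of (b), namely property (c) of Theorem \ref{Nadkarni for C_4}, which is applied exclusively through Corollary \ref{measures on ctbl Boolean algebras} over a countable Boolean $G$-algebra modulo a single $\C_4$ set; but as written your proposal does not establish the pointwise statement claimed.
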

\begin{proof}
	\noindent For (a), notice that \cref{infinite divisibility => compressibility,i-compressible is in C(i+1)} imply that $P_{\w} \in \Cfrak_3$. 
	
	For part (b), it is enough to note the following: if $X = P \cup Q$ and $X = P' \cup Q'$ are the partitions provided by \cref{comparability} when applied to $A,B$ and $A',B$, respectively, then it follows from the construction in the proof of that lemma that $Q' \supseteq Q$.
	
	Part (c) follows from the definition of $[A/B]$. 
	
	For (d), let $\I$ be an $A,B$-sensitive partition of $X$ generated by $i$ Borel sets such that $A^P \prec_{\I} nB^P$, and put $Q = \{x \in P : [A/B](x) \ge n\}$. By (c), $nB^Q \preceq_{\I} A^Q$. Thus, by \cref{ambiguous divisibility => compressibility}, $A^Q \prec_{\I} A^Q$ and hence, by \cref{i-compressible is in C(i+1)}, $[A^Q]_G = Q \in C_{i+1}$.
	
	For (e), let $\I$ be an $A,B$-sensitive partition of $X$ generated by $i$ Borel sets such that $nB^P \preceq_{\I} A^P$, and put $Q = \{x \in P : [A/B](x) < n\}$. By (c), $A^Q \prec_{\I} nB^Q$. Thus, by \cref{ambiguous divisibility => compressibility}, $A^Q \prec_{\I} A^Q$ and hence, by \cref{i-compressible is in C(i+1)}, $[A^Q]_G = Q \in C_{i+1}$.
\end{proof}

\begin{lemma}[Almost cancellation]\label{almost cancellation}
	For any $A,B,C \in X$,
	$$
	[A/B][B/C] \leq [A/C] < ([A/B] + 1)([B/C] + 1)
	$$
	on $R := [B]_G \cap [C]_G$ modulo $\Cfrak_4$.
\end{lemma}
\begin{proof}
	Let $\I = \gen{A,B,C}$.
	
	\medskip
	
	\noindent $[A/B][B/C] \leq [A/C]$: Fix integers $i,j > 0$ and let $P = \{x \in X : [A/B](x) = i \wedge [B/C](x) = j\}$. Since $i,j > 0$, $P \subseteq [A]_G \cap [B]_G \cap [C]_G$ and we work in $P$. By (c) of \cref{properties of [A/B]}, $i B \preceq_{\I} A$ and $j C \preceq_{\I} B$. Thus it follows that $ij C \preceq_{\I} A$ and hence $[A / C] \ge ij$ modulo $\Cfrak_4$ by (e) of \labelcref{properties of [A/B]}.
	
	\medskip
	
	\noindent $[A/C] < ([A/B] + 1)([B/C] + 1)$: By (a) of \labelcref{properties of [A/B]}, $[A/C], [A/B], [B/C] \in \N$ on $R$ modulo $\Cfrak_3$. Fix $i,j \in \N$ and let $Q = \{x \in R : [A/B](x) = i \wedge [B/C](x) = j\}$. We work in $Q$. By (c) of \labelcref{properties of [A/B]}, $A \prec_{\I} (i+1) B$ and $B \prec_{\I} (j+1) C$. Thus $A \prec_{\I} (i+1)(j+1) C$ and hence $[A/C] < (i+1)(j+1)$ modulo $\Cfrak_4$ by (d) of \labelcref{properties of [A/B]}.
\end{proof}

\begin{lemma}[Invariance]\label{invariance}
	For $A,F \in \Bfrak(X)$,  $\forall g \in G, [A / F] = [gA / F]$, modulo $\Cfrak_3$.
\end{lemma}
\begin{proof}
	We may assume that $X = [A]_G \cap [F]_G$. Fix $g \in G$, $n \in \N$, and put $Q = \{x \in X : [gA / F](x) = n\}$. We work in $Q$. Let $\I = \gen{A,F}$ and hence $A,gA,F$ respect $\I$. By (c) of \labelcref{properties of [A/B]}, $nF \preceq_{\I} gA$. But clearly $gA \sim_{\I} A$ and hence $nF \preceq_{\I} A$. Thus, by (e) of \labelcref{properties of [A/B]}, $[A / F] \geq n = [gA / F]$, modulo $\Cfrak_3$. By symmetry, $[gA / F] \geq  [A / F]$ (modulo $\Cfrak_3$) and the lemma follows.
\end{proof}

\begin{lemma}[Almost additivity]\label{almost additivity}
	For any $A,B,F \in X$ with $A \cap B = \0$, $[A/F] + [B/F] \leq [A \cup B / F] \leq [A/F] + [B/F] + 1$ modulo $\Cfrak_4$.
\end{lemma}
\begin{proof}
	Let $\I = \gen{A,B,F}$.
	
	\medskip
	
	\noindent $[A/F] + [B/F] \leq [A \cup B / F]$: Fix $i,j \in \N$ not both $0$, say $i>0$, and let $S = \{x \in X : [A/F](x) = i \wedge [B/F](x) = j\}$. Since $i>0$, $S \subseteq [A]_G \cap [F]_G$ and we work in $S$. By (c) of \labelcref{properties of [A/B]}, $iF^S \preceq_{\I} A^S$ and $jF^S \preceq_{\I} B^S$. Hence $(i+j)F^S \preceq_{\I} (A \cup B)^S$ and thus, by (e) of \labelcref{properties of [A/B]}, $[A \cup B / F] \ge i + j$, modulo $\Cfrak_4$.
	
	\medskip
	
	\noindent $[A \cup B / F] \leq [A/F] + [B/F] + 1$: Outside $[F]_G$, the inequality clearly holds. Fix $i,j \in \N$ and let $M = \{x \in [F]_G: [A/F](x) = i \wedge [B/F](x) = j\}$. We work in $M$. By (c) of \labelcref{properties of [A/B]}, $A \prec_{\I} (i+1)F$ and $B \prec_{\I} (j+1)F$. Thus it is clear that $A \cup B \prec_{\I} (i + j + 2)F$ and hence $[A \cup B / F] < i+j+2$, modulo $\Cfrak_4$, by (d) of \labelcref{properties of [A/B]}.
\end{proof}

\subsection{Fundamental sequence}

\begin{defn}
	A sequence $\{F_n\}_{n \in \N}$ of decreasing Borel complete sections with $F_0 = X$ and $[F_n/F_{n+1}] \geq 2$ modulo $\Cfrak_3$ is called fundamental.
\end{defn}

\begin{prop}\label{fundamental sequence exists}
	There exists a fundamental sequence.
\end{prop}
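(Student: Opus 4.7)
The plan is to construct the sequence inductively with $F_0 = X$. Given a Borel complete section $F_n$, I produce $F_{n+1}$ by invoking the standard marker lemma for aperiodic Borel $G$-spaces (due to Weiss; see also Kechris--Miller \cite{KM}): there is a Borel complete section $F_{n+1} \subseteq F_n$ such that $A := F_n \setminus F_{n+1}$ is also a complete section of $X$. Moreover, by choosing $F_{n+1}$ as a sufficiently deep term of a decreasing marker sequence with empty intersection, it can be made arbitrarily sparse within $F_n$.

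To verify $[F_n/F_{n+1}] \ge 2$ modulo $\C_3$, I fix the partition $\I = \gen{F_n, F_{n+1}}$, which is generated by two Borel sets and is sensitive to both $F_{n+1}$ and $A$. Applying Lemma \ref{comparability} to the pair $F_{n+1}, A$ yields invariant Borel sets $P, Q$ partitioning $X$ with $F_{n+1}^P \prec_{\I} A^P$ and $A^Q \preceq_{\I} F_{n+1}^Q$. On $P$, the first relation furnishes a Borel $B \subseteq A^P$ with $F_{n+1}^P \sim_{\I} B$ and $[A^P \setminus B]_G = P$, so the disjoint decomposition $F_n^P = F_{n+1}^P \sqcup B \sqcup (A^P \setminus B)$ exhibits $2 F_{n+1}^P \preceq_{\I} F_n^P$ in the sense of Definition \ref{defn of divisibility}. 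Since $\I$ is generated by two sets, Proposition \ref{properties of [A/B]}(e) then yields $[F_n/F_{n+1}] \ge 2$ on $P$ modulo $\C_3$.

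The main obstacle is the invariant set $Q$, on which $A^Q \preceq_{\I} F_{n+1}^Q$ and so $F_n^Q$ fails to contain two full disjoint copies of $F_{n+1}^Q$. My strategy for $Q$ is to iterate the halving: replace $F_{n+1}$ on $Q$ by a deeper marker $F_{n+1}' \subseteq F_{n+1}^Q$ whose complement within $F_{n+1}^Q$ is still a complete section of $Q$, reapply Lemma \ref{comparability}, and glue the refined versions of $F_{n+1}$ across orbit-disjoint invariant pieces via Proposition \ref{orbit-disjoint ctbl unions}. This produces a descending sequence of invariant ``bad'' sets $Q \supseteq Q' \supseteq Q'' \supseteq \cdots$, and the empty-intersection property of the underlying marker sequence forces the successive approximations into the ambiguous divisibility regime on any orbit surviving in $\bigcap_k Q^{(k)}$. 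Lemma \ref{ambiguous divisibility => compressibility} then supplies a compressibility witness there, placing the terminal bad set into $\C_3$ and yielding $[F_n/F_{n+1}] \ge 2$ on $X$ modulo $\C_3$. I expect the limiting step on $\bigcap_k Q^{(k)}$ to be the most delicate part, as it requires coherently assembling the iterated equidecompositions into a single witness to compressibility while keeping $\I$ generated by only two Borel sets, which is what controls membership in $\C_3$ rather than a larger $\C_i$.
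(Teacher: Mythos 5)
The paper's proof takes a genuinely different and much more direct route, and your proposal has a gap precisely at the step you flag as delicate. The paper does not fix $F_{n+1}$ in advance and then try to repair the ``bad'' set $Q$ by iteration. Instead, given the complete section $F_n$, it first notes $F_n$ is aperiodic modulo $\C_1$ (via \ref{smooth sets are in C_1}), then uses Lemma \ref{markers} applied to $F_n$ to split it into two disjoint complete sections $A, B$, applies Lemma \ref{comparability} to $(A,B)$ with $\I = \gen{A,B}$ to get $G$-invariant $P,Q$ with $A^P \prec_{\I} B^P$ and $B^Q \preceq_{\I} A^Q$, and only \emph{then defines} $F_{n+1} := A' = A^P \cup B^Q$, namely the piece that is majorized on each of $P$ and $Q$. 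This rearrangement guarantees $A' \preceq_{\I} B'$ globally (with $B' = B^P \cup A^Q$ and $F_n = A' \sqcup B'$), so $2F_{n+1} \preceq_2 F_n$ holds at once and \ref{properties of [A/B]}(e) gives $[F_n/F_{n+1}] \ge 2$ modulo $\C_3$ with no iteration at all. Choosing $F_{n+1}$ \emph{after} comparability rather than before is the key idea you are missing.

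In your scheme the treatment of $Q$ is not just delicate, it is not actually argued to close up. Each iterate $F_{n+1}^{(k)}$ yields a new partition $\I^{(k)} = \gen{F_n, F_{n+1}^{(k)}}$, and the divisibility witnesses produced at stage $k$ are $\E_{\I^{(k)}}$-invariant for that particular $\I^{(k)}$. To assemble them via Proposition \ref{orbit-disjoint ctbl unions} you need one fixed partition, sensitive to all the pieces simultaneously, and the collection $\{F_n, F_{n+1}^{(0)}, F_{n+1}^{(1)}, \dots\}$ is generated by infinitely many sets, which pushes you out of $\C_3$ into some $\C_i$ with $i$ uncontrolled (or $\w$). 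Moreover the claim that the empty-intersection property ``forces the successive approximations into the ambiguous divisibility regime'' is not substantiated: on an orbit surviving every $Q^{(k)}$, what you know is $A^{(k)} \preceq_{\I^{(k)}} F_{n+1}^{(k)}$ for each $k$ with varying $\I^{(k)}$, and there is no single two-set (or even fixed finite) partition $\I$ for which you exhibit $nB \preceq_{\I} A \prec_{\I} nB$, which is what Lemma \ref{ambiguous divisibility => compressibility} requires. The rearrangement trick of the paper sidesteps both problems simultaneously.
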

\begin{proof}
	Take $F_0 = X$. Given any complete Borel section $F$, its intersection with every orbit is infinite modulo a smooth set (if the intersection of an orbit with a set is finite, then we can choose an element from each such nonempty intersection in a Borel way and get a Borel transversal). Thus, by \cref{smooth sets are in C_1}, $F$ is aperiodic modulo $\Cfrak_1$. Now use \cref{markers} (the proof of this does not use any results from the current paper, so there is no loop) to write $F = A \cup B, A \cap B = \0$, where $A,B$ are also complete sections. Let now $P,Q$ be as in \cref{comparability} for $A,B$, and hence $A^P \prec_2 B^P, B^Q \preceq_2 A^Q$ because we can take $\I = \gen{A,B}$. Let $A' = A^P \cup B^Q, B' = B^P \cup A^Q$. Then $F = A' \cup B', A' \cap B' = \0$, $A' \preceq B'$ and $A'$ is also a complete Borel section. By (e) of \cref{properties of [A/B]}, $[F/A'] \geq 2$ modulo $\Cfrak_3$. Iterate this process to inductively define $F_n$.
\end{proof}

\subsection{Definition and properties of $m(A,x)$}

Fix a fundamental sequence $\{F_n\}_{n \in \N}$ and for any $A \in \Bfrak(X), x \in X$, define
\begin{equation}\label{eq:def_of_measure}
	m(A,x) = \lim_{n \rightarrow \infty} \frac{[A/F_n](x)}{[X/F_n](x)},
\end{equation}
if the limit exists, and $0$ otherwise. In the above fraction we define ${\infty \over \infty} = 1$. We will prove in \cref{limit exists} that this limit exists modulo $\Cfrak_4$. But first we need a lemma.

\begin{lemma}\label{limit is infinity}
	For any $A \in \Bfrak(A)$, $$\lim_{n \rightarrow \infty} [A / F_n] = \left\{\begin{array}{ll} \infty & \text{on } [A]_G \\ 0 & \text{on } X \setminus [A]_G \end{array}\right., \text{ modulo } \Cfrak_4.$$
\end{lemma}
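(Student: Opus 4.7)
The off-orbit case is immediate from the definition: if $x \notin [A]_G$, then $x \in [F_n]_G \setminus [A]_G$ for every $n$ (since each $F_n$ is a complete section), and so $[A/F_n](x) = 0$ by the definition of $[A/F_n]$. It remains to handle the case $x \in [A]_G$, where the goal is to show $[A/F_n](x) \to \infty$ modulo $\C_4$.

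The key step is to establish a uniform lower bound on $[X/F_n]$. Applying the first inequality of \ref{almost cancelation} to $X, F_n, F_{n+1}$ (with $R = X$, since each $F_n$ is a complete section) gives
$$[X/F_{n+1}] \ge [X/F_n] \cdot [F_n/F_{n+1}] \quad \text{modulo } \C_4,$$
and since $[F_n/F_{n+1}] \ge 2$ modulo $\C_3 \subseteq \C_4$ by the defining property of a fundamental sequence, induction on $n$ yields $[X/F_n] \ge 2^n$ on $X$ modulo $\C_4$; the countably many exceptional sets (one per $n$) glue to a single element of $\C_4$.

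Now fix $A$ and partition $[A]_G$ by the values of $[X/A]$. By part (a) of \ref{properties of [A/B]}, $[X/A] \in \N$ on $[X]_G = X$ modulo $\C_3$, so the $G$-invariant Borel sets
$$P_k := \set{x \in [A]_G : [X/A](x) = k}, \quad k \in \N,$$
cover $[A]_G$ modulo $\C_3$. On $P_k$, the second inequality of \ref{almost cancelation} (applied with $B = A$, $C = F_n$, whose orbits both contain $P_k$) gives
$$[X/F_n] < ([X/A]+1)([A/F_n]+1) = (k+1)\bigl([A/F_n]+1\bigr) \quad \text{modulo } \C_4.$$
Combining with the lower bound $[X/F_n] \ge 2^n$, we obtain $[A/F_n] > \tfrac{2^n}{k+1} - 1$ on $P_k$ modulo $\C_4$, so $[A/F_n] \to \infty$ on $P_k$ modulo $\C_4$.

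Finally, since $\C_4$ is a $\sigma$-ideal (\ref{C_i is a sigma-ideal}), taking the union over all $n \in \N$ and all $k \in \N$ of the exceptional $\C_4$ sets yields a single set in $\C_4$ off of which $[A/F_n] \to \infty$ pointwise on $\bigcup_k P_k = [A]_G$. The only mildly delicate point is bookkeeping the "modulo $\C_4$'' clauses so that they aggregate to a single $\C_4$-null exceptional set; this is handled uniformly by the $\sigma$-ideal property.
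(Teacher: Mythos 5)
Your proof is correct, and it takes a genuinely different route from the paper's. The paper starts from $\infty > [F_1/A] \ge [F_1/F_n][F_n/A] \ge 2^{n-1}[F_n/A]$ to conclude that $[F_n/A]$ is eventually $0$ modulo $\C_4$, then introduces the increasing sets $B_k := \set{x : [F_k/A](x) = 0}$ (which exhaust $X$ modulo $\C_4$), argues via Lemma \ref{comparability} that $[A/F_k] \ge 1$ on $B_k$, and finally uses geometric growth $[A/F_{k+n}] \ge 2^n [A/F_k]$ to finish on each $B_k$. You instead partition $[A]_G$ by the value of $[X/A]$ and sandwich: $2^n \le [X/F_n] < ([X/A]+1)([A/F_n]+1)$ on $P_k$, which gives the explicit bound $[A/F_n] > 2^n/(k+1) - 1$ directly. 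This is essentially the "dual" computation — the paper bounds $[F_n/A]$ from above and you bound $[A/F_n]$ from below — but your version is somewhat cleaner: it avoids the intermediate "eventually zero" step and the appeal to \ref{comparability} to convert $[F_k/A] = 0$ into $[A/F_k] \ge 1$, at the cost of invoking both inequalities of almost cancelation rather than only the first. Both proofs rely on the same ingredients (part (a) of \ref{properties of [A/B]}, \ref{almost cancelation}, the fundamental-sequence growth $[F_n/F_{n+1}] \ge 2$, and the $\sigma$-ideal property of $\C_4$ for aggregating countably many exceptional sets), so yours fits seamlessly into the rest of the machinery.
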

\begin{proof}
	The part about $X \setminus [A]_E$ is clear, so work in $[A]_E$, i.e. assume $X = [A]_G$. By (a) of \labelcref{properties of [A/B]} and \cref{almost cancellation}, we have
	$$
	\infty > [F_1 /A] \ge [F_1 /F_n ] [F_n / A ] \ge 2^{n-1} [F_n /A], \text{ modulo } \Cfrak_4,
	$$
	which holds for all $n$ at once since $\Cfrak_4$ is a $\sigma$-ideal. Thus $[F_n /A] \to 0$ modulo $\Cfrak_4$ and hence, as $[F_n /A] \in \N$, $[F_n /A]$ is eventually $0$, modulo $\Cfrak_4$. So if
	$$
	B_k := \{x \in [A]_G : [F / A](x) = 0\},
	$$
	then $B_k \nearrow X$, modulo $\Cfrak_4$. Now it follows from \cref{comparability} that $[A / F_k] > 0$ on $B_k$ modulo $\Cfrak_4$. But
	$$
	[A/F_{k+n}] \ge [A / F_k ] [F_k / F_{k+n} ] \ge 2^n [A / F_k ], \text{ modulo } \Cfrak_4,
	$$
	so for every $k$, $[A/F_n] \to \infty$ on $B_k$ modulo $\Cfrak_4$. Since $B_k \nearrow X$ modulo $\Cfrak_4$, we have $[A/F_n] \to \infty$ on $X$, modulo $\Cfrak_4$.
\end{proof}

\begin{prop}\label{limit exists}
	For any Borel set $A \subseteq X$, the limit in \labelcref{eq:def_of_measure} exists and is positive on $[A]_G$, modulo $\Cfrak_4$.
\end{prop}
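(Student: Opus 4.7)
My plan is to show that the sequence $a_n(x) := [A/F_n](x) / [X/F_n](x)$ is Cauchy modulo $\C_4$ using the almost cancellation lemma (\ref{almost cancelation}) as the workhorse, and then prove positivity on $[A]_G$ by the same device applied in the other direction. Throughout, I work on $[A]_G$, since off of $[A]_G$ the numerator is zero and the claim about the limit being $0$ is immediate. I also assume we are off a $\C_4$-small exceptional set on which (a) of \ref{properties of [A/B]}, Lemma \ref{limit is infinity}, and Lemma \ref{almost cancelation} all fail simultaneously for the countably many pairs appearing below; this is legitimate because $\C_4$ is a $\sigma$-ideal (\ref{C_i is a sigma-ideal}).

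For the Cauchy property, I apply \ref{almost cancelation} with $(A, F_n, F_m)$ and with $(X, F_n, F_m)$ for $n < m$ to obtain, modulo $\C_4$,
\[
[A/F_n][F_n/F_m] \le [A/F_m] < ([A/F_n]+1)([F_n/F_m]+1),
\]
and the analogous inequality with $X$ in place of $A$. Dividing the two chains gives
\[
\frac{a_n}{\bigl(1 + \frac{1}{[X/F_n]}\bigr)\bigl(1+\frac{1}{[F_n/F_m]}\bigr)} \ \le\ a_m\ \le\ \Bigl(a_n + \tfrac{1}{[X/F_n]}\Bigr)\Bigl(1 + \tfrac{1}{[F_n/F_m]}\Bigr).
\]
By Lemma \ref{limit is infinity} applied to $X$, $[X/F_n] \to \infty$ modulo $\C_4$; and since $[F_k/F_{k+1}] \ge 2$ modulo $\C_3$, iterating \ref{almost cancelation} yields $[F_n/F_m] \ge 2^{m-n}$ modulo $\C_4$, so the correction factors tend to $1$ uniformly in the relevant range. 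Because $0 \le a_n \le 1$ (by monotonicity (b) of \ref{properties of [A/B]}), this sandwich forces $|a_m - a_n| \to 0$ as $n,m \to \infty$, establishing Cauchyness modulo $\C_4$, and hence the existence of the limit $m(A,x)$.

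For positivity on $[A]_G$ modulo $\C_4$, I use \ref{almost cancelation} the other way: applied to the triple $(X, A, F_n)$ it yields
\[
[X/F_n] \ < \ ([X/A]+1)([A/F_n]+1),
\]
whence
\[
a_n \ =\ \frac{[A/F_n]}{[X/F_n]} \ >\ \frac{[A/F_n]}{([X/A]+1)([A/F_n]+1)} \ =\ \frac{1}{([X/A]+1)\bigl(1 + \frac{1}{[A/F_n]}\bigr)}.
\]
By Lemma \ref{limit is infinity} applied to $A$, $[A/F_n] \to \infty$ on $[A]_G$ modulo $\C_4$, and by part (a) of \ref{properties of [A/B]}, $[X/A] \in \N$ on $[X]_G = X$ modulo $\C_3 \subseteq \C_4$. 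Taking $n \to \infty$ in the displayed inequality gives $m(A,x) \ge 1/([X/A](x)+1) > 0$ on $[A]_G$ modulo $\C_4$.

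The main technical point to watch is that each invocation of \ref{almost cancelation} and Lemma \ref{limit is infinity} is valid only modulo some $\C_i$ with $i \le 4$, but I use these countably many times (for all pairs $n < m$). Because $\C_4$ is a $\sigma$-ideal and contains all the smaller $\C_i$, I can absorb every exceptional set into a single $\C_4$-small set outside of which all the invoked statements hold simultaneously, legitimizing the pointwise limiting arguments above.
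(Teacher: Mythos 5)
Your approach is essentially the same as the paper's: both hinge on the almost--cancellation lemma (\ref{almost cancelation}) applied to triples involving $A$, $X$, and the fundamental sequence $F_n$, together with Lemma~\ref{limit is infinity} and absorption of the countably many exceptional sets into one $\C_4$-set. Your positivity argument, using the triple $(X, A, F_n)$ and the facts $[X/A] \in \N$ (from \ref{properties of [A/B]}(a)) and $[A/F_n] \to \infty$, is a little more direct than the paper's (which works through the auxiliary sets $D_i = \{[A/F_i]>0\}$ and exhausts $[A]_G$ by them); both are correct.

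There is one imprecision worth fixing in the convergence half. Writing $\e_n = 1/[X/F_n]$ and $\dt_{n,m} = 1/[F_n/F_m]$, your sandwich is
\[
\frac{a_n}{(1+\e_n)(1+\dt_{n,m})} \ \le\ a_m\ \le\ (a_n+\e_n)(1+\dt_{n,m}).
\]
You then say the correction factors tend to $1$ ``uniformly in the relevant range,'' so $|a_m - a_n|\to 0$ as $n,m\to\infty$. But $\dt_{n,m}$ controls only the gap $m-n$, not the size of $n$ and $m$ separately: for $m=n+1$ one only has $[F_n/F_{n+1}]\ge 2$, so $\dt_{n,n+1}$ can equal $1/2$ for every $n$, and the factor $(1+\dt_{n,m})$ stays at $3/2$. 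Thus the sandwich does \emph{not} directly bound $|a_{n+1}-a_n|$ by a quantity tending to $0$, and the Cauchy claim as phrased doesn't follow. The correct order of limits is: hold $n$ fixed and let $m\to\infty$ (here $\dt_{n,m}\to 0$ since $[F_n/F_m]\ge 2^{m-n}$), yielding
\[
\frac{a_n}{1+\e_n}\ \le\ \underline{\lim}_m a_m\ \le\ \overline{\lim}_m a_m\ \le\ a_n+\e_n,
\]
and then let $n\to\infty$ to conclude $\overline{\lim}\, a_m \le \underline{\lim}\, a_n$, hence convergence. This is exactly the shape of the Claim in the paper's proof. With that reordering, your argument is complete.
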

\begin{proof}
	\begin{claim*}
		Suppose $B,C \in \Bfrak(X)$, $i \in \N$ and $D_i = \{x \in X : [C / F_i](x) > 0\}$. Then
		$$
		\overline{\lim} {[B/F_n ] \over [C/F_n]} \le {[B/F_i] + 1 \over [C/F_i]}
		$$
		on $D_i$, modulo $\Cfrak_4$.
	\end{claim*}
	\begin{pfof}
		Working in $D_i$ and using \cref{almost cancellation}, $\forall j$ we have (modulo $\Cfrak_4$)
		\begin{align*}
		[B/F_{i+j}] & \le ([B/F_i ]+1) ([F_i / F_{i+j}] + 1) \\
		[C/F_{i+j}] & \ge [C/F_i ] [ F_i /F_{i+j}] > 0,
		\end{align*}
		so
		\begin{align*}
		{[B/F_{i+j}]\over [C/F_{i+j}]} & \le {[B/F_i ]+1 \over [C/F_i ]}
		\cdot {[F_i /F_{i+j}]+1\over [F_i /F_{i+j}]} \\
		& \le {[B/F_i ]+1\over [C/F_i ]} \cdot (1 + {1\over 2^j}),
		\end{align*}
		from which the claim follows.
	\end{pfof}
	
	Applying the claim to $B = A$ and $C = X$ (hence $D_i = X$), we get that for all $i \in \N$
	$$
	\overline{\lim_{n \to \infty}} {[A/F_n ](x)\over [X/F_n ](x)} \le {[A/F_i ](x)+1 \over [X/F_i ](x)} (\text{modulo } \Cfrak_4).
	$$
	Thus
	$$
	\overline{\lim_{n \to \infty}} {[A/F_n ]\over [X/F_n ]} \le \underline{\lim_{i \to \infty}} {[A/F_i
		]+1\over [X/F_i ]} = \underline{\lim_{i \to \infty}} {[A/F_i ]\over [X/F_i ]}
	$$
	since $\lim_{i \to \infty} {1 \over [X/F_i]} = 0$.
	
	To see that $m(A,x)$ is positive on $[A]_E$ modulo $\Cfrak_4$ we argue as follows. We work in $[A]_G$. Applying the above claim to $B = X$ and $C = A$, we get
	$$
	{1 \over m(A,x)} = \lim_{n \to \infty} {[X/F_n] \over [A/F_n]} \le {[X/F_i] + 1 \over [A/F_i]} < \infty \text{ on } D_i \text{ (modulo $\Cfrak_4$)}.
	$$
	Thus $m(A,x) > 0$ on $\bigcup_{i \in \N} D_i$, modulo $\Cfrak_4$. But $D_i \nearrow [A]_G$ because $[A/F_i] \to \infty$ as $i \to \infty$, and hence $m(A,x) > 0$ on $[A]_G$ modulo $\Cfrak_4$.
\end{proof}

\subsection{Proof of \cref{Nadkarni for C_4}}

Fix $A,B \in \Bfrak(X)$. The fact that $m(A,x) \in [0,1]$ and parts (b) and (d) follow directly from the definition of $m(A,x)$. Part (a) follows from the fact that $[A / F_n]$ is Borel for all $n \in \N$. (c) follows from (b) of \cref{properties of [A/B]}, and (e) and (f) are asserted by \cref{limit exists,invariance}, respectively.

To show (g), we argue as follows. By \cref{almost additivity}, $[A/F_n] + [B/F_n] \leq [A \cup B / F_n] \leq [A/F_n] + [B/F_n] + 1$, modulo $\Cfrak_4$, and thus $$\frac{[A/F_n]}{[X/F_n]} + \frac{[B/F_n]}{[X/F_n]} \leq \frac{[A \cup B / F_n]}{[X/F_n]} \leq \frac{[A/F_n]}{[X/F_n]} + \frac{[B/F_n]}{[X/F_n]} + \frac{1}{[X/F_n]},$$ for all $n$ at once, modulo $\Cfrak_4$ (using the fact that $\Cfrak_4$ is a $\sigma$-ideal). Since $[X/F_n] \geq 2^n$, passing to the limit in the inequalities above, we get $m(A,x) + m(B,x) \leq m(A \cup B,x) \leq m(A,x) + m(B,x)$. \hfill{QED (\cref{Nadkarni for C_4})}


\section{Finite generators in the case of $\sigma$-compact spaces}

In this section we prove that the answer to \cref{Weiss's question} is positive in case $X$ has a $\sigma$-compact realization. To do this, we first prove \cref{fin additive >0 on compact => ctbl additive}, which shows how to construct a countably additive invariant probability measure on $X$ using a finitely additive one. We then use \cref{measures on ctbl Boolean algebras} to conclude the result.

For the next two statements, let $X$ be a second countable Hausdorff topological space equipped with a continuous action of $G$.

\begin{lemma}\label{invariant metric outer measure}
	Let $\U \subseteq Pow(X)$ be a countable basis for $X$ closed under the $G$-action and finite unions/intersections. Let $\rho$ be a $G$-invariant finitely additive probability measure on the $G$-algebra generated by $\U$. For every $A \subseteq X$, define
	$$
	\mu^* (A) = \inf \{ \sum_{n \in \N} \rho(U_n ) : U_n \in \U \; \wedge \; A \subseteq \bigcup_{n \in \N} U_n\}.
	$$
	Then:
	\begin{enumerate}[(a)]
		\item $\mu^*$ is a $G$-invariant outer measure.
		
		\item If $K \subseteq X$ is compact, then $K$ is metrizable and $\mu^*$ is a metric outer measure on $K$ (with respect to any compatible metric).
	\end{enumerate}
\end{lemma}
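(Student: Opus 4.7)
\noindent\emph{Plan.} Part (a) is a routine verification from the formula defining $\mu^*$: I would check $\mu^*(\emptyset) = 0$ via the empty cover, monotonicity because any $\U$-cover of $B$ also covers every $A \subseteq B$, countable subadditivity by the standard $\varepsilon/2^n$-trick (pick a $\U$-cover of each $A_n$ with $\rho$-mass within $\varepsilon/2^n$ of $\mu^*(A_n)$ and concatenate), and $G$-invariance because if $\set{U_n}$ is a $\U$-cover of $A$, then $\set{gU_n}$ is a $\U$-cover of $gA$ of identical $\rho$-mass, using $G\U = \U$ together with the $G$-invariance of $\rho$.

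For part (b), I would first observe that $K$ is metrizable by Urysohn's theorem: it is compact Hausdorff (the Hausdorff property is inherited from $X$) and second countable (the restrictions of the countable base $\U$ to $K$ form a countable base for $K$). Fix any compatible metric $d$. To prove the metric outer measure property, let $A, B \subseteq K$ have $d(A, B) = \delta > 0$; the inequality $\mu^*(A \cup B) \le \mu^*(A) + \mu^*(B)$ is immediate from (a), so the task is the reverse direction.

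The key geometric input is that $\overline{A}^K$ and $\overline{B}^K$ are disjoint compact subsets of $X$ (they stay at distance $\ge \delta$, and compact subsets of the Hausdorff space $X$ are closed), so Hausdorffness of $X$ supplies disjoint open sets $U_A \supseteq \overline{A}^K$ and $U_B \supseteq \overline{B}^K$ in $X$. Given $\varepsilon > 0$, I take a $\U$-cover $\set{V_n}$ of $A \cup B$ with $\sum_n \rho(V_n) < \mu^*(A \cup B) + \varepsilon$. Using that $\U$ is a base, for each $n$ I refine: cover $V_n \cap A$ by $\U$-sets $P_{n,i} \subseteq V_n \cap U_A$ and $V_n \cap B$ by $\U$-sets $Q_{n,j} \subseteq V_n \cap U_B$. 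Then $\set{P_{n,i}}$ covers $A$ and $\set{Q_{n,j}}$ covers $B$, and disjointness of $U_A, U_B$ forces pointwise disjointness $P_{n,i} \cap Q_{n',j} = \emptyset$ for all indices.

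The main obstacle is controlling the $\rho$-mass of the refined covers when $\rho$ is only finitely additive: a naive sum $\sum_i \rho(P_{n,i})$ of an overlapping $\U$-cover can exceed $\rho(V_n)$. The lever is closure of $\U$ under finite unions combined with finite additivity of $\rho$: for every $N$, the sets $\bigcup_{i \le N} P_{n,i}$ and $\bigcup_{j \le N} Q_{n,j}$ lie in $\U$, are disjoint, and are contained in $V_n$, so
\[
\rho\Bigl(\bigcup_{i \le N} P_{n,i}\Bigr) + \rho\Bigl(\bigcup_{j \le N} Q_{n,j}\Bigr) \le \rho(V_n).
\]
The plan is to organize the refinement so that these finite unions increase in $N$ and exhaust $V_n \cap U_A$ and $V_n \cap U_B$ respectively; treating each $\bigcup_{i \le N} P_{n,i}$ as a single-$\U$-set whose $\rho$-mass bounds the relevant piece of $\mu^*(V_n \cap A)$ from above, and then splitting $\mu^*(A)$ and $\mu^*(B)$ across the $V_n$'s by the countable subadditivity of (a), one obtains $\mu^*(A) + \mu^*(B) \le \sum_n \rho(V_n) + O(\varepsilon) < \mu^*(A \cup B) + O(\varepsilon)$. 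Letting $\varepsilon \to 0$ completes the proof.
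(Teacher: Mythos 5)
Your part (a) and the metrizability observation are correct and match the paper. Your identification of the ``main obstacle'' in part (b) is also exactly right: a naive refinement of an overlapping $\U$-cover can overcount badly under a merely finitely additive $\rho$. But the resolution you sketch has a gap: you propose to ``organize the refinement so that these finite unions increase in $N$ and exhaust $V_n \cap U_A$,'' and to treat $\bigcup_{i\le N} P_{n,i}$ as ``a single $\U$-set whose $\rho$-mass bounds the relevant piece of $\mu^*(V_n\cap A)$.'' For that bound you would need $V_n \cap A$ (or at least a set containing it) to be covered by a \emph{finite} subfamily $\{P_{n,i}\}_{i\le N}$, and without invoking compactness there is no such finite $N$; the exhaustion never terminates, and the countable collection $\{\bigcup_{i\le N} P_{n,i}\}_{n,N}$ has a divergent $\rho$-sum since its terms increase in $N$ for fixed $n$. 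So as written the argument does not close.

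The missing ingredient is to invoke compactness \emph{once, at the outset}, rather than trying to refine inside each $V_n$. The paper does exactly this: since $\overline{A}^K$ and $\overline{B}^K$ are compact, the separating open sets $U_A \supseteq \overline{A}^K$ and $U_B \supseteq \overline{B}^K$ may themselves be taken to be \emph{finite} unions of basic sets contained in the original separating opens, hence $U_A, U_B \in \U$ (using that $\U$ is a base and is closed under finite unions). After that, no further refinement is needed: given a $\U$-cover $\{W_n\}$ of $A\cup B$ with $\sum_n\rho(W_n)\le\mu^*(A\cup B)+\varepsilon$, the families $\{W_n\cap U_A\}_n$ and $\{W_n\cap U_B\}_n$ are $\U$-covers of $A$ and $B$ respectively (closure of $\U$ under finite intersections), and term-by-term
\[
\rho(W_n\cap U_A)+\rho(W_n\cap U_B)=\rho\bigl(W_n\cap(U_A\cup U_B)\bigr)\le\rho(W_n)
\]
by disjointness and finite additivity. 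Summing gives $\mu^*(A)+\mu^*(B)\le\sum_n\rho(W_n)\le\mu^*(A\cup B)+\varepsilon$. Your outline would become correct if you replaced the cover of $V_n\cap A$ by a cover of the compact set $\overline{A}^K$ and extracted a finite subcover --- but that is precisely what putting the compactness to work up front, as the paper does, accomplishes more cleanly.
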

\begin{proof}
	It is a standard fact from measure theory that $\mu^*$ is an outer measure. That $\mu^*$ is $G$-invariant follows immediately from $G$-invariance of $\rho$ and the fact that $\U$ is closed under the action of $G$.
	
	For (b), first note that by Urysohn metrization theorem, $K$ is metrizable, and fix a metric on $K$. If $E, F \subseteq K$ are a positive distance apart, then so are $\bar{E}$ and $\bar{F}$. Hence there exist disjoint open sets $U,V$ such that $\bar{E} \subseteq U$, $\bar{F} \subseteq V$. Because $\bar{E}$ and $\bar{F}$ are compact, $U,V$ can be taken to be finite unions of sets in $\U$ and therefore $U,V \in \U$.
	
	Now fix $\e>0$ and let $W_n \in \U$, be such that $E \cup F \subseteq \bigcup_n W_n$ and
	\begin{equation}\label{eq:outer_measure_almost_ctbl_additivity}
		\sum_n \rho(W_n) \le \mu^*(E \cup F) + \e \leq \mu^*(E) + \mu^*(F) +\e.
	\end{equation}
	Note that $\{W_n \cap U\}_{n \in \N}$ covers $E$, $\{W_n \cap V\}_{n \in \N}$ covers $F$ and $W_n \cap U, W_n \cap V \in \U$. Also, by finite additivity of $\rho$,
	$$
	\rho(W_n \cap U) + \rho(W_n \cap V) = \rho(W_n \cap (U \cup V)) \leq \rho(W_n).
	$$
	Thus
	$$
	\mu^*(E) + \mu^*(F) \le \sum_n \rho(W_n \cap U) + \sum_n \rho(W_n \cap V) \le \sum_n \rho(W_n),
	$$
	which, together with \labelcref{eq:outer_measure_almost_ctbl_additivity}, implies that $\mu^*(E \cup F) = \mu^*(E) + \mu^*(F)$ since $\e$ is arbitrary.
\end{proof}

\begin{prop}\label{fin additive >0 on compact => ctbl additive}
	Suppose there exist a countable basis $\U \subseteq Pow(X)$ for $X$ and a compact set $K \subseteq X$ such that the $G$-algebra generated by $\U \cup \set{K}$ admits a finitely additive $G$-invariant probability measure $\rho$ with $\rho(K)>0$. Then there exists a countably additive $G$-invariant Borel probability measure on $X$.
\end{prop}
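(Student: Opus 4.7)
The plan is to leverage Lemma~\ref{invariant metric outer measure}: applying it to the restriction of $\rho$ to the subalgebra generated by $\U$ produces a $G$-invariant outer measure $\mu^*$ on $X$ which, by part~(b), is a metric outer measure on the compact metrizable $K$. Carathéodory's theorem for metric outer measures then yields that every Borel subset of $K$ is $\mu^*$-measurable, so $\mu^*$ restricted to $\B(K)$ is an honest countably additive finite Borel measure on $K$. Compactness of $K$ together with $\U$ being a base gives a finite subcover argument showing $\mu^*(K) < \infty$, while finite additivity of $\rho$ applied to any such finite subcover shows $\mu^*(K) \ge \rho(K) > 0$.

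I would then propagate this to a $G$-invariant countably additive Borel measure on all of $X$. The family of $\mu^*$-measurable sets is a $G$-invariant $\sigma$-algebra (inherited from the $G$-invariance of $\mu^*$), so for every $g \in G$ all Borel subsets of the translate $gK$ are also $\mu^*$-measurable. Fixing an enumeration $G = \{g_n\}_{n \in \N}$ and forming the partition $A_n = g_n K \setminus \bigcup_{m < n} g_m K$ of $[K]_G$, every Borel subset of $[K]_G$ decomposes as a countable disjoint union of Borel pieces sitting inside the $g_n K$, and is therefore $\mu^*$-measurable. Hence
$$
\nu(B) := \mu^*(B \cap [K]_G), \qquad B \in \B(X),
$$
defines a countably additive $G$-invariant Borel measure on $X$ (the $G$-invariance coming from that of $[K]_G$ and of $\mu^*$), and it is nonzero because $\nu(K) = \mu^*(K) > 0$.

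The final step, which I expect to be the main obstacle, is to check that $\nu(X) = \mu^*([K]_G) = \sum_n \mu^*(A_n)$ is finite so one can normalize to a probability. The key leverage is the yet-unused hypothesis that $\rho$ is a probability measure with $\rho(K) > 0$: since the $A_n$ are pairwise disjoint elements of the algebra generated by $\U \cup \{K\}$, finite additivity forces $\sum_n \rho(A_n) \le \rho(X) = 1$. Thus it would suffice to prove the comparison $\mu^*(A_n) \le \rho(A_n)$ for each $n$, and this is the technical heart of the argument. Since $\mu^*(A_n)$ is defined as an infimum over covers by sets in $\U$ alone (which does not contain $K$), the inequality cannot be read off directly from the definition; rather, one must exploit compactness of $g_n K$ to reduce $\U$-covers of $A_n$ to finite subcovers and then use finite additivity of $\rho$ on the algebra generated by $\U \cup \{K\}$ together with the Carathéodory-measurability of basic $\U$-sets restricted to the compact metrizable $g_n K$ to match the $\U$-cover estimate against $\rho(A_n)$. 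Once the comparison is established, $\nu(X) \le 1$ and $\nu / \nu(X)$ is the desired $G$-invariant countably additive Borel probability measure on $X$.
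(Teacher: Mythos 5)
Your outline matches the paper's approach up to the finiteness step: apply Lemma~\ref{invariant metric outer measure} to get a $G$-invariant outer measure $\mu^*$ that is metric on $K$, use Carath\'eodory to make Borel subsets of $K$ (hence of each $gK$, hence of $Y := [K]_G$) $\mu^*$-measurable, restrict to get a countably additive $G$-invariant Borel measure on $Y$, extend by zero, and lower-bound $\mu^*(K)\ge\rho(K)>0$ via the finite subcover argument. All of that is correct and is what the paper does.

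The gap is in your treatment of finiteness. The inequality $\mu^*(A_n)\le\rho(A_n)$ that you declare to be "the technical heart" is not provable from the hypotheses and is in fact false in general. By definition $\mu^*(A_n)$ is an infimum over countable $\U$-covers of $A_n$, so it is a kind of $\U$-outer-regularization of $\rho$; a merely \emph{finitely} additive $\rho$ has no reason to be outer regular with respect to $\U$, and one can easily arrange $\mu^*(A_n)>\rho(A_n)$ even for $A_n=K$ (for instance when $\rho$ assigns the algebraic neighborhoods of $K$ a fixed amount of extra mass, coming from a finitely additive mean, that no open cover can shave off). The compactness you invoke also does not help here: $A_n = g_nK\setminus\bigcup_{m<n}g_mK$ is a relatively \emph{open} subset of $g_nK$, so it is typically not compact and $\U$-covers of $A_n$ do not admit finite subcovers. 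Fortunately the termwise comparison is unnecessary. The correct route is continuity from below along the increasing exhaustion of $Y$ by the \emph{compact} sets $L_N:=\bigcup_{n<N}g_nK$: each $L_N$ is compact, so a finite subcover gives $V_N\in\U$ with $L_N\subseteq V_N$, hence $\mu^*(L_N)\le\rho(V_N)\le 1$; since each $L_N$ is a Borel, hence $\mu^*$-measurable, subset of $Y$ and $L_N\nearrow Y$, countable additivity of the Carath\'eodory measure gives $\mu(Y)=\lim_N\mu^*(L_N)\le 1$. This is the same compactness mechanism you already used for $\mu^*(K)\ge\rho(K)$, applied to the compacts $L_N$ rather than to the non-compact slices $A_n$, and it is what justifies the paper's ``$\mu(Y)\le 1$.''
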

\begin{proof}
	Let $K, \U$ and $\rho$ be as in the hypothesis. We may assume that $\U$ is closed under the $G$-action and finite unions/intersections. Let $\mu^*$ be the outer measure provided by \cref{invariant metric outer measure} applied to $\U$, $\rho$. Thus $\mu^*$ is a metric outer measure on $K$ and hence all Borel subsets of $K$ are $\mu^*$-measurable (see \cite[13.2]{Munroe}). This implies that all Borel subsets of $Y = [K]_G = \bigcup_{g \in G} gK$ are $\mu^*$-measurable because $\mu^*$ is $G$-invariant. By Carath\'{e}odory's theorem, the restriction of $\mu^*$ to the Borel subsets of $Y$ is a countably additive Borel measure on $Y$, and we extend it to a Borel measure $\mu$ on $X$ by setting $\mu(Y^c) = 0$. Note that $\mu$ is $G$-invariant and $\mu(Y) \le 1$.
	
	It remains to show that $\mu$ is nontrivial, which we do by showing that $\mu(K) \ge \rho(K)$ and hence $\mu(K)>0$. To this end, let $\{U_n\}_{n \in \N} \subseteq \U$ cover $K$. Since $K$ is compact, there is a finite subcover $\{U_n\}_{n < N}$. Thus $U := \bigcup_{n < N} U_n \in \U$ and $K \subseteq U$. By finite additivity of $\rho$, we have
	$$
	\sum_{n \in \N} \rho(U_n) \ge \sum_{n < N} \rho(U_n) \ge \rho(U) \ge \rho(K),
	$$
	and hence, it follows from the definition of $\mu^*$ that $\mu^*(K) \ge \rho(K)$. Thus $\mu(K) = \mu^*(K) > 0$.
\end{proof}

\begin{cor}\label{invariant measure for compact}
	Let $X$ be a second countable Hausdorff topological $G$-space whose Borel structure is standard. For every compact set $K \subseteq X$ not in $\Cfrak_4$, there is a $G$-invariant countably additive Borel probability measure $\mu$ on $X$ with $\mu(K) > 0$.
\end{cor}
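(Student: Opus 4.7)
The plan is to simply splice together the two main tools developed in this section and the previous one. Since $X$ is second countable and Hausdorff with a standard Borel structure, every open set is Borel, so I would fix a countable base $\U \subseteq \B(X)$ for the topology of $X$. Let $\BA$ denote the Boolean $G$-algebra generated by $\U \cup \set{K}$. Because $G$ is countable and $\U \cup \set{K}$ is countable, $\BA$ is a countable Boolean $G$-algebra contained in $\B(X)$, and of course $K \in \BA$.

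Now apply Corollary \ref{measures on ctbl Boolean algebras} to this $\BA$ and to the set $K$, which lies in $\BA \setminus \C_4$ by hypothesis. This yields a $G$-invariant finitely additive probability measure $\rho$ on $\BA$ with $\rho(K) > 0$. At this point the hypotheses of Proposition \ref{fin additive >0 on compact => ctbl additive} are all in place: a countable base $\U$, the compact set $K$, and a finitely additive $G$-invariant probability measure $\rho$ defined on the $G$-algebra generated by $\U \cup \set{K}$ (namely $\BA$) with $\rho(K) > 0$. The proposition then produces a countably additive $G$-invariant Borel probability measure $\mu$ on $X$, and inspection of its proof (the last paragraph, estimating $\mu^*(K)$ from below by a finite subcover) gives $\mu(K) \ge \rho(K) > 0$.

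There is essentially no obstacle beyond bookkeeping, since the previous two results were designed precisely to be combined: the $\sigma$-ideal $\C_4$ is chosen small enough that \ref{measures on ctbl Boolean algebras} still furnishes a finitely additive measure positive on any $A \notin \C_4$, and \ref{fin additive >0 on compact => ctbl additive} converts such a measure into a genuine countably additive Borel measure under the compactness hypothesis. The only point requiring a moment's care is to ensure that $\U$ and $K$ sit inside one and the same countable Boolean $G$-algebra on which \ref{measures on ctbl Boolean algebras} applies, which is achieved by closing $\U \cup \set{K}$ under $G$ and finite Boolean operations.
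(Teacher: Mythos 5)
Your argument is exactly the paper's proof: fix a countable base $\U$, form the countable Boolean $G$-algebra generated by $\U \cup \set{K}$, invoke Corollary \ref{measures on ctbl Boolean algebras} to get a finitely additive $G$-invariant probability measure positive on $K$, and then apply Proposition \ref{fin additive >0 on compact => ctbl additive}. Correct, with no substantive difference from the paper.
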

\begin{proof}
	Fix any countable basis $\U$ for $X$ and let $\BA$ be the Boolean $G$-algebra generated by $\U \cup \set{K}$. By \cref{measures on ctbl Boolean algebras}, there exists a $G$-invariant finitely additive probability measure $\rho$ on $\BA$ such that $\rho(K) > 0$. Now apply \cref{fin additive >0 on compact => ctbl additive}.
\end{proof}

As a corollary, we derive the analogue of Nadkarni's theorem for $\Cfrak_4$ in case of $\sigma$-compact spaces.
\begin{cor}\label{Nadkarni for sigma-compact}
	Let $X$ be a Borel $G$-space that admits a $\sigma$-compact realization. $X \notin \Cfrak_4$ if and only if there exists a $G$-invariant countably additive Borel probability measure on $X$.
\end{cor}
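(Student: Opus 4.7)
The plan is to prove the two implications separately, with the forward direction reducing routinely to Corollary \ref{invariant measure for compact} via $\sigma$-compactness, and the converse being a direct consequence of the definition of $\C_4$ together with the basic compatibility between compressibility and invariant measures.

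For the easy direction, suppose $\mu$ is a $G$-invariant countably additive Borel probability measure on $X$, and assume toward contradiction that $X \in \C_4$. By the definition of $\C_4$ applied with $P = X$ (which is $G$-invariant and contains $X$), the whole space $X$ is $4^*$-compressible, and in particular $X \prec_4 X$, so $X \prec X$ in the sense of Definition \ref{defn of equidec}. Hence there is a Borel $X' \subseteq X$ with $X \sim X'$ and $[X \setminus X']_G = X$. From $X \sim X'$ and invariance of $\mu$ one gets $\mu(X) = \mu(X')$, so $\mu(X \setminus X') = 0$; but since $X \setminus X'$ is a complete section, countable subadditivity and $G$-invariance give $\mu(X) \le \sum_{g \in G} \mu(g(X \setminus X')) = 0$, contradicting $\mu(X) = 1$.

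For the forward direction, assume $X \notin \C_4$ and fix a $\sigma$-compact realization of $X$, which by the definition of topological realization is a Hausdorff second countable topology on $X$ with standard Borel structure $\B(X)$ and continuous $G$-action. Write $X = \bigcup_{n \in \N} K_n$ with each $K_n$ compact in this topology. Since by Proposition \ref{C_i is a sigma-ideal} the class $\C_4$ is a $\sigma$-ideal, $X \notin \C_4$ forces some $K_{n_0} \notin \C_4$. Now apply Corollary \ref{invariant measure for compact} to this topological $G$-space and the compact set $K_{n_0}$: it yields a $G$-invariant countably additive Borel probability measure $\mu$ on $X$ with $\mu(K_{n_0}) > 0$, which is exactly what we need.

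No real obstacle remains, since the hard work has been absorbed into Corollaries \ref{measures on ctbl Boolean algebras} and \ref{invariant measure for compact}: the first produces a finitely additive invariant measure on a countable Boolean $G$-algebra giving mass to any set outside $\C_4$, and the second upgrades such a measure, when it charges a compact set, to a genuine countably additive Borel probability measure via the metric outer measure construction of Proposition \ref{fin additive >0 on compact => ctbl additive}. The only thing the present corollary adds is the observation that $\sigma$-compactness together with the $\sigma$-ideal property of $\C_4$ automatically supplies a compact set outside $\C_4$ to which that machinery can be applied.
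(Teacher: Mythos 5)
Your proof is correct and follows the paper's argument exactly: the converse direction reduces $\C_4$-membership to ordinary compressibility and invokes the standard fact that compressible spaces admit no invariant probability measure, while the forward direction uses $\sigma$-compactness plus the $\sigma$-ideal property of $\C_4$ (Proposition~\ref{C_i is a sigma-ideal}) to find a compact $K \notin \C_4$ and then applies Corollary~\ref{invariant measure for compact}. You simply spell out the compressibility argument in slightly more detail than the paper does.
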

\begin{proof}
	\noindent $\Leftarrow$: If $X \in \Cfrak_4$, then it is compressible in the usual sense and hence does not admit a $G$-invariant Borel probability measure.
	
	\noindent $\Rightarrow$: Suppose that $X$ is a $\sigma$-compact topological $G$-space and $X \notin \Cfrak_4$. Then, since $X$ is $\sigma$-compact and $\Cfrak_4$ is a $\sigma$-ideal, there is a compact set $K$ not in $\Cfrak_4$. Now apply \cref{invariant measure for compact}.
\end{proof}

\begin{remark}
	For a Borel $G$-space $X$, let $\K$ denote the collection of all subsets of invariant Borel sets that admit a $\sigma$-compact realization (when viewed as Borel $G$-spaces). Also, let $\Cfrak$ denote the collection of all subsets of invariant compressible Borel sets. It is clear that $\K$ and $\Cfrak$ are $\sigma$-ideals, and what \cref{Nadkarni for sigma-compact} implies is that $\Cfrak \cap \K \subseteq \Cfrak_4$.
\end{remark}

\begin{theorem}\label{sigma-compact compressible => finite generator}
	Let $X$ be a Borel $G$-space that admits a $\sigma$-compact realization. If there is no $G$-invariant Borel probability measure on $X$, then $X$ admits a Borel $32$-generator.
\end{theorem}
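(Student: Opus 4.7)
The plan is to observe that this theorem is essentially an immediate consequence of two results already established in the excerpt: Corollary \ref{Nadkarni for sigma-compact} (the $\sigma$-compact analogue of Nadkarni's theorem for the $\sigma$-ideal $\C_4$) and Proposition \ref{i-compressible => finite generator} (which converts $i$-compressibility into a $2^{i+1}$-generator). The arithmetic works out exactly: $2^{4+1} = 32$, which explains the constant in the statement.

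First I would use the hypothesis that $X$ admits no $G$-invariant Borel probability measure together with Corollary \ref{Nadkarni for sigma-compact} to conclude that $X \in \C_4$. Since $X$ is itself $G$-invariant, unwinding the definition of $\C_4$ shows that $X$ is $4^*$-compressible, i.e., there exist a Borel set $B \subseteq X$ and a collection $\F$ of $4$ Borel sets witnessing $X \sim_4 B$ with $[X \setminus B]_G = X$. In particular, $X$ is $4$-compressible in the sense of Definition \ref{defn i-compressibility}.

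Next I would invoke Proposition \ref{i-compressible => finite generator} with $i = 4$ to produce a Borel $2^{i+1} = 32$-generator on $X$. This proposition was proved by combining the equivalence \ref{equivalences to i-compressibility} (which converts $i$-compressibility into the existence of a Borel $i$-traveling complete section) with Lemma \ref{I-traveling implies finite generator} (which builds a $2|\I|$-generator out of any $\I$-traveling complete section); applied here, it gives exactly a $32$-generator.

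The only place any real work has been done is upstream: the hard step was already carried out in Section \ref{section_Nadkarni's proof} and in Proposition \ref{fin additive >0 on compact => ctbl additive}, where a finitely additive invariant probability measure obtained from non-$\C_4$-ness (via Corollary \ref{measures on ctbl Boolean algebras}) was promoted to a countably additive one by exploiting the metric outer measure construction on a compact subset not in $\C_4$. Given those tools, the proof of Theorem \ref{sigma-compact compressible => finite generator} reduces to the two-line chain above and requires no further obstacle to overcome.
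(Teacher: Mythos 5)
Your proposal is correct and follows exactly the same two-step chain as the paper's own proof: apply Corollary \ref{Nadkarni for sigma-compact} to get $X \in \C_4$ (hence $X$ is $4$-compressible, since $X$ is invariant so the defining set $P$ must be $X$ itself), and then apply Proposition \ref{i-compressible => finite generator} with $i=4$ to obtain a $2^5 = 32$-generator. The identification of where the real work lies (the promotion of the finitely additive measure to a countably additive one via the metric outer measure on a compact set) is also accurate.
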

\begin{proof}
	By \cref{Nadkarni for sigma-compact}, $X \in \Cfrak_4$ and hence, $X$ is $4$-compressible. Thus, by \cref{i-compressible => finite generator}, $X$ admits a Borel $2^5$-generator.
\end{proof}

\begin{example}
	Let $LO$ denote the set of all linear orderings of $\N$ with the ordering relation symbol $<$; this can be modeled as a closed subset of $2^{\N^2}$, so it is a compact Polish space. We think of each $x \in LO$ as a structure $(\N,<_x)$, where $<_x$ is the linear ordering of $\N$ according to $x$ and we write, for example, $7 <_x 5$ to mean that $7$ is less than $5$ according to $x$.
	
	Letting $G$ be the group of finite permutations of elements of $\N$, we see that $G$ is countable and acts continuously on $LO$ in the natural way: 
	$$
	n <_{gx} m \shortiff g^{-1}(n) <_x g^{-1}(m),
	$$
	for $n, m \in \N, g \in G, x \in LO$; this is referred to as \emph{the logic action}. 
	
	Put $Y = LO \setminus DLO$, where $DLO$ denotes the set of all dense linear orderings without endpoints (copies of $\Q$). It is straightforward to see that $DLO$ is a $G_{\de}$ subset of $LO$, hence $Y$ is $F_{\sigma}$ and therefore $\sigma$-compact since $LO$ is compact. Also clearly $Y$ is $G$-invariant.
	
	Let $\mu$ be the unique measure on $LO$ defined by $\mu(V_{(F,<_F)}) = {1 \over n!}$, where $(F,<_F)$ is a finite linearly ordered subset of $\N$ of cardinality $n$ and $V_{(F,<_F)}$ is the set of all linear orderings of $\N$ extending the order $<_F$ on $F$. It is not hard to check (shown in \cite{GW}) that $\mu$ is the unique invariant measure for the action of $G$ on $LO$ and $\mu(Y) = 0$. Thus, there is no $G$-invariant Borel probability measure on $Y$ and hence, by \cref{sigma-compact compressible => finite generator}, $Y$ admits a $32$-generator. However, as pointed out by Todor Tsankov and the referee, $LO$ (and hence also $Y$) already has an obvious $2$-generator: namely, the partition generated by the set $\set{x \in LO : 0 <_x 1}$. 
	
	Nevertheless, we can modify this example to make the application of \cref{sigma-compact compressible => finite generator} more fruitful by considering multiple relations instead of just one. For example, for $n \ge 1$, let $LO_n$ be the set of all $n$-tuples of linear orderings of $\N$ with the ordering relation symbols $<^0, <^1, ..., <^{n-1}$, and, as before, consider the natural (logic) action of $G$ on $LO_n$; this can be modeled as the coordinatewise (diagonal) action of $G$ on $LO_n := LO^n$, i.e. $g (x_0,x_1,...,x_{n-1}) = (gx_0,gx_1,...,gx_{n-1})$. The Polish $G$-space $LO_n$ has an obvious $2^n$-generator: namely, the partition generated by the sets $\set{x \in LO_n : 0 <^i_x 1}$, $i<n$. However, letting $X = LO^n \setminus DLO^n$, we again see that it is $G$-invariant and $\sigma$-compact. Moreover, $X$ does not admit an invariant probability measure because otherwise, one of the sets of the form $LO^i \times Y \times LO^{n-i-1}$ would have positive measure, so the pushforward measure under the projection onto the $i^\text{th}$ coordinate would be a nontrivial finite invariant measure on $Y$, but we argued above that such a measure does not exist. Thus, by \cref{sigma-compact compressible => finite generator}, $X$ has a $32$-generator, which gives us something new when $n > 5$.
\end{example}

\section{Finitely traveling sets}

Throughout this section, let $X$ be a Borel $G$-space.

\begin{defn}\label{defn of lfin equidec}
	Let $A,B \in \Bfrak(X)$ be equidecomposable, i.e. there are $N \leq \infty$, $\{g_n\}_{n < N} \subseteq G$ and Borel partitions $\{A_n\}_{n < N}$ and $\{B_n\}_{n < N}$ of $A$ and $B$, respectively, such that $g_n A_n = B_n$ for all $n < N$. $A,B$ are said to be
	\begin{itemize}
		\item locally finitely equidecomposable (denote by $A \sim_{\lfin} B$), if $\{A_n\}_{n < N},\{B_n\}_{n < N},\{g_n\}_{n < N}$ can be taken so that for every $x \in A$, $A_n \cap [x]_G = \0$ for all but finitely many $n<N$;
		
		\item finitely equidecomposable (denote by $A \sim_{\fin} B$), if $N$ can be taken to be finite.
	\end{itemize}
\end{defn}

The notation $\prec_{\fin}$, $\prec_{\lfin}$ and the notions of finite and locally finite compressibility are defined analogous to \cref{defn of equidec,defn of compressibility}.

\begin{defn}\label{defn of fin traveling sets}
	A Borel set $A \subseteq X$ is called (locally) finitely traveling if there exists pairwise disjoint Borel sets $\{A_n\}_{n \in \N}$ such that $A_0 = A$ and $A \sim_{\fin} A_n$ ($A \sim_{\lfin} A_n$), $\forall n \in \N$.
\end{defn}

\begin{prop}\label{finitely compressible => finitely traveling}
	If $X$ is (locally) finitely compressible then $X$ admits a (locally) finitely traveling Borel complete section.
\end{prop}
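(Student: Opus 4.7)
The plan is to mirror the proof of Lemma~\ref{lemma i-compressibility <=> i-traveling}, adding the bookkeeping needed to preserve the finite, respectively locally finite, character of the witnessing map through the iteration. Assume first that $X$ is finitely compressible: fix a Borel set $B \subseteq X$ with $[X \setminus B]_G = X$ and a Borel witness $\g : X \to G$ for $X \sim_{\fin} B$, i.e.\ $\hat\g : X \to B$ is a Borel bijection and the image $F := \g(X) \subseteq G$ is finite. The set $A := X \setminus B$ is then a Borel complete section by the very meaning of $\prec_{\fin}$, and I will show that it is finitely traveling via the sequence $A_n := (\hat\g)^n(A)$, $n \ge 0$.

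Since $\g$ has finite image, $\hat\g$ is Borel and maps Borel sets to Borel sets (Lemma~\ref{measurability of roadmap}), so each $A_n$ is Borel. Pairwise disjointness comes for free from the injectivity of $\hat\g$: if $m > n \ge 0$, then $\hat\g^{m-n}(A) \subseteq \hat\g(X) = B$ is disjoint from $A$, and applying $\hat\g^n$ yields $A_n \cap A_m = \emptyset$. To witness $A \sim_{\fin} A_n$, define Borel maps $h_k : X \to G$ recursively by $h_0 \equiv 1_G$ and $h_{k+1}(x) = \g(\hat\g^k(x)) \cdot h_k(x)$; an easy induction shows $\hat h_k = \hat\g^k$, so $\alpha_n := h_n \rest{A}$ is a Borel bijection $A \to A_n$. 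The crucial observation is that $h_n(x)$ is a product of $n$ elements of $F$, whence the range of $\alpha_n$ has at most $|F|^n$ elements; this gives $A \sim_{\fin} A_n$ for every $n$.

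For the locally finite case, the same construction goes through once global finiteness of $\g(X)$ is replaced by orbitwise finiteness. Given a Borel witness $\g : X \to G$ for $X \sim_{\lfin} B$, the image $\g(X)$ may now be countably infinite, but by definition $\g \rest{[x]_G}$ has finite range $F_x$ for each $x \in X$; this is enough to keep $\hat\g$ Borel, as a countable union of translations on Borel pieces, hence each $\hat\g^n$ is Borel as well. Iterates of $\hat\g$ preserve orbits, so for any $y \in [x]_G$ all the values $\g(y), \g(\hat\g(y)), \ldots, \g(\hat\g^{n-1}(y))$ lie in $F_x$, and therefore $h_n \rest{[x]_G}$ takes at most $|F_x|^n$ values. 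Consequently $\alpha_n$ takes only finitely many values on $[x]_G \cap A$ for every $x \in A$, which is exactly the condition that $A \sim_{\lfin} A_n$. The only point requiring care—though it is really a routine verification rather than an obstacle—is ensuring that this orbitwise finiteness is propagated through the compositional recursion defining $h_n$, and as shown this reduces to the tautological fact that each $\hat\g^i$ maps $[x]_G$ into itself.
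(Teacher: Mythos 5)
Your proof is correct and follows essentially the same route as the paper: run the first half of Lemma~\ref{lemma i-compressibility <=> i-traveling} with $A = X \setminus B$ and $A_n = (\hat\g)^n(A)$, observing that the witnessing maps for $A \sim A_n$ have (orbitwise) finite image because they are $n$-fold products of values of $\g$. Your explicit recursion $h_{k+1}(x) = \g(\hat\g^k(x))\cdot h_k(x)$ and the $|F|^n$ (resp.\ $|F_x|^n$) bound are exactly the bookkeeping the paper's one-line proof alludes to.
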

\begin{proof}
	We prove for finitely compressible $X$, but note that everything below is also locally valid (i.e. restricted to every orbit) for a locally compressible $X$.
	
	Run the proof of the first part of \cref{lemma i-compressibility <=> i-traveling} noting that a witnessing map $\ga : X \to G$ of finite compressibility of $X$ has finite image and hence the image of each $\de_n$ (in the notation of the proof) is finite, which implies that the obtained traveling set $A$ is actually finitely traveling.
\end{proof}

\begin{prop}\label{lfin traveling => X in C4}
	If $X$ admits a locally finitely traveling Borel complete section, then $X \in \Cfrak_4$.
\end{prop}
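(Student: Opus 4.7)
My plan is to prove this by contradiction via the finitely additive measure produced by Corollary \ref{measures on ctbl Boolean algebras}. Suppose $X \notin \C_4$. Since $\C_4$ is defined via invariant Borel supersets and $[A]_G = X$, the assumption $A \in \C_4$ would force $X \in \C_4$, so we also have $A \notin \C_4$. Fix pairwise disjoint Borel sets $\set{A_n}_{n \in \N}$ with $A_0 = A$ and $A \sim_{\lfin} A_n$, and for each $n$ fix the witnessing data: Borel partitions $\set{A^{(n,k)}}_{k \in \N}$ of $A$ and $\set{A_n^{(k)}}_{k \in \N}$ of $A_n$ together with group elements $\set{g_{n,k}}_{k \in \N}$ satisfying $g_{n,k} A^{(n,k)} = A_n^{(k)}$, enjoying the local finiteness property that for every $y \in A$ and $n \in \N$ only finitely many $A^{(n,k)}$ meet $[y]_G$. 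Let $\BA$ be the countable Boolean $G$-algebra generated by $\set{A} \cup \set{A^{(n,k)} : n,k \in \N}$; it contains all $A_n$ and $A_n^{(k)}$ as well. Apply \ref{measures on ctbl Boolean algebras} to $A \in \BA \setminus \C_4$ to obtain a $G$-invariant finitely additive probability measure $\mu$ on $\BA$ together with some $x \in A$ such that $\mu(A) > 0$ and $\mu(B) = 0$ for every $B \in \BA$ with $B \cap [x]_G = \0$.

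The crucial step is to use the local finiteness at $x$ to deduce $\mu(A) = \mu(A_n)$ for every $n$, despite $\mu$ being only finitely additive. Fixing $n$, the set $S := \set{k \in \N : A^{(n,k)} \cap [x]_G \neq \0}$ is finite by local finiteness, and for $k \notin S$ both $A^{(n,k)}$ and $A_n^{(k)} = g_{n,k} A^{(n,k)}$ are disjoint from $[x]_G$ (using $g_{n,k}[x]_G = [x]_G$). The cofinite residues $A \setminus \bigsqcup_{k \in S} A^{(n,k)}$ and $A_n \setminus \bigsqcup_{k \in S} A_n^{(k)}$ therefore belong to $\BA$ as finite Boolean operations and are disjoint from $[x]_G$, whence they have $\mu$-measure zero. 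Finite additivity combined with $G$-invariance then gives $\mu(A) = \sum_{k \in S} \mu(A^{(n,k)}) = \sum_{k \in S} \mu(A_n^{(k)}) = \mu(A_n)$.

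The conclusion follows immediately: pairwise disjointness of $\set{A_n}_{n \in \N}$ together with finite additivity and monotonicity of $\mu$ give $(N+1)\mu(A) = \mu(\bigsqcup_{n=0}^N A_n) \le \mu(X) \le 1$ for every $N \in \N$, forcing $\mu(A) = 0$ and contradicting $\mu(A) > 0$. The main obstacle, which the local finiteness hypothesis is crafted to overcome, is that Corollary \ref{measures on ctbl Boolean algebras} produces only a finitely additive $\mu$, so the identity $\mu(A) = \mu(A_n)$ cannot be read off from countable additivity as in the classical Hajian--Kakutani-style arguments; local finiteness collapses each witnessing countable partition into a finite one on the single orbit $[x]_G$ where $\mu$ concentrates, allowing finite additivity to suffice.
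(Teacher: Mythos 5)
Your proof follows essentially the same strategy as the paper's: build a countable Boolean $G$-algebra from the witnessing data, apply Corollary \ref{measures on ctbl Boolean algebras} to get a finitely additive invariant $\mu$ concentrated on a single orbit $[x]_G$, use local finiteness to collapse each countable witnessing partition to a finite one on $[x]_G$, derive $\mu(A) = \mu(A_n)$, and contradict $\mu(X) = 1$ via pairwise disjointness of the $A_n$.

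There is one technical slip. You assert that the Boolean $G$-algebra $\BA$ generated by $\set{A} \cup \set{A^{(n,k)} : n,k}$ ``contains all $A_n$.'' That is not justified: $\BA$ is closed only under \emph{finite} unions (and complements and the $G$-action), while $A_n = \bigcup_k A_n^{(k)}$ is a countably infinite union, so in general $A_n \notin \BA$. This matters at two points in your argument — when you compute $\mu(A_n) = \mu(A_n \setminus \bigsqcup_{k \in S} A_n^{(k)}) + \sum_{k \in S}\mu(A_n^{(k)})$, and when you take $\mu\bigl(\bigsqcup_{n=0}^N A_n\bigr)$ — both of which presuppose $A_n \in \BA$. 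The paper avoids this by explicitly placing each $A_n$ (along with $X$) into the generating set of $\BA$; alternatively, you can bypass $A_n$ entirely by working with the finite unions $\bigcup_{n \le N}\bigcup_{k \in S_n} A_n^{(k)} \in \BA$, whose $\mu$-measure is $\sum_{n \le N}\sum_{k \in S_n}\mu(A_n^{(k)}) = (N+1)\mu(A)$, and comparing against $\mu(X)=1$. Either patch is one line, so the gap is minor and the core idea is the same as the paper's.
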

\begin{proof}
	Let $A$ be a locally finitely traveling Borel complete section and let $\{A_n\}_{n \in \N}$ be as in \cref{defn of fin traveling sets}. Let $\I_n = \{C_k^n\}_{k \in \N}$, $\J_n = \{D_k^n\}_{k \in \N}$ be Borel partitions of $A$ and $A_n$, respectively, that together with $\{g_k^n\}_{k \in \N} \subseteq G$ witness $A \sim_{\lfin} A_n$ (as in \cref{defn of lfin equidec}). Let $\BA$ denote the Boolean $G$-algebra generated by $\{X\} \cup \bigcup_{n \in \N} (\I_n \cup \J_n \cup \{A_n\})$.
	
	Now assume for contradiction that $X \notin \Cfrak_4$ and hence, $A \notin \Cfrak_4$. Thus, applying \cref{measures on ctbl Boolean algebras} to $A$ and $\BA$, we get a $G$-invariant finitely additive probability measure $\mu$ on $\BA$ with $\mu(A)>0$. Moreover, there is $x \in A$ such that $\forall B \in \BA$ with $B \cap [x]_G = \0$, $\mu(B) = 0$.
	
	\begin{claim*}
		$\mu(A_n) = \mu(A)$, for all $n \in \N$.
	\end{claim*}
	\begin{pfof}
		For each $n$, let $\{C_{k_i}^n\}_{i < K_n}$ be the list of those $C_k^n$ such that $C_k^n \cap [x]_G \neq \0$ ($K_n < \w$ by the definition of locally finitely traveling). Set $B = A \setminus (\bigcup_{i < K_n} C_{k_i}^n)$ and note that by finite additivity of $\mu$,
		$$\mu(A) = \mu(B) + \sum_{i < K_n} \mu(C_{k_i}^n).$$
		Similarly, set $B' = A_n \setminus (\bigcup_{i < K_n} D_{k_i}^n)$ and hence
		$$\mu(A_n) = \mu(B') + \sum_{i < K_n} \mu(D_{k_i}^n).$$
		But $B \cap [x]_G = \0$ and $B' \cap [x]_G = \0$, and thus $\mu(B) = \mu(B') = 0$. Also, since $g_{k_i}^n C_{k_i}^n = D_{k_i}^n$ and $\mu$ is $G$-invariant, $\mu(C_{k_i}^n) = \mu(D_{k_i}^n)$. Therefore
		$$\mu(A) = \sum_{i < K_n} \mu(C_{k_i}^n) = \sum_{i < K_n} \mu(D_{k_i}^n) = \mu(A_n).$$
	\end{pfof}
	
	This claim contradicts $\mu$ being a probability measure since for large enough $N$, $\mu(\bigcup_{n < N} A_n) = N \mu(A) > 1$, contradicting $\mu(X) = 1$.
\end{proof}

This, together with \cref{i-compressible => finite generator}, implies the following.

\begin{cor}\label{lfin traveling => 2^5-generator}
	Let $X$ be a Borel $G$-space. If $X$ admits a locally finitely traveling Borel complete section, then there is a Borel $32$-generator.
\end{cor}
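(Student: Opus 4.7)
The plan is a direct two-step chain using the two results established immediately above. First, I would invoke Proposition \ref{lfin traveling => X in C4} to conclude that $X \in \C_4$. Unpacking the definition of $\C_4$, this means $X$ is contained in some $G$-invariant Borel set $P$ that is $4^*$-compressible; but $X$ is its own $G$-saturation, so $P$ must equal $X$, and hence $X$ itself is $4^*$-compressible. By Definition \ref{defn of i*-compressibility} this is a strengthening of being $4$-compressible in the sense of Definition \ref{defn i-compressibility}, so in particular $X \prec_4 X$.

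Second, I would feed this into Proposition \ref{i-compressible => finite generator} with $i=4$. That proposition produces a Borel $2^{i+1}$-generator out of any $i$-compressible Borel $G$-space, and with $i=4$ we get exactly a Borel $2^5 = 32$-generator on $X$, which is the conclusion of the corollary.

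The only real content in the corollary is hidden in Proposition \ref{lfin traveling => X in C4}, whose proof relies on the heavy machinery of Section \ref{section_Nadkarni's proof} (specifically Corollary \ref{measures on ctbl Boolean algebras}) to derive a contradiction from the assumption $X \notin \C_4$. Taking that proposition as a black box, the derivation of a $32$-generator is purely formal, so I do not anticipate any genuine obstacle beyond carefully tracking the definitions of $i^*$-compressibility versus $i$-compressibility to make sure the constant $4$ propagates correctly through the exponent in $2^{i+1}$.
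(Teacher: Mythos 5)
Your proof is correct and is exactly the argument the paper uses: invoke Proposition \ref{lfin traveling => X in C4} to get $X \in \C_4$ (hence $X$ is $4$-compressible, unwinding the definition of $\C_4$ and noting $X$ is its own invariant superset), then apply Proposition \ref{i-compressible => finite generator} with $i=4$ to obtain the $2^5 = 32$-generator. Your explicit tracking of $4^*$-compressibility versus $4$-compressibility is the only detail the paper leaves implicit, and you handle it correctly.
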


\section{Locally weakly wandering sets and other special cases}\label{section_weakly wandering sets}

Assume throughout the section that $X$ is a Borel $G$-space.

\begin{defn}\label{defn of weakly wandering set}
	We say that $A \subseteq X$ is
	\begin{itemize}
		\item weakly wandering with respect to $H \subseteq G$ if $(h A) \cap (h' A) = \0$, for all distinct $h, h' \in H$;
		
		\item weakly wandering, if it is weakly wandering with respect to an infinite subset $H \subseteq G$ (by shifting $H$, we can always assume $1_G \in H$);
		
		\item locally weakly wandering if for every $x \in X$, $A^{[x]_G}$ is weakly wandering.
	\end{itemize}
\end{defn}

\subsection{Weakly wandering sets and finite generators}

The following is a prototypical/toy example of a construction of a finite generator, and it has served as a driving idea for a number of constructions in the current paper.

\begin{prop}\label{st:ww_imp_3-gen}
	If a $G$-space $X$ admits a Borel weakly wandering complete section, then it admits a $3$-generator.
\end{prop}
\begin{proof}
	Let $W$ be a Borel complete section that is weakly wandering with respect to an infinite sequence $(g_n)_{n \in \N} \subseteq G$, where $g_0 = 1_G$. Let $(U_n)_{n \ge 1}$ be a sequence of Borel sets that generate the Borel $\si$-algebra of $X$ (e.g. a countable basis of open sets in a compatible Polish topology) and put
	$$
	V = \bigcup_{n \ge 1} (g_n W \cap g_n U_n).
	$$
	Thus, $W \cap V = \0$ and we claim that the partition $\set{W, V, (W \cup V)^c}$ is a generator. Indeed, fix distinct points $x,y \in X$. Because $W$ intersects the orbit of $x$, there is $g \in G$ with $gx \in W$, so by replacing $x,y$ with $gx,gy$, we may assume that $x$ was in $W$ to begin with. Now if $y$ is not in $W$, then $W$ separates $x$ and $y$, and we are done; so suppose $y$ is also in $W$. Then, since $\set{U_n}_{n \ge 1}$ separates points, there must be $n \ge 1$ such that $x \in U_n$ but $y \notin U_n$. But then $g_n x \in g_n W \cap g_n U_n$, and hence $g_n x \in V$, whereas $g_n y \notin g_n U_n$, so $g_n y \notin V$ because $g_n y \in g_n W$ and $g_n W \cap V \subseteq g_n U_n$. Thus $g_n^{-1} V$ separates $x$ and $y$.
\end{proof}

\begin{examplelist*}
	\item Let $X = \R$ and let $\Z$ act on $\R$ by translation. Then any interval is weakly wandering and any interval of length greater than $1$ is a complete section. Thus, by the above proposition, this Polish $\Z$-space admits a $3$-generator.
	
	\item Let $X = \Br$ (the Baire space) and $\Ez$ be the equivalence relation of eventual agreement of sequences of natural numbers. We find a countable group $G$ of homeomorphisms of $X$ such that $E_G = \Ez$. For each $s,t \in \QB$ with $s \perp t$ (i.e. $s \nsubseteq t$ and $t \nsubseteq s$) or $s=t$, let $\phi_{s,t} : X \to X$ be defined as follows:
	$$
	\phi_{s,t}(x) = 
	\left\{\begin{array}{ll} 
	t \!\smallfrown\! y & \text{if } x = s \!\smallfrown\! y \\
	s \!\smallfrown\! y & \text{if } x = t \!\smallfrown\! y \\
	x & \text{otherwise}
	\end{array}\right.,
	$$
	and let $G$ be the group generated by $\{\phi_{s,t} : s,t \in \QB, |s|=|t|\}$. It is clear that each $\phi_{s,t}$ is a homeomorphism of $X$ and $E_G = \Ez$. Now for $n \in \N$, let $X_n = \{x \in X : x(0) = n\}$ and let $g_n = \phi_{0,n}$. Then the $X_n$ are pairwise disjoint and $g_n X_0 = X_n$. Hence $X_0$ is a weakly wandering complete section and thus $X$ admits a Borel $3$-generator by \cref{st:ww_imp_3-gen}.

	\item Let $X = 2^{\N}$ (the Cantor space) and $E_t$ be the tail equivalence relation on $X$, that is:
	$$
	x E_t y \Leftrightarrow (\exists n,m \in \N) (\forall k \in \N) x(n+k) = y(m+k).
	$$ 
	Let $G$ be the group generated by $\{\phi_{s,t} : s,t \in 2^{<\N}, s \perp t\}$, where the $\phi_{s,t}$ are defined as above. To see that $E_G = E_t$ fix $x,y \in X$ with $x E_t y$. Thus, there are nonempty $s,t \in 2^{<\N}$ and $z \in X$ such that $x = s \!\smallfrown\! z$ and $y = t \!\smallfrown\! z$. If $s \perp t$, then $y = \phi_{s,t}(x)$. Otherwise, assume, say, $s \sqsubseteq t$ and let $s' \in 2^{<\N}$ be such that $s \perp s'$ (exists since $s \neq \0$). Then $s' \perp t$ and $y = \phi_{s',t} \circ \phi_{s,s'}(x)$.
	
	Now for $n \in \N$, let $s_n = \underbrace{11...1}_n 0$ and $X_n = \{x \in X : x = s_n \!\smallfrown\! y, \text{ for some } y \in X\}$. Note that the $s_n$ are pairwise incompatible and hence the $X_n$ are pairwise disjoint. Letting $g_n = \phi_{s_0,s_n}$, we see that $g_n X_0 = X_n$. Thus $X_0$ is a weakly wandering complete section and hence $X$ admits a Borel $3$-generator by \cref{st:ww_imp_3-gen}.
\end{examplelist*}

\subsection{Localization}

Let $F(\GN)$ denote the Effros space of $\GN$, i.e. the standard Borel space of closed subsets of $\GN$, see \cite[12.C]{bible}. Below we use a Borel selector for $F(\GN)$, i.e. a Borel function $s : F(\GN) \to \GN$ with $s(F) \in F$ for each nonempty $F \in F(\GN)$. It is a basic fact of descriptive set theory that such functions exist; see, for example, \cite[Theorem 12.13]{bible}.

For $A \subseteq X$ and $x \in A$, put
$$
\De_A(x) = \{(g_n)_{n \in \N} \in \GN : g_0 = 1_G \wedge \forall n \neq m (g_n A^{[x]_G} \cap g_m A^{[x]_G} = \0) \}.
$$

\begin{prop}\label{travel guide for lww}
	Let $A \in \Bfrak(X)$.
	\begin{enumerate}[(a)]
		\item $\forall x \in X$, $\De_A(x)$ is a closed set in $\GN$.
		\item $\De_A : A \rightarrow F(\GN)$ is $\s$-measurable and hence universally measurable.
		\item $\De_A$ is $F_A$-invariant, i.e. $\forall x,y \in A$, if $x F_A y$ then $\De_A(x) = \De_A(y)$.
		\item If $A$ is locally weakly wandering, then it is $1$-traveling with $\s$-pieces. In fact, for any Borel selector $s : F(\GN) \to \GN$, the function $\ga := s \circ \De_A$ is a $\s$-measurable $F_A$- and $G$-invariant travel guide for $A$.
	\end{enumerate}
\end{prop}
\begin{proof}
	\begin{enumerate}[(a)]
		\item $\De_A(x)^c$ is open since being in it is witnessed by two coordinates.
		
		\item For $s \in G^{<\N}$, let $B_s = \{F \in F(\GN) : F \cap V_s \neq \0\}$, where $V_s = \{\alpha \in \GN : \alpha \sqsupseteq s\}$. Since $\{B_s\}_{s \in G^{<\N}}$ generates the Borel structure of $F(\GN)$, it is enough to show that $\De_A^{-1}(B_s)$ is analytic, for every $s \in G^{<\N}$. But $\De_A^{-1}(B_s) = \{x \in X : \exists (g_n)_{n \in \N} \in V_s [g_0 = 1_G \wedge \forall n \neq m (g_n A^{[x]_G} \cap g_m A^{[x]_G} = \0)]\}$ is clearly analytic.
		
		\item Assume for contradiction that $x F_A y$, but $\De_A(x) \neq \De_A(y)$ for some $x,y \in A$. We may assume that there is $(g_n)_{n \in \N} \in \De_A(x) \setminus \De_A(y)$ and thus $\exists n \neq m$ such that $g_n A^{[y]_G} \cap g_m A^{[y]_G} \neq \0$. Hence $A^{[y]_G} \cap g_n^{-1}g_m A^{[y]_G} \neq \0$ and let $y',y'' \in A^{[y]_G}$ be such that $y'' = g_n^{-1}g_m y'$. Let $g \in G$ be such that $y' = gy$.
		
		Since $y' = gy$, $y'' = g_n^{-1}g_m g y$ are in $A$, $x F_A y$, and $A$ is $F_A$-invariant, $gx, g_n^{-1}g_m g x$ are in $A$ as well. Thus $A^{[x]_G} \cap g_n^{-1}g_m A^{[x]_G} \neq \0$, contradicting $g_n A^{[y]_G} \cap g_m A^{[y]_G} = \0$ (this holds since $(g_n)_{n \in \N} \in \De_A(x)$).
		
		\item Follows from parts (b) and (c), and the definition of $\De_A$.
	\end{enumerate}
\end{proof}

\begin{theorem}\label{finite generators for lww}
	Let $X$ be a Borel $G$-space. If there is a locally weakly wandering Borel complete section for $X$, then $X$ admits a Borel $4$-generator.
\end{theorem}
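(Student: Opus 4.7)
The plan is to reduce this to a direct application of Proposition \ref{i-compressible => finite generator} with $i=1$ by showing that $X$ itself is $1$-compressible. Let $A$ be the hypothesized locally weakly wandering Borel complete section. Applying Proposition \ref{travel guide for lww}(d) to $A$ produces a $\s$-measurable, $F_A$-invariant (and $G$-invariant) travel guide $\g \colon A \to \GN$, which exhibits $A$ as an $\I$-traveling set with $\s$-pieces, where $\I = \gen{A}$. Since $\I$ is generated by a single Borel set and is $A$-sensitive (as $A \in \I$), this means $A$ is $1$-traveling with $\s$-pieces in the sense of Definition \ref{defn of traveling sets}.

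Because $A$ is a complete section for $X$ (i.e.\ $[A]_G = X$), Proposition \ref{i-compressibility <=> i-traveling} applied with $P = X$ and $\G = \s$ then yields that $X$ is $1$-compressible with universally measurable pieces. The main (mild) obstacle is that these pieces are only universally measurable rather than Borel, so one cannot feed $\g$ directly into Lemma \ref{I-traveling implies finite generator}. This obstruction is precisely what the equivalence (1)$\Leftrightarrow$(4) of Proposition \ref{equivalences to i-compressibility} is designed to handle: it upgrades $1$-compressibility with universally measurable pieces to $1$-compressibility with Borel pieces, via a Jankov-von Neumann uniformization of $\f$ combined with the reflection lemma \ref{reflection of not having measure}.

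Once $X$ is known to be $1$-compressible with Borel pieces, Proposition \ref{i-compressible => finite generator} with $i=1$ produces a Borel $2^{1+1} = 4$-generator on $X$, completing the proof. No further estimates or constructions are needed, since all the bookkeeping on $\E$-invariance, $A$-sensitivity of $\gen{A}$, and the passage from universally measurable to Borel has already been carried out in the earlier subsections.
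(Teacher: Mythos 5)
Your proof is correct and takes essentially the same route as the paper's own (very terse) proof; the paper simply cites Proposition \ref{travel guide for lww}(d), Proposition \ref{equivalences to i-compressibility}, and Proposition \ref{i-compressible => finite generator} in sequence, while you unpack the intermediate steps (passing through $1$-traveling with $\s$-pieces, Proposition \ref{i-compressibility <=> i-traveling}, and the (1)$\Leftrightarrow$(4) upgrade from universally measurable to Borel pieces). All the details you supply are accurate, including the observation that $\gen{A}$ is $A$-sensitive and the use of $\s \subseteq$ universally measurable to feed the output of part (d) into Proposition \ref{equivalences to i-compressibility}.
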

\begin{proof}
	By part (d) of \labelcref{travel guide for lww} and \cref{equivalences to i-compressibility}, $X$ is $1$-compressible. Thus, by \cref{i-compressible => finite generator}, $X$ admits a Borel $2^2$-finite generator.
\end{proof}

\subsection{Countable unions of weakly wandering sets}

We can do even better in the case when a locally weakly wandering complete section is actually a countable union of weakly wandering sets.

\begin{cor}\label{st:ctbl-U-ww_imp_3-gen}
	Let $X$ be a Borel $G$-space. If $X$ admits a complete section that is a countable union of weakly wandering Borel sets, then $X$ admits a Borel $3$-generator.
\end{cor}
\begin{proof}
	Let $(W_n)_{n \in \N}$ be a sequence of Borel weakly wandering sets such that $\bigcup_{n \in \N} W_n$ is a complete section. By replacing each $W_n$ with $W_n \setminus \bigcup_{i<n} [W_i]_G$, we may assume that the $[W_n]_G$ are pairwise disjoint and hence $A := \bigcup_{n \in \N} W_n$ is a locally weakly wandering complete section. Using countable choice, take a function $p : \N \rightarrow \GN$ such that $\forall n \in \N$, $p(n) \in \bigcap_{x \in W_n} \De_{W_n}(x)$ (we know that $\bigcap_{x \in W_n} \De_{W_n}(x) \neq \0$ since $W_n$ is weakly wandering).
	
	Define $\ga : A \rightarrow \GN$ by
	$$
	x \mapsto \text{the smallest $k$ such that } p(k) \in \De_A(x).
	$$
	The condition $p(k) \in \De_A(x)$ is Borel because it is equivalent to $\forall n,m \in \N, y,z \in A \cap [x]_G, p(k)(n)y = p(k)(m)z \Rightarrow n=m \wedge x=y$; thus $\ga$ is a Borel function. Note that $\ga$ is a travel guide for $A$ by definition. Moreover, it is $F_A$-invariant because if $\De_A(x) = \De_A(y)$ for some $x,y \in A$, then conditions $p(k) \in \De_A(x)$ and $p(k) \in \De_A(y)$ hold or fail together. Since $\De_A$ is $F_A$-invariant, so is $\ga$. Hence, \cref{I-traveling implies finite generator} applied to $\I = \gen{A}$ gives a Borel $(2 \cdot 2 - 1)$-generator.
\end{proof}

In the light of this last corollary, we now record a version of the Hajian--Kakutani--It\^{o} theorem (see \labelcref{st:HKI_theorem}) as a corollary of the same theorem.

\begin{cor}\label{st:HKI_with_ctbl-U-ww}
	Let $(X,\mu)$ be a standard probability space equipped with a nonsingular Borel action of $G$. There is no invariant Borel probability measure absolutely continuous with respect to $\mu$ if and only if, modulo $\mu$-$\NULL$, there is a complete section that is a countable union of weakly wandering Borel sets.
\end{cor}
\begin{proof}
	The right-to-left direction immediately follows from the fact that if $W$ is a weakly wandering Borel set and $\nu$ is an invariant finite measure, then $\nu([W]_G) = 0$. For the left-to-right direction, we use a standard measure exhaustion argument based on iterative applications of the Hajian--Kakutani--It\^{o} theorem. By recursion on $n \in \N$, we will define a decreasing sequence $(X_n)_{n \in \N}$ of invariant Borel sets as well as a disjoint sequence $(W_n)_{n \in \N}$ of $\mu$-positive weakly wandering Borel sets such that
	\begin{enumerate}[(i)]
		\item the $X_n$ are $\mu$-vanishing, i.e. $\bigcap_{n \in \N} X_n$ is $\mu$-null,
		\item $W_n$ is a complete section for $X_n \setminus X_{n+1}$, i.e. $[W_n]_G = X_n \setminus X_{n+1}$.
	\end{enumerate}
	To this end, put $X_0 := X$, and assuming that $X_n$ is defined, apply the Hajian--Kakutani--It\^{o} theorem to $X_n$ and get that the set
	$$
	\W_n := \set{W \subseteq X_n : W \text{ is Borel and } \mu(W) > 0}
	$$
	is nonempty. Thus there is $W_n \in \W_n$ with $\mu(W_n) > {1 \over 2} w_n$, where $w_n := \sup_{W \in \W_n} \mu(W)$. Putting $X_{n+1} := X_n \setminus [W_n]_G$, we are through with the construction. However, we still have to check that $X_\w := \bigcap_{n \in \N} X_n$ is $\mu$-null. If $\mu(X_\w)>0$, an application of the Hajian--Kakutani--It\^{o} theorem to $X_\w$ would provide a $\mu$-positive weakly wandering Borel set $W_\w \subseteq X_\w$. But the sequence $(w_n)_{n \in \N}$ is summable since the $W_n$ are pairwise disjoint and $\mu(W_n) > {1 \over 2} w_n$, so for large enough $n \in \N$, $w_n < \mu(W_\w)$, contradicting the definition of $w_n$.
	
	Finally, putting $W = \bigcup_{n \in \N} W_n$, we get a complete section for $X \setminus X_\w$ that is a countable union of weakly wandering Borel sets.
\end{proof}

\cref{st:ctbl-U-ww_imp_3-gen,st:HKI_with_ctbl-U-ww} immediately imply the following version of the Krengel--Kuntz theorem (see \labelcref{Kuntz's thm}) with a $3$-generator instead of $2$.

\begin{cor}\label{st:Krengel--Kuntz_with_3}
	Let $(X,\mu)$ be a standard probability space equipped with a nonsingular Borel action of $G$. If there is no invariant Borel probability measure absolutely continuous with respect to $\mu$, then $X$ admits a $3$-generator modulo $\mu$-$\NULL$.
\end{cor}

\subsection{Further special cases}

Using the function $\De$ defined above, we give another proof of \cref{transversals are 1-traveling}.
\begin{namedthm*}{\cref*{transversals are 1-traveling}}
	Let $X$ be an aperiodic Borel $G$-space and $T \subseteq X$ be Borel. If $T$ is a partial transversal, then $T$ is $\gen{T}$-traveling.
\end{namedthm*}
\begin{proof}
	By definition, $T$ is locally weakly wandering.
	\begin{claim*}
		$\De_T$ is Borel.
	\end{claim*}
	\begin{pfof}
		Using the notation of the proof of part (b) of \cref{travel guide for lww}, it is enough to show that $\De_T^{-1}(B_s)$ is Borel for every $s \in G^{<\N}$. But since $\forall x \in T$, $T \cap [x]_G$ is a singleton, $\De_T(x) \in B_s$ is equivalent to $s(0) = 1_G \wedge (\forall n < m < |s|)$ $s(m)x \neq s(n)x$. The latter condition is Borel, hence so is $\De_T^{-1}(B_s)$.
	\end{pfof}
	
	By part (d) of \labelcref{travel guide for lww}, $\ga = s \circ \De_T$ is a Borel $F_T$-invariant travel guide for $T$.
\end{proof}

\begin{cor}
	Every aperiodic and smooth Borel $G$-space $X$ admits a Borel $3$-generator.
\end{cor}
\begin{proof}
	Let $T \subseteq X$ be a Borel transversal. By \cref{transversals are 1-traveling}, $T$ is $\gen{T}$-traveling. Thus, by \cref{I-traveling implies finite generator}, there is a Borel $(2 \cdot 2 - 1)$-generator.
\end{proof}

Lastly, in case of smooth free actions, a direct construction gives the optimal result as the following proposition shows.
\begin{prop}\label{2-generator for free smooth}
	Let $X$ be a Borel $G$-space. If the $G$-action is free and smooth, then $X$ admits a Borel $2$-generator.
\end{prop}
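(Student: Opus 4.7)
The plan is to produce a Borel $G$-embedding $X \hookrightarrow 2^G$, which by the observation in $\S1$ (a $k$-generator is the same as a Borel $G$-embedding into $k^G$) yields a Borel $2$-generator.

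First, I would use smoothness and freeness to identify $X$ with $T \times G$ as a Borel $G$-space. Fix a Borel transversal $T \subseteq X$; freeness gives, for each $x \in X$, a unique $g_x \in G$ with $g_x^{-1} x \in T$, so the map $x \mapsto (g_x^{-1} x, g_x)$ is a Borel $G$-equivariant bijection onto $T \times G$, where $G$ acts on the product by $h \cdot (t, g) = (t, hg)$.

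Next, I would observe that any Borel $G$-equivariant map $\Phi : T \times G \to 2^G$ is determined by its base value $\phi(t) := \Phi(t, 1_G)$: equivariance forces $\Phi(t, h) = h \cdot \phi(t)$, where $G$ acts on $2^G$ by the shift $(h \cdot \alpha)(g) = \alpha(gh)$. Such a $\Phi$ is injective precisely when (a) each $\phi(t)$ has trivial shift-stabilizer in $G$, and (b) for distinct $t \neq t'$ the images $\phi(t)$ and $\phi(t')$ lie in distinct $G$-shift orbits of $2^G$. So the problem reduces to constructing a Borel map $\phi : T \to 2^G$ satisfying (a) and (b).

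The main step is producing such a $\phi$. Let $F \subseteq 2^G$ be the free part of the shift action, a Borel ($G_\delta$) set, and let $E$ denote the shift orbit equivalence on $F$. Then $E$ is a Borel equivalence relation whose orbits are countable (since $|G| = \aleph_0$) and whose underlying space has size $2^{\aleph_0}$, forcing $E$ to have continuum many classes. By Silver's perfect set theorem for Borel equivalence relations, there is a Borel injection $\iota : 2^{\N} \to F$ that reduces equality to $E$ (i.e., $\iota(\alpha) \, E \, \iota(\beta) \iff \alpha = \beta$). Composing with any Borel injection $\psi : T \to 2^{\N}$ (which exists since $T$ is standard Borel) yields $\phi := \iota \circ \psi$ satisfying (a) and (b), and pulling back through the identification $X \cong T \times G$ produces the desired $G$-embedding $X \hookrightarrow 2^G$.

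The main obstacle is this last step, which requires simultaneously avoiding non-trivial shift-stabilizers and hitting continuum many distinct shift orbits in a Borel fashion; Silver's dichotomy handles both cleanly. An alternative is a direct combinatorial construction in which one fixes an enumeration $G = \{g_n : n \in \N\}$ with $g_0 = 1_G$ and codes each $\alpha \in 2^{\N}$ into $2^G$ via a growing-block marker pattern that both forces triviality of the stabilizer and allows recovery of $\alpha$ from any shift of the resulting point; this is immediate when $G$ contains an element of infinite order, and is adaptable to general countable infinite $G$ with more care.
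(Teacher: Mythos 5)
Your argument is correct, but it takes a genuinely different route from the paper. The paper gives a direct, elementary construction: it fixes an enumeration $G\setminus\set{1_G}=\set{g_n}$, builds a bijection $\pi:\N\to\N$ by a recursion on blocks of three (the point of the triple $3k,3k+1,3k+2$ being to encode both the basic open set $U_k$ and the group element $g_k$ into a single set while avoiding previously used translates), and then sets $A=\bigcup_k g_{\pi(3k)}(T\cap U_k)\cup\bigcup_k g_{\pi(3k+1)}T$; one then checks by hand that $\gen{A}$ separates points. Your proof instead factors through the structural identification $X\cong T\times G$ (valid precisely because the action is free and smooth), observes that a Borel $G$-embedding into $2^G$ amounts to a Borel map $\phi:T\to 2^G$ landing in the free part $F$ of the shift and injective on orbits, and then produces such a $\phi$ by composing a Borel injection $T\hookrightarrow 2^{\N}$ with a perfect set of pairwise orbit-inequivalent points in $F$ furnished by Silver's dichotomy. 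Conceptually this is cleaner and more clearly isolates where freeness and smoothness enter, at the cost of invoking Silver's theorem where the paper's argument is entirely self-contained. Two small points worth spelling out: (i) you should justify that $F$ is uncountable (e.g., $F$ is conull for the product measure on $2^G$: for each $g\neq 1_G$, the fixed set of $g$ consists of the $\alpha$ constant on left cosets of $\gen{g}$, which has measure zero since those cosets have size $\geq 2$ and there are infinitely many of them or they are infinite); and (ii) the inference ``uncountable with countable classes, hence continuum many classes'' is slightly circular as phrased — what you actually need, and have, is merely that $E\rest{F}$ has uncountably many classes, after which Silver produces the perfect set.
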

\begin{proof}
	Let $T \subseteq X$ be a Borel transversal. Also let $G \setminus \set{1_G} = \{g_n\}_{n \in \N}$ be such that $g_n \neq g_m$ for $n \neq m$. Because the action is free, $g_n T \cap g_m T = \0$ for $n \neq m$.
	
	Define $\pi : \N \rightarrow \N$ recursively as follows:
	$$
	\pi(n) = \left\{\begin{array}{ll} \min\{m : g_m \notin \{g_{\pi(i)} : i < n\}\} & \text{if } n=3k \\
	\min\{m : g_m, g_m g_k \notin \{g_{\pi(i)} : i < n\}\} & \text{if } n=3k+1 \\
	\text{the unique $l$ s.t. } g_l = g_{\pi(3k+1)}g_k & \text{if }
	n=3k+2
	\end{array}\right..
	$$
	
	Note that $\pi$ is a bijection. Fix a countable family $\{U_n\}_{n \in \N}$ generating the Borel sets and put $A = \bigcup_{k \in \N} g_{\pi(3k)}(T \cap U_k) \cup \bigcup_{k \in \N} g_{\pi(3k+1)}T$. Clearly, $A$ is Borel, and we show that $\I = \gen{A}$ is a generator. Fix distinct $x, y \in X$. Note that since $T$ is a complete section, we can assume that $x \in T$.
	
	First assume $y \in T$. Take $k$ with $x \in U_k$ and $y \notin U_k$. Then $g_{\pi(3k)} x \in g_{\pi(3k)}(T \cap U_k) \subseteq A$ and $g_{\pi(3k)} y \in g_{\pi(3k)}(T \setminus U_k)$. However $g_{\pi(3k)}(T \setminus U_k) \cap A = \emptyset$ and hence $g_{\pi(3k)} y \notin A$.
	
	Now suppose $y \notin T$. Then there exists $y' \in T^{[y]_G}$ and $k$ such that $g_ky' = y$. Now $g_{\pi(3k+1)}x \in g_{\pi(3k+1)} T \subseteq A$ and $g_{\pi(3k+1)} y = g_{\pi(3k+1)}g_k y' = g_{\pi(3k+2)} y' \in g_{\pi(3k+2)} T$. But $g_{\pi(3k+2)} T \cap A = \emptyset$, hence $g_{\pi(3k+1)} y \notin A$.
\end{proof}

\begin{cor}
	Let $H$ be a Polish group and $G$ be a countable subgroup of $H$. If $G$ admits an infinite discrete subgroup, then the translation action of $G$ on $H$ admits a $2$-generator.
\end{cor}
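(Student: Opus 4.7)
Let $D \le G$ be the infinite discrete subgroup of $H$ supplied by the hypothesis. The plan is to apply Proposition \ref{2-generator for free smooth} to the translation action of $D$ on $H$ and then transfer the resulting partition to $G$. The transfer is immediate: if $\Pa$ is a Borel partition of $H$ into two pieces such that $D\Pa := \set{dA : d \in D, A \in \Pa}$ separates points of $H$, then, since $D \subseteq G$, the larger collection $G\Pa \supseteq D\Pa$ also separates points, and hence $\Pa$ is automatically a $2$-generator for the $G$-action as well.

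Thus it suffices to verify that the $D$-action on $H$ is free and smooth, so that Proposition \ref{2-generator for free smooth} applies. Freeness is immediate since $dh = h \Rightarrow d = 1_H$. For smoothness, I would argue directly using discreteness. Pick an open neighborhood $U \ni 1_H$ with $U \cap D = \set{1_H}$, then a symmetric open $W \ni 1_H$ with $WW \subseteq U$, so that $WW \cap D \subseteq \set{1_H}$. For every $h \in H$, the open set $Wh$ is a partial $D$-transversal: if two of its points $w_1 h, w_2 h$ (with $w_1, w_2 \in W$) satisfy $w_1 h = d\, w_2 h$ for some $d \in D$, then $d = w_1 w_2^{-1} \in WW \cap D \subseteq \set{1_H}$, forcing $w_1 h = w_2 h$. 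Fixing a countable dense $\set{h_n}_{n \in \N} \subseteq H$, every $h \in H$ lies in some $Wh_n$ (indeed, $Wh$ is an open neighborhood of $h$, so contains some $h_n$, giving $h_n = w h$ for some $w \in W$ and hence $h = w^{-1} h_n \in W h_n$), so $H = \Uni Wh_n$. Defining recursively the Borel sets
\[
T_n = Wh_n \setminus D\Bigl(\bigcup_{k<n} Wh_k\Bigr),
\]
each $T_n$ is a Borel partial transversal contained in $Wh_n$, and each $D$-orbit meets $T_n$ exactly once --- for the least $n$ at which that orbit meets $Wh_n$. Hence $T := \Uni T_n$ is a Borel transversal for the $D$-action on $H$, witnessing smoothness.

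Applying Proposition \ref{2-generator for free smooth} to the free smooth $D$-action on $H$ then yields a Borel $2$-generator for the $D$-action, which by the first paragraph is simultaneously a Borel $2$-generator for the $G$-action. No serious obstacle arises; the only point deserving care is that the partial-transversal computation must be performed on the correct side, using left-translates $Wh$ rather than $hW$, so that the relevant quotient $(w_1 h)(w_2 h)^{-1}$ collapses to $w_1 w_2^{-1}$ without an intruding conjugation by $h$.
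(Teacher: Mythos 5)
Your proof is correct and follows essentially the same route as the paper: reduce to the infinite discrete subgroup (since a generator for the subgroup action is a fortiori one for the $G$-action), check that its translation action is free and smooth, and invoke Proposition \ref{2-generator for free smooth}. The only difference is cosmetic: where you build a Borel transversal by hand from a symmetric neighborhood $W$ with $WW \cap D = \set{1_H}$ and a countable dense set, the paper instead observes that a discrete subgroup is closed (via a left-invariant metric) and cites the standard fact that translation actions of closed subgroups are smooth.
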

\begin{proof}
	Let $G'$ be an infinite discrete subgroup of $G$. Clearly, it is enough to show that the translation action of $G'$ on $H$ admits a $2$-generator. Since $G'$ is discrete, it is closed. Indeed, if $d$ is a left-invariant compatible metric on $H$, then $B_d(1_H, \e) \cap G' = \set{1_H}$, for some $\e>0$. Thus every $d$-Cauchy sequence in $G'$ is eventually constant and hence $G'$ is closed. This implies that the translation action of $G'$ on $H$ is smooth and free (see \cite[12.17]{bible}), and hence \cref{2-generator for free smooth} applies.
\end{proof}

\section{Separating partitions}

Assume throughout this section that $X$ is a Borel $G$-space.

\subsection{Aperiodic separation and $G$-equivariant maps to $2^G$}

\begin{lemma}\label{markers}
	If $X$ is aperiodic then it admits a countably infinite partition into Borel complete sections.
\end{lemma}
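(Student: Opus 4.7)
The plan is to invoke the Jackson--Kechris--Louveau vanishing-marker lemma for aperiodic countable Borel equivalence relations (which applies to $E_G$ by hypothesis; see \cite{JKL}) and then to re-index the resulting sequence via a Borel orbit-rank to extract the desired partition. First I would produce a decreasing sequence of Borel complete sections $X = A_0 \supseteq A_1 \supseteq A_2 \supseteq \cdots$ with $\bigcap_n A_n = \emptyset$. The naive candidate partition $A_n \setminus A_{n+1}$ generally fails to meet every orbit: on some orbit $O$ one can have $A_n \cap O = A_{n+1} \cap O$, in which case the $n$-th piece misses $O$ entirely, and the fix is to ``skip'' such indices on each orbit.

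Concretely, I would define the Borel function $r : X \to \N$ by sending $x$ to the largest $n$ with $x \in A_n$ (well-defined because the $A_n$ vanish and $A_0 = X$) and then set
\[
\phi(x) \;:=\; \bigl|\{m \le r(x) : (A_m \setminus A_{m+1}) \cap [x]_G \neq \emptyset\}\bigr| - 1.
\]
The function $\phi$ is Borel because the condition on $m$ unfolds to the countable Borel disjunction $\exists g \in G\,(gx \in A_m \setminus A_{m+1})$. The proposed partition is then $C_n := \phi^{-1}(n)$.

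The key verification is that $\phi\!\!\downharpoonright_{[x]_G}$ is surjective onto $\N$ for every $x$. For a fixed orbit $O$ set $M_O := \{m : (A_m \setminus A_{m+1}) \cap O \ne \emptyset\} = r(O)$; a short pigeonhole argument forces $M_O$ to be infinite, since were $M_O \subseteq \{0, \ldots, N\}$ we would have $A_{N+1} \cap O = A_{N+2} \cap O = \cdots \subseteq \bigcap_n A_n = \emptyset$, contradicting that $A_{N+1}$ is a complete section. Enumerating $M_O = \{m_0 < m_1 < \cdots\}$, the definition of $\phi$ gives $\phi(x) = i$ exactly when $r(x) = m_i$, and since $r$ attains each value $m_i$ somewhere in $O$, $\phi\!\!\downharpoonright_O$ surjects onto $\N$, so every $C_n$ meets every orbit.

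The main obstacle is really just securing the vanishing-marker sequence in the first place; the re-indexing trick above is what sidesteps any need for an orbit-strict strengthening of the marker lemma, since the bookkeeping in $\phi$ automatically ignores the indices at which a marker fails to shrink on a given orbit.
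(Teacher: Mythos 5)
Your proof is correct and follows essentially the same route as the paper: both obtain a vanishing decreasing sequence of Borel complete sections from the marker lemma and then re-index per orbit, skipping the indices at which the sequence fails to shrink on that orbit. Your counting function $\phi$ is just a different bookkeeping of the paper's recursively defined indices $k_n(x)$, and all the verifications (Borelness via the countable disjunction over $G$, and the pigeonhole argument showing each orbit has infinitely many shrinking indices) go through as you state.
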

\begin{proof}
	The following argument is also given in \cite[proof of Theorem 13.1]{KM}. By the marker lemma (see \cite[6.7]{KM}), there exists a vanishing sequence $\set{B_n}_{n \in \N}$ of decreasing Borel complete sections, i.e. $\bigcap_{n \in \N} B_n = \0$. For each $n \in \N$, define $k_n : X \to \N$ recursively as follows:
	$$
	\left\{\begin{array}{rcl}
	k_0(x) & = & 0 \\
	k_{n+1}(x) &= & \min \set{k \in \N : B_{k_n(x)} \cap [x]_G \nsubseteq B_k}
	\end{array}\right.,
	$$
	and define $A_n \subseteq X$ by
	$$
	x \in A_n \Leftrightarrow x \in B_{k_n(x)} \setminus B_{k_{n+1}(x)}.
	$$
	It is straightforward to check that $A_n$ are pairwise disjoint Borel complete sections.
\end{proof}

For $A \in \Bfrak(X)$, if $\I = \gen{A}$ then we use the notation $F_A$ and $f_A$ instead of $\EI$ and $\f$, respectively.

We now work towards strengthening the above lemma to yield a countably infinite partition into $F_A$-invariant Borel complete sections.

\begin{defn}[Aperiodic separation]
	For Borel sets $A, Y \subseteq X$, we say that $A$ aperiodically separates $Y$ if $f_A([Y]_G)$ is aperiodic (as an invariant subset of the shift $2^G$). If such $A$ exists, we say that $Y$ is aperiodically separable.
\end{defn}

\begin{prop}\label{E-invariant complete sections}
	For $A \in \Bfrak(X)$, if $A$ aperiodically separates $X$, then $X$ admits a countably infinite partition into Borel $F_A$-invariant complete sections.
\end{prop}
\begin{proof}
	Let $Y = \{y \in 2^G : |[y]_G| = \infty\}$ and hence $f_A(X)$ is a $G$-invariant subset of $Y$. By \cref{markers} applied to $Y$, there is a partition $\{B_n\}_{n \in \N}$ of $Y$ into Borel complete sections. Thus $A_n = f_{\I}^{-1}(B_n)$ is a Borel $F_A$-invariant complete section for $X$ and $\{A_n\}_{n \in \N}$ is a partition of $X$.
\end{proof}

Let $\Afrak$ denote the collection of all subsets of aperiodically separable Borel sets.

\begin{lemma}\label{A sigma-ideal}
	$\Afrak$ is a $\sigma$-ideal.
\end{lemma}
\begin{proof}
	We only have to show that if $Y_n$ are aperiodically separable Borel sets, then $Y = \bigcup_{n \in \N} Y_n \in \Afrak$.  Let $A_n$ be a Borel set aperiodically separating $Y_n$. Since $A_n$ also aperiodically separates $[Y_n]_G$ (by definition), we can assume that $Y_n$ is $G$-invariant. Furthermore, by taking $Y_n' = Y_n \setminus \bigcup_{k<n} Y_k$, we can assume that $Y_n$ are pairwise disjoint. Now letting $A = \bigcup_{n \in \N} (A_n \cap Y_n)$, it is easy to check that $A$ aperiodically separates $Y$.
\end{proof}

Let $\Sm$ denote the collection of all subsets of smooth sets. By a similar argument as the one above, $\Sm$ is a $\sigma$-ideal.

\begin{lemma}\label{smooth is in A}
	If $X$ is aperiodic, then $\Sm \subseteq \Afrak$.
\end{lemma}
\begin{proof}
	Let $S \in \Sm$ and hence there is a Borel transversal $T$ for $[S]_G$. Fix $x \in S$ and let $y \ne z \in [x]_G$. Since $T$ is a transversal, there is $g \in G$ such that $gy \in T$, and hence $gz \notin T$. Thus $f_T(y) \ne f_T(z)$, and so $f_T([x]_G)$ is infinite. Therefore $T$ aperiodically separates $[S]_G$.
\end{proof}

For the rest of this subsection, fix an enumeration $G = \{g_n\}_{n \in \N}$ and let $F_A^n$ be following equivalence relation:
$$
y F_A^n z \Leftrightarrow \forall k < n (g_k y \in A \leftrightarrow g_k z \in A).
$$
Note that $F_A^n$ has no more than $2^n$ equivalence classes and that $y F_A z$ if and only if $\forall n (y F_A^n z)$.

\begin{lemma}\label{criterion for aperiodically separating}
	For $A,Y \in \Bfrak(X)$, $A$ aperiodically separates $Y$ if and only if $(\forall x \in Y) (\forall n) (\exists y,z \in Y^{[x]_G}) [y F_A^n z \wedge \neg(y F_A z)]$.
\end{lemma}
\begin{proof}
	\noindent $\Rightarrow$: Assume that for all $x \in Y$, $f_A([x]_G)$ is infinite and thus $F_A \rest{[x]_G}$ has infinitely many equivalence classes. Fix $n \in \N$ and recall that $F_A^n$ has only finitely many equivalence classes. Thus, by the Pigeon Hole Principle, there are $y,z \in Y^{[x]_G}$ such that $y F_A^n z$ yet $\neg (y F_A z)$.
	
	\noindent $\Leftarrow$: Assume for contradiction that $f_A(Y^{[x]_G})$ is finite for some $x \in Y$. Then it follows that $F_A = F_A^n$, for some $n$, and hence for any $y,z \in Y^{[x]_G}$, $y F_A^n z$ implies $y F_A z$, contradicting the hypothesis.
\end{proof}

\begin{theorem}\label{X is aperiodically separable}
	If $X$ is an aperiodic Borel $G$-space, then $X \in \Afrak$.
\end{theorem}
\begin{proof}
	By \cref{markers}, there is a partition $\{A_n\}_{n \in \N}$ of $X$ into Borel complete sections. We will inductively construct Borel sets $B_n \subseteq C_n$, where $C_n$ should be thought of as the set of points \textit{colored} (black or white) at the $n^{th}$ step, and $B_n$ as the set of points colored \textit{black} (thus $C_n \setminus B_n$ is colored \textit{white}).
	
	Define a function $\# : X \rightarrow \N$ by $x \mapsto m$, where $m$ is such that $x \in A_m$. Fix a countable family $\{U_n\}_{n \in \N}$ of sets generating the Borel $\sigma$-algebra of $X$.
	
	Assuming that for all $k < n$, $C_k, B_k$ are defined, let $\bar{C}_n = \bigcup_{k<n} C_k$ and $\bar{B}_n = \bigcup_{k<n} B_k$. Put $P_n = \{x \in A_0 : \forall k < n (g_k x \in \bar{C}_n) \wedge g_n x \notin \bar{C}_n\}$ and set $F_n = F_{\bar{B}_n}^n \rest{P_n}$, that is for all $x,y \in P_n$,
	$$
	y F_n z \Leftrightarrow \forall k < n (g_k y \in \bar{B}_n \leftrightarrow g_k z \in \bar{B}_n).
	$$
	Now put $C'_n = \set{x \in P_n : \#(g_n x) = \min \#((g_nP_n)^{[x]_G})}$, $C''_n = \set{x \in C'_n : \exists y, z \in (C'_n)^{[x]_G} (y \ne z \wedge y F_n z)}$ and $C_n = g_n C''_n$. Note that it follows from the definition of $P_n$ that $C_n$ is disjoint from $\bar{C}_n$.
	
	Now in order to define $B_n$, first define a function $\n : X \rightarrow \N$ by
	$$
	x \mapsto \text{ the smallest $m$ such that there are } y,z \in C''_n \cap [x]_G \text{ with } y F_n z, y \in U_m \text{ and } z \notin U_m.
	$$
	Note that $\n$ is Borel and $G$-invariant. Lastly, let $B'_n = \{x \in C''_n : x \in U_{\n(x)}\}$ and $B_n = g_n B'_n$. Clearly $B_n \subseteq C_n$. Now let $B = \bigcup_{n \in \N} B_n$ and $D = \left[\bigcup_{n \in \N} (C'_n \setminus C''_n)\right]_G$. We show that $B$ aperiodically separates $Y := X \setminus D$ and $D \in \Sm$. Since $\Sm \subseteq \Afrak$ and $\Afrak$ is an ideal, this will imply that $X \in \Afrak$.
	
	\begin{claim}
		$D \in \Sm$.
	\end{claim}
	\begin{pfof}
		Since $\Sm$ is a $\sigma$-ideal, it is enough to show that for each $n$, $[C'_n \setminus C''_n]_G \in \Sm$, so fix $n  \in \N$. Clearly $(C'_n \setminus C''_n)^{[x]_G}$ is finite, for all $x \in X$, since there can be at most $2^n$ pairwise $F_n$-nonequivalent points. Thus, fixing some Borel linear ordering of $X$ and taking the smallest element from $(C'_n \setminus C''_n)^{[x]_G}$ for each $x \in C'_n \setminus C''_n$, we can define a Borel transversal for $[C'_n \setminus C''_n]_G$.
	\end{pfof}
	
	By \cref{criterion for aperiodically separating}, to show that $B$ aperiodically separates $Y$, it is enough to show that $(\forall x \in Y) (\forall n) (\exists y,z \in [x]_G) [y F_B^n z \wedge \neg(y F_B z)]$. Fix $x \in Y$.
	
	\begin{claim}
		$(\exists^{\infty} n) (C''_n)^{[x]_G} \neq \0$.
	\end{claim}
	\begin{pfof}
		Assume for contradiction that $(\forall^{\infty} n) (C''_n)^{[x]_G} = \0$. Since $x \notin D$, it follows that $(\forall^{\infty} n) P_n^{[x]_G} = \0$. Since $A_0$ is a complete section and $\bar{C}_0 = \0$, $P_0^{[x]_G} \neq \0$. Let $N$ be the largest number such that $P_N^{[x]_G} \ne \0$. Thus for all $n > N$, $C_n^{[x]_G} = \0$ and hence for all $n > N$, $\bar{C}_n^{[x]_G} = \bar{C}_{N+1}^{[x]_G}$. Because $C_N^{[x]_G} \ne \0$, there is $y \in A_0^{[x]_G}$ such that $\forall k \le N (g_k y \in \bar{C}_{N+1})$; but because $P_{N+1}^{[x]_G} = \0$, $g_{N+1} y$ must also fall into $\bar{C}_{N+1}$. By induction on $n > N$, we get that for all $n>N$, $g_n y \in \bar{C}_n$ and thus $g_n y \in \bar{C}_{N+1}$.
		
		On the other hand, it follows from the definition of $C'_n$ that for each $n$, $(C'_n)^{[x]_G}$ intersects exactly one of $A_k$. Thus $\bar{C}_{N+1}^{[x]_G}$ intersects at most $N+1$ of $A_k$ and hence there exists $K \in \N$ such that for all $k \geq K$, $\bar{C}_{N+1}^{[x]_G} \cap A_k = \0$. Since $\exists^{\infty} n (g_n y \in \bigcup_{k \geq K} A_k)$, $\exists^{\infty} n (g_n y \notin \bar{C}_{N+1})$, a contradiction.
	\end{pfof}
	
	Now it remains to show that for all $n \in \N$, $(C''_n)^{[x]_G} \neq \0$ implies that $\exists y,z \in [x]_G$ such that $y F_B^n z$ but $\neg (y F_B z)$. To this end, fix $n \in \N$ and assume $(C''_n)^{[x]_G} \neq \0$. Thus there are $y,z \in (C''_n)^{[x]_G}$ such that $y F_n z$, $y \in U_{\n(x)}$ and $z \notin U_{\n(x)}$; hence, $g_n y \in B_n$ and $g_n z \notin B_n$, by the definition of $B_n$. Since the $C_k$ are pairwise disjoint, $B_n \subseteq C_n$ and $g_n y, g_n z \in C_n$, it follows that $g_n y \in B$ and $g_n z \notin B$, and therefore $\neg (y F_B z)$. Finally, note that $F_n = F_B^n \rest{P_n}$ and hence $y F_B^n z$.
\end{proof}

It is worth explicitly stating the previous theorem in terms of the coding map $f_A$:

\begin{theorem}\label{map_to_aperiodic_part_of_shift}
	Any aperiodic Borel $G$-space admits a $G$-equivariant Borel map to the aperiodic part of the shift action $G \actson 2^G$.
\end{theorem}

\begin{cor}\label{G=Z implies separating orbits}
	For an aperiodic Borel $\Z$-space $X$, there is a Borel set $A \subseteq X$ such that $G \gen{A}$ separates points in each orbit, i.e. $f_A \rest{[x]_\Z}$ is one-to-one, for all $x \in X$.
\end{cor}
\begin{proof}
	Let $A$ be a Borel set aperiodically separating $X$ (exists by \cref{X is aperiodically separable}) and put $Y = f_A(X)$. Then $Y \subseteq 2^\Z$ is aperiodic and hence the action of $\Z$ on $Y$ is free. But this implies that for all $y \in Y$, $f_A^{-1}(y)$ intersects every orbit in $X$ at no more than one point, and hence $f_A$ is one-to-one on every orbit.
\end{proof}

From \cref{E-invariant complete sections,X is aperiodically separable} we immediately get the following strengthening of \cref{markers}.

\begin{cor}\label{E-invariant complete sections for X}
	Any aperiodic Borel $G$-space $X$ admits a countably infinite partition into Borel $F_A$-invariant complete sections, for some Borel set $A \subseteq X$.
\end{cor}

\subsection{Separating smooth-many invariant sets}

The following is a useful tool in constructing generators and we will apply it in the next section to the equivalence relation $E$ of being in the same component of the ergodic decomposition.

\begin{defn}[Smooth equivalence relations]\label{defn:smooth_eq_rel}
	An equivalence relation $E$ on a standard Borel space $X$ is called smooth if there is a Borel map $h : X \to \R$ reducing $E$ to the equality relation on $\R$, i.e. for each $x,y \in X$,
	$$
	x E y \iff h(x) = h(y).
	$$
\end{defn}

Note that since any two uncountable Polish spaces are Borel isomorphic (see \cite[Theorem 15.6]{bible}), in the above definition $\R$ can be replaced with any other uncountable Polish space. Also note that a smooth equivalence relation $E$ is automatically Borel (as a subset of $X^2$) because $E = \bar{h}^{-1}(\De(\R))$, where $\De(\R) = \set{(r,r) : r \in \R}$ and $\bar{h} : X^2 \to \R^2$ is defined by $(x,y) \mapsto (h(x), h(y))$.

It is a theorem of Burgess (see \cite{Burgess}) that for a Borel $G$-space $X$, the orbit equivalence relation $E_G$ is smooth if and only if the action $G \actson X$ is smooth in the sense of \cref{defn:smoothness_via_transversal}, that is: $X$ admits a Borel transversal.

For a Borel $G$-space $X$, an equivalence relation $E$ on $X$ is called \emph{$G$-invariant} if $E_G \subseteq E$; in other words, each $E$-class is a union of $G$-orbits.

\begin{theorem}\label{separating smooth-many invariant sets}
	Let $X$ be an aperiodic Borel $G$-space and let $E$ be a smooth $G$-invariant equivalence relation on $X$. There exists a partition $\Pa$ of $X$ into $4$ Borel sets such that $G \Pa$ separates any two $E$-nonequivalent points in $X$, i.e. for all $x, y \in X$, $[x]_E \ne [y]_E$ implies $f_{\Pa}(x) \neq f_{\Pa}(y)$.
\end{theorem}
\begin{proof}
	By \cref{E-invariant complete sections for X}, there is $A \in \Bfrak(X)$ and a Borel partition $\{A_n\}_{n \in \N}$ of $X$ into $F_A$-invariant complete sections. Fix an enumeration $G = \{g_n\}_{n \in \N}$, and for each $n \in \N$, define a function $\n : X \rightarrow \N$ by
	$$
	x \mapsto \text{the smallest $m$ such that } \exists x' \in A_0^{[x]_G} \text{ with } g_m x' \in A_n.
	$$
	Clearly $\n$ is Borel, and because all of $A_k$ are $F_A$-invariant, $\n$ is also $F_A$-invariant, i.e. for all $x,y \in X$, $x F_A y \rightarrow \n(x) = \n(y)$. Also, $\n$ is $G$-invariant by definition.
	
	Put $A'_n = \{x \in A_0 : g_{\n(x)} x \in A_n\}$ and note that $A'_n$ is $F_A$-invariant Borel since so are $\n$, $A_0$ and $A_n$. Moreover, $A'_n$ is clearly a complete section. Define $\ga_n : A'_n \rightarrow A_n$ by $x \mapsto g_{\n(x)} x$. Clearly, $\ga_n$ is Borel and one-to-one.
	
	Since $E$ is smooth, there is a Borel $h : X \rightarrow \R$ such that for all $x,y \in X$, $x E y \leftrightarrow h(x) = h(y)$. Let $\{V_n\}_{n \in \N}$ be a countable family of subsets of $\R$ generating the Borel $\sigma$-algebra of $\R$ and put $U_n = h^{-1}(V_n)$. Because each equivalence class of $E$ is $G$-invariant, so is $h$ and hence so is $U_n$.
	
	Now let $B_n = \ga_n(A'_n \cap U_n)$ and note that $B_n$ is Borel being a one-to-one Borel image of a Borel set. It follows from the definition of $\ga_n$ that $B_n \subseteq A_n$. Put $B = \bigcup_{n \in \N} B_n$ and $\Pa = \gen{A,B}$; in particular, $|\Pa| \leq 4$. We show that $\Pa$ is what we want. To this end, fix $x,y \in X$ with $\neg (x E y)$. If $\neg (x F_A y)$, then $G \gen{A}$ (and hence $G \Pa$) separates $x$ and $y$.
	
	Thus assume that $x F_A y$. Since $h(x) \neq h(y)$, there is $n$ such that $h(x) \in V_n$ and $h(y) \notin V_n$. Hence, by invariance of $U_n$, $gx \in U_n \wedge gy \notin U_n$, for all $g \in G$. Because $A'_n$ is a complete section, there is $g \in G$ such that $gx \in A'_n$ and hence $gy \in A'_n$ since $A'_n$ is $F_A$-invariant. Let $m = \n(gx)$ ($= \n(gy)$). Then $g_m gx \in B_n$ while $g_m gy \notin B_n$ although $g_m gy \in \ga_n(A'_n) \subseteq A_n$. Thus $g_m gx \in B$ but $g_m gy \notin B$ and therefore $G \Pa$ separates $x$ and $y$.
\end{proof}

\section{Potential dichotomy theorems}\label{section_dichotomy}

In this section we prove dichotomy theorems assuming Weiss's question has a positive answer for $G = \Z$. In the proofs we use the Ergodic Decomposition Theorem (see \cite{Farrell}, \cite{Varadarajan}) and a Borel/uniform version of Krieger's finite generator theorem, so we first state both of the theorems and sketch the proof of the latter.

For a Borel $G$-space $X$, let $\M_G(X)$ denote the set of $G$-invariant Borel probability measures on $X$ and let $\Erg_G(X)$ denote the set of ergodic ones among those. Clearly both are Borel subsets of $P(X)$ (the standard Borel space of Borel probability measures on $X$) and thus are themselves standard Borel spaces.

\begin{ergdecthm}[Farrell, Varadarajan]
	Let $X$ be a Borel $G$-space. If $\M_G(X) \ne \0$ (and hence $\Erg_G(X) \ne \0$), then there is a Borel surjection $x \mapsto e_x$ from $X$ onto $\Erg_G(X)$ such that:
	\begin{enumerate}[(i)]
		\item $x E_G y \Rightarrow e_x = e_y$;
		
		\item For each $e \in \Erg_G(X)$, if $X_e = \set{x \in X : e_x = e}$ (hence $X_e$ is invariant Borel), then $e(X_e) = 1$ and $e \rest{X_e}$ is the unique ergodic invariant Borel probability measure on $X_e$;
		
		\item For each $\mu \in \M_G(X)$ and $A \in \Bfrak(X)$, we have $\mu(A) = \int e_x(A) d\mu(x).$
	\end{enumerate}
\end{ergdecthm}

For the rest of the section, let $X$ be a Borel $\Z$-space.

For $e \in \ErgX$, if we let $h_e$ denote the entropy of $(X, \Z, e)$, then the map $e \mapsto h_e$ is Borel. Indeed, if $\set{\Pa_k}_{k \in \N}$ is a refining sequence of partitions of $X$ that generates the Borel $\sigma$-algebra of $X$, then, by \cite[4.1.2]{Downarowicz}, $h_e = \lim_{k \to \infty} h_e(\Pa_k, \Z)$, where $h_e(\Pa_k, \Z)$ denotes the entropy of $\Pa_k$. By \cite[17.21]{bible}, the function $e \mapsto h_e(\Pa_k)$ is Borel and thus so is the map $e \mapsto h_e$.

For all $e \in \ErgX$ with $h_e < \w$, let $N_e$ be the smallest integer such that $\log_2 N_e > h_e$. The map $e \mapsto N_e$ is Borel because so is $e \mapsto h_e$.

\begin{KriegersThm}[Uniform version]\label{uniform Krieger}
	Let $X$ be a Borel $\Z$-space. Suppose $\M_{\Z}(X) \ne \0$ and let $\rho$ be the map $x \mapsto e_x$ as in the Ergodic Decomposition Theorem. Assume also that all measures in $\ErgX$ have finite entropy and let $e \mapsto N_e$ be the map defined above. Then there is a partition $\set{A_n}_{n \le \infty}$ of $X$ into Borel sets such that
	\begin{enumerate}[(i)]
		\item $A_{\infty}$ is invariant and does not admit an invariant Borel probability measure;
		
		\item For each $e \in \ErgX$, $\set{A_n \cap X_e}_{n < N_e}$ is a generator for $X_e \setminus A_{\infty}$, where $X_e = \rho^{-1}(e)$.
	\end{enumerate}
\end{KriegersThm}
\begin{skofpf}
	Note that it is enough to find a Borel invariant set $X' \subseteq X$ and a Borel $\Z$-map $\phi : X' \to \N^{\Z}$, such that for each $e \in \ErgX$, we have
	\begin{enumerate}[(I)]
		\item $e(X \setminus X') = 0$;
		
		\item $\phi \rest{X_e \cap X'}$ is one-to-one and $\phi(X_e \cap X') \subseteq (N_e)^{\Z}$, where $(N_e)^{\Z}$ is naturally viewed as a subset of $\N^{\Z}$.
	\end{enumerate}
	Indeed, assume we had such $X'$ and $\phi$, and let $A_{\infty} = X \setminus X'$ and $A_n = \phi^{-1}(V_n)$ for all $n \in \N$, where $V_n = \set{y \in \N^{\Z} : y(0) = n}$. Then it is clear that $\set{A_n}_{n \in \N}$ satisfies (ii). Also, (I) and part (iii) of the Ergodic Decomposition Theorem imply that (i) holds for $A_{\infty}$.
	
	To construct such a $\phi$, we use the proof of Krieger's theorem presented in \cite[Theorem 4.2.3]{Downarowicz}, and we refer to it as Downarowicz's proof. For each $e \in \ErgX$, the proof constructs a Borel $\Z$-embedding $\phi_e : X' \to N_e^{\Z}$ on an $e$-measure $1$ set $X'$. We claim that this construction is uniform in $e$ in a Borel way and hence would yield $X'$ and $\phi$ as above.
	
	Our claim can be verified by inspection of Downarowicz's proof. The proof uses the existence of sets with certain properties and one has to check that such sets exist with the properties satisfied for all $e \in \ErgX$ at once. For example, the set $C$ used in \cite[proof of Lemma 4.2.5]{Downarowicz} can be chosen so that for all $e \in \ErgX$, $C \cap X_e$ has the required properties for $e$ (using the Shannon--McMillan--Brieman theorem). Another example is the set $B$ used in the proof of the same lemma, which is provided by Rohlin's lemma. By inspection of the proof of Rohlin's lemma (see \cite[2.1]{Glasner}), one can verify that we can get a Borel $B$ such that for all $e \in \ErgX$, $B \cap X_e$ has the required properties for $e$. The sets in these two examples are the only kind of sets whose existence is used in the whole proof; the rest of the proof constructs the required $\phi$ ``by hand''.
\end{skofpf}

\begin{theorem}[Dichotomy I]\label{dichotomyI}
	Suppose the answer to \cref{Weiss's question} is positive and let $X$ be an aperiodic Borel $\Z$-space. Then exactly one of the following holds:
	\begin{enumerate}[(1)]
		\item there exists an invariant ergodic Borel probability measure with infinite entropy;
		
		\item there exists a partition $\set{Y_n}_{n \in \N}$ of $X$ into invariant Borel sets such that each $Y_n$ has a finite generator.
	\end{enumerate}
\end{theorem}
\begin{proof}
	We first show that the conditions above are mutually exclusive. Indeed, assume there exist an invariant ergodic Borel probability measure $e$ with infinite entropy and a partition $\set{Y_n}_{n \in \N}$ of $X$ into invariant Borel sets such that each $Y_n$ has a finite generator. By ergodicity, $e$ would have to be supported on one of the $Y_n$. But $Y_n$ has a finite generator and hence the dynamical system $(Y_n, \Z, e)$ has finite entropy by the Kolmogorov--Sinai theorem (see \labelcref{Kolmogorov--Sinai}). Thus so does $(X, \Z, e)$ since these two systems are isomorphic (modulo $e$-$\NULL$), contradicting the assumption on $e$.
	
	Now we prove that at least one of the conditions holds. Assume that there is no invariant ergodic measure with infinite entropy. Now, if there was no invariant Borel probability measure at all, then, since the answer to \cref{Weiss's question} is assumed to be positive, $X$ would admit a finite generator, and we would be done. So assume that $\M_{\Z}(X) \ne \0$ and let $\set{A_n}_{n \le \infty}$ be as in \cref{uniform Krieger}. Furthermore, let $\rho$ be the map $x \mapsto e_x$ as in the Ergodic Decomposition Theorem. Set $X' = X \setminus A_{\infty}$, $Y_{\infty} = A_{\infty}$, and for all $n \in \N$,
	$$
	Y_n = \set{x \in X' : N_{e_x} = n},
	$$
	where the map $e \mapsto N_e$ is as above. Note that the sets $Y_n$ are invariant since $\rho$ is invariant, so $\set{Y_n}_{n \le \w}$ is a countable partition of $X$ into invariant Borel sets. Since $Y_{\infty}$ does not admit an invariant Borel probability measure, by our assumption, it has a finite generator.
	
	Let $E$ be the equivalence relation on $X'$ defined by $\rho$, i.e. $\forall x,y \in X'$,
	$$
	x E y \Leftrightarrow \rho(x) = \rho(y).
	$$
	By definition, $E$ is a smooth Borel equivalence relation with $E \supseteq E_{\Z}$ since $\rho$ respects the $\Z$-action. Thus, by \cref{separating smooth-many invariant sets}, there exists a partition $\Pa$ of $X'$ into $4$ Borel sets such that $\Z \Pa$ separates any two points in different $E$-classes.
	
	Now fix $n \in \N$ and we will show that $\I = \Pa \vee \set{A_i}_{i < n}$ is a generator for $Y_n$. Indeed, take distinct $x,y \in Y_n$. If $x$ and $y$ are in different $E$-classes, then $\Z \Pa$ separates them and hence so does $\Z \I$. Thus we can assume that $x E y$. Then $e := \rho(x) = \rho(y)$, i.e. $x,y \in X_e = \rho^{-1}(e)$. By the choice of $\set{A_i}_{i \in \N}$, $\set{A_n \cap X_e}_{n < N_e}$ is a generator for $X_e$ and hence $\Z \set{A_i}_{i < N_e}$ separates $x$ and $y$. But $n = N_e$ by the definition of $Y_n$, so $\Z \I$ separates $x$ and $y$.
\end{proof}

\begin{prop}\label{arbitrarily large entropy}
	Let $X$ be a Borel $\Z$-space. If $X$ admits invariant ergodic probability measures of arbitrarily large entropy, then it admits an invariant probability measure of infinite entropy.
\end{prop}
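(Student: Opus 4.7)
The plan is to construct an infinite-entropy invariant measure as an explicit countable convex combination of ergodic ones. By hypothesis, pick invariant ergodic Borel probability measures $\{e_n\}_{n \geq 1}$ with $h_{e_n} \to \infty$; after passing to a subsequence we may assume $h_{e_n} \geq n^2$ for every $n \geq 1$. Choose positive weights $c_n := 6/(\pi^2 n^2)$, so that $\sum_{n \geq 1} c_n = 1$ and $c_n h_{e_n} \geq 6/\pi^2$. Define
$$
\mu := \sum_{n \geq 1} c_n e_n,
$$
which is a $\Z$-invariant Borel probability measure on $X$, since invariance passes to countable convex combinations of invariant measures.

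It remains to show $h_\mu = \infty$. Fix $N \geq 1$ and decompose
$$
\mu = \sum_{n \leq N} c_n e_n + r_N \nu_N,
$$
where $r_N := \sum_{n > N} c_n$ and $\nu_N := r_N^{-1} \sum_{n > N} c_n e_n$ is itself an invariant Borel probability measure. Since distinct ergodic invariant probability measures are mutually singular, the $N+1$ measures $e_1, \ldots, e_N, \nu_N$ are pairwise mutually singular. By the classical affinity of entropy on finite convex combinations of pairwise mutually singular invariant probability measures (see, e.g., \cite{Glasner} or \cite{Rudolph}),
$$
h_\mu = \sum_{n \leq N} c_n h_{e_n} + r_N h_{\nu_N} \geq \sum_{n \leq N} c_n h_{e_n} \geq N \cdot \frac{6}{\pi^2}.
$$
Letting $N \to \infty$ yields $h_\mu = \infty$, as desired.

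The only substantive ingredient is the finite affinity of entropy on mutually singular invariant measures, which is a classical fact of ergodic theory; the extension to our countable combination is then immediate from the truncation $\mu = \sum_{n \leq N} c_n e_n + r_N \nu_N$ above. In particular, no appeal to the more delicate integral-form affinity over the full ergodic decomposition is required, and the dominant step is simply engineering the weights $c_n$ so that the divergence $h_{e_n} \to \infty$ survives after multiplication by a summable sequence.
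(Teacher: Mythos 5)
Your proof is correct, but it routes through a different ingredient than the paper does. You cite the classical affinity of the metric entropy functional $\mu \mapsto h_\mu(T)$ on the simplex of invariant measures, applied to the truncated decomposition $\mu = \sum_{n\le N} c_n e_n + r_N \nu_N$, and then just read off the lower bound $h_\mu \ge \sum_{n\le N} c_n h_{e_n}$. One small remark: the hypothesis of pairwise mutual singularity that you invoke is unnecessary for this --- affinity of entropy (e.g. Glasner, or Walters Thm.~8.1) holds for arbitrary convex combinations of invariant measures, so your appeal to the ergodic-decomposition disjointness of the $e_n$'s is a harmless but superfluous detour. Also, for your conclusion you only use the inequality $h_\mu \ge \sum_{n\le N} c_n h_{e_n}$, not the full equality; so superaffinity would already suffice. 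The paper, by contrast, does not quote affinity as a black box; it instead fixes $n$, uses the ergodic decomposition to obtain the invariant set $X_n$ carrying $\mu_n$, and carries out a direct static-entropy calculation (the inequality $-\log(cr)\,cr \ge -c\log(r)\,r$ for $0\le c,r\le 1$) to show $h_\mu(\bar\Pa) \ge 2^{-n} h_{\mu_n}(\bar\Pa)$ for partitions of $X_n$, then passes to the dynamic entropy and the supremum. The paper's argument is longer but self-contained; yours is shorter at the cost of importing a standard but nontrivial theorem. Both the weight engineering and the limiting argument are essentially the same in both proofs.
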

\begin{proof}
	For each $n \ge 1$, let $\mu_n$ be an invariant ergodic probability measure of entropy $h_{\mu_n} > n 2^n$ such that $\mu_n \ne \mu_m$ for $n \ne m$, and put
	$$
	\mu = \sum_{n \ge 1} {1 \over 2^n} \mu_n.
	$$
	It is clear that $\mu$ is an invariant probability measure, and we show that its entropy $h_{\mu}$ is infinite using an argument pointed out by the referee of the current paper (the author's original argument was less general).
	
	\begin{claim*}
		For any invariant probability measures $\nu, \mu$ on $X$ and $c \le 1$, if $c \nu \le \mu$ then $c h_\nu \le h_\mu$.
	\end{claim*}
	\begin{pfof}
		First note that the entropy of any atomic invariant probability measure $\rho$ is $0$: indeed, such a measure is supported on countably many orbits, each of which is finite, and hence for any partition $\I$, the sequence $h_\rho(\bigvee_{i=-n}^n T^i \I)$ converges as $n \to \w$; therefore, the time-average entropy (see \labelcref{eq:dynamic_entropy}) is $0$. It follows now that the nonatomic parts of $\nu,\mu$ do not contribute in the calculation of their entropies, so we may assume without loss of generality that $\nu,\mu$ are nonatomic.
		
		The function $g(x) = - x \log_2 x$ used in the calculation of the entropy is convex on $(0,1)$ and satisfies $c g(x) \le g(cx) \le g(y)$ for $0 < x,y < 1/2$ with $cx \le y$. In \labelcref{eq:static_entropy}, $x$ is equal to the measure of a piece of a partition; since, by the convexity of $g(x)$, refining a partition only increases its static entropy and our measures are nonatomic, in \labelcref{eq:sup_entropy} we can take the supremum merely over the partitions with pieces of measure less than $1/2$. This yields $c h_\nu \le h_{\mu}$.
	\end{pfof}
	
	For any $n \ge 1$, we have ${1 \over 2^n} \mu_n \le \mu$, so the above claim gives $h_\mu \ge {1 \over 2^n} h_{\mu_n} \ge n$ and hence $h_\mu = \w$.
\end{proof}

\begin{theorem}[Dichotomy II]\label{dichotomyII}
	Suppose the answer to \cref{Weiss's question} is positive and let $X$ be an aperiodic Borel $\Z$-space. Then exactly one of the following holds:
	\begin{enumerate}[(1)]
		\item there exists an invariant Borel probability measure with infinite entropy;
		
		\item $X$ admits a finite generator.
	\end{enumerate}
\end{theorem}
\begin{proof}
	The Kolmogorov--Sinai theorem implies that the conditions are mutually exclusive, and we prove that at least one of them holds. Assume that there is no invariant measure with infinite entropy. If there was no invariant Borel probability measure at all, then, by our assumption, $X$ would admit a finite generator. So assume that $\M_{\Z}(X) \ne \0$ and let $\set{A_n}_{n \le \infty}$ be as in \cref{uniform Krieger}. Furthermore, let $\rho$ be the map $x \mapsto e_x$ as in the Ergodic Decomposition Theorem. Set $X' = X \setminus A_{\infty}$ and $X_e = \rho^{-1}(e)$, for all $e \in \ErgX$.
	
	By our assumption, $A_{\w}$ admits a finite generator $\Pa$. Also, by \cref{arbitrarily large entropy}, there is $N \ge 1$ such that for all $e \in \ErgX$, $N_e \le N$ and hence $\Qa := \set{A_n}_{n < N}$ is a finite generator for $X_e$; in particular, $\Qa$ is a partition of $X'$. Let $E$ be the following equivalence relation on $X$:
	$$
	x E y \Leftrightarrow (x, y \in A_{\w}) \vee (x,y \in X' \wedge \rho(x) = \rho(y)).
	$$
	By definition, $E$ is a smooth equivalence relation with $E \supseteq E_{\Z}$ since $\rho$ respects the $\Z$-action and $A_{\w}$ is $\Z$-invariant. Thus, by \cref{separating smooth-many invariant sets}, there exists a partition $\J$ of $X$ into $4$ Borel sets such that $\Z \J$ separates any two points in different $E$-classes.
	
	We now show that $\I := \gen{\J \cup \Pa \cup \Qa}$ is a generator. Indeed, fix distinct $x,y \in X$. If $x$ and $y$ are in different $E$-classes, then $\Z \J$ separates them. So we can assume that $x E y$. If $x,y \in A_{\w}$, then $\Z \Pa$ separates $x$ and $y$. Finally, if $x,y \in X'$, then $x,y \in X_e$, where $e = \rho(x)$ ($= \rho(y)$), and hence $\Z \Qa$ separates $x$ and $y$.
\end{proof}

\begin{remark}
	It is likely that the above dichotomies are also true for any amenable group using a uniform version of Krieger's theorem for amenable groups (see \cite{DP}), but the author has not checked the details.
\end{remark}

\section{Finite generators on comeager sets}\label{section_finite gen modulo meager}

This section is devoted to the proof of the following:

\begin{theorem}\label{4-generator modulo a meager set}
	Any aperiodic Polish $G$-space admits a $4$-generator on an invariant comeager set.
\end{theorem}

Throughout this section, let $X$ be an aperiodic Polish $G$-space. We use the notation $\forall^* x$ to mean ``for comeager many $x$''.

Having advertised the Kuratowski--Ulam method in the introduction, let us point out that a ``blind'' application of it would not give us the statement of the above theorem. Indeed, assume for a moment that we have found a parametrized construction of finite partitions $\Pa_\al$, for $\al \in \N^\N$, and let
$$
\Phi(\Pa_\al, x, y) :\shortiff \text{``if $x \ne y$, then $G \Pa_\al$ separates $x$ and $y$''}.
$$
If we apply the Kuratowski--Ulam method to this $\Phi$, we will get that for comeager many $\al \in \N^\N$, we have:
$$
\forall^* (x,y) \in X^2 \ \Phi(\Pa_\al, x, y),
$$
while we want a comeager set $D \subseteq X$ such that
$$
\forall (x,y) \in D^2 \ \Phi(\Pa_\al, x, y).
$$
The problem is that a $2$-dimensional comeager set may not contain a square of a $1$-dimensional comeager set. To get around this, we transform our $2$-dimensional problem into two $1$-dimensional problems, and here is the first of them:

\begin{lemma}\label{separating orbits on a comeager set}
	There exists $A \in \Bfrak(X)$ such that $G \gen{A}$ separates points in each orbit of a comeager $G$-invariant set $D$, i.e. for each $x \in D$, the restriction of the coding map $f_{\gen{A}}$ to $[x]_G$ is one-to-one.
\end{lemma}
\begin{proof}
	Fix a countable basis $\{U_n\}_{n \in \N}$ for $X$ with $U_0 = \emptyset$ and let $\{A_n\}_{n \in \N}$ be a partition of $X$ provided by \cref{markers}. For each $\alpha \in \Br$ (the Baire space), define
	$$
	B_{\alpha} = \Uni (A_n \cap U_{\alpha(n)}).
	$$
	
	\begin{claim*}
		$\forall^* \alpha \in \Br \forall^* z \in X \forall x,y \in [z]_G (x \ne y \Rightarrow \exists g \in G (gx \in B_{\alpha} \nLeftrightarrow gy \in B_{\alpha}))$.
	\end{claim*}
	\begin{pfof}
		By Kuratowski--Ulam, it is enough to show the statement with places of the quantifiers $\forall^* \alpha \in \Br$ and $\forall^* z \in X$ switched. Also, since orbits are countable and a countable intersection of comeager sets is comeager, we can also switch the places of the quantifiers $\forall^* \alpha \in \Br$ and $\forall x,y \in [z]_G$. Thus we fix $z \in X$ and $x,y \in [z]_G$ with $x \neq y$ and show that $C = \{\alpha \in \Br : \exists g \in G \ (gx \in B_{\alpha} \nLeftrightarrow gy \in B_{\alpha})\}$ is dense open.
		
		To see that $C$ is open, take $\alpha \in C$ and let $g \in G$ be such that $gx \in B_{\alpha} \nLeftrightarrow gy \in B_{\alpha}$. Let $n,m \in \N$ be such that $gx \in A_n$ and $gy \in A_m$. Then for all $\beta \in \Br$ with $\beta(n) = \alpha(n)$ and $\beta(m) = \alpha(m)$, we have $gx \in B_{\beta} \nLeftrightarrow gy \in B_{\beta}$. But the set of such $\beta$ is open in $\Br$ and contained in $C$.
		
		For the density of $C$, let $s \in \QB$ and set $n = |s|$. Since $A_n$ is a complete section, $\exists g \in G$ with $gx \in A_n$. Let $m \in \N$ be such that $gy \in A_m$. Take any $t \in \N^{\max\{n,m\}+1}$ with $t \sqsupseteq s$ satisfying the following condition:
		
		\noindent Case 1: $n > m$. If $gy \in U_{s(m)}$ then set $t(n) = 0$. If $gy \notin U_{s(m)}$, then let $k$ be such that $gx \in U_k$ and set $t(n) = k$.
		
		\noindent Case 2: $n \leq m$. Let $k$ be such that $gx \in U_k$ but $gy \notin U_k$ and set $t(n) = t(m) = k$.
		
		Now it is easy to check that in any case $gx \in B_{\alpha} \nLeftrightarrow gy \in B_{\alpha}$, for any $\alpha \in \Br$ with $\alpha \sqsupseteq t$, and so $\alpha \in C$ and $\alpha \sqsupseteq s$. Hence $C$ is dense.
	\end{pfof}
	
	By the claim, $\exists \alpha \in \Br$ such that 
	$$
	D := \set{z \in X : \forall x,y \in [z]_G \text{ with } x \neq y, \ G \gen{B_{\alpha}} \text{ separates $x$ and $y$}}
	$$ 
	is comeager and clearly invariant, which completes the proof.
\end{proof}

The reader is invited to compare this last lemma with \cref{X is aperiodically separable} and \cref{G=Z implies separating orbits}. In fact, in the proof of \cref{4-generator modulo a meager set} below, we only use \cref{separating orbits on a comeager set} to deduce the conclusion of \cref{X is aperiodically separable} modulo $\MEAGER$, which we could also deduce from \cref{X is aperiodically separable} itself. However, we still included \cref{separating orbits on a comeager set} here to keep this section self-contained, and also because its proof is easier than that of \cref{X is aperiodically separable}.

\begin{proof}[Proof of \cref{4-generator modulo a meager set}]
	Let $A$ and $D$ be provided by \cref{separating orbits on a comeager set}. Throwing away an invariant meager set from $D$, we may assume that $D$ is dense $G_{\de}$ and hence Polish in the relative topology. Therefore, we may assume without loss of generality that $X = D$.
	
	Thus $A$ aperiodically separates $X$ and hence, by \cref{E-invariant complete sections}, there is a partition $\{A_n\}_{n \in \N}$ of $X$ into $F_A$-invariant Borel complete sections (the latter could be inferred directly from \cref{E-invariant complete sections for X} without using \cref{separating orbits on a comeager set}). Fix an enumeration $G = \{g_n\}_{n \in \N}$ and a countable basis $\{U_n\}_{n \in \N}$ for $X$. Denote $\Brr = (\N^2)^{\N}$ and for each $\alpha \in \Brr$, define
	$$
	B_{\alpha} = \Uone (A_n \cap g_{(\alpha(n))_0}U_{(\alpha(n))_1}).
	$$
	
	\begin{claim*}
		$\forall^* \alpha \in \Brr \forall^* x \in X \forall l \in \N \exists n,k \in \N (\alpha(n) = (k,l) \wedge g_k x \in A_n)$.
	\end{claim*}
	\begin{pfof}
		By Kuratowski--Ulam, it is enough to show that $\forall x \in X$ and $\forall l \in \N$, $C = \{\alpha \in \Brr : \exists k,n \in \N (\alpha(n) = (k,l) \wedge g_k x \in A_n)\}$ is dense open.
		
		To see that $C$ is open, note that for fixed $n,k,l \in N$, $\alpha(n) = (k,l)$ is an open condition in $\Brr$.
		
		For the density of $C$, let $s \in (\N^2)^{<\N}$ and set $n = |s|$. Since $A_n$ is a complete section, $\exists k \in \N$ with $g_k x \in A_n$. Any $\alpha \in \Brr$ with $\alpha \sqsupseteq s$ and $\alpha(n) = (k,l)$ belongs to $C$. Hence $C$ is dense.
	\end{pfof}
	
	By the claim, there exists $\alpha \in \Brr$ such that $Y = \{x \in X : \forall l \in \N \ \exists k,n \in \N \ (\alpha(n) = (k,l) \wedge g_k x \in A_n)\}$ is comeager.
	
	Let $\I = \gen{A, B_{\alpha}}$, and so $|\I| \le 4$. We show that $\I$ is a generator on $Y$. Fix distinct $x,y \in Y$. If $x$ and $y$ are separated by $G \gen{A}$ then we are done, so assume otherwise, that is $x F_A y$. Let $l \in \N$ be such that $x \in U_l$ but $y \notin U_l$. Then there exists $k,n \in \N$ such that $\alpha(n) = (k,l)$ and $g_k x \in A_n$. Since $g_k x F_A g_k y$ and $A_n$ is $F_A$-invariant, $g_k y \in A_n$. Furthermore, since $g_k x \in A_n \cap g_k U_l$ and $g_k y \notin A_n \cap g_k U_l$, $g_k x \in B_{\alpha}$ while $g_k y \notin B_{\alpha}$. Hence $G \gen{B_{\alpha}}$ separates $x$ and $y$, and thus so does $G\I$. Therefore $\I$ is a generator.
\end{proof}

\begin{cor}
	Let $X$ be a Polish $G$-space. If $X$ is aperiodic, then it is $2$-compressible modulo $\MEAGER$.
\end{cor}
\begin{proof}
	By \cite[Theorem 13.1]{KM}, $X$ is compressible modulo $\MEAGER$. Also, by the above theorem, $X$ admits a $4$-generator modulo $\MEAGER$. Thus \cref{n-generator => log(n)-compressible} implies that $X$ is $2$-compressible modulo $\MEAGER$.
\end{proof}

\section{The nonexistence of non-meager weakly wandering sets}

Throughout this section, let $X$ be a Polish $\Z$-space and $T$ be the homeomorphism corresponding to the action of $1 \in \Z$.

\subsection{An arithmetic criterion}

\begin{obs}\label{closure properties of ww sets}
Let $A \subseteq X$ be weakly wandering with respect to $H \subseteq \Z$. Then $A$ is weakly wandering with respect to
\begin{enumerate}[(a)]
\item any subset of $H$;
\item $r+H$, $\forall r \in \Z$;
\item $-H$.
\end{enumerate}
\end{obs}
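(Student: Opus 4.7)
The three assertions are all elementary consequences of the definition, and the plan is to derive them from a single reformulation. Specifically, I would first observe that since $T^m$ is a bijection of $X$ for every $m \in \Z$, applying $T^{-h'}$ to both sides of $hA \cap h'A = \emptyset$ gives the equivalent statement $(h-h')A \cap A = \emptyset$. Hence $A$ is weakly wandering with respect to a set $S \subseteq \Z$ precisely when $A \cap kA = \emptyset$ for every nonzero $k$ in the difference set $S - S := \set{s - s' : s, s' \in S}$.

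Given this reformulation, each of the three parts reduces to a statement about difference sets in the abelian group $\Z$. For (a), if $H' \subseteq H$ then $H' - H' \subseteq H - H$, so the disjointness condition for $H$ immediately implies the one for $H'$. For (b), $(r+H) - (r+H) = H - H$, so being weakly wandering with respect to $H$ is in fact equivalent to being weakly wandering with respect to $r + H$. For (c), using the commutativity of $\Z$, $(-H) - (-H) = H - H$, giving the same equivalence.

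There is no real obstacle; the only step requiring a moment's thought is the reformulation via difference sets, after which all three items are automatic. Optionally I would remark that (b) and (c) in fact give equivalences rather than one-way implications, which is slightly stronger than the asserted closure property, and that only (c) uses the abelian structure of $\Z$ in an essential way.
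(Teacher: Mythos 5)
Your argument is correct. The paper states this as an observation without proof, and your difference-set reformulation ($A$ is weakly wandering with respect to $S$ iff $kA \cap A = \emptyset$ for all nonzero $k \in S - S$, valid because each $T^m$ is a bijection) is a clean packaging of the direct verification one would write down; all three parts then follow exactly as you say.
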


\begin{defn}
Let $d \geq 1$ and $F = \set{n_i}_{i<k} \subseteq \Z$, where $n_0 < n_1 < ... < n_{k-1}$ are increasing. $F$ is called $d$-syndetic if $n_{i+1} - n_i \leq d$ for all $i < k-1$. In this case we say that the length of $F$ is $n_{k-1}-n_0$ and denote it by $||F||$.
\end{defn}

\begin{lemma}\label{php}
Let $d \geq 1$ and $F \subseteq \Z$ be a $d$-syndetic set. For any $H \subseteq \Z$, if $|H| = d+1$ and $\max(H) - \min(H) < ||F|| + d$, then $F$ is not weakly wandering with respect to $H$ (viewing $\Z$ as a $\Z$-space).
\end{lemma}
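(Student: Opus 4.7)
The plan is to prove the lemma by a pigeonhole argument exploiting the $d$-syndeticity of $F$. By Observation \ref{closure properties of ww sets}(b), weak wandering is invariant under translating $H$, so I may assume $\min H = 0$. Write $F = \set{n_0 < n_1 < \cdots < n_{k-1}}$ and $H = \set{0 = h_0 < h_1 < \cdots < h_d}$, so that $h_d < \|F\| + d = n_{k-1} - n_0 + d$.

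The first key step is to observe that the $d$-syndeticity of $F$ implies
\[
[n_0, n_{k-1} + d - 1] \;\subseteq\; F + \set{0, 1, \ldots, d-1},
\]
where the right-hand side is the Minkowski sum. Indeed, for any integer $m$ in this interval, let $n_j$ be the largest element of $F$ with $n_j \le m$ (taking $n_j = n_{k-1}$ if $m > n_{k-1}$). If $j < k-1$, then $m < n_{j+1}$, so $m - n_j < n_{j+1} - n_j \le d$; and if $j = k-1$, then $m - n_{k-1} \le d-1$ by the right endpoint of the interval. In either case $m - n_j \in \set{0, 1, \ldots, d-1}$.

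Next I would verify that each shift $h_i + n_0$ lies in $[n_0, n_{k-1} + d - 1]$. The left end is clear since $h_0 + n_0 = n_0$, and the right end follows from $h_d + n_0 < n_{k-1} - n_0 + d + n_0 = n_{k-1} + d$. So for each $i \in \set{0, 1, \ldots, d}$ I can choose $j(i) < k$ and $r(i) \in \set{0, 1, \ldots, d-1}$ with $h_i + n_0 = n_{j(i)} + r(i)$.

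Finally, since there are $d+1$ indices $i$ but only $d$ possible values for $r(i)$, the pigeonhole principle produces $i < i'$ with $r(i) = r(i')$. Subtracting the two equations yields $h_{i'} - h_i = n_{j(i)} - n_{j(i')}$, which rearranges to $h_i + n_{j(i)} = h_{i'} + n_{j(i')}$. Hence $(h_i + F) \cap (h_{i'} + F) \neq \emptyset$, while $h_i \ne h_{i'}$, so $F$ is not weakly wandering with respect to $H$. No serious obstacle is anticipated; the argument is essentially a careful choice of pigeonhole, with the only computational point being the interval-covering claim above.
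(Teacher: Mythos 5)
Your proof is correct and takes essentially the same approach as the paper's: both are pigeonhole arguments with $d$ holes and $d+1$ pigeons, with you pigeonholing on the residue $r(i) \in \set{0, \ldots, d-1}$ rather than on the $d$-integer window $[n_{k-1}, n_{k-1}+d)$ used in the paper. One small sign slip at the end: the correct subtraction gives $h_{i'} - h_i = n_{j(i')} - n_{j(i)}$ and hence $h_{i'} + n_{j(i)} = h_i + n_{j(i')}$, which still exhibits an element of $(h_i + F) \cap (h_{i'} + F)$, so the conclusion stands.
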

\begin{proof}
Using (b) and (c) of \cref{closure properties of ww sets}, we may assume that $H$ is a set of non-negative numbers containing $0$. Let $F = \set{n_i}_{i<k}$ with $n_i$ increasing.

\begin{claim*}
$\forall h \in H$, $(h + F) \cap [n_{k-1}, n_{k-1} + d) \neq \0$.
\end{claim*}
\begin{pfof}
Fix $h \in H$. Since $0 \leq h < ||F|| + d$,
$$
n_0 + h < n_0 + (||F|| + d) = n_{k-1} + d.
$$
We prove that there is $0 \leq i \leq k-1$ such that $n_i + h \in [n_{k-1}, n_{k-1} + d)$. Otherwise, because $n_{i+1} - n_i \leq d$, one can show by induction on $i$ that $n_i + h < n_{k-1}, \forall i < k$, contradicting $n_{k-1} + h \geq n_{k-1}$.
\end{pfof}

Now $|H| = d+1 > d = |\Z \cap [n_{k-1}, n_{k-1} + d)|$, so by the Pigeon Hole Principle there exists $h \neq h' \in H$ such that $(h + F) \cap (h' + F) \neq \0$ and hence $F$ is not weakly wandering with respect to $H$.
\end{proof}

\begin{defn}
Let $d,l \geq 1$ and $A \subseteq X$. We say that $A$ contains a $d$-syndetic set of length $l$ if there exists $x \in X$ such that $\{n \in \Z : T^n(x) \in A\}$ contains a $d$-syndetic set of length $\geq l$. This is equivalent to $\bigcap_{n \in F} T^n(A) \neq \0$, for some $d$-syndetic set $F \subseteq \Z$ of length $\geq l$.
\end{defn}

For $A \subseteq X$, define $s_A : \N \rightarrow \N \cup \{\infty\}$ by
$$
d \mapsto \sup\{l \in \N : A \text{ contains a } d\text{-syndetic set of length } l\}.
$$
Also, for infinite $H \subseteq \Z$, define a width function $w_H : \N \rightarrow \N$ by
$$
d \mapsto \min\{\max(H') - \min(H') : H' \subseteq H \wedge |H'| = d+1\}.
$$

\begin{prop}\label{syndeticity inequality for ww sets}
If $A \subseteq X$ is weakly wandering with respect to an infinite $H \subseteq \Z$ then $\forall d \in \N, s_A(d) + d \leq w_H(d)$.
\end{prop}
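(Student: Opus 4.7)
The strategy is to contrapose and combine Lemma \ref{php} with the defining property of weakly wandering sets in $X$. Suppose for contradiction that $s_A(d) + d > w_H(d)$ for some $d \in \N$. Then there is an integer $l \leq s_A(d)$ with $l + d > w_H(d)$, so by definition of $s_A$ we can pick a $d$-syndetic set $F \subseteq \Z$ of length $\geq l$ such that $\bigcap_{n \in F} T^n(A) \neq \emptyset$. Pick also $H' \subseteq H$ witnessing $w_H(d)$: $|H'| = d+1$ and $\max(H') - \min(H') = w_H(d) < ||F|| + d$. By \ref{closure properties of ww sets}(a), $A$ is still weakly wandering with respect to $H'$, so the goal reduces to deriving a contradiction from this.

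Apply Lemma \ref{php} to $F$ and $H'$ (viewing $\Z$ as a $\Z$-space under translation): since $F$ is $d$-syndetic, $|H'| = d+1$, and $\max(H') - \min(H') < ||F|| + d$, the set $F$ fails to be weakly wandering with respect to $H'$. That is, there exist distinct $h,h' \in H'$ and $n,n' \in F$ with $h + n = h' + n'$, equivalently $n' - n = h - h'$.

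Now transfer this collision from $\Z$ back to $X$. Let $y \in \bigcap_{n \in F} T^n(A)$; write $y = T^n a = T^{n'} a'$ with $a,a' \in A$. Then $a = T^{n'-n} a' = T^{h-h'} a'$, so $T^{h'} a = T^h a'$, showing that $T^h(A) \cap T^{h'}(A) \neq \emptyset$ with $h \neq h'$ in $H'$. This contradicts $A$ being weakly wandering with respect to $H'$, completing the proof.

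The only mildly delicate step is the bookkeeping of signs in the final translation between the collision $h + n = h' + n'$ in $\Z$ and the collision $T^h A \cap T^{h'} A \neq \emptyset$ in $X$; everything else is a direct invocation of Lemma \ref{php} and the definitions.
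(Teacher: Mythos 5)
Your proof is correct and follows the same route as the paper: choose $F$ and $H'$ from the hypothesis $s_A(d)+d>w_H(d)$, apply Lemma \ref{php} to conclude $F$ is not weakly wandering with respect to $H'$, and transfer the resulting collision back to $A$. The paper compresses your final transfer step into the single phrase ``hence neither is $A$,'' so your only addition is to make that (correct) sign bookkeeping explicit.
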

\begin{proof}
Let $H$ be an infinite subset of $\Z$ and $A \subseteq X$, and assume that $s_A(d) + d > w_H(d)$ for some $d \in \N$. Thus $\exists x \in X$ such that $\{n \in \Z : T^n(x) \in A\}$ contains a $d$-syndetic set $F$ of length $l$ with $l + d > w_H(d)$ and $\exists H' \subseteq H$ such that $|H'| = d+1$ and $\max(H') - \min(H') = w_H(d)$. By \cref{php} applied to $F$ and $H'$, $F$ is not weakly wandering with respect to $H'$ and hence neither is $A$. Thus $A$ is not weakly wandering with respect to $H$.
\end{proof}

\begin{cor}\label{arbitrarily long syndetic => not ww}
If $A \subseteq X$ contains arbitrarily long $d$-syndetic sets for some $d \geq 1$, then it is not weakly wandering.
\end{cor}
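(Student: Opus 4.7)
The plan is to derive this as an immediate contrapositive consequence of Proposition \ref{syndeticity inequality for ww sets}. The key observation is that for any infinite $H \subseteq \Z$ and any $d \in \N$, the quantity $w_H(d)$ is finite: since $H$ is infinite, there exist subsets $H' \subseteq H$ of cardinality $d+1$, and each such $H'$ has finite width $\max(H') - \min(H')$, so the minimum $w_H(d)$ over all such $H'$ is a finite non-negative integer.

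Suppose for contradiction that $A$ is weakly wandering, so there exists an infinite $H \subseteq \Z$ with respect to which $A$ is weakly wandering. Fix the $d \geq 1$ provided by the hypothesis, so that $A$ contains arbitrarily long $d$-syndetic sets; by definition of $s_A$, this means $s_A(d) = \infty$. Apply Proposition \ref{syndeticity inequality for ww sets} to conclude $s_A(d) + d \leq w_H(d)$. But $w_H(d)$ is finite while $s_A(d) = \infty$, a contradiction.

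There is essentially no obstacle here; the real content is packaged into Lemma \ref{php} and Proposition \ref{syndeticity inequality for ww sets}, and this corollary is just a clean restatement. The only thing to verify carefully is the finiteness of $w_H(d)$, which I flag above.
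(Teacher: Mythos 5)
Your proof is correct and follows the same route as the paper's: note $s_A(d)=\infty$ and invoke Proposition \ref{syndeticity inequality for ww sets} to contradict weak wandering with respect to any infinite $H$. Your explicit remark that $w_H(d)$ is finite for infinite $H$ is the (trivial but real) point the paper leaves implicit, so nothing is missing.
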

\begin{proof}
If $A$ and $d$ are as in the hypothesis, then $s_A(d) = \infty$ and hence, by \cref{syndeticity inequality for ww sets}, $A$ is not weakly wandering with respect to any infinite $H \subseteq \Z$.
\end{proof}

\begin{theorem}\label{syndetic open sets => no ww set}
Let $X$ be a Polish $G$-space. Suppose for every nonempty open $V \subseteq X$ there exists $d \geq 1$ such that $V$ contains arbitrarily long $d$-syndetic sets, i.e. $\bigcap_{n \in F} T^n(V) \neq \0$ for arbitrarily long $d$-syndetic sets $F \subseteq \Z$. Then $X$ does not admit a non-meager Baire measurable weakly wandering subset.
\end{theorem}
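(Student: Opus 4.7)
The plan is a direct transfer argument from the open set $V$ that witnesses non-meagerness of $A$ to $A$ itself, then an appeal to Corollary~\ref{arbitrarily long syndetic => not ww}.

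First I would assume for contradiction that $A \subseteq X$ is a Baire measurable, non-meager, weakly wandering subset. Because $A$ has the Baire property and is non-meager, standard Baire category theory (localization of a BP set) furnishes a nonempty open set $V \subseteq X$ in which $A$ is comeager; equivalently, $M := V \setminus A$ is meager. By hypothesis, fix $d \geq 1$ such that $V$ contains arbitrarily long $d$-syndetic sets, i.e., for every $l \geq 1$ there is a $d$-syndetic set $F \subseteq \Z$ of length $\geq l$ with $\bigcap_{n \in F} T^n(V) \neq \0$.

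The core step is to upgrade this from $V$ to $A$: I claim that for every finite $F \subseteq \Z$ with $\bigcap_{n \in F} T^n(V) \neq \0$, one also has $\bigcap_{n \in F} T^n(A) \neq \0$. Since $T$ is a homeomorphism, each $T^n(M)$ is meager, and the finite union $\bigcup_{n \in F} T^n(M)$ is meager. On the other hand $\bigcap_{n \in F} T^n(V)$ is a nonempty open set, hence non-meager by Baire category. Using $T^n(A) \supseteq T^n(V) \setminus T^n(M)$, we obtain
$$
\bigcap_{n \in F} T^n(A) \;\supseteq\; \Bigl(\bigcap_{n \in F} T^n(V)\Bigr) \setminus \Bigl(\bigcup_{n \in F} T^n(M)\Bigr),
$$
and the right-hand side is non-meager (open minus meager), in particular nonempty.

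Applying this with $d$-syndetic sets $F$ of unbounded length shows that $A$ itself contains arbitrarily long $d$-syndetic sets, i.e., $s_A(d) = \infty$. Corollary~\ref{arbitrarily long syndetic => not ww} then implies that $A$ is not weakly wandering, contradicting our choice of $A$. The argument is almost entirely routine Baire category plus $T$ being a homeomorphism; the only place one needs to be a bit careful is to remember that $V \setminus A$ being meager transfers to each $T^n(V) \setminus T^n(A)$ being meager, which is immediate since homeomorphisms preserve meager sets. No step looks genuinely hard — the theorem is essentially packaging the previous syndeticity/Pigeonhole lemmas into a Baire-category statement.
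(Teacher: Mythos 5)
Your proof is correct and follows essentially the same route as the paper's: localize the non-meager BP set $A$ to an open $V$ where it is comeager, transfer the $d$-syndetic intersections from $V$ to $A$ using that homeomorphisms preserve meagerness and finite unions of meager sets are meager, and conclude via Corollary~\ref{arbitrarily long syndetic => not ww}. No issues.
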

\begin{proof}
Let $A$ be a non-meager Baire measurable subset of $X$. By the Baire property, there exists a nonempty open $V \subseteq X$ such that $A$ is comeager in $V$. By the hypothesis, there exists arbitrarily long $d$-syndetic sets $F \subseteq \Z$ such that $\bigcap_{n \in F} T^n(V) \neq \0$. Since $A$ is comeager in $V$ and $T$ is a homeomorphism, $\bigcap_{n \in F} T^n(A)$ is comeager in $\bigcap_{n \in F} T^n(V)$, and hence $\bigcap_{n \in F} T^n(A) \neq \0$ for any $F$ for which $\bigcap_{n \in F} T^n(V) \neq \0$. Thus $A$ also contains arbitrarily long $d$-syndetic sets and hence, by \cref{arbitrarily long syndetic => not ww}, $A$ is not weakly wandering.
\end{proof}

\begin{cor}\label{finite intersection property for open sets => no ww set}
Let $X$ be a Polish $G$-space. Suppose for every nonempty open $V \subseteq X$ there exists $d \geq 1$ such that $\{T^{nd}(V)\}_{n \in \N}$ has the finite intersection property. Then $X$ does not admit a non-meager Baire measurable weakly wandering subset.
\end{cor}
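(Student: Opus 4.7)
The plan is to deduce this corollary directly from Theorem~\ref{syndetic open sets => no ww set} by showing that the hypothesis here implies the hypothesis there. That is, given a nonempty open $V \subseteq X$ and $d \ge 1$ such that $\{T^{nd}(V)\}_{n \in \N}$ has the finite intersection property, I will verify that $V$ contains arbitrarily long $d$-syndetic sets.

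The key observation is simply that for each $k \ge 1$, the arithmetic progression $F_k := \set{0, d, 2d, \dots, kd}$ is a $d$-syndetic subset of $\Z$ of length $kd$. By the assumed finite intersection property of $\{T^{nd}(V)\}_{n \in \N}$, we have
$$
\bigcap_{n \in F_k} T^n(V) = \bigcap_{n=0}^{k} T^{nd}(V) \neq \0.
$$
Hence $V$ contains a $d$-syndetic set of length $kd$ for every $k$, which is precisely the hypothesis of Theorem~\ref{syndetic open sets => no ww set}. Applying that theorem yields the desired conclusion that $X$ does not admit a non-meager Baire measurable weakly wandering subset.

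There is essentially no obstacle here; the corollary is just the special case of the theorem where the ``arbitrarily long $d$-syndetic sets'' witnessing the hypothesis can be taken to be initial segments of the arithmetic progression $d\N$. The only thing to double-check is the definitional bookkeeping: that $F_k$ indeed has length $kd$ (its largest and smallest elements differ by $kd$) and gaps of exactly $d$ (so it is $d$-syndetic), and that $k$ can be made arbitrarily large, which makes the length unbounded.
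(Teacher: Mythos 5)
Your proof is correct and is essentially identical to the paper's own argument: both take $F = \set{0, d, 2d, \dots, kd}$, note it is a $d$-syndetic set of length $kd$ with $\bigcap_{n\in F}T^n(V)\neq\0$ by the finite intersection property, and invoke Theorem~\ref{syndetic open sets => no ww set}.
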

\begin{proof}
Fix nonempty open $V \subseteq X$ and let $d \geq 1$ such that $\{T^{nd}(V)\}_{n \in \N}$ has the finite intersection property. Then for every $N$, $F = \{kd : k \leq N\}$ is a $d$-syndetic set of length $Nd$ and $\bigcap_{n \in F} T^n(V) \neq \0$. Thus \cref{syndetic open sets => no ww set} applies.
\end{proof}

\subsection{A negative answer to the Eigen--Hajian--Nadkarni question}

\begin{lemma}\label{generic lww implies generic ww}
Let $X$ be a generically ergodic Polish $G$-space. If there is a non-meager Baire measurable locally weakly wandering subset then there is a non-meager Baire measurable weakly wandering subset.
\end{lemma}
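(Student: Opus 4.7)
The plan is to convert the local witnesses given by $\Dt_A$ into a single infinite $H \subseteq G$ that witnesses weak wandering on a non-meager subset of $A$, by combining Proposition \ref{travel guide for lww}(d) with generic ergodicity.

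Fix a Borel selector $s : F(\GN) \to \GN$ and set $\g := s \circ \Dt_A$. By Proposition \ref{travel guide for lww}(d), $\g : A \to \GN$ is Baire measurable, $E_G$-invariant, and satisfies $\g(x) \in \Dt_A(x)$ for every $x \in A$. Because the sets $\g(x)(n) \cdot (A \cap [x]_G)$ are pairwise disjoint and $A \cap [x]_G \neq \0$, the sequence $\g(x)$ must be injective; hence $S(x) := \set{\g(x)(n) : n \in \N}$ is an infinite subset of $G$ containing $1_G$, and any subset of $S(x)$ still witnesses weak wandering on $A \cap [x]_G$.

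For each $g \in G$ let $R^g := \set{x \in A : g \in S(x)}$, which is Baire measurable. By $E_G$-invariance of $\g$ on $A$, $R^g$ is $E_G$-invariant within $A$, and its saturation $[R^g]_G$ is an $E_G$-invariant Baire measurable subset of $X$. Generic ergodicity forces each $[R^g]_G$ to be meager or comeager, so let $G^* := \set{g \in G : [R^g]_G \text{ is comeager}}$. Note $1_G \in G^*$: since $\g(x)(0) = 1_G$ for all $x \in A$, $R^{1_G} = A$ and $[A]_G$ is a non-meager invariant Baire set, hence comeager. The key step is showing $G^*$ is infinite: if it were finite, then $\bigcup_{g \notin G^*} [R^g]_G$ would be a countable union of meager sets, hence meager, so on a non-meager subset of $A$ we would have $S(x) \subseteq G^*$, contradicting the infinitude of $S(x)$.

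Finally, let $W := A \cap \bigcap_{g \in G^*} [R^g]_G$. Since $G^*$ is countable and each $[R^g]_G$ is comeager, the intersection is a comeager $E_G$-invariant Baire set, and $W$ differs from $A$ by a meager set, so $W$ is non-meager Baire measurable. For any $x \in W$ and $g \in G^*$, the orbit $[x]_G$ meets $R^g$ in some $y \in A$ with $g \in S(y)$; $E_G$-invariance of $S$ on $A$ then gives $S(x) = S(y) \ni g$, so $G^* \subseteq S(x)$. Since $\g(x) \in \Dt_A(x)$, the sets $\set{g \cdot (A \cap [x]_G) : g \in G^*}$ are pairwise disjoint, which forces $\set{gW}_{g \in G^*}$ to be pairwise disjoint. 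Thus $W$ is non-meager Baire and weakly wandering with respect to the infinite set $G^* \ni 1_G$. The main obstacle is verifying that $G^*$ is infinite; this is precisely where generic ergodicity is used to promote the pointwise infinitude of $S(x)$ into a globally valid infinite $H$.
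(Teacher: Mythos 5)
Your proof is correct and takes essentially the same route as the paper's: both rest on Proposition \ref{travel guide for lww}(d) to produce an orbit-invariant Baire measurable travel guide and then use generic ergodicity to extract a single infinite set of group elements witnessing weak wandering on a non-meager subset of $A$ — the paper simply observes that $\g$ is generically constant, while you unpack that step via the invariant sets $[R^g]_G$ and the argument that $G^*$ is infinite. One small point: Proposition \ref{travel guide for lww} is stated for Borel $A$, so before forming $\Dt_A$ you should first (as the paper does) discard a meager set to replace $A$ by a $G_{\dt}$ locally weakly wandering subset.
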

\begin{proof}
Let $A$ be a non-meager Baire measurable locally weakly wandering subset. By generic ergodicity, we may assume that $X = [A]_G$. Throwing away a meager set from $A$ we can assume that $A$ is $G_{\de}$. Then, by (d) of \cref{travel guide for lww}, there exists a $\s$-measurable (and hence Baire measurable) $G$-invariant travel guide $\ga : A \rightarrow \GN$. By generic ergodicity, $\ga$ must be constant on a comeager set, i.e. there is $(g_n)_{n \in \N} \in \GN$ such that $Y := \ga^{-1}((g_n)_{n \in \N})$ is comeager. But then $W := A \cap Y$ is non-meager and is weakly wandering with respect to $\set{g_n}_{n \in \N}$.
\end{proof}

Let $X = \{\alpha \in 2^{\N} : \alpha \text{ has infinitely many 0-s and 1-s}\}$ and $T$ be the odometer transformation on $X$. We will refer to this $\Z$-space as the odometer space.

\begin{cor}\label{no generic lww for odometer}
The odometer space does not admit a non-meager Baire measurable locally weakly wandering subset.
\end{cor}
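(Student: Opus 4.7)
The plan is to reduce to Corollary \ref{finite intersection property for open sets => no ww set} via Lemma \ref{generic lww implies generic ww}, and then verify its hypothesis using the arithmetic structure of the odometer on basic open sets.

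First, I would note that the odometer action on $X$ is minimal: given $\alpha \in X$ and any basic open set $V_s = \{\beta \in X : \beta \sqsupseteq s\}$ for $s \in 2^{<\N}$, there is some $n \in \Z$ such that the first $|s|$ bits of $T^n \alpha$ equal $s$ (just add or subtract an appropriate integer). Hence every orbit is dense, which in a perfect Polish space implies that the action is generically ergodic. This lets me invoke Lemma \ref{generic lww implies generic ww} to reduce the problem to showing that $X$ admits no non-meager Baire measurable weakly wandering subset.

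Next, I would apply Corollary \ref{finite intersection property for open sets => no ww set}: it suffices to exhibit, for every nonempty open $V \subseteq X$, some $d \geq 1$ such that $\{T^{nd}(V)\}_{n \in \N}$ has the finite intersection property. Since the sets $V_s$ (for $s \in 2^{<\N}$) form a basis, I may assume $V = V_s$ for some $s$ of length $k$. Set $d = 2^k$. The key observation is that for any $\alpha \in X$, the binary sum $\alpha + 2^k$ (computed with carry) agrees with $\alpha$ on the first $k$ coordinates, because $2^k$ has $0$-s in positions $0, \dots, k-1$ and adding it cannot produce a carry into those positions. Thus $T^{2^k}(\alpha) \in V_s$ whenever $\alpha \in V_s$, and by the same reasoning (applied to $T^{-2^k}$) $T^{2^k}$ is a bijection of $V_s$ onto itself. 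Consequently $T^{nd}(V_s) = V_s$ for every $n \in \N$, and $\bigcap_{n \in F} T^{nd}(V_s) = V_s \neq \0$ for every finite $F \subseteq \N$.

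This verifies the hypothesis of Corollary \ref{finite intersection property for open sets => no ww set}, completing the proof. There is essentially no serious obstacle here: all the heavy lifting was done in Theorem \ref{syndetic open sets => no ww set}, Corollary \ref{finite intersection property for open sets => no ww set}, and Lemma \ref{generic lww implies generic ww}; the content of this corollary is just the elementary observation that the odometer is periodic on any ``cylinder set'' determined by a finite initial segment, together with the standard fact that the odometer is minimal.
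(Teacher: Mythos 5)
Your proof is correct and follows essentially the same route as the paper: reduce to Corollary \ref{finite intersection property for open sets => no ww set} via Lemma \ref{generic lww implies generic ww}, and observe that the odometer restricted to a cylinder $V_s$ of length $k$ is periodic with period $2^k$, so the basic open sets are literally fixed by $T^{2^k}$. In fact your choice $d = 2^k$ is the right one; the paper's text writes ``$d = |s|$'', which appears to be a typo for $d = 2^{|s|}$, and you have silently corrected it while also spelling out the minimality/generic-ergodicity step that the paper leaves implicit.
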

\begin{proof}
Let $\{U_s\}_{s \in 2^{<\N}}$ be the standard basis. Then for any $s \in 2^{<\N}$, $T^{d}(U_s) = U_s$, where $d = 2^{|s|}$. Thus $\{T^{nd}(U_s)\}_{n \in \N}$ has the finite intersection property, in fact $\bigcap_{n \in \N} T^{nd}(U_s) = U_s$. Hence, we are done by \cref{finite intersection property for open sets => no ww set,generic lww implies generic ww}.
\end{proof}

The following corollary shows the failure of the analogue of the Hajian--Kakutani--It\^{o} theorem in the context of Baire category as well as gives a negative answer to \cref{EHN's question}.
\begin{cor}\label{failure of having lww mod meager}
There exists a generically ergodic Polish $\Z$-space $Y$ (namely an invariant dense $G_{\de}$ subset of the odometer space) with the following properties:
\begin{enumerate}[(i)]
\item there does not exist an invariant Borel probability measure on $Y$;

\item there does not exist a non-meager Baire measurable locally weakly wandering set;

\item there does not exist a Baire measurable countably generated partition of $Y$ into invariant sets, each of which admits a Baire measurable weakly wandering complete section.
\end{enumerate}
\end{cor}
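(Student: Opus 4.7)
The plan is to take $Y$ to be an invariant dense $G_\dt$ subset of the odometer space $X$ that admits no invariant Borel probability measure, provided by applying Theorem \ref{Kechris-Miller thm} to $X$. This yields (i) at once. The odometer is minimal, hence topologically transitive, which for a continuous $\Z$-action on a Polish space implies generic ergodicity. Because $Y$ is an invariant dense $G_\dt$ in $X$, meagerness and Baire measurability of subsets of $Y$ agree in $Y$ and in $X$, so the action on $Y$ inherits generic ergodicity from $X$.

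For (ii), I would reduce to Corollary \ref{no generic lww for odometer} applied to $X$. The correspondence of Baire-category notions between $X$ and $Y$ just mentioned shows that a non-meager Baire measurable subset of $Y$ is also such when regarded as a subset of $X$. Local weak wandering transfers trivially: any $\Z$-orbit meeting $Y$ lies entirely inside $Y$ by invariance, while orbits disjoint from $Y$ give empty intersections with the set in question. Thus a non-meager Baire measurable locally weakly wandering subset of $Y$ would be one of $X$, contradicting the odometer corollary.

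For (iii), suppose $\Pa$ is a Baire measurable countably generated partition of $Y$ into invariant sets, each admitting a Baire measurable weakly wandering complete section, and let $\set{B_n}_{n \in \N}$ be Baire measurable invariant generators of $\Pa$. By generic ergodicity of $Y$, each $B_n$ is meager or comeager; pick $C_n \in \set{B_n, B_n^c}$ comeager, and set $Y^* := \bigcap_n C_n$. Then $Y^*$ is comeager, invariant, and is one of the pieces of $\Pa$, since any other atom differs from $Y^*$ in the choice of $C_n$ for some $n$ and hence lies inside the meager $C_n^c$. By hypothesis $Y^*$ carries a Baire measurable weakly wandering complete section $W$, so $Y^* = [W]_\Z = \bigcup_{n \in \Z} T^n W$. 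This countable union being comeager forces some translate $T^n W$ to be non-meager, hence so is $W$ (as $T$ is a homeomorphism of $Y$). Finally $W$, being weakly wandering, is automatically locally weakly wandering: if $H \subseteq \Z$ witnesses its weak wandering, then for any $x$, $W \cap [x]_\Z \subseteq W$ is weakly wandering with respect to the same $H$. Thus $W$ is a non-meager Baire measurable locally weakly wandering subset of $Y$, contradicting (ii).

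The only delicate point is the transfer of Baire-category notions between $X$ and $Y$ in (ii); this is routine because $Y$ is a dense $G_\dt$ in $X$. The remainder is bookkeeping: generic ergodicity picks out the unique comeager atom of the countably generated partition, and the complete-section property then produces the non-meager locally weakly wandering witness needed to apply (ii).
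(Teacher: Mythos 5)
Your proposal is correct and follows essentially the same route as the paper: take $Y$ from the Kechris--Miller theorem, get (ii) from Corollary \ref{no generic lww for odometer}, and for (iii) use generic ergodicity to isolate a comeager piece whose weakly wandering complete section would have to be non-meager, contradicting (ii). You fill in some details the paper leaves implicit (the transfer of Baire-category notions to the dense $G_\delta$ subset, and why the generators of the partition may be taken invariant -- each is a union of the invariant atoms), but the argument is the same.
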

\begin{proof}
By the Kechris--Miller theorem (see \labelcref{Kechris-Miller thm}), there exists an invariant dense $G_{\de}$ subset $Y$ of the odometer space that does not admit an invariant Borel probability measure. Now (ii) is asserted by \cref{no generic lww for odometer}. By generic ergodicity of $Y$, for any Baire measurable countably generated partition of $Y$ into invariant sets, one of the pieces of the partition has to be comeager. But then that piece does not admit a Baire measurable weakly wandering complete section since otherwise it would be non-meager, contradicting (ii).
\end{proof}

\begin{bibdiv}
\begin{biblist}

\bib{BK}{book}
{
author = {Becker, H.}
author = {Kechris, A. S.}
title = {The Descriptive Set Theory of Polish Group Actions}
date = {1996}
publisher = {Cambridge Univ. Press}
series = {London Math. Soc. Lecture Note Series}
volume = {232}
}

\bib{Burgess}{article}
{
	author = {Burgess, J. P.}
	title = {A selection theorem for group actions}
	date = {1979}
	journal = {Pac. J. Math.}
	volume = {80}
	pages = {333--336}
}

\bib{CKM}{article}
{
	author = {Conley, C. T.}
	author = {Kechris, A. S.}
	author = {Miller, B. D.}
	title = {Stationary probability measures and topological realizations}
	date = {2013}
	journal = {Israel. J. Math.}
	volume = {198}
	number = {1}
	pages = {333--345}
}

\bib{DP}{article}
{
author = {Danilenko, A. I.}
author = {Park, K. K.}
title = {Generators and Bernoullian factors for amenable actions and cocycles on their orbits}
date = {2002}
journal = {Ergodic Theory and Dynamical Systems}
volume = {22}
pages = {1715--1745}
}

\bib{Downarowicz}{book}
{
author = {Downarowicz, T.}
title = {Entropy in Dynamical Systems}
date = {2011}
publisher = {Cambridge Univ. Press}
series = {New Mathematical Monographs Series}
volume = {18}
}

\bib{EHN}{article}
{
author = {Eigen, S.}
author = {Hajian, A.}
author = {Nadkarni, M.}
title = {Weakly wandering sets and compressibility in a descriptive setting}
date = {1993}
journal = {Proc. Indian Acad. Sci.}
volume = {103}
number = {3}
pages = {321--327}
}

\bib{Farrell}{article}
{
author = {Farrell, R. H.}
title = {Representation of invariant measures}
date = {1962}
journal = {Illinois J. Math.}
volume = {6}
pages = {447--467}
}

\bib{Glasner}{book}
{
author = {Glasner, E.}
title = {Ergodic Theory via Joinings}
date = {2003}
publisher = {American Mathematical Society}
series = {Mathematical Surveys and Monographs}
volume = {101}
}

\bib{GW}{article}
{
author = {Glasner, E.}
author = {Weiss, B.}
title = {Minimal actions of the group $S(\Z)$ of permutations of the integers}
date = {2002}
journal = {Geom. Funct. Anal.}
volume = {12}
pages = {964--988}
}

\bib{HI}{article}
{
author = {Hajian, A. B.}
author = {It\^{o}, Y.}
title = {Weakly wandering sets and invariant measures for a group of transformations}
date = {1969}
journal = {Journal of Math. Mech.}
volume = {18}
pages = {1203--1216}
}

\bib{HK}{article}
{
author = {Hajian, A. B.}
author = {Kakutani, S.}
title = {Weakly wandering sets and invariant measures}
date = {1964}
journal = {Trans. Amer. Math. Soc.}
volume = {110}
pages = {136--151}
}

\bib{JKL}{article}
{
author = {Jackson, S.}
author = {Kechris, A. S.}
author = {Louveau, A.}
title = {Countable Borel equivalence relations}
date = {2002}
journal = {Journal of Math. Logic}
volume = {2}
number = {1}
pages = {1--80}
}

\bib{bible}{book}
{
author = {Kechris, A. S.}
title = {Classical Descriptive Set Theory}
date = {1995}
publisher = {Springer}
series = {Graduate Texts in Mathematics}
volume = {156}
}

\bib{KM}{book}
{
author = {Kechris, A. S.}
author = {Miller, B.}
title = {Topics in Orbit Equivalence}
date = {2004}
publisher = {Springer}
series = {Lecture Notes in Math.}
volume = {1852}
}

\bib{Krieger}{article}
{
author = {Krieger, W.}
title = {On entropy and generators of measure-preserving transformations}
date = {1970}
journal = {Trans. Amer. Math. Soc.}
volume = {149}
pages = {453--464}
}

\bib{Krengel}{article}
{
author = {Krengel, U.}
title = {Transformations without finite invariant measure have finite strong generators}
conference = {
		title = {First Midwest Conference, Ergodic Theory and Probability}
	}
	book = {
        series = {Springer Lecture Notes}
        volume = {160}
	}
	date = {1970}
	pages = {133--157}
}

\bib{Kuntz}{article}
{
author = {Kuntz, A. J.}
title = {Groups of transformations without finite invariant measures have strong generators of size 2}
date = {1974}
journal = {Annals of Probability}
volume = {2}
number = {1}
pages = {143--146}
}

\bib{Miller_thesis}{book}
{
author = {Miller, B. D.}
title = {PhD Thesis: Full groups, classification, and equivalence relations}
date = {2004}
publisher = {University of California at Los Angeles}
}

\bib{Munroe}{book}
{
author = {Munroe, M. E.}
title = {Introduction to Measure and Integration}
date = {1953}
publisher = {Addison--Wesley}
}

\bib{Nadkarni}{article}
{
author = {Nadkarni, M. G.}
title = {On the existence of a finite invariant measure}
date = {1991}
journal = {Proc. Indian Acad. Sci. Math. Sci.}
volume = {100}
pages = {203--220}
}

\bib{Rudolph}{book}
{
author = {Rudolph, D.}
title = {Fundamentals of Measurable Dynamics}
date = {1990}
publisher = {Oxford Univ. Press}
}

\bib{Varadarajan}{article}
{
author = {Varadarajan, V. S.}
title = {Groups of automorphisms of Borel spaces}
date = {1963}
journal = {Trans. Amer. Math. Soc.}
volume = {109}
pages = {191--220}
}

\bib{Wagon}{book}
{
author = {Wagon, S.}
title = {The Banach--Tarski Paradox}
date = {1993}
publisher = {Cambridge Univ. Press}
}

\bib{Weiss}{article}
{
author = {Weiss, B.}
title = {Countable generators in dynamics-universal minimal models}
date = {1987}
journal = {Measure and Measurable Dynamics, Contemp. Math.}
volume = {94}
pages = {321--326}
}
\end{biblist}
\end{bibdiv}

\end{document}